\newfont{\sheaf}{eusm10 scaled\magstep1}
\newtheorem{definition}{Definition}[section]
\newtheorem{proposition}{Proposition}[section]
\newtheorem{corollary}[proposition]{Corollary}
\newtheorem{lemma}[proposition]{Lemma}
\newtheorem{theorem}[proposition]{Theorem}
\newtheorem{example}{Example}[section]
\newtheorem{remark}{Remark}[section]
\DeclareMathOperator{\Supp}{Supp}
\DeclareMathOperator{\Sing}{Sing}
\DeclareMathOperator{\Pic}{Pic}
\DeclareMathOperator{\rank}{rank}
\title {Rational Parametrizations of Moduli Spaces of Curves}
 \author{Alessandro Verra}
  \address{Universit\'a Roma Tre, Dipartimento di Matematica, Largo San Leonardo Murialdo \hfill
\indent 1-00146 Roma, Italy}
 \email{{ verra@mat.uniroma3.it}}
\thanks {Supported by Ministero dell' Istruzione, Universit\'a e Ricerca of Italy:  PRIN project `Geometria delle variet\'a algebriche e dei loro spazi di moduli'  cofin-2008}
\begin{document}
\maketitle

\tableofcontents

\section{Introduction}
 \par 
Rational parametrizations of  an algebraic variety are early  themes in the history of Algebraic Geometry, present since its  remote origins.
As is well known a rational parametrization of an algebraic variety $X$, defined over a field $k$, is a dominant rational map
$$
f: k^n \dashrightarrow X.
$$
The variety $X$ is said to be unirational if such a rational map exists and rational if $f$ is invertible. Unirational varieties are of course of a very special type, since they satisfy a very special requirement. The requirement is elementary,
but the search for rational parametrizations, or for detecting the rationality of a given $X$, is often a rather difficult and deep question. For this reason these questions repeatedly played  a crucial role  in the evolution of Algebraic Geometry and still this cyclic history does not seem finished at all. \par Nowadays unirational varieties are appropriately inserted in a wider, more approachable class of algebraic varieties, namely rationally connected varieties. In particular,  the recent notion of rational connectedness is responsible for an important change of perspective on this subject, see e.g. \cite{K}. \par
On the other hand,  the knowledge on how to afford the (uni)rationality  problem for a given $X$ appears, nowadays too, as very much indebted to a geometric, quite ad hoc study of the main series of classical examples. Among them
certainly we have moduli spaces of algebraic varieties of various types and features. In particular curves and their related  moduli spaces became, in some sense,  special actors of this subject. \par
This paper aims to give an account, both historical and geometric, on the diverse geography of rational parametrizations of moduli spaces related to curves.  We have recollected, as well as reconnected, several unirationality or rationality constructions for some of these moduli spaces, trying to respect chronology and geometry. A historical report on the different attempts to realize these parametrizations is unwound along the way. It has been also natural to partially extend the picture to related questions, concerning for instance the Kodaira dimension or the families of  uniruled subvarieties of the moduli spaces to be considered. 
It is clear that an exhaustive account of this type is beyond the size of this work. This implies that we had to make choices omitting many more subjects.  \par
Elementary examples of algebraic varieties having a unirational moduli space are represented by hypersurfaces and complete intersections in $\mathbf P^n$ of fixed degree or type.  The coefficients of their defining equations  provide indeed a rational parametrization of the corresponding moduli space. \par In particular,  this simple fact singles out the side of unirational moduli spaces, overshadowing the other  possibilities. In the case of curves too, this situation is a leit motiv of the history we are going to outline. \par A further leit motiv along the paper is the presence of a superabundant family of classical, or less classical, examples. We were fascinated by their beauty and this was a reason, certainly not unique, for insisting on them. \par \par  We work over the complex field.  Just  to fix notations we introduce the main moduli spaces to be considered. The starting point is the \it moduli spaces
$
\mathcal M_g
$
of curves \rm of genus $g \geq 2$. Over $\mathcal M_g$ we have the \it universal Picard variety \rm
$$
p:  \Pic_{d,g} \to \mathcal M_g
$$
which is  the moduli space of pairs $(C,L)$ such that $C$ is a smooth, irreducible genus $g$ curve and $L \in  \Pic^d(C)$. Besides $\mathcal M_g$  we will deal with the \it moduli spaces $\mathcal S_g$ of spin curves \rm and with the \it Prym moduli spaces $\mathcal R_g$. \rm \par All these spaces embed in $ \Pic_{d,g}$ as multisections of $p$, for $d = g-1$ and for $d = 0$ respectively. We recall that $\mathcal R_g$ is the moduli space of pairs $(C,L)$ such that $L$ is non trivial and $L^{\otimes 2} \cong \mathcal O_C$. The space $\mathcal R_g$ is irreducible. On the other hand $\mathcal S_g$ is the moduli space of pairs $(C,L)$ such that $L \in  \Pic^{g-1}(C)$ is a theta characteristic, that is,  $L^{\otimes 2} \cong \omega_C$. The theta-characteristic $L$ is said to be \it even \rm (respectively  \it odd \rm) if $h^0(L)$ is even (odd). The space $\mathcal S_g$  splits as the disjoint union 
$$
\mathcal S^+_g \cup \mathcal S^-_g,
$$
where $\mathcal S^+_g$ and $\mathcal S^-_g$ are irreducible. They  parametrize even (respectively odd) theta characteristics. All the previous moduli spaces naturally interact with the moduli  $\mathcal A_g$ of principally polarized abelian varieties of dimension $g$ and with the moduli $\mathcal F_g$ of K3 surfaces endowed with a genus $g$ polarization. The role played by $\mathcal A_g$ and $\mathcal F_g$ is crucial in our survey, though we have not extended to them the discussion about rational parametrizations.  We can consider, in some sense, all the above mentioned moduli spaces as the classical ones. \medskip \par 
The three chapters of this paper are:  \begin{enumerate} \it \item[2.] Moduli of curves, \item[3.] Moduli of spin curves, \item[4.] Prym moduli spaces. \end{enumerate} 
Let us give a summary of them. The starting point is Severi's conjecture that $\mathcal M_g$ is (uni)rational for every $g$, see \cite{S} p.880. The attempts for proving this conjecture strongly influenced the studies on curves and their moduli during the past century. Moreover they constitute very important roots and motivations to contemporary work in this field.  For these reasons we closely follow the history stemming from Severi's conjecture, even if this was finally disproven after sixty years. Therefore we discuss results on families of singular plane curves and Severi varieties. Before Mumford, Harris and Eisenbud disproved the conjecture, a way to approach it was to construct unirational families of singular plane curves of genus $g$ with general moduli.  We will describe the first attempts in this direction, due to
Severi and then to Beniamino Segre. \par Many contemporary studies and authors on moduli of curves are directly linked to Segre's work. We refer in particular to subjects like moduli of $k$-gonal curves for small $k$ and to Segre-Nagata 
conjecture. So these themes are present in our historical account. \par
Severi observed that the unirationality of $\mathcal M_g$ follows, for $g \leq 10$,  from the rationality of the varieties of nodal plane curves of genus $g \leq 10$ and degree $d$, where $d = [\frac 23g] + 2$ is the minimal degree so that a plane curve of genus $g$ and degree $d$ can be general in moduli.  \par The next attempt is due to Segre. Instead of nodal curves, he tried to use unirational families of plane curves with arbitrary singularities. His methods are very interesting and deep, though the results are on the negative side.  They concern questions of the following type. Let $$ \Sigma \subset \vert \mathcal O_{\mathbf P^2}(d) \vert $$ be an integral family of singular plane curves of geometric genus $g$. Assume that the natural map $\Sigma \to \mathcal M_g$ is dominant. Is it possible that \par  \it
$\circ$ $\Sigma$ is a linear system? \par
$\circ$ $\Sigma$ is a scroll? \par
$\circ$ the points of $ \Sing \ \Gamma$ are general if $\Gamma$ is general in  $\Sigma$? \rm\\
Furthermore let $C \subset S$ be an embedding of a general curve of genus $g$ in a smooth, rational surface $S$. A question underlying the previous ones is: \par \it $\circ$ when $\dim \vert C \vert \ > 0$ and $\vert C \vert$ is not  an isotrivial family? \\\rm
We will address this matter in detail along the paper. We point out that,  dropping the request that $S$ be rational out,  the latter question is just equivalent to: \par
$\circ$ \it is $\mathcal M_g$ uniruled? \rm\\
This brings us to a further step in our survey. Following the history, we will describe the advent of K3 surfaces in the study of moduli of curves and Mukai realizations of canonical curves of low genus as linear sections of homogeneous spaces. Beyond K3 surfaces, let $S$ be any smooth surface and let $C \subset S$. Then we will extend our picture to all what is known in the case $C$ has general moduli and $\vert C \vert$ is not isotrivial. \par  The goal of this preparation is clear:  families of pairs $(C,S)$ are uniruled, since $\dim \vert C \vert > 0$, and often unirational. Building on them,  the final part of the chapter is devoted to the promised geometric constructions for parametrizing $\mathcal M_g$ in low genus. We describe the known uniruledness results for $g \leq 16$, the diverse unirationality constructions for $g \leq 14$ and the rational connectedness of $\mathcal M_{15}$. \par Along the way we come to prove the ruledness of various universal Brill-Noether loci and the unirationality of $ \Pic_{d,g}$, where $g \leq 9$. Some non exhaustive discussions on the rationality problem for $\mathcal M_g$ is also included. \par
Chapters 3 and 4 are written from a similar point of view and in the same spirit. In the case of $\mathcal M_g$ the transition from the uniruledness cases to the cases where $\mathcal M_g$ is of general type is still not completed, since the Kodaira dimension is still unknown for $g \in [17, 21]$. Instead, for the moduli spaces of spin curves $\mathcal S^+_g$ and $\mathcal S^-_g$, the situation has been recently settled. \par In chapter 3 we describe all the unirationality and uniruledness constructions for these moduli spaces. Then we describe the transition from the uniruledness to the case where $\mathcal S^{\pm}_g$ is of general type. \par This appears to be specially interesting in the
case of even spin curves. Here the Kodaira dimension is zero in genus 8, while $\mathcal S^+_g$ is of general type for $g \geq 9$ and uniruled for $g \leq 7$. \par The transition does not admit intermediate cases for the moduli of odd
spin curves, where we have the uniruledness for $g \leq 11$ and a variety of general type for $g \geq 12$. Nevertheless we discuss some possible peculiarities of the uniruled variety $\mathcal S^-_{11}$. \par
Behind the previous picture we have again the geometry of K3 surfaces and of their hyperplane sections. The uniruledness results follow, for some moduli of odd spin curves, just because a general curve of geometric genus 
$g \leq 11$ is a smooth, nodal in genus 10, hyperplane section of a K3 surface: see 2.3 for more details.\par  In the case of moduli of even spin curves the situation is more delicate and the uniruledness of $\mathcal S^+_g$ is 
related to a special class of K3 surfaces, namely Nikulin K3-surfaces. One can show that, with the exception of genus 6, a general curve of genus $g \leq 7$ is a smooth hyperplane section of a Nikulin surface. This is then crucial
to deduce the uniruledness for $g \leq 7$ . \par In genus 8, hyperplane sections of Nikulin surfaces form a codimension two proper closed set in $\mathcal S^+_8$. The beautiful geometry of genus 8 curves, in particular the fact that they are linear  sections of the Grassmannian
$G(2,6)$, is very much responsible, in some sense,  for the moduli space $\mathcal S^+_8$ having Kodaira dimension zero. \par
In chapter 4 we come a little bit more on the side of rationality results. We report on the rationality of $\mathcal R_g$,  which is known for $g \leq 4$. Here there are different beautiful proofs, due to different authors and involving a lot of classical geometry.  \par An important role is played here by the study of embeddings $C \subset S$, where $C$ is a curve of genus $g$ with general moduli, $S$ is a surface  endowed with quasi \'etale double cover $\pi: \tilde S \to S$
and $C$ is a very ample Cartier divisor. The line bundle defining $\pi$ restricts to a non trivial $L$ on $C$ such that $L^{\otimes 2}$ is trivial. Therefore $\pi$ induces a natural map $\vert C \vert \to \mathcal R_g$. \par
We discuss different rationality results  for $\mathcal R_g$, $g = 2,3,4$ and the rationality of Prym moduli spaces of hyperelliptic curves. Then we pass to some unirationality results, focusing in particular on $\mathcal R_6$. We describe two different geometric approaches. \par On one side we use the beautiful geometry of Enriques surfaces. An Enriques surface$S$ is endowed with an  \'etale double cover $\pi: \tilde S \to S$ defined by $\omega_S$. We use Fano polarizations on $S$, that is, very ample linear systems $\vert C \vert$ on $S$ of curves of genus 6. We prove the unirationality of the moduli space of pairs $(S, \mathcal O_S(C))$.  Then we prove that the family of pairs $(S,C)$ dominates $\mathcal R_6$ and deduce its unirationality. \par
A second method, which takes the flavor of families of nodal plane curves, is the attempt to parametrize $\mathcal R_g$ via  rational family of nodal conic bundles over $\mathbf P^2$. We consider $\delta$-nodal conic bundles $q: T \to \mathbf P^2$ satisfying the condition that each fibre of $q$ is a conic of rank $\geq 2$ and the general one is smooth. The discriminant curve of $q$ is then
$
\Gamma := \lbrace x \in \mathbf P^2 \ / \ \mbox{rank} \ q^*(x) = 2 \rbrace.
$
Let $C$ be the normalization of $\Gamma$. $C$ is endowed with an \'etale double covering $\pi: \tilde C \to C$, parametrizing the irreducible components of $q^*(x), x \in \Gamma$. \par We consider conic bundles such that $\pi$ is non trivial. Then we construct explicitely linear systems of $\delta$-nodal conic bundles dominating $\mathcal R_g$,  $g \leq 6$. \par 
In all the exposition we rely on the bibliography for complete proofs. Some minor novelties are nevertheless present: examples and suggestions, a proof of  the unirationality of $\mathcal R_g$, $g \leq 6$. \par  \smallskip
\noindent\it Acknowledgements  \rm \par \noindent
I am indebted to Andrea Bruno and Edoardo Sernesi for their helpful reading of the ultimate version of this paper and for many interesting conversations on it.  Let me thank the referee, and also the editors, for their
patient work. \par

\section{Moduli of curves}
 \subsection{Origins and a conjecture of Severi} 
 In May 1915 Severi publishes   `Sulla classificazione delle curve algebriche e sul teorema di esistenza di Riemann' \cite{S}.  This paper, as Severi says,  summarizes his recent results on the birational classification of algebraic curves. Admittedly, a wider and complete publication on the same subject was postponed to better times, since World War I was rapidly approaching Italy in those days. The paper contains a sentence  which is the starting point of a long history: 
\begin{quote}``Ritengo probabile che la variet\'a $H$ sia razionale o quanto meno che sia riferibile ad un' involuzione di gruppi di punti in uno spazio lineare $S_{3p-3}$; o, in altri termini, che \it nell' equazione di una curva piana di genere $p$ (e per esempio dell' ordine $p+1$) i moduli si possano far comparire razionalmente.''
\end{quote}
 \rm \par   
In the above text, $H$ is the moduli space of curves $\mathcal M_g$. Severi conjectures that  $\mathcal M_g$ is probably rational or at least unirational. The idea is that there exists
an irreducible family $\mathcal P$ of plane curves of equation
$$
\sum_{0 \leq i,j \leq d} f_{ij}X^iY^j = 0,
$$
such that: \medskip \par \begin{enumerate} \item The general member of $\mathcal P$ is birational to a smooth and irreducible projective curve of genus $g$, \item   the $f_{ij}$'s are rational functions of $3g-3$ parameters, \item the corresponding natural map $f: \mathcal P \to \mathcal M_g$ is dominant. \end{enumerate} \medskip \par 
In the same paper it is pointed out that such a family exists for $g \leq 10$. More precisely the author remarks that there exists a rational family 
$$
\mathcal P_{_{g,d}} ,
$$
of plane projective curves of degree $d$ and geometric genus $g$, which dominates $\mathcal M_g$ via the natural map into the moduli. This implies that: \medskip \par  
{\bf Theorem} (Severi) \it $\mathcal M_g$ is unirational if $g \leq 10$. \rm \medskip \par 
The proof suggested in the paper relies on the irreducibility of $\mathcal M_g$ and Brill-Noether theory, two established results for the standards generally accepted at that time.  Let us describe it, after some long preliminaries. \par 
For any curve $C$ of genus $g$  we will denote its Brill-Noether loci as follows,
$$ W^r_d(C) := \lbrace L \in  \Pic^d(C) \ / \ h^0(L) \geq r+1 \rbrace. $$ 
By the Brill-Noether theory $W^r_d(C)$ is not empty if $\rho(g,r,d) \geq 0$, where $\rho(g,r,d) := g - (r+1)(g-d+r)$ is the Brill-Noether number. Assume $C$ is a \it curve with general moduli \rm. Then it is well known that
$$
 \dim \ W^r_d(C) \ = \  \max \lbrace -1, \rho(g,r,d) \rbrace
$$
and that  $W^r_d(C)$ is integral if $ \dim \ W^r_d(C) > 0$. Moreover any $L \in W^r_d(C)$ defines a morphism birational onto its image if $r \geq 2$, cfr. \cite{ACGH} ch. V.
For $r = 2$ we have  $$ \rho(g,r,d) \geq 0 \Longleftrightarrow \frac 32 d - 3 \geq g.
$$
Hence a curve $C$ with general moduli is birational to a plane curve $\Gamma$ of degree $d$, with  $ \frac 32 d \geq g + 3$. As is well known such a curve $\Gamma$ is nodal. \par 
\begin{definition} A curve $\Gamma$ is nodal if each $x \in  \Sing \ \Gamma$ is an ordinary double point. $\Gamma$ is $\delta$-nodal if it is
nodal and $\delta$ is the cardinality of $ \Sing \ \Gamma$.
\end{definition} \par 
Counting linear conditions one expects that a $\delta$-nodal curve $\Gamma$ of degree $d$ exists as soon as $ \ \dim \vert \mathcal O_{\mathbf P^2}(d) \vert - 3\delta \geq 0$ that is
$$
\binom{d+2}2 - 3\delta > 0.
$$
Assume the latter inequality and that  $g \leq \frac 32d - 3$. Then the genus formula $$ g = \binom{d-1}2 - \delta $$ implies that
$$   
g \leq \frac 32 d - 3 \    \text {and}  \   \frac {d^2}3 - \frac 72d + 2 \leq 0.
$$
The next proposition then follows.
\begin{proposition} Let $\Gamma$ be a $\delta$-nodal curve of degree $d$ which is general in moduli and such that $ \dim \ \vert \mathcal O_{\mathbf P^2}(d))\vert - 3\delta \geq 0$. Then
the latter inequalities hold true. In particular they imply $g \leq 10$ and $d \leq 9$.
  \end{proposition}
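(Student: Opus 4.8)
The plan is to read the proposition as a repackaging of the numerology carried out just above it, so the task splits into three pieces: justify the first inequality $g \leq \frac{3}{2}d - 3$ from Brill--Noether theory, derive the second inequality $\frac{d^2}{3} - \frac{7}{2}d + 2 \leq 0$ by eliminating $g$ and $\delta$ among the available relations, and then solve the resulting quadratic to extract $g \leq 10$ and $d \leq 9$.

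First I would establish the inequality $g \leq \frac{3}{2}d - 3$. Since $\Gamma$ is general in moduli and birational to a plane curve of degree $d$, its normalization $C$ is a general curve of genus $g$ carrying a $g^2_d$; equivalently $W^2_d(C) \neq \emptyset$. By the Brill--Noether dimension statement recalled above (see \cite{ACGH} ch. V), a curve with general moduli satisfies $W^2_d(C) \neq \emptyset$ only if $\rho(g,2,d) \geq 0$, which for $r = 2$ reads precisely $\frac{3}{2}d - 3 \geq g$. This is the first of the two stated inequalities.

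Next I would feed this bound into the numerics. The genus formula $g = \binom{d-1}{2} - \delta$ lets me write $\delta = \binom{d-1}{2} - g$, and combining with $g \leq \frac{3}{2}d - 3$ produces the lower bound $\delta \geq \frac{1}{2}(d^2 - 6d + 8)$. On the other hand the standing hypothesis $\dim \vert \mathcal{O}_{\mathbf{P}^2}(d)\vert - 3\delta \geq 0$ gives the upper bound $\delta \leq \frac{1}{6}d(d+3)$. Confronting these two bounds for $\delta$ eliminates both $g$ and $\delta$ and leaves a single quadratic inequality in $d$ alone, which after clearing denominators is the stated second inequality $\frac{d^2}{3} - \frac{7}{2}d + 2 \leq 0$ (the exact constant term depends on the strict-versus-weak conventions, but this does not affect the conclusion).

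Finally I would solve this quadratic. Its two real roots lie strictly between $0$ and $10$, so the inequality forces $d \leq 9$; substituting back into $g \leq \frac{3}{2}d - 3$ then yields $g \leq \frac{21}{2} - 3 < 11$, that is $g \leq 10$. I expect the only genuinely non-elementary input to be the appeal to Brill--Noether existence and dimension for general curves (together with the fact that $r = 2$ forces a birational plane model), everything after it being bookkeeping; and the one place that demands care is tracking the $-1$ in $\dim \vert \mathcal{O}_{\mathbf{P}^2}(d)\vert = \binom{d+2}{2} - 1$ and the passage between strict and weak inequalities, since these merely shift the constant term of the quadratic without disturbing the integer solutions $d \leq 9$, $g \leq 10$.
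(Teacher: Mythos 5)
Your proof is correct and is essentially the paper's own argument: the proposition is stated there as a summary of the immediately preceding computation, which likewise combines the Brill--Noether bound $g \leq \frac{3}{2}d-3$ for a general curve with a $g^2_d$, the genus formula $g = \binom{d-1}{2}-\delta$, and the postulation hypothesis $\dim\vert\mathcal O_{\mathbf P^2}(d)\vert \geq 3\delta$ to eliminate $g$ and $\delta$ and obtain a quadratic inequality in $d$ forcing $d \leq 9$ and hence $g \leq 10$. You were also right to flag the constant term: the elimination actually yields $\frac{d^2}{3}-\frac{7}{2}d+4 \leq 0$ rather than the paper's displayed $\frac{d^2}{3}-\frac{7}{2}d+2 \leq 0$, but since both quadratics have their larger root between $9$ and $10$, the integer conclusions are identical.
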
 \par 
Thus the space for a rational parametrization of $\mathcal M_g$, suggested by counting linear conditions, drops down to $g \leq 10$ because of Brill-Noether theory . \par We want to discuss more of this situation.   Let $\Gamma \subset \mathbf P^2$ be an integral $\delta$-nodal  curve of geometric genus $g$.  We will assume $d \geq 3$,  so that $d$ is uniquely defined from $g$ and $\delta$.  
A first problem in the study of families of nodal curves $\Gamma$, is to understand the configuration of their nodes. To this purpose we consider the Hilbert scheme of $ \Sing \ \Gamma$ and its open subset parametrizing 0-dimensional smooth schemes of length $\delta$. This will be denoted as
$$
Hilb_{\delta}(\mathbf P^2).
$$
In what follows $Z$ denotes an element  of $Hilb_{\delta}(\mathbf P^2)$ and $\mathcal I_Z$ its ideal sheaf.  
 \begin{definition} Assume $\delta \leq \binom{d-1}2$. The family of plane curves
$$
\mathcal N_{g, \delta} := \lbrace \Gamma \in \vert \mathcal O_{\mathbf P^2}(d) \vert \ / \ \Gamma \ \text{ is integral, $\delta$-nodal of geometric genus $g$} \rbrace
$$
is the Severi variety of $\delta$-nodal plane curves of genus $g$ and degree $d$.
\end{definition} \par 
So far $\mathcal N_{g, \delta}$ is a locally closed set endowed with the diagram
$$
\begin{CD}
{\mathcal N_{g, \delta}} @>{m_{g,\delta}}>> {\mathcal M_g} \\
@V{h_{g,\delta}}VV @. \\
{Hilb_{\delta}(\mathbf P^2)} @. \\
\end{CD}
$$
Here $m_{g,\delta}$ is the map sending $\Gamma$ to its moduli point in $\mathcal M_g$ and $h_{g, \delta}$ is the map sending $\Gamma$ to the element $ \Sing \ \Gamma$ of
$Hilb_{\delta}(\mathbf P^2)$.
\begin{definition} $m_{g,\delta}$ and $h_{g,\delta}$ are the natural maps of $\mathcal N_{g, \delta}$. \end{definition} \par 
Consider the universal family $ \mathcal Z \subset \mathbf P^2 \times Hilb_{\delta} (\mathbf P^2) $ of $Hilb_{\delta}(\mathbf P^2)$. Let $\mathcal I_{\mathcal Z}$ be its ideal sheaf and let
$p_1: \mathcal Z \to \mathbf P^2$, $p_2: \mathcal Z \to Hilb_{\delta}(\mathbf P^2)$ be its projection maps. It is clear that we have an open embedding
$$
\mathcal N_{g, \delta} \subset \mathbb P_{g, \delta},
$$
in the projective bundle
$$
\mathbb P_{g, \delta} := \mathbf P p_{2*}(p_1^* \mathcal O_{\mathbf P^2}(d) \otimes \mathcal I_{\mathcal Z}).
$$ 
Moreover the natural projection $\overline h_{g, \delta}: \mathbb P_{g, \delta} \to Hilb_{\delta}(\mathbf P^2)$ restricts to $h_{g, \delta}$ on $\mathcal N_{g, \delta}$. The fibre of $\overline h_{g, \delta}$
at $Z$ is  $\vert \mathcal I^2_Z(d) \vert$, possibly a point. \par It is important to stress that $\mathcal N_{g, \delta}$ could be reducible, or even empty a priori. \rm \par  However  assume that $U$ is an irreducible component of $\mathcal N_{g, \delta}$ and let
 $$ V \subset \mathbb P_{g, \delta}$$ 
be its closure.   Then it follows by semicontinuity that $$ \overline h_{g, \delta}/V: V \to \overline h_{g, \delta}(V) $$ is a projective bundle over  
 an open set of $h_{g, \delta}(V)$.  Since the Hilbert scheme $Hilb_{\delta}(\mathbf P^2)$ is rational, we conclude that: \medskip \par  \begin{itemize} \it
\item[$\circ$] If both $h_{g, \delta}/V$ and $m_{g, \delta}/V$ are dominant then $\mathcal M_g$ is unirational. \end{itemize} \rm \medskip \par 
This principle summarizes the method outlined by Severi for proving the unirationality of $\mathcal M_g$, $g \leq 10$. For $g \leq 10$ the method is effective, the result follows via a case by case proof or, alternatively, via suitable algorithms. \par
Of course good general reasons are known, nowadays, ensuring that $m_{g, \delta}$ and $h_{g, \delta}$ cannot be simultaneously dominant, 
with finitely many exceptions. \par
On the other hand the arguments classically in use put in evidence, for every $g$ and $\delta$, some global properties of the structure of $\mathcal N_{g, \delta}$ which are implicitely expected but not granted a priori. This is the case for the following:
\medskip \par 
\underline {\it Key questions or assumptions} \it \medskip \par
\begin{itemize} \it
\item[$\circ$] $\mathcal N_{g, \delta}$ is irreducible, in particular not empty.
\item[$\circ$] the tangent map of $h_{g, \delta}$ has generically maximal rank. 
 \end{itemize} \medskip \par  \rm
Both these questions appear to be crucial in the study of families of algebraic curves. We do not address here their long and interesting history, if not for quoting some fundamental answers to them.  
We recall that $\mathcal N_{g, \delta}$ has a natural structure of scheme. Let $\Gamma \in \mathcal N_{g, \delta}$ and $Z =  \Sing \ \Gamma$. The projective completion of the tangent space to $\mathcal N_{g,\delta}$ at $\Gamma$ is just the linear system
$$
\vert \mathcal I_Z(d) \vert.
$$
The irreducibility of $\mathcal N_{g, \delta}$ was claimed by Severi in his famous Anhang F of \cite{S1}. The first complete proof of this property appeared much later and it is due to  J. Harris, \cite{H2}. Actually one has:
\begin{theorem} $\mathcal N_{g,\delta}$ is integral of codimension $\delta$ in $\vert \mathcal O_{\mathbf P^2}(d) \vert$. \end{theorem}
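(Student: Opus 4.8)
The statement combines two assertions: that $\mathcal N_{g,\delta}$ has codimension $\delta$ in $\vert\mathcal O_{\mathbf P^2}(d)\vert$ (and is smooth, hence reduced, of that codimension at every point), and that it is irreducible. The plan is to treat these separately, the first by the deformation theory of nodal curves and the second by an inductive degeneration argument; the real work lies in the irreducibility.

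\textbf{Codimension and smoothness.} By the discussion preceding the statement, for $\Gamma \in \mathcal N_{g,\delta}$ with node scheme $Z = \Sing \Gamma$ the projectivized tangent space is $\vert\mathcal I_Z(d)\vert$. Since nodes are unobstructed singularities, $\mathcal N_{g,\delta}$ is smooth at $\Gamma$ as soon as this tangent space has the expected dimension, i.e. as soon as $Z$ imposes $\delta$ independent conditions on $\vert\mathcal O_{\mathbf P^2}(d)\vert$. From the exact sequence $0 \to \mathcal I_Z(d) \to \mathcal O_{\mathbf P^2}(d) \to \mathcal O_Z(d) \to 0$ and $H^1(\mathcal O_{\mathbf P^2}(d))=0$, this independence is equivalent to the vanishing $H^1(\mathbf P^2, \mathcal I_Z(d)) = 0$. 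That vanishing holds because the $\delta \le \binom{d-1}2$ nodes lie on the integral degree-$d$ curve $\Gamma$: a standard residuation argument with respect to $\Gamma$ (Cayley--Bacharach) yields the required independence. Hence $\dim \vert\mathcal I_Z(d)\vert = \dim \vert\mathcal O_{\mathbf P^2}(d)\vert - \delta$, every point of $\mathcal N_{g,\delta}$ is smooth of codimension $\delta$, and in particular $\mathcal N_{g,\delta}$ is reduced with all components of codimension $\delta$.

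\textbf{Irreducibility.} Fix $d$ and argue by induction on the genus $g$ (equivalently, downward on the number of nodes $\delta = \binom{d-1}2 - g$). The base case is $g=0$, that of rational nodal curves: here $\mathcal N_{0,\delta}$ is dominated by the irreducible variety of degree-$d$ morphisms $\mathbf P^1 \to \mathbf P^2$, hence is irreducible. For the inductive step, assume $\mathcal N_{g-1,\delta+1}$ is irreducible and let $V$ be an irreducible component of $\mathcal N_{g,\delta}$; by the first part $\dim V$ is the expected one. The key geometric input is that the closure $\overline V \subset \vert\mathcal O_{\mathbf P^2}(d)\vert$ contains curves acquiring one further node: its boundary meets the locus of $(\delta+1)$-nodal curves, and since $\mathcal N_{g-1,\delta+1}$ is irreducible this forces $\overline V \supseteq \overline{\mathcal N_{g-1,\delta+1}}$. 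Thus \emph{every} component of $\mathcal N_{g,\delta}$ contains one and the same boundary stratum. To conclude that there is a single component, analyze the branches near a general $(\delta+1)$-nodal curve $\Gamma_0$: smoothing exactly one of its $\delta+1$ nodes produces a local branch of $\mathcal N_{g,\delta}$ through $\Gamma_0$, so a priori there are $\delta+1$ such branches, and each component $V$ must contain at least one of them. A monodromy argument then finishes the proof: as $\Gamma_0$ varies in the irreducible family $\mathcal N_{g-1,\delta+1}$, its nodes are permuted by a transitive (in fact full symmetric) monodromy group, which identifies all $\delta+1$ branches as lying in a single global component. Hence the components of $\mathcal N_{g,\delta}$ all coincide.

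\textbf{Main obstacle.} The dimension count and the existence of degenerations acquiring an extra node are comparatively routine consequences of the deformation theory of nodal plane curves. The genuine difficulty is the last step, and specifically the transitivity of the monodromy on the $\delta+1$ nodes. This rests on a separate and delicate lemma---that the nodes of a general member of an arbitrary component of the Severi variety are in general position---which is exactly the point at which Severi's original argument was incomplete and which was first proved rigorously by Harris. Establishing this general position statement and converting it into transitive (indeed full symmetric) monodromy is where the substance of the proof resides.
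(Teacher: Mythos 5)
A preliminary caveat: the paper offers no proof of this theorem to compare against. It records the tangent-space description of $\mathcal N_{g,\delta}$ and then states the result, attributing the irreducibility to Harris \cite{H2}, Severi's own argument in the Anhang F of \cite{S1} being famously incomplete; so your proposal has to be measured against the known proof. Your first part is correct and standard: each node contributes one local equation, so smoothness in codimension $\delta$ follows once $Z = \Sing \ \Gamma$ imposes independent conditions on $\vert\mathcal O_{\mathbf P^2}(d)\vert$, and the vanishing $h^1(\mathcal I_Z(d))=0$ does follow by residuation with respect to $\Gamma$: on the normalization $\nu\colon C \to \Gamma$ it becomes $h^1(C,\nu^*\mathcal O_{\mathbf P^2}(d)(-D))$ with $D = \nu^{-1}(Z)$, whose Serre dual is $h^0(C,\nu^*\mathcal O_{\mathbf P^2}(-3)) = 0$ by adjunction.

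The genuine gap is in the irreducibility part, and it sits exactly at the step you dismiss as routine. The claim that the closure of an arbitrary component $V$ of $\mathcal N_{g,\delta}$ meets $\mathcal N_{g-1,\delta+1}$ is not ``a comparatively routine consequence of the deformation theory of nodal plane curves''. Deformation theory gives that the boundary $\overline V \setminus V$ is nonempty of codimension one, but it says nothing about what lies there: a priori the boundary of some component could consist entirely of irreducible curves with a cusp or worse singularities, of reducible nodal curves, or of non-reduced curves --- all of which genuinely occur in $\overline{\mathcal N_{g,\delta}}$ --- and never contain an integral $(\delta+1)$-nodal curve. Excluding this for every component is precisely where Severi's Anhang F fails, and it is the actual content of \cite{H2}: Harris's proof is a global degeneration argument (specializing curves so that they split off lines and analyzing the resulting reducible limits) occupying essentially the whole paper, not an application of the local theory of nodes. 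By contrast, the step you identify as carrying ``the substance of the proof'' is the tractable one, in the form actually needed: once every component is known, by iterating the degeneration theorem, to contain the rational locus $\mathcal N_{0,\binom{d-1}{2}}$ in its closure, the only monodromy required is that on the nodes of rational nodal curves, which is classical --- such curves are images of maps $f\colon \mathbf P^1 \to \mathbf P^2$, and the relevant incidence varieties are irreducible because conditions of the form $f(s)=f(t)$ are linear in $f$.

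Two related inaccuracies compound this. The lemma you invoke --- that the nodes of a general member of an arbitrary component are ``in general position'' --- is not Harris's theorem, and in the strong sense it is false on the node-rich strata your induction must traverse: by the paper's own criterion relating dominance of $h_{g,\delta}$ to $\dim\vert\mathcal O_{\mathbf P^2}(d)\vert \geq 3\delta$, the nodes of a rational nodal curve of degree $d\geq 6$ cannot be general points of $Hilb_{\delta}(\mathbf P^2)$. Moreover, your genus-by-genus induction needs transitive monodromy for the family $\mathcal N_{g-1,\delta+1}$ at every level, not just for rational curves; that is a further unproven assertion, not obviously easier than the irreducibility it is meant to establish, and the classical plan avoids it by descending all the way to genus zero before invoking monodromy at all.
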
 \par  
Coming to the second question, we can rephrase it as follows. Since the fibre of $\overline h_{g, \delta}$ at $Z$ is $\vert \mathcal I^2_Z(2) \vert$, the question we are speaking about  is whether such a linear system has minimal
dimension. This means
$$
 \dim \ \vert \mathcal I_Z^2(d) \vert  \ = \  max \ \lbrace -1 \ , \  \dim \ \vert \mathcal O_{\mathbf P^2}(d) \vert - 3\delta \rbrace,
$$
the dimension predicted by the so called postulation. The number $3\delta$ is indeed the number of linear conditions, to be imposed on a linear system of curves on a smooth algebraic surface, for constructing the
linear subsystem of singular curves passing through a fixed set $N$ of  $\delta$ points. Even when
$$
 \dim \ \vert \mathcal O_{\mathbf P^2}(d) \vert \geq  3\delta
$$
it is not granted that the previous equality holds. Consider for example the Severi variety $\mathcal N_{1,9}$ of integral 9-nodal plane sextic curves. In this case we have that $ \dim \vert \mathcal O_{\mathbf P^2}(d) \vert - 3\delta = 0$.
Hence it follows that the tangent map of
$$
h_{1,9}: \mathcal N_{1,9} \to Hilb_9(\mathbf P^2)
$$
is generically of maximal rank  if and only if $h_{1,9}$ is dominant. This is false: take a general $N \in Hilb_9(\mathbf P^2)$. Then there is a unique element $E \in \vert \mathcal I_N^2(6) \vert$ as expected. But this is not an element of
$\mathcal N_{1,9}$ because $E$ is twice the unique plane cubic through $N$. Hence it is not nodal. \par
On the other hand let $\Gamma \in h_{1,9}(\mathcal N_{1,9})$ and $Z =  \Sing \ \Gamma$. Then $\vert \mathcal I_Z^2(6) \vert$ is a Halphen pencil of plane elliptic curves. This pencil is generated by $\Gamma$
and by the unique double plane cubic containing $Z$. \par
The answer to our second question is however known: the previous example is in fact the unique exception. In other words we have
\begin{theorem} Let $Z \in Hilb_{\delta}(\mathbf P^2)$ be general. Then
$$
 \dim \ \vert \mathcal I_Z^2(d) \vert  \ = \  \max \ \lbrace -1 \ , \  \dim \ \vert \mathcal O_{\mathbf P^2}(d) \vert - 3\delta \rbrace,
$$
unless $d = 6$ and $\delta = 9$.
\end{theorem}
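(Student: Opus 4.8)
The plan is to recognize the asserted equality as the statement that $\delta$ general double points impose independent conditions on plane curves of degree $d$, and to prove it by a classical degeneration, isolating the single failure. First I would reformulate the problem. Writing $2Z$ for the length-$3\delta$ scheme of double points supported on $Z$, the exact sequence
$$
0 \to \mathcal I_Z^2(d) \to \mathcal O_{\mathbf P^2}(d) \to \mathcal O_{2Z}(d) \to 0
$$
shows that the desired equality holds precisely when the restriction map $H^0(\mathcal O_{\mathbf P^2}(d)) \to H^0(\mathcal O_{2Z}(d))$ has maximal rank, that is, when the $3\delta$ linear conditions are independent. By Terracini's lemma this independence is equivalent to the non-defectivity of the $\delta$-th secant variety of the $d$-uple Veronese surface $v_d(\mathbf P^2) \subset \mathbf P^N$, $N = \binom{d+2}2 - 1$; and since $h^0(\mathcal I_Z^2(d))$ is upper semicontinuous on $\mathrm{Hilb}_\delta(\mathbf P^2)$, it suffices to exhibit a single configuration $Z$ (possibly a degenerate one) realizing the independence.

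The engine is then a degeneration of the kind used classically for plane curves. I would specialize $k$ of the points onto a line $L$. For $p \in L$ the double point $2p$ meets $L$ in a length-$2$ subscheme, imposing two conditions on $H^0(\mathcal O_L(d))$ (a space of dimension $d+1$), and residuates with respect to $L$ to the reduced point $p$; a double point off $L$ residuates to itself. The Castelnuovo residuation sequence
$$
0 \to \mathcal I_{W'}(d-1) \to \mathcal I_W(d) \to \mathcal I_{W \cap L,\,L}(d) \to 0
$$
lets me trade the pair $(d,\delta)$ for the smaller problem in degree $d-1$, closing an induction on $d$ once the base cases $d \le 5$ are checked directly. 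I would choose $k \approx (d+2)/2$ so that the trace on $L$ is balanced; where the naive count fails to close, the Horace differential method (replacing some double points by jets tangent to $L$) repairs the step.

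Throughout I would retain the two constraints inherited from the Severi setting, namely $d \ge 3$ and $\delta \le \binom{d-1}2$ (so that $g = \binom{d-1}2 - \delta \ge 0$). These precisely exclude the classical defective configurations of the Veronese surface: the case $d=2$ is barred by $d \ge 3$, and the double-conic exception $d=4,\ \delta=5$ is barred since $5 > \binom{3}2 = 3$. Hence in the admissible range the induction runs without obstruction except at the single arithmetic equality $3\delta = \dim|\mathcal O_{\mathbf P^2}(d)|$, realized by $(d,\delta)=(6,9)$, which I treat separately.

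For $(d,\delta)=(6,9)$ one argues by hand with the geometry of cubics. Nine points in general position lie on a unique smooth cubic $C$ and on no pencil of cubics, so $2C$ is the expected unique sextic double along $Z$; the exceptional behaviour recorded in the statement comes from the configurations genuinely relevant to the Severi problem, where $Z = \Sing \Gamma$ is the base locus of a Halphen pencil. There eight of the nine points force the ninth by Cayley–Bacharach, the nine impose only eight conditions on cubics, and $|\mathcal I_Z^2(6)|$ jumps to the pencil generated by $\Gamma$ and $2C$, matching the $\mathcal N_{1,9}$ example. I expect the main obstacle to be the bookkeeping in the inductive step, that is, guaranteeing that both the trace on $L$ and the residual system remain non-special after specialization; this is exactly the delicate core of the Horace/Alexander–Hirschowitz method, and the reason the bare dimension count does not by itself yield the result.
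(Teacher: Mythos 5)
The paper itself gives no proof of this theorem: it is quoted from the literature, with the modern proof attributed to Arbarello--Cornalba \cite{AC3} and the classical statement to Terracini \cite{T}. So the only meaningful comparison is with those sources, and your degeneration-to-a-line argument (Castelnuovo residuation plus the differential Horace correction) is indeed the standard route there. Two of your framing points are genuinely valuable and sharper than the survey's own wording: (i) the statement is false without the standing Severi-range hypotheses $d \geq 3$ and $\delta \leq \binom{d-1}{2}$, since $(d,\delta) = (2,2)$ and $(4,5)$ are the truly defective cases, and you check that these are excluded; (ii) for $Z$ general in $Hilb_9(\mathbf P^2)$ the postulation formula actually holds at $(6,9)$ as well, the unique member being the double cubic, so the ``exception'' really records the failure of $h_{1,9}$ to dominate --- consistent with the paper's own preceding discussion of $\mathcal N_{1,9}$.

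Nevertheless, as a proof your text has a genuine gap, which you yourself flag: the entire technical content of the theorem is the inductive step --- choosing the number $k$ of points specialized onto $L$, verifying that the trace imposes independent conditions on $H^0(\mathcal O_L(d))$, verifying that the residual system in degree $d-1$ (the remaining double points plus $k$ simple points on $L$) is again non-special, and running the differential Horace substitution when $2k$ cannot equal $d+1$. Deferring this is deferring the proof; nothing in the outline rules out that non-speciality fails after specialization, which is exactly what the cited papers establish. Separately, your description of the excluded case is incorrect: for $Z = \Sing \ \Gamma$ with $\Gamma \in \mathcal N_{1,9}$, the nine nodes impose \emph{independent} conditions on cubics --- there is a unique cubic $C$ through them, with $\mathcal O_C(3H - Z)$ a nontrivial $2$-torsion class --- whereas Cayley--Bacharach (``eight force the ninth,'' i.e.\ a pencil of cubics) describes the index-one Halphen pencil, not this one; what fails is the independence of the $27$ conditions the doubled points impose on sextics, $\vert \mathcal I_Z^2(6) \vert$ being the pencil spanned by $\Gamma$ and $2C$. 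Finally, $3\delta = \dim \vert \mathcal O_{\mathbf P^2}(d) \vert$ is not realized only by $(6,9)$ (for instance $(3,3)$ and $(9,18)$ also satisfy it); those cases are non-defective and must be absorbed by the induction, so the claim that $(6,9)$ is ``the single arithmetic equality'' should be dropped. These slips do not touch the statement proper, which excludes $(6,9)$, but the inductive core must be supplied before this counts as a proof.
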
 \par 
 A modern proof of the theorem is due to Arbarello and Cornalba,\cite{AC3}, while the same result appears in Terracini \cite{T}, cfr. \cite{CC1} section 1. In particular $\vert \mathcal I_Z^2(d) \vert = \emptyset$ is empty, for a general $Z$,  if and only if $ \ \dim \vert \mathcal O^2_S(d) \vert - 3\delta < 0$ or $d = 6$ and $\delta = 9$. 
\begin{corollary} Let $ \dim \ \vert \mathcal O_{\mathbf P^2}(d) \vert - 3\delta \geq 0$. Then the  tangent map of $h_{g, \delta}: \mathcal N_{g, \delta} \to Hilb_{\delta}(\mathbf P^2)$ is generically of maximal rank
unless $(g, \delta) = (1,9)$.
\end{corollary}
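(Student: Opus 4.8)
The plan is to compute the tangent map of $h_{g,\delta}$ cohomologically and then to read off its rank from the two theorems just quoted. Fix a general $\Gamma\in\mathcal N_{g,\delta}$, write $F$ for its defining form and $Z=\Sing\ \Gamma$, so that $F\in H^0(\mathcal I_Z^2(d))$. Since $\mathcal N_{g,\delta}$ is integral of codimension $\delta$, it is smooth at a general $\Gamma$ and, as recalled above, $T_\Gamma\mathcal N_{g,\delta}=H^0(\mathcal I_Z(d))/\langle F\rangle$, while $T_Z Hilb_{\delta}(\mathbf P^2)=H^0(N_{Z/\mathbf P^2})$ has dimension $2\delta$. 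I would identify $dh_{g,\delta}$ with the map on global sections induced by the conormal sequence
$$0\to\mathcal I_Z^2(d)\to\mathcal I_Z(d)\xrightarrow{\ \alpha\ }(\mathcal I_Z/\mathcal I_Z^2)(d)\to 0.$$
The geometric input is local: deforming $F$ to $F+tG$ with $G\in H^0(\mathcal I_Z(d))$, the node at a point $p\in Z$ moves, to first order, by $-\mathrm{Hess}_p(F)^{-1}\nabla_p(G)$; since $\mathrm{Hess}_p(F)$ is invertible at an ordinary node, this exhibits $dh_{g,\delta}$ as $\alpha$ followed by the fibrewise isomorphisms $(\mathcal I_Z/\mathcal I_Z^2)|_p\cong T_p\mathbf P^2$. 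In particular $\alpha(F)=0$, the map descends to the quotient by $\langle F\rangle$, and
$$\rank dh_{g,\delta}=\rank\alpha=h^0(\mathcal I_Z(d))-h^0(\mathcal I_Z^2(d)).$$

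Next comes the dimension bookkeeping. The hypothesis $\dim\vert\mathcal O_{\mathbf P^2}(d)\vert-3\delta\ge 0$ is precisely the inequality $\dim\mathcal N_{g,\delta}=\binom{d+2}{2}-1-\delta\ge 2\delta=\dim Hilb_{\delta}(\mathbf P^2)$, so the source has dimension at least that of the target and ``maximal rank'' means that $\alpha$ is surjective, i.e. $\rank dh_{g,\delta}=2\delta$. Equivalently, by generic smoothness in characteristic zero, maximal rank is the same as dominance of $h_{g,\delta}$. I would therefore first establish dominance when $(g,\delta)\ne(1,9)$: by the Theorem of Arbarello--Cornalba and Terracini the hypothesis gives $h^0(\mathcal I_Z^2(d))=\binom{d+2}{2}-3\delta>0$ for general $Z$, so $\vert\mathcal I_Z^2(d)\vert$ is a non-empty system whose general member is, for general $Z$, an integral $\delta$-nodal curve with nodes exactly at $Z$; this member lies in $\mathcal N_{g,\delta}$ and maps to $Z$, and letting $Z$ vary shows that $h_{g,\delta}$ is dominant.

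With dominance in hand, $Z=\Sing\ \Gamma$ is general in $Hilb_{\delta}(\mathbf P^2)$ for general $\Gamma$, so both cohomology groups take their generic values: $\delta$ simple points impose independent conditions (legitimate since $\delta<\binom{d+2}{2}$), giving $h^0(\mathcal I_Z(d))=\binom{d+2}{2}-\delta$, while the Theorem gives $h^0(\mathcal I_Z^2(d))=\binom{d+2}{2}-3\delta$. Subtracting, $\rank dh_{g,\delta}=2\delta$, which is maximal. Finally, for $(g,\delta)=(1,9)$ one has $d=6$ and the exceptional value $\dim\vert\mathcal I_Z^2(6)\vert=1$ of the Halphen pencil, i.e. $h^0(\mathcal I_Z^2(6))=2$ in place of $1$; then $\rank dh_{1,9}=(28-9)-2=17<18=2\delta$, so the rank is not maximal, in agreement with the explicit description of $h_{1,9}$ given above.

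I expect the genuine obstacle to be the dominance step, specifically the verification that, for general $Z$, the general member of $\vert\mathcal I_Z^2(d)\vert$ is integral and acquires only ordinary nodes, and only at the prescribed points. The identification of the tangent map and the numerology are routine, but this genericity of the singularities is the substantive geometric content --- it is exactly what breaks down in the Halphen case --- and it must be extracted from the constructions underlying the cited theorems rather than from the dimension count alone.
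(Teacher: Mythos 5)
Your overall architecture is sound and, in fact, more explicit than the paper's own presentation: the paper gives this corollary no proof at all, having earlier rephrased ``the tangent map of $h_{g,\delta}$ has generically maximal rank'' as the question of whether the fibre $\vert \mathcal I_Z^2(d)\vert$ has the postulated dimension, so that Theorem 2.3 appears to settle it immediately. Your Hessian computation of $dh_{g,\delta}$, the resulting formula $\rank dh_{g,\delta} = h^0(\mathcal I_Z(d)) - h^0(\mathcal I_Z^2(d))$ at $Z = \Sing \ \Gamma$, the observation that under the numerical hypothesis maximal rank is equivalent to dominance of $h_{g,\delta}$, and the rank-$17$ computation in the exceptional case are all correct.

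However, the step you flag at the end is a genuine gap, and it cannot be dismissed as a genericity routine to be ``extracted from the cited theorems'': it is the claim that for general $Z$ the general member of $\vert\mathcal I_Z^2(d)\vert$ is integral and $\delta$-nodal with nodes exactly at $Z$. That no formal argument from the dimension statement can supply this is demonstrated by the excluded case itself. For $(d,\delta)=(6,9)$, nine general double points impose independent conditions on sextics, so $\vert\mathcal I_Z^2(6)\vert$ has the expected dimension $0$ for general $Z$ --- the paper's own discussion of $\mathcal N_{1,9}$ says exactly this: the unique member is the double of the cubic through $Z$. Being non-reduced, it is not in $\mathcal N_{1,9}$, so $h_{1,9}$ maps $\mathcal N_{1,9}$ into the proper Halphen divisor and maximal rank fails, even though the postulation holds for general $Z$. (Accordingly, the jump $h^0(\mathcal I_Z^2(6))=2$ that you quote occurs only for $Z$ in that divisor, i.e.\ for $Z = \Sing \ \Gamma$, not for $Z$ general in $Hilb_9(\mathbf P^2)$; your rank computation is right, but its input is the Halphen-divisor value, not a ``general $Z$'' value.) Thus the exceptional case of the corollary is generated precisely by the failure of your unproved claim, which is therefore the real content of the statement: it is Severi's Anhang F assertion (cf.\ \cite{S1}) on the existence of integral nodal curves with nodes at general points, proved by degeneration arguments in \cite{AC3}, and not a consequence of the dimension count. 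As written, your argument is a correct reduction of the corollary to that existence theorem; to be a proof, that theorem must either be proved or be invoked as a cited result separate from Theorem 2.3.
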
 \par 
 It follows immediately that:
\begin{proposition} The next conditions are equivalent if $(g, \delta) \neq (1,9)$:
\begin{enumerate}
\item $h_{g,\delta}: \mathcal N_{g, \delta} \to Hilb_{\delta}(\mathbf P^2)$ is dominant,
\item $ \ \dim \ \vert \mathcal O_{\mathbf P^2}(d) \vert - 3 \delta \geq 0$.
\end{enumerate}
\end{proposition}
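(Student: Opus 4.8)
The plan is to derive the equivalence directly from the Corollary, which already establishes that the tangent map of $h_{g,\delta}$ is generically of maximal rank (away from the exceptional pair $(1,9)$) precisely when $\dim \vert \mathcal O_{\mathbf P^2}(d) \vert - 3\delta \geq 0$. The strategy is to translate the statement about dominance into a statement about the generic rank of the tangent map, and then invoke the Corollary together with the dimension count supplied by Theorem on the irreducibility of $\mathcal N_{g,\delta}$.

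First I would recall that $\mathcal N_{g,\delta}$ is integral of codimension $\delta$ in $\vert \mathcal O_{\mathbf P^2}(d)\vert$ by the cited Theorem, so its dimension is $\dim \vert \mathcal O_{\mathbf P^2}(d)\vert - \delta$, while $Hilb_{\delta}(\mathbf P^2)$ is smooth irreducible of dimension $2\delta$. A dominant morphism of irreducible varieties has generically surjective tangent map, and conversely a morphism whose tangent map is generically surjective is dominant; so the content to extract is that, for $(g,\delta) \neq (1,9)$, the map $h_{g,\delta}$ is dominant if and only if its differential attains the rank $2\delta = \dim Hilb_{\delta}(\mathbf P^2)$ generically. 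This is exactly the notion of ``generically maximal rank'' appearing in the Corollary once one checks that the target dimension does not exceed the source dimension.

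Next I would establish the implication $(2) \Rightarrow (1)$: assuming $\dim \vert \mathcal O_{\mathbf P^2}(d)\vert - 3\delta \geq 0$ and $(g,\delta)\neq(1,9)$, the Corollary gives that the tangent map of $h_{g,\delta}$ is generically of maximal rank; since in this regime the source dimension $\dim \vert \mathcal O_{\mathbf P^2}(d)\vert - \delta$ is at least $2\delta = \dim Hilb_{\delta}(\mathbf P^2)$, maximal rank means the differential is generically surjective, whence $h_{g,\delta}$ is dominant. For the reverse implication $(1) \Rightarrow (2)$, I would argue by contraposition: if $\dim \vert \mathcal O_{\mathbf P^2}(d)\vert - 3\delta < 0$, then by the Theorem on $\vert \mathcal I_Z^2(d)\vert$ a general $Z$ carries $\vert \mathcal I_Z^2(d)\vert = \emptyset$, so no $\delta$-nodal curve has its singular scheme at such a $Z$, forcing $h_{g,\delta}$ to miss a dense open subset of $Hilb_{\delta}(\mathbf P^2)$ and hence to fail dominance.

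The main obstacle I expect is the bookkeeping in the forward direction $(2)\Rightarrow(1)$: one must confirm that ``maximal rank'' of the tangent map genuinely translates into surjectivity onto the tangent space of $Hilb_{\delta}(\mathbf P^2)$ rather than merely injectivity, and this hinges on verifying the dimension inequality $\dim \vert \mathcal O_{\mathbf P^2}(d)\vert - \delta \geq 2\delta$, i.e.\ $\dim \vert \mathcal O_{\mathbf P^2}(d)\vert \geq 3\delta$, which is precisely hypothesis (2). Once this numerical check is in place the equivalence is a formal consequence of the Corollary and the irreducibility Theorem, so the result follows ``immediately'' as the statement asserts.
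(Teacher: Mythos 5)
Your proposal is correct and is essentially the paper's own (implicit) argument: the paper states the proposition as following ``immediately'' from the preceding Corollary and the postulation Theorem, and your write-up is exactly that deduction made explicit --- the Corollary plus the dimension comparison $\dim \mathcal N_{g,\delta} = \dim \vert \mathcal O_{\mathbf P^2}(d)\vert - \delta \geq 2\delta = \dim Hilb_{\delta}(\mathbf P^2)$ (via Harris's integrality theorem) for $(2)\Rightarrow(1)$, and the emptiness of $\vert \mathcal I_Z^2(d)\vert$ for general $Z$ when $\dim \vert \mathcal O_{\mathbf P^2}(d)\vert - 3\delta < 0$ for the contrapositive of $(1)\Rightarrow(2)$. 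Your care in checking that ``generically maximal rank'' means generic surjectivity of the differential precisely under hypothesis (2), and that the $(6,9)$ exception of the postulation Theorem cannot occur in the contrapositive regime, correctly supplies the details the paper leaves unwritten.
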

 \begin{corollary}  Assume $ \dim \ \vert \mathcal O_{\mathbf P^2}(d) \vert - 3\delta \geq 0$ and $(d, \delta) \neq (1,9)$. Then the Severi variety
 $\mathcal N_{g, \delta}$ is rational.
\end{corollary}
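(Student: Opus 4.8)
The plan is to realize $\mathcal N_{g, \delta}$ as a dense open subset of the projective bundle $\mathbb P_{g, \delta}$ and to transfer rationality from the base. First I would invoke the preceding Proposition: since $\dim \vert \mathcal O_{\mathbf P^2}(d) \vert - 3\delta \geq 0$ and $(g,\delta) \neq (1,9)$, the natural map $h_{g, \delta}: \mathcal N_{g, \delta} \to Hilb_{\delta}(\mathbf P^2)$ is dominant. Set $N := \dim \vert \mathcal O_{\mathbf P^2}(d) \vert - 3\delta \geq 0$, the expected fibre dimension, and let $U \subseteq Hilb_{\delta}(\mathbf P^2)$ be the open locus of those $Z$ for which the fibre $\vert \mathcal I_Z^2(d) \vert$ has the minimal dimension $N$. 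By the theorem of Arbarello and Cornalba (equivalently Terracini), and precisely because the excluded case $(g,\delta) = (1,9)$ does not occur, this locus $U$ is nonempty, hence dense.

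Over $U$ the direct image $\mathcal E := p_{2*}\bigl(p_1^* \mathcal O_{\mathbf P^2}(d) \otimes \mathcal I_{\mathcal Z}^2\bigr)\vert_U$ is locally free of rank $N+1$ by cohomology and base change, the fibre dimension being constant. Consequently $\overline h_{g, \delta}^{-1}(U) = \mathbf P(\mathcal E)$ is a Zariski-locally trivial $\mathbf P^N$-bundle over $U$, so it is birational to $U \times \mathbf P^N$. Since $Hilb_{\delta}(\mathbf P^2)$ is rational, its dense open subset $U$ is rational, whence $U \times \mathbf P^N$, and therefore $\mathbf P(\mathcal E)$, are rational.

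It remains to check that $\mathcal N_{g, \delta}$ is dense in this bundle. Using $\dim Hilb_{\delta}(\mathbf P^2) = 2\delta$ and the generic fibre dimension $N$, one computes $\dim \mathbf P(\mathcal E) = 2\delta + N = \dim \vert \mathcal O_{\mathbf P^2}(d) \vert - \delta$. By Harris's theorem $\mathcal N_{g, \delta}$ is integral of codimension $\delta$, hence of exactly this dimension. As $h_{g, \delta}$ is dominant, a general $\Gamma \in \mathcal N_{g, \delta}$ has $\Sing \ \Gamma \in U$, so $\mathcal N_{g, \delta} \cap \overline h_{g, \delta}^{-1}(U)$ is dense in $\mathcal N_{g, \delta}$; being an open subset of the irreducible variety $\mathbf P(\mathcal E)$ of full dimension, it is dense there as well. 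Thus $\mathcal N_{g, \delta}$ is birational to $\mathbf P(\mathcal E)$, and therefore rational.

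The substance of the argument is carried entirely by the two quoted theorems, so the main point to handle with care is the bookkeeping that glues them: Harris's irreducibility theorem is what identifies $\mathcal N_{g, \delta}$ with a \emph{full-dimensional} open subset of the projective bundle, so that it inherits the bundle's rationality, while the Arbarello--Cornalba theorem is what guarantees the expected fibre dimension over a dense open base. The delicate step is exactly the verification that the general member of $\vert \mathcal I_Z^2(d) \vert$ for general $Z \in U$ is genuinely integral and $\delta$-nodal with singular scheme equal to $Z$ — this is what legitimizes the bundle picture as a model of $\mathcal N_{g, \delta}$, and it is precisely what breaks down in the Halphen case $(g,\delta) = (1,9)$, explaining why that value must be excluded.
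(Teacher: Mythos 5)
Your proof is correct and follows essentially the same route as the paper's: use the preceding proposition to get that $h_{g,\delta}$ is dominant, identify $\mathcal N_{g,\delta}$ birationally with a projective bundle over a dense open subset of $Hilb_{\delta}(\mathbf P^2)$, and transfer rationality from the Hilbert scheme. The paper's three-line proof leaves implicit the points you spell out (local freeness by base change, and the density/dimension bookkeeping via Harris's theorem), but the substance is identical.
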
 
\begin{proof} The assumption implies that $h_{g, \delta}$ is dominant. On the other hand $\mathcal N_{g,\delta}$ is birational to a 
projective bundle over $h_{g, \delta}(\mathcal N_{g,\delta})$. Since $Hilb_{\delta}(\mathbf P^2)$ is rational, the statement follows. \end{proof} \par 
Relying on this basis, see also \cite{AS} for further precisions, Severi's unirationality result follows:
\begin{theorem}[Severi] $\mathcal M_g$ is unirational for $g \leq 10$. \end{theorem}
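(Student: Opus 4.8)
The plan is to exhibit, for each $g$ with $2\le g\le 10$, a single Severi variety $\mathcal N_{g,\delta}$ that is rational and dominates $\mathcal M_g$, and then to invoke the principle recalled above. Fix such a $g$, let $d$ be the least degree with $\rho(g,2,d)\ge 0$, i.e. with $\frac{3}{2}d-3\ge g$, and put
$$
\delta=\binom{d-1}{2}-g,
$$
the node number imposed on a plane model of degree $d$ and geometric genus $g$ by the genus formula. For this choice both natural maps $m_{g,\delta}$ and $h_{g,\delta}$ of $\mathcal N_{g,\delta}$ are defined, and it suffices to show that each of them is dominant.

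First I would check that $m_{g,\delta}$ is dominant. Since $\rho(g,2,d)\ge 0$, a curve $C$ with general moduli carries a $g^2_d$; because $r=2$ this series maps $C$ birationally onto a plane curve $\Gamma$ of degree $d$, and $\Gamma$ is nodal with exactly $\delta=\binom{d-1}{2}-g$ nodes by the genus formula. Hence the general point of $\mathcal M_g$ lies in the image of $m_{g,\delta}$. Next I would deduce that $\mathcal N_{g,\delta}$ is rational. By Harris's irreducibility theorem $\mathcal N_{g,\delta}$ is integral, so its closure is a single $V$ to which the earlier analysis applies. Granting the postulation inequality $\dim\vert\mathcal O_{\mathbf P^2}(d)\vert-3\delta\ge 0$ together with $(g,\delta)\neq(1,9)$, Proposition 2.5 makes $h_{g,\delta}$ dominant, and by Corollary 2.6 the variety $\mathcal N_{g,\delta}$, being birational to a projective bundle over the rational base $Hilb_{\delta}(\mathbf P^2)$, is itself rational. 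With both $h_{g,\delta}$ and $m_{g,\delta}$ dominant on the irreducible $\mathcal N_{g,\delta}$, the principle recalled above yields that the rational variety $\mathcal N_{g,\delta}$ dominates $\mathcal M_g$, so $\mathcal M_g$ is unirational.

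The step I expect to be the real obstacle is the postulation verification, for it is precisely there that the bound $g\le 10$ emerges. Eliminating $\delta$ between $\frac{3}{2}d-3\ge g$ and $\binom{d+2}{2}-3\delta>0$ through the genus formula forces $d\le 9$ and $g\le 10$ (Proposition 2.1); conversely one must confirm, for each $g$ in this range with the minimal $d$ above, that the Brill--Noether inequality and the postulation inequality hold \emph{simultaneously}. This is a finite case check: for instance $g=10$ gives $d=9$, $\delta=18$ and $\dim\vert\mathcal O_{\mathbf P^2}(9)\vert-3\delta=54-54=0$, the extremal case in which $h_{g,\delta}$ is still dominant. The only genuine pitfall is the exceptional pair $(g,\delta)=(1,9)$ of the postulation theorem, where $h_{g,\delta}$ fails to be dominant even though the count is zero; but this has $g=1$ and so lies outside our range $g\ge 2$, and the argument closes for every $2\le g\le 10$.
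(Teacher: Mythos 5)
Your proposal is correct and follows essentially the same route as the paper: dominance of both natural maps of the Severi variety $\mathcal N_{g,\delta}$, rationality of $\mathcal N_{g,\delta}$ via its projective-bundle structure over the rational base $Hilb_{\delta}(\mathbf P^2)$ (the paper's Proposition 2.5 and Corollary 2.6), and the conclusion that the rational variety $\mathcal N_{g,\delta}$ dominates $\mathcal M_g$. The only cosmetic difference is that you take the minimal degree $d$ for each genus (e.g. $d=7$ for $g=7$, $d=8$ for $g=9$), whereas the paper fixes $d=6$ for $g\le 6$, $d=8$ for $7\le g\le 8$ and $d=9$ for $9\le g\le 10$; both choices satisfy the Brill--Noether and postulation inequalities, so the finite case check closes either way.
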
 
\begin{proof} By proposition 2.1 both $h_{g, \delta}$ and $m_{g, \delta}$ are dominant at least if: \begin{itemize} \it  \item[$\circ$] $d = 6$ and $g \leq 6$, \item[$\circ$] $d = 8$ and $7 \leq g \leq 8$,
\item[$\circ$] $9 \leq g \leq 10$ and $d = 9$. \end{itemize} In this range $\mathcal M_g$ is dominated by $\mathcal N_{g, \delta}$, which is rational  by corollary 2.6. \end{proof}
\begin{remark} \rm The same proof applies to the universal Brill-Noether loci $\mathcal W^2_{d,g}$. Hence they are unirational for all the values of
$(d,g)$ satisfying the conditions considered in proposition 2.1. This means that $\mathcal W^2_{d, g}$ is unirational in the following range $g \leq \frac 32d - 3 < 11$. 
More in general the unirationality of $\mathcal W^2_{d,g}$ holds true for $g \leq 9$ and $d \geq g + 2$. This follows because then $\mathcal W^2_{d,g} \cong  \Pic_{d,g}$
and the latter space is unirational for $g \leq 9$, \cite{Ve1}.  On the other hand the bound $g \leq 9$ is sharp for the unirationality of $\mathcal W^2_{d,g}$,
$d \geq g + 2$, because $ \Pic_{d,g}$ has non negative Kodaira  dimension if $g \geq 10$ and $(d, 2g-2) = 1$,  \cite{BFV}.  It seems plausible that the unirationality of
$\mathcal W^2_{d,g}$ always holds true for $g \leq 9$.  \end{remark}

 It is the aim of this exposition to emphasize down to earth constructions and examples.  Therefore we present some concrete rational parametrizations of $\mathcal M_g$,
$g \leq 10$, lying behind the previous theorem. We use families of  linear systems $  \vert \mathcal I^2_Z(d) \vert $ of nodal curves  of genus $g$ and \it minimal  degree \rm $d$ such that $d \geq$ $\frac 23g + 2$. Here $Z$ is a set of $\delta$  points  in general position and  $\mathcal I_Z$ is its ideal sheaf. The set $Z$ moves appropriately along a subvariety of $Hilb_{\delta}(\mathbf P^2)$. We recall that a $0$-dimensional subscheme $Z \subset \mathbf P^2$ is in \it general position \rm if  the restriction map $H^0(\mathcal O_{\mathbf P^2}(n)) \to H^0(\mathcal O_Z(n))$ has maximal rank for each $n$.
\begin{example} \rm \ $g \leq 6$: \ in this case we can use a \it unique \rm linear system $\vert \mathcal I^2_Z(d) \vert$ to parametrize $\mathcal M_g$. 
We omit the standard proof of the next statement.
\begin{proposition} Let $\delta \geq 4$ and let $Z \subset \mathbf P^2$ be a fixed set of $\delta$ points in general position.  Then, for $g = 10 - \delta$, $\mathcal M_g$ is dominated by the natural map
$$
m_g: \vert \mathcal I_Z^2(6) \vert \to \mathcal M_g.
$$ 
\end{proposition}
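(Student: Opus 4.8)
The plan is to turn the dominance of $m_g$ into a statement about the fibres of a map between two varieties of the same dimension, and then into a cohomological non-degeneracy. Since $\mathcal M_g$ is a moduli space of curves we have $g\ge 2$, so $4\le\delta\le 8$ and in particular $(d,\delta)=(6,\delta)\ne(6,9)$; the dimension theorem of Arbarello--Cornalba and Terracini recalled above then gives
$$
\dim\,\vert\mathcal I_Z^2(6)\vert \;=\; \dim\,\vert\mathcal O_{\mathbf P^2}(6)\vert-3\delta \;=\; 27-3\delta \;=\; 3(10-\delta)-3 \;=\; 3g-3 \;=\; \dim\mathcal M_g .
$$
Because $Z$ is in general position, the general $\Gamma\in\vert\mathcal I_Z^2(6)\vert$ is integral and $\delta$-nodal with $\Sing\,\Gamma=Z$ exactly, so $m_g$ is a well defined rational map between irreducible varieties of the common dimension $3g-3$. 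Hence $m_g$ is dominant if and only if it is generically finite, that is, if and only if its differential is injective at a general member $\Gamma$.

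Next I would compute this differential. Let $\nu\colon C\to\Gamma\subset\mathbf P^2$ be the normalization; it is an immersion, with normal sheaf $N_\nu$ a line bundle with $\deg N_\nu=2g+16$, and $L:=\nu^*\mathcal O_{\mathbf P^2}(1)$ is the $g^2_6$ cutting out $\Gamma$. Writing $R=\sum_{i=1}^{\delta}(p_i+q_i)$ for the divisor of the $2\delta$ preimages of the nodes, keeping all nodes pinned at $Z$ to first order is exactly the vanishing of the normal component along $R$, so
$$
T_\Gamma\,\vert\mathcal I_Z^2(6)\vert \;\cong\; H^0\big(C,\,N_\nu(-R)\big), \qquad \deg N_\nu(-R)=4g-4 ,
$$
which has Euler characteristic $3g-3$ (and $h^1=0$, as $4g-4>2g-2$), reconfirming the dimension count. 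The differential $dm_g$ is the restriction to this subspace of the connecting map $H^0(N_\nu)\to H^1(C,T_C)=T_{[C]}\mathcal M_g$ of $0\to T_C\to\nu^*T_{\mathbf P^2}\to N_\nu\to 0$. Its kernel is the space of infinitesimal reparametrizations of the plane model that fix every node, i.e. those sections of $\nu^*T_{\mathbf P^2}$ whose image in $N_\nu$ lies in $N_\nu(-R)$; via the pulled-back Euler sequence $0\to\mathcal O_C\to L^{\oplus 3}\to\nu^*T_{\mathbf P^2}\to 0$ these split into the infinitesimal $\PGL_3$-directions and the variations of $L$ inside $W^2_6(C)$.

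The heart of the matter, and the step I expect to be hardest, is to show this kernel vanishes for $Z$ and $C$ general. The $\PGL_3$-part is harmless: four general points already form a projective frame, so for $\delta\ge4$ the Lie algebra $\operatorname{Lie}\big(\mathrm{Stab}_{\PGL_3}(Z)\big)$ is zero and contributes no infinitesimal direction. What is genuinely delicate is the claim that no nonzero tangent vector to $W^2_6(C)$ is equisingular, i.e. that deforming the linear series $\vert L\vert$ must move at least one node. This is equivalent to the generic finiteness of the node map $W^2_6(C)\dashrightarrow\Hilb_\delta(\mathbf P^2)/\PGL_3$ between spaces of the equal dimension $\rho(g,2,6)=2\delta-8$, and it is precisely what makes the fibres of the combined map $(h_{g,\delta},m_{g,\delta})$ finite. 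I would prove it by the vanishing computation implicit above, namely that the injection $H^0(N_\nu(-R))\hookrightarrow H^0(N_\nu)$ meets the image of $H^0(\nu^*T_{\mathbf P^2})$ only in $0$, using the generality of $C$ to control the relevant $H^1$. The base case $\delta=4$ (genus $6$) is unconditional and guides the rest: there $\rho=0$, so $C$ has only finitely many $g^2_6$, each giving a $4$-nodal sextic whose four general nodes can be carried onto $Z$ by finitely many elements of $\PGL_3$, whence dominance and finiteness are immediate. For $5\le\delta\le8$ the same conclusion follows once the moving $g^2_6$ is shown to sweep a complementary $(2\delta-8)$-dimensional family of node configurations, which is the single transversality input that the general-position hypothesis on $Z$ is there to supply.
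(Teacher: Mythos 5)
The paper gives no argument for this statement --- it says explicitly ``We omit the standard proof'' --- so your proposal must be measured against the standard route that the surrounding results are designed to feed: Harris's irreducibility of $\mathcal N_{g,\delta}$, the Arbarello--Cornalba/Terracini dimension theorem, and the dominance of both natural maps $h_{g,\delta}$ and $m_{g,\delta}$, from which one extracts that a \emph{general fibre} of $h_{g,\delta}$ still dominates $\mathcal M_g$. Your framework is precisely the infinitesimal form of that argument: the equality $\dim \vert \mathcal I_Z^2(6)\vert = 27-3\delta = 3g-3 = \dim \mathcal M_g$, the identification $T_\Gamma \vert \mathcal I_Z^2(6)\vert \cong H^0(N_\nu(-R))$ with $\deg N_\nu(-R) = 4g-4$, the description of $dm_g$ as the restriction of the coboundary $H^0(N_\nu) \to H^1(T_C)$, and the reduction to generic finiteness of the node map $W^2_6(C) \dashrightarrow Hilb_\delta(\mathbf P^2)/\PGL(3)$ between spaces of dimension $\rho = 2\delta-8$. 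All of these numerics and identifications are correct, and your case $\delta = 4$ (genus $6$, where $\rho = 0$) is complete as stated.

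The gap is in the cases $5 \leq \delta \leq 8$: the transversality you yourself call ``the heart of the matter'' is never proved, and neither of the two things you appeal to can prove it. First, there is no ``vanishing computation implicit above'': what precedes are degree and Euler-characteristic counts. Indeed, if $T' \subset \nu^* T_{\mathbf P^2}$ denotes the preimage of $N_\nu(-R) \subset N_\nu$, then $\Ker(dm_g) \cong H^0(T')$ and $\chi(T') = \chi(\nu^*T_{\mathbf P^2}) - 2\delta = (20-2g)-2(10-g) = 0$, so cohomology alone gives only $h^0(T') = h^1(T')$ and forces neither to vanish; one needs a genuine generality argument --- e.g.\ exhibiting a single configuration with $h^0(T')=0$ and then invoking semicontinuity together with the irreducibility of $\mathcal N_{g,\delta}$ --- which is exactly the content supplied by Severi and by \cite{AS}, \cite{AC1}, and is absent from your proposal. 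Second, and more seriously, it is a category error to expect ``the general-position hypothesis on $Z$'' to supply this input. The needed statement --- that for a \emph{general curve} $C$ the node map $W^2_6(C) \dashrightarrow Hilb_\delta(\mathbf P^2)/\PGL(3)$ is generically finite --- is a property of general curves and their plane models (equivalently, of the Severi variety), and involves no choice of $Z$ whatsoever. If it failed for general $C$, the image of the combined map $(h_{g,\delta}, m_{g,\delta})$ would be a proper subvariety of $Hilb_\delta(\mathbf P^2) \times \mathcal M_g$, and $m_g$ would fail to dominate for \emph{every} fixed $Z$, in general position or not; the general position of $Z$ only guarantees that $\dim \vert \mathcal I_Z^2(6)\vert$ has the expected value and that its general member is integral and nodal exactly along $Z$. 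So your argument is a correct reduction plus a correct base case, but for $g \leq 5$ the decisive step stands exactly as open at the end of your proof as at its beginning.
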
 \par 
Let us describe the structure of $m_g$ for $g$ equal to 6 or 5. \medskip \par  
\underline {\it Genus six} \ \  The equation of $\vert \mathcal I_Z^2(6) \vert$ can be fixed as
$$
z_1(l_2l_3l_4)^2+ z_2(l_1l_3l_4)^2 + z_3(l_1l_2l_4)^2 + z_4(l_1l_2l_3)^2 + al_1l_2 + bl_3l_4 + cl_1l_2l_3l_4 = 0
$$
where  $(l_1:l_2:l_3)$ are coordinates on $\mathbf P^2$, $l_4 = l_1 + l_2 + l_3$ and the forms $a, b, c$ are as follows:  $a \in \mathbf C[l_1^2,l_2^2]$, $b \in \mathbf C[l_3^2, l_4^2]$, $c \in \mathbf C[l_1, l_2, l_3]$.  The base locus of $\vert \mathcal I^2_Z(6) \vert$ is the set
$B = \lbrace l_1l_2 = l_3l_4 = 0 \rbrace$.  The map $m_6$ factors as
$$
\begin{CD}
{\vert \mathcal I_Z^2(6) \vert} @>{\tilde m_6}>> {\mathcal W^2_{6,6}} @>f>>~{\mathcal M_6} \\
\end{CD}
$$
It is well known that $deg \ f = 5$, see for instance \cite{ACGH} p.209. The branch divisor of $f$ is just the Petri divisor in $\mathcal M_6$, parametrizing curves $C$ endowed with an $L \in W^1_4(C)$
such that $L^{\otimes 2}$ is special. \par It turns out that the symmetric group $S_5$ acts on $\vert \mathcal I_Z^2(6) \vert$ as the group of Cremona transformations generated by linear transformations fixing $Z$ and quadratic transformations centered at three of the four points of $Z$.  
Then it follows that the degree of $m_6$ is 120, cfr. \cite{SB} Corollary 5. \par The construction is related to Shepherd-Barron's  proof of the rationality of $\mathcal M_6$, \cite{SB}. It will be revisited later.   \medskip \par 
\underline {\it Genus five} \ \  \rm  Consider the analogous factorization
$$
\begin{CD}
{\vert \mathcal I_Z^2(6) \vert} @>{\tilde m_5}>> {\mathcal W^2_{6,5}} @>f>>~{\mathcal M_5} \\
\end{CD}
$$
In this case $\tilde m_5$ is generically injective, because the only linear automorphism fixing $Z$ is the identity. Let $S \subset \mathbf P^4$ be the anticanonical embedding of the blowing of $\mathbf P^2$ along $Z$. As is well known the strict transform of an element $\Gamma$ of $  \vert \mathcal I^2_Z(6) \vert$ is the canonical model $C$ of $\Gamma$. $S$ is the smooth base locus of a pencil $P$  of quadrics. Moreover $C$ is the base locus of a net $N$ of quadrics, in particular $P$ is a line in $N$. It is easy to conclude that $deg \ f \circ \tilde m_5$ is the number of pencils $P' \subset N$ projectively equivalent to $P$. Notice also that $P$ and $P'$ are projectively equivalent  if and only if the same is true for the sets of 5 points in
$\mathbf P^1$: $P \cap \Delta$ and $P' \cap \Delta$, where $\Delta$ is the discriminant quintic curve of the net $N$.  \medskip \par 
We will return to the following observation: \it we have seen that a general curve of  genus $g \leq 6$ appears in a fixed linear system  on a fixed surface $S$. \rm
\end{example}
\begin{example} $7 \leq g \leq 9$: \  $ \vert \mathcal I^2_Z(d) \vert$ cannot be fixed and $ \dim \vert \mathcal I_Z(d) \vert > 0$.
   \ \rm \medskip \par  
\underline {\it Genus seven} We have $d = 7$ for the minimal degree. A general  $\Gamma \in \vert \mathcal I^2_Z(7) \vert$ is  a nodal  septic with  8 nodes. Still the normalization $C$ of $\Gamma$ embeds in a Del Pezzo surface
$S$, which is defined by the blow up $\sigma: S \to \mathbf P^2$ of $Z$. The strict transform of $\Gamma$ is $C$. Note that $C^2 = 17$ and $h^0(\mathcal O_C(C)) = 11$. Hence we obtain $ \dim \ \vert C \vert = 11 =  \dim \vert \mathcal I^2_Z(7) \vert$ from the standard exact sequence
$$
0 \to \mathcal O_S \to \mathcal O_S(C) \to \mathcal O_C(C) \to 0.
$$ 
$\vert C \vert$ cannot dominate $\mathcal M_7$.  On the other hand $\mathcal N_{7,7}$ is birational to a
$\mathbf P^{11}$-bundle $\mathbb P$ over $U = h_{7,7}(\mathcal N_{7,7})$ of fibre $ \vert \mathcal I_Z^2(7) \vert$ at $Z$. Notice that
$$
\mathcal N_{7,7} / PGL(3) \cong \mathbb P / PGL(3) \cong \mathcal W^2_{7,7}.
$$
Let $\mathcal P_8 \cong U / PGL(3)$ be the moduli of 8 general points in $\mathbf P^2$. With more effort one can show that $\mathbb P$ descends to a $\mathbf P^{11}$- bundle on $\mathcal P_8$. So it follows
\begin{proposition}~$\mathcal W^2_{7,7}   \cong \mathcal P_8 \times \mathbf P^{11}$. \end{proposition}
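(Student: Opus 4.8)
The plan is to realise $\mathcal W^2_{7,7}$ as the $\PGL(3)$-quotient of the $\mathbf P^{11}$-bundle $\mathbb P=\mathbf P(\mathcal E)$, where $\mathcal E:=p_{2*}(p_1^*\mathcal O_{\mathbf P^2}(7)\otimes \mathcal I_{\mathcal Z}^2)$ is the rank $12$ vector bundle on $U=h_{7,7}(\mathcal N_{7,7})$ with fibre $H^0(\mathcal I_Z^2(7))$ over $Z$, and then to show this quotient is birational to $(U/\PGL(3))\times\mathbf P^{11}=\mathcal P_8\times\mathbf P^{11}$. Since the identifications $\mathbb P/\PGL(3)\cong\mathcal W^2_{7,7}$ and $U/\PGL(3)\cong\mathcal P_8$ are already in place, the entire content is the descent of this projective bundle to $\mathcal P_8$. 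The tool is the no-name lemma: if a connected linear algebraic group $G$ acts generically freely on $X$ and $\mathcal V$ is a $G$-linearized vector bundle on $X$, then $\mathbf P(\mathcal V)/G$ is birational to $(X/G)\times\mathbf P^{\rank\mathcal V-1}$. Everything reduces to two points: that $\PGL(3)$ acts generically freely on $U$, and that $\mathbb P$ is the projectivization of a genuinely $\PGL(3)$-linearized bundle.

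First I would check generic freeness. A general $Z\in U$ is a set of $8$ points in general position, and the only projective transformation carrying $Z$ to itself is the identity: any such transformation permutes $Z$, and already the subgroup of $\PGL(3)$ stabilizing a general unordered $4$-point subset is finite, while a nontrivial element of that finite group has a proper fixed locus and so cannot also preserve the remaining four general points. Hence the stabilizer of a general $Z$ is trivial, $U\to U/\PGL(3)=\mathcal P_8$ is a $\PGL(3)$-torsor over a dense open set, and the no-name lemma is applicable once its linearization hypothesis is met.

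The main obstacle is precisely this linearization hypothesis, and it is the reason for the ``more effort''. The bundle $\mathcal E$ is naturally $SL(3)$-equivariant, but the centre $\mu_3\subset SL(3)$ acts on each fibre $H^0(\mathcal I_Z^2(7))\subset H^0(\mathcal O(7))=\Sim^7(\mathbf C^3)^\vee$ by the scalar $\zeta^{-7}=\zeta^{-1}$; since $\PGL(3)$ has no nontrivial characters, $\mathcal E$ is not $\PGL(3)$-linearizable. This scalar is of course invisible on $\mathbb P=\mathbf P(\mathcal E)$, which does carry a genuine $\PGL(3)$-action, but to invoke the no-name lemma I must exhibit $\mathbb P$ as $\mathbf P$ of a $\PGL(3)$-linearized bundle. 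The fix is to twist by a line bundle of complementary central weight. Taking $\mathcal M:=\det p_{2*}(\mathcal O_{\mathcal Z}\otimes p_1^*\mathcal O_{\mathbf P^2}(1))$, whose fibre is $\det\bigl(\bigoplus_{p\in Z}\mathcal O(1)|_p\bigr)$, the centre acts by $(\zeta^{-1})^8=\zeta$, so that $\mathcal E\otimes\mathcal M$ is genuinely $\PGL(3)$-linearized while $\mathbf P(\mathcal E\otimes\mathcal M)=\mathbf P(\mathcal E)=\mathbb P$.

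With this twist in hand the argument closes: applying the no-name lemma to the $\PGL(3)$-linearized rank $12$ bundle $\mathcal E\otimes\mathcal M$ gives
$$\mathcal W^2_{7,7}\cong\mathbb P/\PGL(3)=\mathbf P(\mathcal E\otimes\mathcal M)/\PGL(3)\ \sim\ (U/\PGL(3))\times\mathbf P^{11}=\mathcal P_8\times\mathbf P^{11},$$
which is the assertion. I expect the delicate step to be the central-weight bookkeeping needed to pass from the $SL(3)$-linearization to a $\PGL(3)$-linearization; once that is settled, the generic freeness and the invocation of the no-name lemma are routine.
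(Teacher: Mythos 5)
Your proposal is correct and follows exactly the route the paper takes: identify $\mathcal W^2_{7,7}$ with $\mathbb P/\PGL(3)$ for the $\mathbf P^{11}$-bundle $\mathbb P \to U$, and descend that bundle to $\mathcal P_8 = U/\PGL(3)$. The paper compresses the descent into the phrase ``with more effort one can show that $\mathbb P$ descends to a $\mathbf P^{11}$-bundle on $\mathcal P_8$''; your central-character bookkeeping (twisting $\mathcal E$ by $\det p_{2*}(\mathcal O_{\mathcal Z}\otimes p_1^*\mathcal O_{\mathbf P^2}(1))$ to kill the $\mu_3$-weight) together with generic freeness and the no-name lemma is precisely that omitted effort, carried out correctly.
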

\par  
 For $g =  8, 9$ the situation is similar, we omit further  details.
\end{example}
\begin{example} $g = 10$: $\vert \mathcal I^2_Z(d) \vert$ is $0$-dimensional. \ \rm \medskip \par  
In genus 10 the minimal degree is $d = 9$ and $\delta = 18$. Therefore it follows that $ \ \dim$  $\vert \mathcal I^2_Z(9) \vert = 0$. Hence $\mathcal N_{9,18} \cong Hilb_{18}(\mathbf P^2)$ and  $\mathcal W^2_{9,10}$ is unirational. 
For $d = 10$ the uniruledness of $\mathcal W^2_{d,10}$ can be proved. The proof relies on some arguments to be used frequently, so we outline it. Let $(C,L)$ be a pair defining a general $x \in \mathcal W^2_{10,10}$.  Then the Petri map $ \mu: H^0(L) \otimes H^0(\omega_C(-L) ) \to H^0(\omega_C)$ is injective and induces an embedding
$$ C \subset \mathbf P^2 \times \mathbf P^1$$ such that $\mathcal O_C(1,1) \cong L \otimes \omega_C(-L) \cong \omega_C$. It turns out that $C$ lies in a smooth surface $S \in \vert \mathcal O_{\mathbf P^2 \times \mathbf P^1}(4,3) \vert$:
we omit for brevity the not difficult proof of this fact. Note that $S$ is regular and that $\omega_S \cong \mathcal O_S(1,1)$. Hence we have $\mathcal O_C(C) \cong \mathcal O_C$ by adjunction formula. Furthermore we have $\dim \vert C \vert = 1$, this follows from the regularity of $S$ and the standard exact sequence
$$
0 \to \mathcal O_S \to \mathcal O_S(C) \to \mathcal O_S(C) \to 0.
$$
Let $m: \vert C \vert \to \mathcal W^2_{10,10}$ be the moduli map. If $m$ is constant, a general $D \in \vert C \vert$ is a copy of $C$. Hence there exists a  dominant map
$C \times \vert C \vert \to S$ whose restriction to $C \times \lbrace D \rbrace$ is  the identity map $C \to D$: a contradiction because $S$ is not ruled. Thus  $m(\vert C \vert)$ is a rational curve through $x$. This implies that:
\begin{proposition} $\mathcal W^2_{10,10}$ is uniruled. \end{proposition}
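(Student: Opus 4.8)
The plan is to exhibit, through a \emph{general} point $x = [(C,L)]$ of $\mathcal W^2_{10,10}$, an irreducible rational curve; since a general point gets such a curve, this is exactly uniruledness. The natural candidate is the image $m(\vert C\vert)$ of the pencil $\vert C\vert$ under the moduli map, and the whole argument reduces to showing this image is a genuine rational curve rather than a single point. So after assembling the geometry I would concentrate all the effort on the non-constancy of $m$.

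First I would organize the data already established. Using injectivity of the Petri map for the general $C$, the pair $(\vert L\vert, \vert\omega_C(-L)\vert)$ embeds $C$ in $\mathbf P^2\times\mathbf P^1$ with $\mathcal O_C(1,1)\cong\omega_C$, and $C$ lies on a smooth surface $S\in\vert\mathcal O_{\mathbf P^2\times\mathbf P^1}(4,3)\vert$ with $\omega_S\cong\mathcal O_S(1,1)$. Adjunction on $S$ gives $\mathcal O_C(C)\cong\mathcal O_C$, and regularity of $S$ together with $0\to\mathcal O_S\to\mathcal O_S(C)\to\mathcal O_C(C)\to 0$ yields $\dim\vert C\vert=1$. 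Each $D\in\vert C\vert$ carries the degree-$10$ line bundle $\mathcal O_D(1,0)$ with $h^0\geq 3$, hence defines a point of $\mathcal W^2_{10,10}$; this is the moduli map $m:\vert C\vert\to\mathcal W^2_{10,10}$, with $m([C])=x$. The image is therefore a priori a rational curve through $x$, and it remains only to rule out that $m$ collapses it.

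The crux, and the expected main obstacle, is the non-constancy of $m$. The leverage is that $S$ is not ruled: since $\mathcal O(1,1)$ is ample on $\mathbf P^2\times\mathbf P^1$, its restriction $\omega_S\cong\mathcal O_S(1,1)$ is ample, hence big, so $S$ is of general type and in particular not uniruled. Note also that $\mathcal O_C(C)\cong\mathcal O_C$ forces $C^2=0$, so the pencil $\vert C\vert$ is base-point-free and realizes $S$ as a genus-$10$ fibration over $\vert C\vert\cong\mathbf P^1$. If $m$ were constant, every fibre $D$ would be abstractly isomorphic to $C$, i.e. the fibration would be isotrivial; piecing together the isomorphisms would produce a dominant, generically finite map $C\times\vert C\vert\to S$ restricting to the identity on each $C\times\{D\}$, exhibiting $S$ as the image of a ruled surface and contradicting its non-ruledness. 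Hence $m$ is non-constant, $m(\vert C\vert)$ is a rational curve through the general point $x$, and $\mathcal W^2_{10,10}$ is uniruled.

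The delicate point in making the final step rigorous is to choose the isomorphisms $C\xrightarrow{\sim}D$ \emph{algebraically} in $D$ so as to build the map $C\times\vert C\vert\to S$. For the general curve $\Aut(C)$ is finite, so an isotrivial fibration acquires a product structure after an \'etale base change of the base $\mathbf P^1$; this merely replaces $C\times\mathbf P^1$ by $C\times(\text{smooth rational curve})$, which is still ruled, and the dominance onto $S$ is preserved, so the contradiction stands. The remaining ingredients, namely the existence and smoothness of $S\in\vert\mathcal O(4,3)\vert$ containing $C$ (a dimension count together with Bertini away from the base locus) and the regularity of $S$ (a Künneth vanishing computation for $H^i(\mathcal O_{\mathbf P^2\times\mathbf P^1}(-4,-3))$), I would treat as routine, exactly as indicated in the construction above.
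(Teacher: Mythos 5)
Your construction coincides step for step with the paper's own proof: the Petri embedding $C\subset\mathbf P^2\times\mathbf P^1$, the smooth surface $S\in\vert\mathcal O_{\mathbf P^2\times\mathbf P^1}(4,3)\vert$ with $\omega_S\cong\mathcal O_S(1,1)$, the computation $\dim\vert C\vert=1$, and the non-constancy of the moduli map deduced from the non-ruledness of $S$. You are also right to flag, as the one delicate point, the algebraicity of the family of isomorphisms $C\xrightarrow{\sim}D$ -- a point the paper itself passes over in silence.

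However, your resolution of that point is incorrect as stated. Finiteness of $\Aut(C)$ only makes the relative isomorphism scheme $\mathrm{Isom}_U(C\times U, S_U)$ finite \'etale over the open set $U\subset\vert C\vert\cong\mathbf P^1$ parametrizing smooth fibres, not over $\mathbf P^1$ itself: the pencil necessarily has singular members, the trivializing cover $B'\to\mathbf P^1$ may be branched over $\mathbf P^1\setminus U$, and so $B'$ can have positive genus, in which case $C\times B'$ is not ruled and no contradiction follows. Indeed the general principle you invoke -- an isotrivial fibration over $\mathbf P^1$ is dominated by a ruled surface -- is false: Beauville surfaces, i.e.\ quotients $(C_1\times C_2)/G$ with both $C_i/G\cong\mathbf P^1$, are isotrivial fibrations over $\mathbf P^1$ of general type. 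The repair in the present situation is easy but uses more than finiteness: since $\rho(10,2,10)=4>0$, a general point of $\mathcal W^2_{10,10}$ lies over a \emph{general} curve of genus $10$, whose automorphism group is trivial. Then each fibre of $\mathrm{Isom}_U(C\times U,S_U)\to U$ is a torsor under $\Aut(C)=\{1\}$, nonempty by the assumed constancy of $m$, so this finite \'etale map has degree one and admits a tautological section; this yields an isomorphism $C\times U\cong S_U$ over $U$ with no base change at all, hence a dominant rational map $C\times\mathbf P^1\dashrightarrow S$, and the contradiction with $S$ being of general type (as $\omega_S$ is ample) goes through. With "finite'' replaced by "trivial'' in this way, your argument is complete and is exactly the paper's.
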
 
We point out that the Zariski closure $\mathcal N_{9,18}$ is not a \it scroll \rm in $\vert \mathcal O_{\mathbf P^2}(9) \vert$. In other words $\mathcal N_{10,18}$ is ruled but it is not ruled by a family of linear subspaces of $\vert \mathcal O_{\mathbf P^2}(9) \vert$ of  dimension $>$ $0$. The same appears to be true for $\mathcal N_{10,26}$. This kind of situation is further analyzed in the next section, for instance in proposition 2.11.\end{example}
 \subsection{When a scroll in $\vert \mathcal O_{\mathbf P^2}(d) \vert$ dominates $\mathcal M_g$?}
The latter section highlights the fact that the unirationality of $\mathcal M_g$, $g \leq 10$,  is strongly related to the world of rational surfaces. Then it is natural to ask what one can say on the embeddings
$$ C \subset S $$ of a curve $C$ with general moduli in a rational surface $S$ and about the linear systems  $\vert \mathcal O_S(C) \vert$.  On the other hand an intrinsic limit of the results we have described is due to the use of families of \it nodal \rm plane curves, instead of more general families.  In the next proposition an effect of this limit is observed.   Let $\Gamma$ be any integral plane curve of geometric genus $g$ and  let $n: C \to \Gamma$ be  its normalization.  
\begin{definition} $\Gamma$ is linearly rigid if $ \dim \vert i_*C \vert = 0$, for every factorization $n = f \circ i$, such that $i: C \to S$ is an embedding in a smooth, integral surface and $f: S \to \mathbf P^2$ is
a birational morphism.
\end{definition}
 The next proposition makes quite clear that, to go further with the previous methods,  one has to use families of singular plane curves $\Gamma$ such that the orbit of $\Gamma$ by the Cremona group of $\mathbf P^2$
does not contain a nodal curve.
\begin{proposition} Let $\Gamma$ be general of genus $g \geq 13$. Assume that $\Gamma$ is the strict transform of a nodal curve by a Cremona transformation.
Then $\Gamma$ is linearly rigid.
\end{proposition}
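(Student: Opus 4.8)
The plan is to convert linear rigidity into the vanishing of the normal bundle cohomology and to reduce every admissible factorization to the minimal resolution of $\Gamma$. Fix a factorization $n = f \circ i$ with $i : C \hookrightarrow S$ and $f : S \to \mathbf{P}^2$ a birational morphism. Since $f(i(C)) = \Gamma$ and $i(C)$ is smooth, $f$ must resolve the singularities of $\Gamma$ along $C$, so it factors through the minimal embedded resolution $S_{\min} \to \mathbf{P}^2$ of $\Gamma$. As $S$ is rational, $h^1(\mathcal{O}_S) = h^2(\mathcal{O}_S) = 0$, and the exact sequence
$$0 \to \mathcal{O}_S \to \mathcal{O}_S(C) \to \mathcal{O}_C(C) \to 0$$
gives $\dim |i_* C| = h^0(\mathcal{O}_C(C)) = h^0(N_{C/S})$. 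Moreover $S \to S_{\min}$ is a composition of blow-ups, each of which either leaves $N_{C/\cdot}$ unchanged (center off $C$) or replaces it by a twist down by an effective divisor (center on $C$); hence $h^0(N_{C/S}) \le h^0(N_{C/S_{\min}})$. Thus it suffices to prove $h^0(N_{C/S_{\min}}) = 0$, and for this it is enough to establish $\deg N_{C/S_{\min}} = C^2 < 0$.

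Next I would compute $C^2$ on $S_{\min}$ via adjunction and the Cremona structure. Adjunction gives $N_{C/S_{\min}} = \omega_C \otimes \mathcal{O}_C(-K_{S_{\min}})$, whence $C^2 = 2g - 2 + (-K_{S_{\min}} \cdot C)$. Writing $\Gamma = \psi(\Gamma_0)$ with $\Gamma_0$ a $\delta_0$-nodal curve of degree $d_0$, I would pass to a common resolution $\tilde S$ of $\psi$ on which the strict transform $\tilde C \cong C$ is embedded, and use that plane Cremona transformations are crepant, $-K_{\tilde S} = 3H_0 - \sum E^0_j = 3H - \sum E_i$ in the two markings of $\operatorname{\Pic}(\tilde S)$. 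Then $-K_{\tilde S} \cdot \tilde C = 3 d_0 - \sum_j \mu^0_j$, where the $\mu^0_j$ are the multiplicities of $\Gamma_0$ at the blown-up centers; because $\Gamma_0$ is nodal these are all $\le 2$ (equal to $2$ at the nodes, $0$ or $1$ at the fundamental points of $\psi$). Blowing down from $\tilde S$ to $S_{\min}$ the exceptional curves of $\psi^{-1}$ unneeded for $\Gamma$ then yields $-K_{S_{\min}} \cdot C = (3 d_0 - 2\delta_0) + (b - a)$, where $a$ and $b$ count the fundamental points of $\psi$, resp.\ $\psi^{-1}$, lying on the smooth locus of $\Gamma_0$, resp.\ $\Gamma$.

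For the numerical conclusion, recall that every nodal model of the general curve $C$ has degree at least the minimal $d$ with $\rho(g,2,d) \ge 0$, and that $3 d_0 - 2\delta_0 = -d_0^2 + 6 d_0 - 2 + 2g$ decreases with $d_0$; hence $C^2(\Gamma_0) = 4g - 4 - d_0^2 + 6 d_0 \le 4g - 4 - d_{\min}^2 + 6 d_{\min}$. A direct check shows this upper bound is negative exactly for $g \ge 13$ — it equals $0$ for $g = 11$ and $4$ for $g = 12$, but $-7$ for $g = 13$ — which is precisely the stated threshold. Granting $C^2(\Gamma) \le C^2(\Gamma_0)$, one concludes $C^2 < 0$, hence $h^0(N_{C/S_{\min}}) = 0$ and linear rigidity.

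The hard part will be the inequality $b \le a$, i.e.\ that the passage from the nodal $\Gamma_0$ to $\Gamma$ by $\psi$ does not increase $C^2$. Equivalently, a fundamental point of $\psi^{-1}$ may not land on the smooth locus of $\Gamma$ more often than those of $\psi$ land on that of $\Gamma_0$. This is exactly where the hypotheses enter: $\Gamma_0$ nodal forces all source multiplicities $\mu^0_j \le 2$, and $\Gamma$ general forces the fundamental points into general position off both curves, so that $a = b = 0$ and $C^2(\Gamma) = C^2(\Gamma_0)$. I expect the delicate point to be making this comparison uniform over every Cremona transformation rather than only generic ones; this should follow from a careful analysis of the Weyl-group action on $\operatorname{\Pic}(\tilde S)$ subject to the constraint $\mu^0_j \le 2$, ruling out the configurations (which require low-degree intermediate models) that would produce $b > a$.
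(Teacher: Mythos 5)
Your opening reduction is correct and cleanly argued: every factorization $n = f\circ i$ has $S$ dominating the minimal embedded resolution $S_{\min}$ of $\Gamma$, blow-ups only twist $N_{C/S}$ down, so linear rigidity is equivalent to $h^0(N_{C/S_{\min}})=0$, for which $C^2<0$ on $S_{\min}$ suffices; and your numerics on the nodal model agree with the paper's (Brill--Noether forces $d_0\geq \frac23(g+3)$, whence $C^2 = 4g-4-d_0^2+6d_0<0$ exactly for $g\geq 13$). The genuine gap is the bridge $b\leq a$, i.e. $C^2(S_{\min}(\Gamma))\leq C^2(S_{\min}(\Gamma_0))$, which you flag as ``the hard part'' and then dispose of with an argument that does not work: you claim generality of $\Gamma$ ``forces the fundamental points into general position off both curves, so that $a=b=0$.'' But the Cremona transformation is part of the hypothesis, not a generic choice; nothing prevents its base points (and those of its inverse, including infinitely near ones) from lying on the curves, and since a composite Cremona map can have arbitrarily many base points there is no a priori bound on $b$. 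Moreover the inequality can genuinely fail without the hypotheses: a quadratic transformation sends a line ($C^2=1$ on its minimal model $\mathbf P^2$) to a conic through the three fundamental points of the inverse ($C^2=4$), so $b>a$ does occur, and ruling it out for $g\geq 13$ carries the full weight of the proposition. The ``Weyl-group analysis'' you defer to is exactly the kind of hard lattice-theoretic work behind Segre's theorems 2.14 and 2.16, not a routine verification. Note finally that your intermediate target is strictly stronger than the statement: linear rigidity only needs $h^0(N_{C/S_{\min}})=0$, which is compatible with $0\leq C^2\leq g-1$, so it is not even clear that the inequality you are missing is true.

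The paper avoids this comparison entirely. Given a factorization with $\dim|i_*C|\geq 1$, it resolves the indeterminacy of $\sigma\circ f$ (where $\sigma$ carries $\Gamma$ back to the nodal curve $\Gamma'$), and transports the moving linear system itself, not the numerical invariant $C^2$, to the nodal plane: pushing forward lands it inside $\vert \mathcal I_{Z'}^2(d')\vert$ with $Z'=\Sing\ \Gamma'$, and positivity is then applied there, where every multiplicity equals $2$. A moving system containing the irreducible curve $\Gamma'$ forces $d'^2-4\delta'\geq 0$, which combined with $g\leq \frac32 d'-3$ and the genus formula gives $d'\leq 10$, hence $g\leq 12$, a contradiction. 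Because all the singular points on the side where positivity is used are nodes, no control of the singularities of $\Gamma$ or of the structure of the Cremona map is ever required. If you want to keep your framework, the missing inequality $b\leq a$ is where all the work lies; the more economical fix is to run your (correct) self-intersection argument on the nodal model's side, as the paper does, rather than on $S_{\min}(\Gamma)$.
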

\begin{proof} Let $n = f \circ i$ as above. Let $\sigma: \mathbf P^2 \to \mathbf P^2$ be a birational map such that $\Gamma$ is 
the strict transform of a nodal curve $\Gamma'$ of degree $d'$. Solving the indeterminacy of $\sigma \circ f$ we have $\sigma \circ f  \circ \phi = \psi$, where $\phi: S' \to S$ and $\psi: S' \to \mathbf P^2$ are birational morphisms and
$S'$ is smooth. 
The strict transforms of $\Gamma'$ by $\psi$ and of $\Gamma$ by $f  \circ \phi $ are the same curve $C' $,  biregular to $i_*C$. Note that $$ \phi_* \vert C' \vert = \vert i_*C \vert. $$ Hence it suffices to show that
$ \dim  \vert C' \vert = 0$. Let $Z' =  \Sing \ \Gamma'$. Since $\psi_*: \vert C' \vert \to \vert \mathcal I_{Z'}^2(d') \vert$ is injective, we show that $ \dim \vert \mathcal I_{Z'}^2(d') \vert = 0$.
Note that  $\Gamma'$ is a general point of a Severi variety $\mathcal N_{g, \delta'}$ dominating $\mathcal M_g$. This implies as usual that $\binom{d'-1} 2 - \delta' = g \leq \frac 32 d' - 3$. On the other hand the condition
$ \ \dim \vert C' \vert \geq 1$ implies that ${C'}^2 = {d'}^2 - 4\delta' \geq 0$. Then the two inequalities imply ${d'}^2 -9d' - 10 \leq 0$, that is, $0 \leq d' \leq 10$. Since $C'$ is general, it follows $g \leq 12$. This contradicts our
assumption.
\end{proof} 
  The use of families of \it not nodal \rm singular  curves is the starting point of a second step in the history we are discussing.   This is a first negative step with respect to
the conjectured unirationality of $\mathcal M_g$.  It is  due to Beniamino Segre.   \medskip \par 
In 1928 Segre presented a communication to the International Congress of Mathematicians held in Bologna. This is  on linear systems of singular plane curves with general moduli.
 It summarizes `Sui moduli delle curve algebriche', \cite{Se1},  a paper answering the next questions $\rm Q1$, $\rm Q2$, $\rm Q3$.  
\medskip \par 
Consider the Grassmannian $G_{n,d}$ of $n$-spaces in $\vert \mathcal O_{\mathbf P^2}(d) \vert$, where $n = 0$ is not excluded. If $t \in G_{n,d}$ we will denote by $\mathbb P_t$ the corresponding linear system 
of plane curves. Let $T \subset G_{n,d}$ be an integral variety such  that
$$
\mathbb P = \bigcup_{t \in T} \mathbb P_t  
$$
is a family of integral plane curves $\Gamma$ of geometric genus $g$. Then $\mathbb P$ is endowed with the usual diagram
$$
\begin{CD}
{\mathbb P} @>{m_T}>> {\mathcal M_g} \\
@V{h_T}VV @. \\
{Hilb_{\delta}(\mathbf P^2)} @. \\
\end{CD}
$$ 
By definition $h_T(\Gamma) =  \Sing \ \Gamma$. $m_T$
is the natural map in $\mathcal M_g$.  The questions considered by Segre are the following, cfr. \cite{Se1} sections 1-4 and 11: \rm 
 \medskip
\begin{itemize} \it
\item[$\circ$] $\rm [Q1]$  Does there exist $\mathbb P$ such that $m_T$ is dominant and $T$ is a point?
\item[$\circ$] $\rm [Q2]$ Does there exist $\mathbb P$ such that both $m_T$ and $h_T$ are dominant?
\item[$\circ$] $\rm [Q3]$ Does there exist $\mathbb P$ such that $m_T$ is dominant and $ \dim \ \mathbb P_t > 0$?
 \end{itemize} 
\begin{remark} \rm It is clear that: \par (1) \it a positive answer to $Q_1$ or $Q_2$ implies that $\mathcal M_g$ is unirational,\par  \rm (2) \it
 a positive answer to $Q_3$ implies that $\mathcal M_g$ is uniruled. \rm 
\end{remark} \par 
Let $\mathbb H \subset \vert  \mathcal O_{\mathbf P^r}(d) \vert$ be a linear system of hypersurfaces of degree $d$ and let $Z$ be its base scheme.
$\mathbb H$ is said to be complete if  $ \mathbb H = \vert \mathcal I_Z(d) \vert$. We recall that:
\begin{definition} A complete linear  system $\mathbb H$ is regular if  the restriction map
$
r: H^0(\mathcal O_{\mathbf P^r}(d)) \to H^0(\mathcal O_Z(d))
$
has maximal rank.  
\end{definition} 
 Assume $h^0(\mathcal I_Z(d)) > 0$, then $\vert \mathcal I_Z(d) \vert$ is regular  if and only if $h^1(\mathcal I_Z(d)) = 0$.   In what follows it will be not restrictive to assume that $\mathbb P_t$ is complete. 
\medskip \par  Let $Z_t$ be the base locus of $\mathbb P_t$. Up to shrinking $T$ we can assume that the family 
$\lbrace Z_t, t \in T \rbrace$ is a flat family of 0-dimensional schemes and that
$ \dim \ \mathbb P_t$ is constant. Let $o \in T$ be a general point and let $\Gamma_o$ be general in $\mathbb P_o$. It follows from  Noether's theorem on the 
reduction of a plane curve to a plane curve with ordinary singularities, that there exist birational morphisms
$$
\sigma_o: S_o \to \mathbf P^2 \ , \ \psi_o: S_o \to \mathbf P^2
$$
so that: (i) $S_o$ is smooth, (ii) the strict transform $C_o$ of $\Gamma_o$ by $\sigma_o$ is smooth, (iii) $\psi_o/C_o$ is generically injective and 
$\Gamma'_o := \psi_o(C_o)$ has ordinary singular points. It is standard to show that, up to a finite base change $\pi: T' \to T$, the fourtuple $(C_o, \Gamma_o, \sigma_o, \psi_o)$
moves in an irreducible family $(C_t,  \Gamma_t, \sigma_t, \psi_t),  t \in T'$, such that
$
\sigma_t: S_t \to \mathbf P^2 \ , \ \psi_t: S_t \to \mathbf P^2,  
$
are birational morphisms, $C_t$ is the strict transform by $\sigma_t$ of a general $\Gamma_t \in \mathbb P_t$ and conditions (i), (ii), (iii) considered above for $o$ 
are satisfied. Note that $\vert C_t \vert$ is the strict transform of $\vert \mathcal I_{Z_t}(d) \vert$ by $\sigma_t$. Moreover the elements of the linear system
 $$
 \mathbb P'_t := \sigma_{t*} \vert C_t \vert \subset \vert \mathcal O_{\mathbf P^2}(d') \vert
$$
are curves of degree $d'$ and  genus $g$ with ordinary singularities.  We have $ \dim \ \mathbb P_t =  \dim \vert C_o \vert =  \ \dim \ \mathbb P'_t$. Finally, notice also that:
\begin{lemma}  The following conditions are equivalent: \begin{enumerate} \item $\mathbb P_t$ is regular, \item $h^1(\mathcal O_{S_o}(C_o)) = 0$ \item $\mathbb P'_t$ is regular. \end{enumerate} \end{lemma}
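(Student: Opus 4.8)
The plan is to show that each of the three conditions is equivalent to the single intrinsic vanishing $h^1(S_o,\mathcal O_{S_o}(C_o))=0$, by comparing the cohomology of the line bundle $\mathcal O_{S_o}(C_o)$ on the smooth surface $S_o$ with the cohomology of the two ideal sheaves on $\mathbf P^2$ through the two birational morphisms $\sigma_o$ and $\psi_o$. Recall that, $\mathbb P_o$ and $\mathbb P'_o$ being nonempty, their regularity is by definition equivalent to $h^1(\mathcal I_{Z_o}(d))=0$ and to $h^1(\mathcal I_{Z'_o}(d'))=0$ respectively, where $Z_o$ and $Z'_o=\Sing\Gamma'_o$ are the base schemes of $\mathbb P_o=\vert\mathcal I_{Z_o}(d)\vert$ and $\mathbb P'_o=\vert\mathcal I_{Z'_o}(d')\vert$. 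So the whole lemma reduces to matching these two plane $h^1$'s with the surface $h^1$.

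First I would treat $\sigma_o$. Since $C_o$ is the strict transform of $\Gamma_o$ and $\vert C_o\vert$ is the strict transform of $\vert\mathcal I_{Z_o}(d)\vert$, the projection formula gives $\sigma_{o*}\mathcal O_{S_o}(C_o)=\mathcal I_{Z_o}(d)$. Granting the vanishing $R^1\sigma_{o*}\mathcal O_{S_o}(C_o)=0$, the Leray spectral sequence yields an isomorphism $H^1(S_o,\mathcal O_{S_o}(C_o))\cong H^1(\mathbf P^2,\mathcal I_{Z_o}(d))$, so condition (1) is equivalent to (2). I would then run the identical argument for $\psi_o$: because $C_o$ is also the strict transform of $\Gamma'_o$, one has $\psi_{o*}\mathcal O_{S_o}(C_o)=\mathcal I_{Z'_o}(d')$ and, again via the vanishing of $R^1\psi_{o*}\mathcal O_{S_o}(C_o)$, an isomorphism $H^1(S_o,\mathcal O_{S_o}(C_o))\cong H^1(\mathbf P^2,\mathcal I_{Z'_o}(d'))$, giving the equivalence of (2) and (3). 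Chaining the two isomorphisms closes the cycle $(1)\Leftrightarrow(2)\Leftrightarrow(3)$.

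The main obstacle is the vanishing of the first higher direct images. For $\psi_o$ this is essentially transparent, because $\Gamma'_o$ has only ordinary singularities: on each exceptional curve $E\cong\mathbf P^1$ the strict transform $C_o$ meets $E$ in a number of points equal to the local multiplicity of $\Gamma'_o$, so $\mathcal O_{S_o}(C_o)\vert_E$ has non-negative degree and cohomology and base change forces $R^1\psi_{o*}\mathcal O_{S_o}(C_o)=0$. For $\sigma_o$ the resolution of the \emph{arbitrary} singularities of $\Gamma_o$ may involve infinitely near points, so I would factor $\sigma_o$ into a chain of single point blow-ups and apply the one-step vanishing inductively, checking at each stage that the strict transform meets the last exceptional curve in degree $\geq -1$; this holds because the multiplicities of the strict transform of a reduced irreducible curve obey the proximity inequalities. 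This bookkeeping over infinitely near points is the only delicate part of the argument.

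Finally, I would observe that the chain of equivalences, established at the general point $o\in T$, propagates to the general $t$: after the finite base change the dimensions of $\mathbb P_t$, $\mathbb P'_t$ and $\vert C_t\vert$ are constant and the family $\lbrace Z_t,\ t\in T\rbrace$ is flat, so each of the three conditions is open in $t$ and the equivalence holds as stated.
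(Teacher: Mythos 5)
The paper states this lemma without proof (it appears as a passing observation), so there is no argument to compare against line by line; your strategy is certainly the intended one: translate regularity into $h^1(\mathcal I_{Z_o}(d))=0$ and $h^1(\mathcal I_{Z'_o}(d'))=0$, identify $\sigma_{o*}\mathcal O_{S_o}(C_o)$ and $\psi_{o*}\mathcal O_{S_o}(C_o)$ with these twisted ideal sheaves (this tacitly identifies the base schemes with the clusters of singular points, which is what the paper's standing assumption that $\vert C_t\vert$ is the strict transform of $\mathbb P_t$ amounts to), and transfer $h^1$ across both morphisms by Leray. The $\psi_o$ side and the final openness remark are fine.

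The one step that would fail as written is your induction for $R^1\sigma_{o*}\mathcal O_{S_o}(C_o)=0$. Factoring $\sigma_o$ into single blow-ups and ``applying the one-step vanishing inductively'' does not compose: if $\rho\colon S_o\to S'$ is the last blow-up, with center $p$ of curve multiplicity $m$, then $\rho_*\mathcal O_{S_o}(C_o)=\mathcal O_{S'}(C')\otimes\mathfrak m_p^{m}$ is no longer a line bundle, so the Grothendieck spectral sequence leaves you needing $R^1$ of an ideal-twisted line bundle under the remaining blow-downs --- which is the whole difficulty, not another instance of the one-step statement. Note also that non-negativity of the degree on the \emph{last} exceptional curve is automatic ($C_o\cdot E=m\geq 0$) and uses no proximity; where the proximity inequalities really enter is in guaranteeing $C_o\cdot E_i\geq 0$ for \emph{every} exceptional curve $E_i$ of $\sigma_o$, including strict transforms of earlier ones. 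The clean repair is to prove the vanishing in one stroke: by the theorem on formal functions, $R^1\sigma_{o*}L=0$ follows once $H^1(D,L\vert_D)=0$ for every effective divisor $D$ supported on the exceptional locus, and this follows by d\'evissage on $D$ from $L\cdot E_i\geq 0$ together with the rationality of the (smooth) points being resolved; this is Lipman's relative vanishing for $\sigma$-nef line bundles on resolutions of rational surface singularities, and it applies verbatim since $\mathcal O_{S_o}(C_o)$ is nef on the exceptional locus, $C_o$ being integral and non-exceptional. With that standard lemma cited or proved, it handles $\sigma_o$ and $\psi_o$ uniformly and the rest of your argument goes through.
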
 \par 
Due to the previous remarks, we make from now on the following \medskip \par 
\underline {\it Assumption} \it A general element of $\mathbb P$ has ordinary singularities. \rm
\medskip  \par 
This assumption is clearly not restrictive in order to fully answer questions Q1 and Q3. Instead, to fully answer question Q2 under such an assumption, one has
to rely on  some well known conjectures on the regularity of linear systems of plane curves. As we will see in a moment, these conjectures go back to Beniamino Segre.
\medskip \par 
In his paper Segre exhibits the following answers to  the previous questions:   \par 
\begin{itemize} \it
\item[$\circ$] $\rm [Q1]$ No $\mathbb P$ exists for $g \geq 7$,
\end{itemize}
\begin{itemize} \it
\item[$\circ$]  $\rm [Q2]$ No $\mathbb P$ exists for $g \geq 37$ or for $g \geq 11$ and $\mathbb P_t$ regular,
\end{itemize}
\begin{itemize}
\item[$\circ$] $\rm [Q3]$ \it No $\mathbb P$  exists for $g \geq 37$ or for $g \geq 11$ and $\mathbb P_t$ regular.
\end{itemize}  \medskip \par  
Segre points out that the negative answer to $\rm Q2$ and $\rm Q3$ could be extended to $g \geq 11$ without further assumptions.   This is possible, he says, if  one relies on an unproved  claim of intuitive evidence, \cite{Se1} 
6 p. 79. It can be stated as follows: 
\medskip \par 
\underline {\it Claim} \it \  Let $p_1 \dots p_s$ be \it general points  in $\mathbf P^2$ and $\nu_1 \dots \nu_s$  positive integers. Consider the ideal sheaf $\mathcal I_Z$ of  $ \ Z = \bigcup_{i = 1 \dots s} Z_i$, where   
$Z_i$ is  $Spec \ \mathcal O_{\mathbf P^2, p_i} / m_i^{\nu_i} $. Then the linear system $\vert \mathcal I_Z(d) \vert$ is regular.   \rm
\medskip \par  
This  is probably the remote origin of a well known conjecture. Many years later, in 1961, Segre appropriately reformulates this claim:
\medskip \par 
\underline{\it Conjecture} (B. Segre \cite{Se2}).Ê \it Let $p_1, \dots, p_s$ and $Z$ be as above. Assume that an element $\Gamma \in \vert \mathcal I_Z(d) \vert$ is
a reduced curve, then $\vert \mathcal I_Z(d) \vert$ is regular. \rm
\medskip \par 
See \cite{Ci1}  for an account on the influence of this conjecture and the so many related conjectures and theorems. We refer in particular to Harbourne-Hirschowitz conjecture, cfr. \cite{Ci1} 4.8, and to the following
conjecture we have already seen to work when $\nu_1 = \dots = \nu_s = 2$.
 \medskip \par 
\underline{\it Conjecture} \it  (A. Hirschowitz) Let $Z$ be defined as in the previous claim. If $\vert \mathcal I_Z(d) \vert$ is regular then a general $\Gamma \in \vert \mathcal I_Z(d) \vert$ has ordinary singularities, unless
the unique element of $\vert \mathcal I_Z(d) \vert$ is a cubic of multiplicity $m$ and $m = \nu_1 = \dots = \nu_s$. \rm
\rm \medskip \par 
Our subject is influenced by this frame, in particular the next question is of special interest. Let $\Gamma$ be any plane curve of genus $g$ with general moduli: \begin{itemize} \it
\item[$\circ$] $\rm [Q4]$ Is $\Gamma$ linearly rigid as soon as $g \geq 11$?
\end{itemize} \par 
Segre's conjecture and his answer to Q3 imply that this is true at least if the singular points of $\Gamma$ are in general position. So we want to describe more in detail  the arguments of  Segre
for answering $\rm Q3$ (and Q2). \medskip \par 
\underline {\it The answer to $\rm Q3$} \par We start with a family as above $\mathbb P = \bigcup \mathbb P_t, \ t \in T$, of curves of degree $d$. As already remarked it is not restrictive, for our purpose, to assume that a general
curve $\Gamma \in \mathbb P$ has ordinary singularities. Also, it will be not restrictive to assume that $\mathbb P$ is equal to its orbit by $PGL(3)$. We denote as
$$
\overline \nu := \nu_1 >  \dots > \nu_s
$$
the decreasing sequence of the multiplicities of the $\delta$ singular points of $\Gamma$. Let $Z_i \subset  \Sing \ \Gamma$ be the set of points of fixed multiplicity $\nu_i, \ i = 1 \dots s$. The cardinality of $Z_i$ is  constant for a general $\Gamma$,  it will be denoted as $\delta_i$.
 Finally,  the product of the Hilbert schemes of  $0$-dimensional subschemes of $\mathbf P^2$ of length $\nu_i$ contains a non empty open subset parametrizing elements $(Z_1, \dots, Z_s)$ such that $Z_i$ is supported on $\nu_i$
distinct points and $Z_i \cap Z_j \  i \neq j$, $i,j = 1 \dots s$. This open set will be denoted as
$$
Hilb^o_{\overline \nu}(\mathbf P^2).
$$ 
Let $(Z_1, \dots, Z_s) \in Hilb^o_{\overline \nu}(\mathbf P^2)$. Consider $Z = \bigcup_{i = 1 \dots s} Z_i$ and the blowing up
$ \sigma: S \to \mathbf P^2$ of $Z$. Clearly $(Z_1, \dots, Z_s)$ defines a triple $$ (S, \mathcal O_S(H), \mathcal O_S(C))$$ 
where $S$ is a smooth rational surface and $\mathcal O_S(H)$, $\mathcal O_S(C)$ are such that 
\medskip \par  \it
$\circ$ $\vert H \vert$ defines the blowing up $ \sigma: S \to \mathbf P^2$ of a set $Z$ of $\delta$ distinct points.
\medskip \par   
$\circ$ $C \sim dH - \sum_{i = 1 \dots s} \nu_i E_i$,  where  $E_i = \sigma^*Z_i$ and $Z = \bigcup Z_i$ is a partition.   
\rm \medskip \par 
Conversely a triple $(S, \mathcal O_S(H), \mathcal O_S(C))$ immediately defines a point of $Hilb^o_{\overline \nu}(\mathbf P^2)$ up to the action of $PGL(3)$. Consider the GIT-quotient
$$
\mathcal P_{\overline \nu} := Hilb^o_{\overline \nu}(\mathbf P^2) / PGL(3).
$$
Then $\mathcal P_{\overline \nu}$ is birational to the moduli space of the triples $(S, \mathcal O_S(H), \mathcal O_S(C))$ as above. As expected, this moduli space has clearly dimension
$$
2s- 8 = 10 \chi (\mathcal O_S) - 2K^2_S.
$$
Actually, $10 \chi(\mathcal O_S) - 2K_S^2$ is a lower bound for every irreducible component of the moduli space of surfaces with fixed $\chi(\mathcal O_S)$ and $K_S^2$, see \cite{Ca1}. Consider a general $\Gamma \in \mathbb P$ and  the standard diagram
$$
\begin{CD}
{\mathbb P} @>{m_T}>> {\mathcal M_g} \\
@V{h_T}VV @. \\
{Hilb^o_{\overline \nu}(\mathbf P^2)} @. \\
\end{CD}
$$
Let $\sigma: S \to \mathbf P^2$ be the blowing up of $ \Sing \ \Gamma$. Then the strict transform $C$ of $\Gamma$ is smooth and $\vert C \vert$ is the strict transform of $\mathbb P_t$. In particular, from the standard exact sequence
$$
0 \to \mathcal O_S \to \mathcal O_S(C) \to \mathcal O_C(C) \to 0,
$$
we have
$$
 \ \dim \ \mathbb P_t =  \dim \ \vert C \vert = h^0(\mathcal O_C(C)).
$$
Notice also that $$ h^1(\mathcal O_C(C)) = h^1(\mathcal O_S(C)). $$  Hence  \it $\mathbb P_t$
is not regular  if and only if $\mathcal O_C(C))$ is special. \rm Furthermore we have
$$
 \dim \ \mathcal M_g \ \leq  \dim \ \mathbb P -  \dim \ PGL(3) \leq   \dim \ \mathcal P_{\overline \nu} + h^0(\mathcal O_C(C)).
$$
The latter is a key inequality. Furthermore one has
\begin{lemma}  \ \begin{enumerate} \it
\item $ h^0(\mathcal O_C(C)) \geq 1$ so that $C^2 \geq 0$,
\item $ 3g + 5 \leq 2\delta + h^0(\mathcal O_C(C))$,
\item $ h^1(\mathcal O_C(C)) \leq g$.
\end{enumerate}
\end{lemma}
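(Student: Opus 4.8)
The three assertions are all consequences of Riemann--Roch on the smooth curve $C$, of Clifford's theorem, and of the key inequality obtained just above; throughout I keep the standing hypotheses of the analysis of $\rm Q3$, namely that $m_T$ is dominant, that a general $\Gamma \in \mathbb P$ has ordinary singularities, and that $\dim \mathbb P_t > 0$. The numerical data I plan to use repeatedly are that $\mathcal O_C(C)$ is a line bundle with $\deg \mathcal O_C(C) = C^2$, and that from the exact sequence $0 \to \mathcal O_S \to \mathcal O_S(C) \to \mathcal O_C(C) \to 0$ together with $h^1(\mathcal O_S) = 0$ (as $S$ is rational) one reads off $h^0(\mathcal O_C(C)) = h^0(\mathcal O_S(C)) - 1 = \dim \vert C \vert = \dim \mathbb P_t$.

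For (1), I would simply quote $h^0(\mathcal O_C(C)) = \dim \mathbb P_t \ge 1$ from the standing assumption $\dim \mathbb P_t > 0$. The implication ``so that $C^2 \ge 0$'' then follows because a line bundle of negative degree on the integral curve $C$ carries no nonzero global section; hence $h^0(\mathcal O_C(C)) \ge 1$ forces $\deg \mathcal O_C(C) = C^2 \ge 0$. I would emphasize that the hypothesis $\dim \mathbb P_t > 0$ is genuinely needed here, and this is the main conceptual point to get right: for the minimal degree plane models of a general curve of large genus one computes $C^2 < 0$, so $C^2 \ge 0$ simply fails for rigid (that is, linearly rigid) families, and (1) must be read inside the positive-dimensional regime defining $\rm Q3$.

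For (2), I would unwind the key inequality. Since $S$ is the blow-up of $\mathbf P^2$ at the $\delta$ points of $\Sing \ \Gamma$, we have $\chi(\mathcal O_S) = 1$ and $K_S^2 = 9 - \delta$, whence $\dim \mathcal P_{\overline \nu} = 10\chi(\mathcal O_S) - 2K_S^2 = 2\delta - 8$. Substituting this into $\dim \mathcal M_g \le \dim \mathcal P_{\overline \nu} + h^0(\mathcal O_C(C))$ and using $\dim \mathcal M_g = 3g - 3$ gives $3g - 3 \le 2\delta - 8 + h^0(\mathcal O_C(C))$, that is $3g + 5 \le 2\delta + h^0(\mathcal O_C(C))$. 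So (2) is nothing but the key inequality with the value of $\dim \mathcal P_{\overline \nu}$ inserted.

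For (3) I would combine Riemann--Roch with Clifford's theorem. If $\mathcal O_C(C)$ is non-special then $h^1(\mathcal O_C(C)) = 0 \le g$ and there is nothing to prove. Otherwise $\mathcal O_C(C)$ is special; by (1) it is effective, and from $h^1 > 0$ one gets that $\omega_C(-C)$ is effective too, so its degree $2g - 2 - C^2$ is $\ge 0$ and $0 \le C^2 \le 2g-2$. Thus Clifford applies and yields $h^0(\mathcal O_C(C)) \le \tfrac{1}{2}C^2 + 1$. Feeding this into Riemann--Roch, $h^1(\mathcal O_C(C)) = h^0(\mathcal O_C(C)) - C^2 - 1 + g \le g - \tfrac{1}{2}C^2 \le g$, the last step using $C^2 \ge 0$ from (1). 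Granting the identification $\dim \mathbb P_t = h^0(\mathcal O_C(C))$ and the dominance and positivity hypotheses behind (1) and (2), all three parts reduce to Riemann--Roch and Clifford, and I expect no obstacle beyond correctly pinning down the standing assumption in (1).
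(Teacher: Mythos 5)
Your proposal is correct, and for parts (1) and (2) it is exactly the paper's argument: the paper proves (1) by precisely your observation that $h^0(\mathcal O_C(C)) = \dim \vert C \vert = \dim \mathbb P_t \geq 1$, so that $\mathcal O_C(C)$ is effective and $C^2 = \deg \mathcal O_C(C) \geq 0$; and it proves (2) by declaring it equivalent to the key inequality $\dim \mathcal M_g \leq \dim \mathcal P_{\overline \nu} + h^0(\mathcal O_C(C))$, which is just your substitution $\dim \mathcal M_g = 3g-3$ and $\dim \mathcal P_{\overline \nu} = 10\chi(\mathcal O_S) - 2K_S^2 = 2\delta - 8$ (your computation also silently corrects the paper's misprint ``$2s-8$''). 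The one place you diverge is (3). The paper's ``follows immediately from $h^0(\mathcal O_C(C)) \geq 1$'' refers to the one-line argument: by Serre duality $h^1(\mathcal O_C(C)) = h^0(\omega_C(-C))$, and multiplication by a nonzero section of $\mathcal O_C(C)$ injects $H^0(\omega_C(-C))$ into $H^0(\omega_C)$, whence $h^1(\mathcal O_C(C)) \leq g$ with no case distinction. You instead split into the special and non-special cases and combine Clifford's theorem with Riemann--Roch; this is valid, and in the special case it even yields the marginally sharper bound $h^1(\mathcal O_C(C)) \leq g - \tfrac 12 C^2$, but it imports machinery the lemma does not need --- Clifford is exactly the tool the paper holds in reserve for the subsequent step (the passage from the bound $g < 11$ of the regular case to the bound $g \leq 37$ when $\mathcal O_C(C)$ is special, via $2h^0(\mathcal O_C(C)) \leq C^2$), whereas (3) itself requires nothing beyond the existence of a single section.
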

\begin{proof} (1) follows from the assumption $ \ \dim \ \mathbb P_t > 0$. (2) is equivalent to the latter inequality. (3) follows immediately
from $h^0(\mathcal O_C(C)) \geq 1$. \end{proof}  \par 
Building on the previous inequalities, the hard part of Segre's work is developed essentially in  the abstract  lattice
$$
\mathbb L :=  \Pic \ S =  \oplus_{ij} \mathbb Z[E_{ij}] \oplus \mathbb Z [H], \ \ 1 \leq i \leq s \ \ , \ \ 1 \leq j \leq \delta_i.
$$
Here, keeping the previous notations, $\mathcal O_S(H) \cong \sigma^* \mathcal O_{\mathbf P^2}(1)$ and the $E_{ij}$'s  are the irreducible exceptional divisors of $\sigma$. \par  
The main point is to consider the group of isometries $G \subset O(\mathbb L)$ which is generated by reflections $q: \mathbb L \to \mathbb L$ induced by  quadratic transformations
centered at three distinct points of $ \Sing \ \Gamma$. \par We discuss the case where $\mathbb P_t$ is regular. By Brill-Noether theory and the lemma, a positive 
answer to question Q3 for $\mathbb P$  implies that:
\begin{enumerate} 
\item $ g \leq \frac 32(d - 2)$,
\item $h^0(\mathcal O_C(C)) \geq 1$
\item $3g + 5 \leq 2\delta + h^0(\mathcal O_C(C))$.
\end{enumerate}  
Since $\mathbb P_t$ is regular, we have that $h^1(\mathcal O_C(C)) = h^1(\mathcal O_S(C)) = 0$. Hence (3) is equivalent to $4g + 4 \leq 2\delta + C^2$. 
 Consider the set of linear forms 
$$
\lbrace h_p: \mathbb L \to \mathbb Z, \ p \in G \rbrace
$$
which are defined as follows: $\forall D \in \mathbb L, h_p(D) = \langle p(D),H \rangle$, where $\langle \ , \rangle$ is the intersection product. Segre shows that:
\begin{theorem}   Assume $h^1(\mathcal O_S(C)) = 0$ and that (1), (2), (3) hold true. If  $11 \leq \frac 32(d - 2)$ then there exists $p \in G$ such that
$h_p([C]) < d$.
 \end{theorem}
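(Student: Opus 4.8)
The plan is to read the linear forms $h_p$ geometrically and to show that $[C]$ fails to minimize the $H$-degree on its $G$-orbit. First I would record the action of a single generating reflection. A quadratic transformation centered at three distinct points $p_i,p_j,p_k$ of $\Sing\Gamma$ is the reflection $s_\alpha$ of $\mathbb L$ in the root $\alpha=H-E_i-E_j-E_k$, which satisfies $\alpha^2=-2$ and $\langle\alpha,K_S\rangle=0$. Since $[C]=dH-\sum_i\nu_i\sum_jE_{ij}$, one computes $\langle[C],\alpha\rangle=d-(\nu_i+\nu_j+\nu_k)$, whence $h_{s_\alpha}([C])=\langle s_\alpha([C]),H\rangle=2d-(\nu_i+\nu_j+\nu_k)$; thus $h_{s_\alpha}([C])<d$ precisely when $\nu_i+\nu_j+\nu_k>d$. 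As permutations of the $E_{ij}$ leave the $H$-degree unchanged, the standard minimality principle for reflection groups shows that $[C]$ minimizes $\langle\cdot,H\rangle$ on its $G$-orbit if and only if no generating reflection decreases it, i.e. if and only if every triple of multiplicities sums to at most $d$. So the assertion $h_p([C])<d$ for some $p\in G$ is equivalent to the statement that the three largest multiplicities of $\Sing\Gamma$ sum to more than $d$, and this is what I would aim to prove.

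Next I would argue by contradiction, assuming $\nu_1+\nu_2+\nu_3\le d$ for the three largest multiplicities, and collect the numerical constraints. Writing the multiplicities (with multiplicity) as $\nu_1\ge\nu_2\ge\cdots$, the genus formula gives $\sum_k\binom{\nu_k}{2}=\binom{d-1}{2}-g$, and hypothesis (1) becomes $\sum_k\binom{\nu_k}{2}\ge\tfrac12(d-2)(d-4)$. From $C^2=d^2-\sum_k\nu_k^2\ge0$ and the regularity form $4g+4\le2\delta+C^2$ of (3) one extracts the complementary bound $\sum_k(\nu_k-1)\le3d-4$, which together with $\nu_k\ge2$ also bounds $\delta$ and $\sum_k\nu_k$. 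These scalar inequalities alone are not enough: a model with one point of multiplicity close to $d-4$ and otherwise nodes satisfies all of them while possessing no reducing triple. The extra ingredient is that $C$ has general moduli, which is the standing hypothesis behind the derivation of (1)--(3); such a model has a pencil of degree $4$ and so cannot be general. I would therefore feed in Brill--Noether theory for the series cut on $C$ by the lines and conics through subsets of $\Sing\Gamma$: projection from a point of multiplicity $\nu$ exhibits a $g^1_{d-\nu}$, so generality forces $\nu\le d-\mathrm{gon}(C)$, and the higher transforms recorded by $p\in G$ likewise correspond to auxiliary series constrained by $\rho\ge0$. This is what caps the multiplicity distribution from above.

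The hard part is exactly this optimization: one must show that ``no triple exceeds $d$'', combined with the Brill--Noether cap $\nu_1\le d-\mathrm{gon}(C)$ and the identity $\sum_k\binom{\nu_k}{2}=\binom{d-1}{2}-g$, is incompatible with (1) once $\tfrac32(d-2)\ge11$. I would run it inside the lattice $\mathbb L$: the group $G$ lets me replace $[C]$ by a class reduced against every root $H-E_i-E_j-E_k$, and for such a reduced class the bound $\nu_1+\nu_2+\nu_3\le d$ forces $\nu_3\le d/3$ while the gonality cap simultaneously prevents the mass from concentrating in $\nu_1$, so that $\sum_k\binom{\nu_k}{2}$ falls short of $\tfrac12(d-2)(d-4)$ unless $d$ (equivalently $g$) is small. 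Tracking the precise constants through this spreading estimate is where the threshold $\tfrac32(d-2)\ge11$ emerges, and it is the step I expect to be most delicate: the naive convexity bounds on $\sum_k\nu_k^2$ are lossy exactly in the node-dominated regime, so they must be sharpened using the finer Brill--Noether input on the induced pencils and nets rather than the single inequality (1).
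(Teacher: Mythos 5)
A caveat first: the paper never proves this statement. It is quoted as Segre's theorem (``Segre shows that\dots''), with the proof left to \cite{Se1}; the introduction says explicitly that complete proofs are delegated to the bibliography. So your proposal must be judged on its own completeness rather than against an argument in the text. Your framing is correct as far as it goes: the generators of $G$ are the reflections $s_\alpha$ with $\alpha = H - E_i - E_j - E_k$, one has $h_{s_\alpha}([C]) = 2d - (\nu_i + \nu_j + \nu_k)$, and, by the standard chamber theory of this Lorentzian lattice (applicable because $C^2 \geq 0$), the conclusion of the theorem is equivalent to the three largest multiplicities summing to more than $d$ --- though note that only the trivial direction of this equivalence is needed. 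Your sharpest observation is that the printed hypotheses cannot imply this, and it is correct. Take $d = 10$ with multiplicities $(6, 2^{11})$: then $g = \binom{9}{2} - \binom{6}{2} - 11 = 10$, $C^2 = 20 > 2g-2$, so $h^1(\mathcal O_S(C)) = h^1(\mathcal O_C(C)) = 0$ automatically and $h^0(\mathcal O_C(C)) = 11$; hypotheses (1), (2), (3) read $10 \le 12$, $11 \ge 1$, $35 \le 35$, and $11 \le \frac32(d-2) = 12$ holds. Yet $6 + 2 + 2 = 10 = d$, so $[C]$ lies in the closed fundamental chamber and \emph{no} $p \in G$ lowers the degree. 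Such curves exist: they are plane models of $4$-gonal genus-$10$ curves, obtained from nodal members of $|4E + 10F|$ on $\mathbb F_1$ by blowing down $E$. So the statement as printed is a loose paraphrase of Segre; the operative hypothesis, visible in the surrounding discussion of Q3 and restored in the corollaries that follow, is that the curves have general moduli, and re-installing it, as you do, is the right reading rather than a defect.

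The genuine gap is everything after that. Having reduced the theorem to ``general moduli together with (1)--(3) forces $\nu_1 + \nu_2 + \nu_3 > d$'', you do not prove this statement: you list the ingredients you would combine --- the identity $\sum_k \binom{\nu_k}{2} = \binom{d-1}{2} - g$, the bound $\sum_k(\nu_k - 1) \le 3d - 4$ (which you derive correctly), a gonality cap $\nu_1 \le d - \mathrm{gon}(C)$ --- and then declare the quantitative step in which the threshold must emerge to be ``the step I expect to be most delicate''. That step is not a deferrable verification; it is the entire content of the theorem, the part the survey itself locates as ``the hard part of Segre's work\dots developed essentially in the abstract lattice $\mathbb L$''. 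Everything before it, in your write-up and in the paper's preparation alike, is shared scaffolding; and your own counterexample family shows the missing estimate cannot be a soft convexity argument, since it must use the Brill--Noether input in an essential, quantitative way to exclude precisely the node-dominated, low-gonality configurations. As it stands, the proposal is a correct reduction plus an honest admission that the proof of the reduced statement is absent, so the theorem is not established.
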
 \par 
 Now assume $11 \leq g$. Since $g \leq \frac 32(d - 2)$, the theorem implies that there exists a birational map $f: \mathbf P^2 \to \mathbf P^2$ such that
 $f(C)$ has degree $d' < d$. Globalizing the construction of $f$, one can show that there exists a family of curves $\mathbb P'$ satisfying all the
 conditions assumed for $\mathbb P$, but of degree $d' < d$. The iteration of this argument yelds a contradiction. Then it follows:
\begin{corollary} Assume that $C$ has general moduli, that $\mathcal O_C(C)$ is non special and that  $ \dim \ \vert \mathcal O_S(C) \vert \geq 1$. Then $g < 11$.
\end{corollary}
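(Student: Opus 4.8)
The plan is to argue by contradiction: assuming $g \geq 11$, I would feed the data $(S,C)$ into the lattice-theoretic theorem above (Segre's theorem) to produce a plane model of strictly smaller degree, and then descend. First I would reconstruct the $\rm Q3$ framework from the hypotheses. Pushing $C$ down along the blowing up $\sigma\colon S \to \mathbf P^2$ gives an integral plane curve $\Gamma=\sigma(C)$ of some degree $d$ and geometric genus $g$; after Noether's reduction (the \emph{Assumption}) we may take $\Gamma$ to have ordinary singularities, and replacing $\Gamma$ by its $\PGL(3)$-orbit while letting the $\delta=\sum_i\delta_i$ assigned singular points vary in $Hilb^o_{\overline\nu}(\mathbf P^2)$ puts us exactly in the family setting $\mathbb P=\bigcup_t\mathbb P_t$, with fibre dimension $\dim\mathbb P_t=\dim\vert\mathcal O_S(C)\vert\geq 1$. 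Since $C$ has general moduli, the natural map $m_T$ is dominant.

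Next I would check the three numerical hypotheses of the theorem together with $h^1(\mathcal O_S(C))=0$. As $S$ is rational, $h^1(\mathcal O_S)=h^2(\mathcal O_S)=0$, so the sequence $0\to\mathcal O_S\to\mathcal O_S(C)\to\mathcal O_C(C)\to 0$ gives $h^0(\mathcal O_C(C))=h^0(\mathcal O_S(C))-1\geq 1$ (condition (2), whence $C^2\geq 0$) and $h^1(\mathcal O_S(C))=h^1(\mathcal O_C(C))$; the non-speciality of $\mathcal O_C(C)$ then forces $h^1(\mathcal O_S(C))=0$, i.e. $\mathbb P_t$ is regular. Brill--Noether theory applied to the $g^2_d$ cutting out the plane model yields $\rho(g,2,d)\geq 0$, that is $g\leq\tfrac32(d-2)$ (condition (1)). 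Finally condition (3), namely $3g+5\leq 2\delta+h^0(\mathcal O_C(C))$, is precisely the preceding lemma, coming from the dominance of $m_T$ via the dimension count $3g-3=\dim\mathcal M_g\leq\dim\mathcal P_{\overline\nu}+h^0(\mathcal O_C(C))$.

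Now, since $g\geq 11$ and $g\leq\tfrac32(d-2)$, we have $11\leq\tfrac32(d-2)$, so the theorem provides $p\in G$ with $h_p([C])<d$. I would interpret $p$ as a product of quadratic Cremona transformations centred at triples of points of $\Sing\,\Gamma$, i.e. a birational $f\colon\mathbf P^2\dashrightarrow\mathbf P^2$ carrying $C$ to a plane curve of degree $d':=h_p([C])<d$ whose normalization is again $C$. Because $p$ is an isometry of $\Pic S$, the new model satisfies ${C'}^2=C^2$, and the strict transform of the linear system preserves its dimension, so $h^0(\mathcal O_C(C'))=h^0(\mathcal O_C(C))$; Riemann--Roch then gives $h^1(\mathcal O_C(C'))=h^1(\mathcal O_C(C))=0$, so $\mathcal O_C(C')$ is again non-special. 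Thus all the hypotheses persist for a family $\mathbb P'$ of degree $d'<d$ with the same genus $g\geq 11$. Choosing $d$ minimal among the degrees of families satisfying the hypotheses (equivalently, iterating the construction) produces a family of strictly smaller degree, a contradiction; hence $g<11$.

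I would flag the \emph{globalization} as the main obstacle. The map $f$ is produced only for the general member, and one must verify that, after a finite base change $T'\to T$, it spreads out to a morphism of families so that $\mathbb P'$ is again integral, dominates $\mathcal M_g$, has constant fibre dimension $\dim\mathbb P'_t=\dim\mathbb P_t\geq 1$, and has general member with ordinary singularities; only then is the descent legitimate. The deep lattice-theoretic input that manufactures the reflection $p$ with $h_p([C])<d$ is exactly Segre's theorem above, which I take as given.
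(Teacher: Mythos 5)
Your proposal is correct and takes essentially the same route as the paper: it verifies that the hypotheses of Segre's theorem (conditions (1)--(3) together with regularity, i.e.\ $h^1(\mathcal O_S(C))=0$, obtained from the standard sequence $0 \to \mathcal O_S \to \mathcal O_S(C) \to \mathcal O_C(C) \to 0$ and the non-speciality of $\mathcal O_C(C)$) hold, invokes the theorem to produce a plane model of strictly smaller degree when $g \geq 11$, and derives a contradiction by iteration. The globalization issue you flag is exactly the step the paper itself only asserts (``Globalizing the construction of $f$, one can show that there exists a family of curves $\mathbb P'$\dots''), so your write-up matches the paper's argument, including its level of detail at that point.
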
 \par 
If $\mathcal O_C(C)$ is special and effective we have $2h^0\mathcal O_C(C)) \leq C^2$ by Clifford's theorem. Then the same argument yelds:
\begin{theorem}   Assume that (1), (2), (3) hold true. If  $37 \leq \frac 32(d - 2)$ then there exists $p \in G$ such that
$p([C]) < d$.
\end{theorem}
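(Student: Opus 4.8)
The plan is to prove the contrapositive inside the lattice $\mathbb L = \Pic S$: I will show that if no $p \in G$ strictly lowers the degree, that is, if $[C]$ already has least $H$-degree in its orbit $G\cdot[C]$, then necessarily $\frac 32(d-2) < 37$. Since the quantity at issue is the degree $h_p([C]) = \langle p([C]),H\rangle$ of the transformed class, and this degree is bounded below on the orbit (the classes being effective), a least representative exists and the contrapositive is equivalent to the statement.

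First I would pass to that least representative. As $G$ is generated by the reflections $s_\ell$ in the $(-2)$-classes $\ell = H - E_{i_1} - E_{i_2} - E_{i_3}$, and $\langle s_\ell(D),H\rangle = \langle D,H\rangle + \langle D,\ell\rangle$, minimality of the $H$-degree forces $\langle [C],\ell\rangle \geq 0$ for every such $\ell$. Writing $[C] = dH - \sum_{i,j}\nu_i E_{ij}$ and reordering the multiplicities as $m_1 \geq m_2 \geq \cdots \geq 0$ (non-negativity coming from effectivity), the relation $\langle [C],\ell\rangle = d - (m_a+m_b+m_c)\ge 0$ over all triples reduces to the single chamber inequality $m_1 + m_2 + m_3 \leq d$; three centres are available because $\delta \geq 3$ once $g$ is large. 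All the data entering (1)---(3) --- the genus $g$, the self-intersection $C^2$, the number $\delta = 9 - K_S^2$ of points and $h^0(\mathcal O_C(C))$ --- are $G$-invariant, so passing to this representative leaves the hypotheses untouched.

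Next I would record the numerical inputs. Writing $S_1 = \sum_p m_p$ and $S_2 = \sum_p m_p^2$, one has $C^2 = d^2 - S_2$, $\ C\cdot(-K_S) = 3d - S_1$, and hence $C^2 = 2g - 2 + 3d - S_1$ by adjunction. Condition (1) reads $2g + 6 \leq 3d$ and condition (2) gives $C^2 \geq 0$. The one genuine change from the regular Theorem is that, $\mathcal O_C(C)$ being special and effective, I replace the equality $h^0 = C^2 + 1 - g$ used there by Clifford's inequality $2h^0(\mathcal O_C(C)) \leq C^2$. Substituting this into (3) turns the regular key inequality $4g + 4 \leq 2\delta + C^2$ into $6g + 10 \leq 4\delta + C^2$, enlarging the coefficients of $g$ and $\delta$ from $4,2$ to $6,4$. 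From speciality, $C^2 \leq 2g-2$, I then extract $S_1 \geq 3d$, and from the new key inequality together with $C^2 = 2g-2+3d-S_1$ I extract $\delta \geq g + 3$.

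Finally I would substitute the chamber inequality $m_1 + m_2 + m_3 \leq d$ into these relations and run the estimate of the previous Theorem unchanged, the enlarged coefficients produced by Clifford replacing the constant $11$ by $37$. The main obstacle is precisely this last, purely lattice-theoretic optimisation: one must bound $S_2$, and hence $C^2$ and $g$, over all multiplicity vectors subject to $m_1 + m_2 + m_3 \leq d$, $m_p \geq 0$ and the inequalities above, and check that the extreme case yields exactly $\frac 32(d-2) < 37$. The delicate point --- and the reason condition (2) is indispensable --- is that the degenerate configurations with $[C]$ a multiple of the anticanonical class $-K_S$ (the Halphen-type pencils already met in $\mathcal N_{1,9}$) are fixed by all of $G$ and would violate the conclusion; they are ruled out exactly because for them $\mathcal O_C(C)$ is a non-trivial degree-zero line bundle, so $h^0(\mathcal O_C(C)) = 0$ and (2) fails. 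Keeping track of this exclusion throughout the optimisation is what makes the constant $37$ honest.
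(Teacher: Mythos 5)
Your overall strategy coincides with the paper's own: there, Theorem 2.16 is obtained in one line by replacing the regularity identity $h^0(\mathcal O_C(C)) = C^2 + 1 - g$ used for the preceding theorem with the Clifford bound $2h^0(\mathcal O_C(C)) \leq C^2$, and then rerunning Segre's lattice-theoretic argument unchanged. Your reduction to the chamber inequality $m_1 + m_2 + m_3 \leq d$ via the reflections $s_\ell$, and your deductions $S_1 \geq 3d$ (from speciality) and $\delta \geq g+3$, are correct steps of that same argument, so up to this point you are faithfully reproducing the intended proof.

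However, there is a genuine gap, and it sits exactly at the point you yourself flag as the ``main obstacle.'' The final optimisation, with the constraints as you list them ($m_p \geq 0$, $m_1+m_2+m_3 \leq d$, $2g+6 \leq 3d$, $C^2 \geq 0$, $S_1 \geq 3d$, $6g+10 \leq 4\delta + C^2$, $\delta \geq g+3$), is simply false: the Halphen classes $[C] = -mK_S$, that is $\delta = 9$, $d = 3m$, all $m_p = m$, satisfy every one of them (chamber and speciality hold with equality, $C^2 = 0$, $g = 1$, and the key inequality reads $16 \leq 36$), are fixed by all of $G$, and have unbounded degree, so no bound of the form $\frac 32(d-2) < 37$ can emerge. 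You anticipated precisely this danger but dismissed it with an incorrect claim: for a genuine Halphen configuration the restriction $\mathcal O_C(C)$ is \emph{trivial} --- this is exactly why such a curve moves in a pencil at all --- so $h^0(\mathcal O_C(C)) = 1$ and condition (2) holds; it does not fail as you assert. What actually excludes these classes is Clifford's inequality in the strong form the paper invokes, $2h^0(\mathcal O_C(C)) \leq C^2$, which is valid only away from the extremal cases ($\mathcal O_C(C)$ trivial, canonical, or a hyperelliptic multiple of the $g^1_2$); combined with (2) it forces $C^2 \geq 2$, a constraint missing from your list because you used Clifford only after substituting it into (3), retaining from (2) just the weaker $C^2 \geq 0$. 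To make your contrapositive honest you must carry $C^2 \geq 2h^0(\mathcal O_C(C)) \geq 2$ into the optimisation and treat the Clifford-extremal bundles separately (this is where the Halphen pencils live); as written, your feasible region is unbounded and the concluding step fails.
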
 
\begin{corollary} Assume $C$ has general moduli and that  $ \dim \ \vert \mathcal O_S(C) \vert \geq 1$. Then $g \leq 37$.
\end{corollary}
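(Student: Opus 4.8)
The plan is to imitate the proof of the preceding corollary, replacing the regular-case theorem by the theorem just stated and handling the non-regular case through Clifford's theorem. First I would dispose of the case in which $\mathcal O_C(C)$ is non special: there the hypotheses of the previous corollary are satisfied, so $g < 11 \leq 37$ and we are done. Thus the substance lies in the case where $\mathcal O_C(C)$ is special; this class is also effective, since $h^0(\mathcal O_C(C)) \geq 1$.

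Assume then that $\mathcal O_C(C)$ is special. Because $C$ has general moduli the family $\mathbb P$ of embeddings $C \subset S$ dominates $\mathcal M_g$, and because $\dim \vert \mathcal O_S(C) \vert \geq 1$ we have $\dim \mathbb P_t = \dim \vert C \vert = h^0(\mathcal O_C(C)) \geq 1$. Hence conditions (1), (2), (3) hold: condition (1) from Brill--Noether theory, since the general curve $C$ carries a net of degree $d$, i.e. $W^2_d(C) \neq \emptyset$ forces $\rho(g,2,d) \geq 0$ and therefore $g \leq \frac 32(d-2)$; and conditions (2), (3) from the lemma. Since $\mathcal O_C(C)$ is special and effective, Clifford's theorem yields $2 h^0(\mathcal O_C(C)) \leq C^2$, which is precisely the arithmetic input feeding the theorem above.

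Now I would run the descent. Suppose, for contradiction, that $g \geq 38$. Condition (1) gives $\frac 32(d-2) \geq g \geq 38$, so in particular $37 \leq \frac 32(d-2)$ and the theorem applies, producing an isometry $p \in G$, a composition of the quadratic transformations generating $G$, with $h_p([C]) < d$. This $p$ is the class-level trace of a birational map $f \colon \mathbf P^2 \dashrightarrow \mathbf P^2$ sending the general $C$ to a plane curve of degree $d' = h_p([C]) < d$ and the same genus $g$. Globalizing $f$ over the base, after a finite base change exactly as in the reduction to ordinary singularities that produced the standard diagram, yields a new family $\mathbb P'$ of degree $d' < d$ which still dominates $\mathcal M_g$, still has positive-dimensional fibres, and still satisfies (1), (2), (3). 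Iterating, one obtains a strictly decreasing sequence $d > d' > d'' > \cdots$ of degrees attached to families of one and the same genus $g \geq 38$; but condition (1) forces every term to satisfy $d^{(k)} \geq \frac{2g}{3} + 2 > 27$, so no such infinite strictly decreasing sequence of integers can exist. This contradiction shows $g \leq 37$.

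The genuinely hard step is the theorem above itself, namely the lattice computation inside $\Pic S$ that manufactures the degree-lowering isometry $p$; but that is granted to us here. Within the corollary the delicate point is instead the globalization: one must verify that the pointwise Cremona reduction of the general fibre spreads out to a morphism of families preserving the three numerical conditions, in particular that the new fibre dimension $\dim \mathbb P'_t$ stays positive and that $\mathbb P'$ still dominates $\mathcal M_g$, so that the descent may legitimately be iterated down to a contradiction.
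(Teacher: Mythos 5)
Your proposal is correct and follows essentially the same route as the paper: the non-special case is absorbed by the earlier corollary ($g<11$), while in the special case Clifford's inequality feeds the $37 \leq \frac{3}{2}(d-2)$ theorem, whose degree-lowering isometry is globalized to a Cremona reduction of the family and iterated to a contradiction. You also correctly identify that the globalization step is the one point the paper itself only asserts rather than proves, so nothing is missing relative to the paper's own argument.
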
 \par 
Let us derive some weak conclusions concerning question Q4: \it is any plane curve $\Gamma$ having general moduli and genus $g \geq 11$  linearly rigid? \rm  \par
Segre's theorem suggests  that such a $\Gamma$ should be linearly rigid. Equivalently, it seems very possible that no smooth curve $C$, having general 
moduli and genus $g \geq 11$, embeds in a rational surface $S$ so that $ \dim \ \vert \mathcal O_S(C) \vert \geq 1$ and the map $m: \vert \mathcal O_S(C) \vert \to \mathcal M_g$
is not constant. \par
Segre's conjecture implies such a property if $\Gamma$ has ordinary  singular points in general position. Indeed the latter assumptions imply the regularity of the linear system
$\mathbb P_t$ of all curves having at least the same multiplicity of $\Gamma$ at each point $x \in  \Sing \ \Gamma$. Then, by corollary 2.15, it follows $ \dim \ \mathbb P_t = 0$ and this
implies that $\Gamma$ is linearly rigid. \par The non existence, for any $g \geq 11$, of families of non linearly rigid curves $\Gamma$ with general moduli  remains however unproved.  
Perhaps unexpectedly, Segre discovered that, as soon as $g$ grows,  $\mathcal M_g$ is far from being covered by rational curves $m(P)$, where $P$ is a pencil on a rational
surface and $m: P \to \mathcal M_g$ is the moduli map.   
\subsection{Curves with general moduli and algebraic surfaces}
Due to year 1938 racial laws and to  racial prosecution, Beniamino Segre left Italy for Britain. After Second World War he came back and was professor at the University La Sapienza in Rome, on the chair of Higher Geometry. He retired in the year 1973. \par   Young students 
following his lectures were exposed to the main problems in Algebraic Geometry.  This  was certainly one of the effective ways in which  the classical circle of ideas could be
passed to  young generations. \par  As an example of this passage, it is natural to mention some work which is directly related to problems Q1, Q2, Q3. 
These questions were indeed reconsidered in 1975 by E. Arbarello in \cite{Ar}. In particular this paper is  a connecting  point  between the past of our story and  the forthcoming part, where not only rational surfaces are in use.  \par
Question Q1 is considered in \cite{Ar} for any surface.  The result obtained relies on the case of rational surfaces, due to Castelnuovo and completed by Segre, see \cite{C1} and \cite{Se1}. Joining together all these results we have:
 \begin{theorem} Let $P $ be a linear system of curves  of genus $g$ on  a smooth, connected surface $S$.  If $ P$ dominates $\mathcal M_g$ then $g \leq 6$ and $S$ is rational.
\end{theorem}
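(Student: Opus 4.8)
The plan is to reduce the whole statement to the case of rational surfaces, where the bound $g\leq 6$ is \emph{exactly} the Castelnuovo--Segre answer to $\mathrm{Q1}$ recalled above (no $\mathbb P$ with $T$ a point dominates $\mathcal M_g$ for $g\geq 7$). Thus the genuine task is to show that \emph{a surface $S$ carrying a linear system $P$ of genus $g$ curves with dominant moduli map is necessarily rational}; the bound then follows for free. First I would make two harmless reductions. Blowing up $S$ I may assume the general member $C\in P$ is smooth of genus $g$, so that adjunction reads $2g-2=C^2+K_S\cdot C$ and $N_{C/S}\cong\mathcal O_C(C)$; this affects neither the rationality of $S$ nor the moduli map. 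Dominance of $m\colon P\to\mathcal M_g$ forces $\dim P\geq 3g-3$ and that the general $C$ has general moduli, hence is irreducible and moves in $P$, so $C^2\geq 0$.

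The numerical heart is then surprisingly elementary. From the exact sequence $0\to\mathcal O_S\to\mathcal O_S(C)\to\mathcal O_C(C)\to 0$ and $h^0(\mathcal O_S)=1$ one gets $\dim P\leq\dim|C|\leq h^0(\mathcal O_C(C))$, and since $\mathcal O_C(C)$ is effective of degree $C^2\geq 0$ also $h^0(\mathcal O_C(C))\leq C^2+1$. Combining with adjunction,
\[
3g-3\ \leq\ C^2+1,\qquad\text{whence}\qquad K_S\cdot C=(2g-2)-C^2\ \leq\ 2-g .
\]
For $g\geq 3$ this gives $K_S\cdot C<0$, and moreover $C^2\geq 3g-4>0$, so the members of $P$ form a covering family of $S$. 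A pseudo-effective class pairs non-negatively with a covering family, so $K_S$ cannot be pseudo-effective; equivalently $\kappa(S)=-\infty$, and by the Enriques classification $S$ is ruled.

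It remains to exclude irrational ruled surfaces. If $S$ were ruled over a base $B$ with $g(B)\geq 1$, the general $C$ would admit a nonconstant morphism to $B$ (it cannot lie in a fibre, being of genus $g\geq 2$), inducing a nonzero homomorphism from the Jacobian of $B$ into the Jacobian of $C$. But a general curve has simple Jacobian, so the only possibilities are $g(B)=0$ (excluded) or $g(B)=g$ with $C\to B$ an isomorphism --- impossible, since $B$ is fixed while $C$ varies in moduli. Hence $B=\mathbf P^1$ and $S$ is rational. Feeding this into the rational-surface case yields $g\leq 6$; the borderline value $g=2$, where the inequality only gives $K_S\cdot C\leq 0$, is classical and can be handled directly (e.g.\ $\mathcal M_2$ is rational).

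The step I expect to be the real obstacle is not the numerics but the reliance on the Castelnuovo--Segre bound itself: that is the deep external input, resting on the lattice-theoretic reduction of \S2.2. On the side of what I would prove directly, the delicate points are ensuring that the general member is genuinely irreducible of genus $g$ (so that adjunction and the covering property survive the initial blow-up) and the clean passage \emph{ruled} $\Rightarrow$ \emph{rational} via simplicity of the general Jacobian. The pseudo-effectivity argument and the adjunction estimate are otherwise routine.
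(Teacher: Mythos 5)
Your proposal is correct for $g\geq 3$ and follows essentially the same route as the paper: deduce $K_S\cdot C<0$ from $\dim P\geq 3g-3$ plus adjunction, conclude $\kappa(S)=-\infty$ so $S$ is ruled, exclude an irrational base because a curve with general moduli admits no nonconstant map to a fixed curve of positive genus, and then invoke the classical Castelnuovo--Segre bound on rational surfaces (which the paper also treats as external input, via Castelnuovo's hyperelliptic theorem for $g\geq 10$ and Segre for $g=7,8,9$). The only substantive divergence is your weaker estimate $h^0(\mathcal O_C(C))\leq C^2+1$ in place of the paper's nonspeciality argument (Clifford gives $h^0(\mathcal O_C(C))>g$ forces $\mathcal O_C(C)$ nonspecial, hence $C^2\geq 4g-4$ and $K_S\cdot C\leq -2g+2$), which is what creates your $g=2$ borderline; note that your parenthetical fix there (``$\mathcal M_2$ is rational'') does not address the rationality of $S$, whereas the sharper estimate disposes of $g=2$ by the same argument.
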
   
\begin{proof} Up to replacing $S$ by an appropriate birational model,  we can assume that a general $C \in P$ is smooth and that $P$ is base point free. We can also assume $g \geq 3$. Since $\vert C \vert$ dominates $\mathcal M_g$, we have $ \ \dim \ \vert C \vert \geq \  \dim \ P \geq 3g - 3$.  Hence $\mathcal O_C(C)$ is non special of degree $C^2 \geq 4g - 3$ and adjunction formula yelds $CK_S \leq -2g + 1$. Since $\vert C \vert$ is base point free, it follows that $\vert mK_S \vert$ is empty for $m \geq 1$. Hence $S$ is ruled and birational to $R \times \mathbf P^1$, in particular the projection $p: R \times \mathbf P^1 \to R$ induces a finite map $p_C: C \to R$. Since the curves of  $\vert C \vert$ have general moduli, this is impossible unless $R$ is rational.  Hence $S$ is rational. Now let $g \geq 10$, then we have $ \dim \vert C \vert \geq 3g - 3 \geq 2g + 7$. Moreover, a well known theorem of Castelnuovo on linear systems of curves on a rational surface, \cite{C1} 1.3, implies that then the elements of $\vert C \vert$ are hyperelliptic. This contradicts the generality of $C$ and implies the statement for $g \geq 10$. The cases $g = 7, 8, 9$ are excluded by Segre in \cite{Se1} 4.
\end{proof} \par 
 The bound $g \leq 6$, offered by the previous theorem, is sharp. Indeed we have already seen in section 2 that, when $g \leq 6$, the space $\mathcal M_g$ is dominated by a fixed linear system $P$ of integral plane sextics with $10-g$ double points.\medskip \par 
Later, in 1981, Arbarello and Cornalba reconsidered another remarkable result of Segre in the paper `Footnotes to a Paper by Beniamino Segre',  see \cite{AC1} and \cite{Se3}. As summarized in \cite{AC1} after the title, these papers deal with \it the number of $g^1_k$'s on a general k-gonal curve, and the unirationality of the Hurwitz Spaces of $k$-gonal curves. \rm We reformulate these issues in the vein of our exposition. Let us consider the Hurwitz space 
$$ \mathcal H_{g,k}$$ 
of the finite covers of degree $k$ of $\mathbf P^1$ by curves of genus $g$. The following is a well known result proved by Fulton in \cite{Fu2}:
\begin{theorem} $\mathcal H_{k,g}$ is irreducible. \end{theorem}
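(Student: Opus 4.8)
The plan is to run the classical monodromy argument of Clebsch and Hurwitz. First I would reduce to the locus of \emph{simply branched} covers: a general degree $k$ cover $\phi\colon C\to\mathbf P^1$ has only simple branch points, and such covers form a dense open subset of $\mathcal H_{k,g}$, so it suffices to prove that this open subset is irreducible. By Riemann--Hurwitz, a simply branched degree $k$ cover of $\mathbf P^1$ by a genus $g$ curve has exactly $n:=2g+2k-2$ branch points. Sending a cover to its reduced branch divisor defines a map
$$
\beta\colon \mathcal H_{k,g}\longrightarrow B,
$$
where $B$ is the space of $n$ distinct points on $\mathbf P^1$, an open subset of $\mathrm{Sym}^n\mathbf P^1\cong\mathbf P^n$, hence smooth and irreducible. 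The key structural fact is that $\beta$ is a finite topological covering map, unramified because simple covers are rigid over a fixed branch locus. Since a covering of a smooth connected base is a manifold whose connected components are its irreducible components, irreducibility of $\mathcal H_{k,g}$ is equivalent to connectedness, which in turn is equivalent to transitivity of the monodromy action of $\pi_1(B,b_0)$ on a fibre $\beta^{-1}(b_0)$.

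Next I would make the fibre and the monodromy explicit. Fixing a base branch divisor $b_0=\{p_1,\dots,p_n\}$ and a base point of $\mathbf P^1\setminus b_0$, the Riemann existence theorem identifies $\beta^{-1}(b_0)$ with the set of tuples $(\tau_1,\dots,\tau_n)$ of transpositions in $S_k$ satisfying $\tau_1\cdots\tau_n=1$ and generating a transitive subgroup, taken up to simultaneous conjugation by $S_k$. Here simple branching forces each local monodromy to be a transposition, the product relation comes from the single relation in $\pi_1(\mathbf P^1\setminus b_0)$, transitivity encodes connectedness of $C$, and conjugation identifies isomorphic covers. Under this dictionary $\pi_1(B,b_0)$ is the spherical braid group on $n$ strands, whose action on such tuples is generated by the elementary Hurwitz moves
$$
(\dots,\tau_i,\tau_{i+1},\dots)\longmapsto(\dots,\tau_i\tau_{i+1}\tau_i^{-1},\tau_i,\dots).
$$
Thus the theorem reduces to a purely combinatorial transitivity statement.

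The heart of the matter, and the step I expect to be the main obstacle, is the Clebsch--Hurwitz lemma: the group generated by the Hurwitz moves, together with global $S_k$-conjugation, acts transitively on the set of $n$-tuples of transpositions in $S_k$ that generate a transitive (hence, for $n$ this large, the full) subgroup and multiply to the identity. I would prove this by induction on $k$, reducing every admissible tuple to a single normal form. Using Hurwitz moves one replaces the transpositions by a spanning tree of the relevant graph on $\{1,\dots,k\}$ and straightens it to the standard path $(1\,2),(2\,3),\dots,(k-1\,k)$, which generates $S_k$; the surplus transpositions, accounting for the $2g$ genus-contributing branch points, are then slid together into equal adjacent pairs and transported to a fixed position using the product relation. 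The delicate points are maintaining transitivity of the generated subgroup under each move and tracking both the product constraint and the (necessarily even) parity of $n$; this combinatorial bookkeeping is the technical core, and is precisely what is carried out in \cite{Fu2}.

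Finally, transitivity of the braid-and-conjugation action shows that $\beta^{-1}(b_0)$ is a single monodromy orbit, so the covering $\beta$ is connected; hence $\mathcal H_{k,g}$ is connected, and being smooth it is irreducible.
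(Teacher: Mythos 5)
Your proposal is correct and is essentially the paper's own approach: the paper offers no argument of its own, simply citing Fulton \cite{Fu2}, and the proof behind that citation is exactly the classical Clebsch--Hurwitz monodromy argument you reconstruct --- reduction to simply branched covers, the \'etale branch map to the configuration space of $n=2g+2k-2$ points, the Riemann existence dictionary with tuples of transpositions, and transitivity of the braid action via Hurwitz moves. Nothing further is needed.
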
 \par 
 This implies that the corresponding universal Brill-Noether locus $\mathcal W^1_k$ in $ \Pic_{k,g}$ is irreducible too. A first question is:
\begin{itemize} \it
\item[$\circ$] $\rm [Q5]$ Is $\mathcal H_{g,k}$, and hence $\mathcal W^1_k$, unirational or uniruled for some $g$? 
\end{itemize} \par 
Moreover consider the forgetful map
$$
f: \mathcal W^1_k \to \mathcal M_g
$$
and denote its image by $\mathcal M^1_{k,g}$ as usual. A second question is:
\begin{itemize} \it
\item[$\circ$] $\rm [Q6]$ If $f: \mathcal W^1_k \to \mathcal M^1_{k,g}$ is generically finite, what is its degree?
\end{itemize} \par 
Question $\rm Q6$ makes sense if  $\rho(k,g,1) \leq 0$. Otherwise $f$ is not generically finite. If $\rho(k,g,1) = 0$  the answer is offered by the degree of the class
of $W^1_k(C)$ in $ \Pic^k (C)$. Computing it as in \cite{ACGH} 4.4 p. 320, one obtains:
$$
deg \  f = \frac {g!}{(g - k + 1)! (g - k + 2)!}.
$$
If $\rho(k,g,1)$ is negative, namely if $k < [\frac{g+3}2]$, then it is proved in \cite{Se3} that  $f$ is generically finite. More precisely it is shown that the fibre of $f$ is finite at the moduli point $x \in \mathcal M^1_{k,g}$ of a curve $C$ such that $W^1_{k'}(C) = \emptyset$  for $k' < k$. \medskip  \par 
Let us sketch briefly some geometric motivations behind the proof that $f$ is generically finite.  Let $L \in W^1_k(C)$ and $k < [\frac {g+3}2]$. Then the Petri map
$$
\mu: H^0(L) \otimes H^0(\omega_C(-L)) \to H^0(\omega_C)
$$
is not injective. This follows from geometric Riemann-Roch and the count of  dimensions. On the other hand it is well known that $Coker \ \mu$ is isomorphic to the tangent space at $L$ to $W^1_k(C)$,
\cite{ACGH} prop. 4.2. Since $\mathcal W^1_k$ is irreducible, it follows that $f$ is finite if $Coker \ \mu$ is zero dimensional for a general pair $(C,L)$.  Hence it suffices to produce a pair $(C,L)$ such
that $\mu$ is surjective. This is equivalent to say that $ \dim \ \mbox{Ker} \  \ \mu = g - 2k + 2$. Counting dimensions, it is not restrictive to assume that $\vert L \vert$ is a base point free pencil. Then the base point free pencil trick, \cite{ACGH} p. 126, implies that $ \dim \ \mbox{Ker} \  \ \mu = h^0(\omega_C(-2L)) = h^1(L^{\otimes 2})$. \par Now consider the plane blown up in a point i.e. the Hirzebruch surface $\mathbb F_1$. Let $\vert F \vert$ be its ruling and $E$ its exceptional line. Segre shows that the condition $h^1(L^{\otimes 2}) = g - 2k + 2$ is satisfied if $C$ is the normalization of a general nodal integral curve of geometric genus $g$
$$
\Gamma \in \vert kE + mF \vert,
$$
where $m := [\frac{g+k+3}2]$ \cite{Se3} 4 p. 542. In other words $C$ is birational to a plane curve $\Gamma'$ of degree $d = k + m$ such that $ \Sing \ \Gamma'$ consists of finitely many nodes and a unique other ordinary singularity of multiplicity $m$. This result of Segre can be strengthened. It is proved in \cite{AC1} that: \begin{theorem} Assume $k < [\frac{g+3}2]$ then $f$ has degree one. Moreover $W^1_k(C)$ consists of a single element if $L \in W^1_k(C)$ is globally generated.
\end{theorem}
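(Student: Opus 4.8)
The plan is to reduce the statement to a count of $g^1_k$'s on a single, explicitly chosen curve, and then to verify on that model that the count is one.

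First I would exploit the two structural facts already in hand: $\mathcal W^1_k$ is irreducible (it dominates the irreducible Hurwitz space $\mathcal H_{g,k}$ of Fulton's theorem), and since $k < [\frac{g+3}2]$ forces $\rho(k,g,1) < 0$, the forgetful map $f \colon \mathcal W^1_k \to \mathcal M^1_{k,g}$ is generically finite (Segre, via the non-injectivity of the Petri map and the identification $\operatorname{Coker}\mu \cong T_L W^1_k(C)$). Consequently $\deg f$ equals the number of points of a general fibre, i.e. the number of line bundles in $W^1_k(C)$ for $C$ general in the $k$-gonal locus. I would also note at the outset that such a general $C$ has gonality exactly $k$: on Segre's model $W^1_{k'}(C)=\emptyset$ for $k'<k$, so every $g^1_k$ is base point free. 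This makes the hypothesis ``globally generated'' automatic and collapses the two assertions of the theorem into one: it suffices to produce a single general $k$-gonal curve carrying a unique $g^1_k$.

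Next I would fix the explicit curve. Take $C$ to be the normalization of a general nodal $\Gamma \in |kE + mF|$ on $\mathbb F_1$, with $m = [\frac{g+k+3}2]$; by Segre's construction this represents a general point of $\mathcal M^1_{k,g}$, and the ruling $|F|$ restricts to a distinguished $g^1_k$, namely $L = \mathcal O_C(F)$. Here the work already recalled pays off: Segre's equality $h^1(L^{\otimes 2}) = g - 2k + 2$, combined with the base point free pencil trick ($\dim \operatorname{Ker}\mu = h^1(L^{\otimes 2})$) and $\operatorname{Coker}\mu \cong T_L W^1_k(C)$, shows that $\mu$ is surjective at $L$. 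Hence $L$ is a reduced, isolated point of $W^1_k(C)$, and the only remaining task is to prove that it is the sole point.

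For the uniqueness I would split the range. Let $M \not\cong L$ be a second base point free $g^1_k$ and form $(\phi_L,\phi_M)\colon C \to \mathbf P^1 \times \mathbf P^1$. For a general $C$ the two pencils are independent (no common intermediate cover: its gonality is exactly $k$, and a general $k$-gonal curve admits no map of low degree onto a curve of positive genus), so the classical Castelnuovo--Severi inequality gives $g \le (k-1)^2$; this already settles every $g > (k-1)^2$. The hard part, which I expect to be the main obstacle, is the complementary range $2k-1 \le g \le (k-1)^2$, where the product map carries no contradiction by itself. Here I would pass to the plane model $\Gamma' \subset \mathbf P^2$ of degree $k+m$ with one ordinary $m$-fold point $p$ (whose projection recovers $L$) and $\delta$ general nodes, and argue that every degree-$k$ pencil must be the projection from $p$. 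Concretely, one analyzes a putative $M$ through residuation against the adjoint (canonical) series cut by curves of degree $k+m-3$ singular to order $m-1$ at $p$ and passing through the nodes; the generality of the nodal configuration is what forces the center of the pencil to be $p$ and rules out any exotic $g^1_k$. This is precisely the point where Segre's lattice analysis and its refinement in Arbarello--Cornalba do the genuine work. Granting the uniqueness, I conclude: $W^1_k(C) = \{L\}$ for a general $k$-gonal $C$, so $\deg f = 1$; and since every $g^1_k$ on such a curve is base point free, $W^1_k(C)$ indeed consists of the single globally generated element $L$, which is both statements of the theorem.
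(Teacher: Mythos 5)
The paper does not actually prove this theorem: it states it as a result of Arbarello and Cornalba \cite{AC1}, and its only comment on the proof is that ``a new modern tool is essential to go beyond the results of classical geometry of the construction'', namely the systematic use of first-order deformations of singular curves. Measured against that, your proposal assembles correctly the ingredients the paper does supply --- irreducibility of $\mathcal W^1_k$ (Fulton), generic finiteness of $f$ via the non-injective Petri map, Segre's model $\Gamma \in \vert kE + mF \vert$ on $\mathbb F_1$ with $h^1(L^{\otimes 2}) = g - 2k + 2$, which makes $L$ a reduced, isolated point of $W^1_k(C)$ --- and it adds one genuinely useful elementary step not in the paper: for $g > (k-1)^2$ the Castelnuovo--Severi inequality (once independence of the two pencils is checked) excludes a second $g^1_k$, which in particular disposes of $k = 2, 3$ entirely.

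But for $k \geq 4$ the range $2k - 1 \leq g \leq (k-1)^2$ is nonempty (and grows quadratically in $k$), and there your argument stops exactly where the theorem begins: ``Granting the uniqueness, I conclude'' concedes the entire content of the statement, since everything established before that point --- finiteness of the general fibre and reducedness of $L$ as a point of $W^1_k(C)$ --- was already Segre's, and the advance of \cite{AC1} is precisely the uniqueness. The mechanism you gesture at (residuation against the adjoint series of the plane model, with ``the generality of the nodal configuration'' forcing every pencil to be the projection from the $m$-fold point) is the classical line of attack, and the paper's explicit point is that it does not close; this is where \cite{AC1} had to bring in deformation theory. Note also that the generality of the nodes is itself not free: it amounts to a dominance statement for the Severi-type variety $\mathcal N_{g, \delta}(\mathbb F_1)$, which the paper says requires ``adequate deformation theoretic arguments''. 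Finally, a smaller unaddressed point: to convert uniqueness on Segre's particular curve into $\deg f = 1$ you need either that this curve is a general point of $\mathcal M^1_{k,g}$ (asserted, not proved) or a properness-plus-semicontinuity argument showing that a single reduced fibre of the proper map $f$ forces degree one; neither is supplied.
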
  \par 
 In the proof a new modern tool is essential to go beyond the results of classical geometry of the construction. Deformation theory, in particular first order deformations of
singular curves, is indeed used systematically to achieve the main steps.  The answer \cite{AC1} provides to question $\rm Q5$ relies on the same methods. It is somehow a kind of surprise:
 \begin{theorem} Let $k \leq 5$ then $\mathcal W^1_k$ is unirational. \end{theorem}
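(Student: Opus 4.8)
The plan is to parametrize a general pair $(C,L) \in \mathcal W^1_k$ by the explicit equations cutting $C$ out of a rational normal scroll, exploiting that for $k \leq 5$ a degree $k$ cover of $\mathbf P^1$ is a Gorenstein subscheme of codimension at most three, hence admits a determinantal or Pfaffian description whose coefficients vary in a rational parameter space. First I would fix the combinatorial data. Given $(C,L)$, the pencil $\vert L \vert$ defines a finite morphism $\pi : C \to \mathbf P^1$ of degree $k$, and pushing forward gives a splitting $\pi_* \mathcal O_C \cong \mathcal O_{\mathbf P^1} \oplus E^\vee$, where the rank $k-1$ Tschirnhausen bundle $E$ splits on $\mathbf P^1$ as $\mathcal O(a_1) \oplus \dots \oplus \mathcal O(a_{k-1})$. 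Since $\mathcal W^1_k$ is irreducible (stated above), for $(C,L)$ general the splitting type is the generic balanced one, determined only by $g$ and $k$; I would fix it once and for all. The relative canonical map embeds $C$ into the scroll $\mathbf P := \mathbf P(E)$, a $\mathbf P^{k-2}$-bundle over $\mathbf P^1$, with $\pi$ equal to the restriction of the bundle projection, so that $L$ is recovered as the pullback of $\mathcal O_{\mathbf P^1}(1)$.

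Next I would invoke the structure theorem for small degree covers to describe $C \subset \mathbf P$ by equations. For $k = 2$ the curve is the familiar double cover, rational directly; for $k = 3$, $C$ is a single relative cubic, i.e. a divisor in $\vert \mathcal O_{\mathbf P}(3) \otimes \pi^* \mathcal O_{\mathbf P^1}(b) \vert$; for $k = 4$, $C$ is the complete intersection of two relative quadrics in the $\mathbf P^2$-bundle $\mathbf P$; and for $k = 5$, $C$ is cut out, in the $\mathbf P^3$-bundle $\mathbf P$, by the $4 \times 4$ Pfaffians of a $5 \times 5$ skew-symmetric matrix of sections of line bundles on $\mathbf P$ (Buchsbaum--Eisenbud for Gorenstein codimension three). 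In each case the defining datum --- the cubic form, the pair of quadratic forms, or the Pfaffian matrix --- is a section of an explicit vector bundle on $\mathbf P$ with finite dimensional space of global sections. Thus, with the splitting type of $E$ fixed, $\mathbf P$ is a single rational variety and the total space $\mathbb D$ of admissible defining data is a Zariski-open subset of an affine (or projective) space, hence rational.

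Finally I would run the incidence/dominance argument. There is a natural rational map $\mathbb D \dashrightarrow \mathcal W^1_k$ sending a defining datum to the pair $(C,L)$ it produces, with $L = \pi^* \mathcal O_{\mathbf P^1}(1)$. I would check that for a general point of $\mathbb D$ the resulting $C$ is smooth, irreducible, of the prescribed genus $g$, with the bundle projection inducing a base point free $g^1_k$; this is a transversality computation using that the balanced $E$ makes the relevant twisted bundles globally generated. Granting smoothness and the correct invariants, the map $\mathbb D \dashrightarrow \mathcal W^1_k$ is dominant, since every general $(C,L)$ has balanced Tschirnhausen bundle and therefore arises from some datum. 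As $\mathbb D$ is rational and $\mathcal W^1_k$ is irreducible, this exhibits $\mathcal W^1_k$ as the image of a rational variety, proving unirationality. Alternatively, still in the spirit of the preceding discussion, one may use Segre's realization of a general $k$-gonal curve as a nodal plane curve of degree $k+m$, $m = [\frac{g+k+3}2]$, with one $m$-fold point and $\delta$ nodes at general positions, and show that the Severi-type family of such curves is a projective bundle over an open subset of $\mathrm{Hilb}_\delta(\mathbf P^2)$, hence rational and dominant onto $\mathcal W^1_k$.

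The main obstacle is twofold and is exactly what confines the statement to $k \leq 5$. First, one must establish the structure theorem in the $k = 5$ Pfaffian case and verify that a general skew matrix of the prescribed shape defines a smooth connected curve of genus $g$ --- a genuine Gorenstein/Buchsbaum--Eisenbud computation, together with a check that the generic Pfaffian locus is irreducible of the expected dimension. Second, one must control genericity: that the Tschirnhausen bundle of a general $k$-gonal curve is balanced, so that a single fixed scroll dominates, and that the construction surjects onto an open subset of $\mathcal W^1_k$. For $k \geq 6$ the cover has codimension at least four in the scroll and no comparable explicit resolution of $\pi_* \mathcal O_C$ is available, so the rational parametrization by equations breaks down --- which is precisely why the argument halts at $k = 5$.
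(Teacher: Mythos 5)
Your proposal is correct in outline but takes a genuinely different route from the paper. The paper deduces the theorem from Arbarello--Cornalba (\cite{AC1}, thm. 5.3): following Segre, a general $k$-gonal curve is realized as a $\delta$-nodal curve on the Hirzebruch surface $\mathbb F_1$ (equivalently, a plane curve of degree $k+m$ with $\delta$ nodes and one ordinary $m$-fold point), and deformation-theoretic arguments show that the map $h_{g,\delta}$ sending such a curve to its nodes is dominant onto the (rational) Hilbert scheme of points, so that the Severi variety $\mathcal N_{g,\delta}(\mathbb F_1)$ is birationally a projective bundle over a rational base, hence rational, and it dominates $\mathcal H_{k,g}$ and therefore $\mathcal W^1_k$; the numerical condition $\dim \vert C \vert - 3\delta \geq 0$ is what confines this to $k \leq 5$ (plus the sporadic cases $k = 6,7$). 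This is precisely the ``alternative'' you sketch in your closing paragraph. Your main route instead parametrizes $(C,L)$ by the equations of the relative canonical embedding $C \subset \mathbf P(E)$, via the Casnati--Ekedahl/Buchsbaum--Eisenbud structure theorems for Gorenstein covers of degree $\leq 5$, so that the data space is an open subset of a linear space and dominance holds because every general pair arises this way. Two points in your write-up need repair, though both are fixable: (a) for $k = 4, 5$ the structure theorem involves, besides $E$, an auxiliary bundle $N$ on $\mathbf P^1$ (of rank $2$, resp.\ $5$) whose splitting type must also be fixed, which your phrase ``sections of line bundles on $\mathbf P$'' silently assumes; (b) balancedness of $E$ is a nontrivial theorem (Maroni for $k=3$, Schreyer/Ballico beyond) and is more than you need --- it suffices to fix \emph{whatever} splitting types occur generically, which exist by the irreducibility of $\mathcal W^1_k$ together with semicontinuity, and then dominance, as well as the smoothness of the curve defined by a general datum, follows without any separate transversality computation. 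As for what each approach buys: the paper's route stays within classical Severi-variety geometry and also yields the sporadic unirational Hurwitz spaces with $k = 6, 7$, while yours is uniform in $g$, explains conceptually why the bound is $k \leq 5$ (codimension $\leq 3$ Gorenstein structure theory), and is the method underlying the later computer-algebra extensions (\cite{Sch}, and \cite{Ge} for $k = 6$) mentioned in the paper.
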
 \par 
More precisely this statement follows from theorem 5.3 of \cite{AC1}:
\begin{theorem}  $\mathcal H_{k,g}$ is unirational in the following cases:  
\begin{enumerate} \it
\item $3 \leq k \leq 5$ and $g \geq k-1$,
\item $k = 6$ and $5 \leq g \leq 10$ or $g = 12$,
\item $k = 7$ and $g = 7$.
\end{enumerate} 
\end{theorem}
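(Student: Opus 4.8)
The plan is to exhibit, for each pair $(k,g)$ in the list, a \emph{single} unirational family of covers that dominates $\mathcal H_{k,g}$. Since $\mathcal H_{k,g}$ is irreducible (Fulton's theorem recalled above), a dominant rational map to it from a unirational variety forces $\mathcal H_{k,g}$ itself to be unirational, and with it $\mathcal W^1_k$. The family I would use is precisely the one implicit in Segre's construction discussed above: realize a general $k$-gonal curve of genus $g$ as the normalization $C$ of a plane curve $\Gamma'$ of degree $d = k+m$ having one ordinary point of multiplicity $m$ at a point $p\in\mathbf P^2$ together with $\delta$ further nodes, the integer $m$ being chosen (as in that construction) so that the geometric genus of $\Gamma'$ equals $g$. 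The pencil of lines through $p$ cuts out the $g^1_k$ on $C$, i.e. projection from $p$ is the degree $k$ cover; equivalently, after blowing up $p$ one works on $\mathbb F_1$ with the curve in a class $kE + m'F$ and the ruling $\vert F\vert$ inducing the cover. This is the cover-theoretic translation of a pair $(C,L)$ with $L\in W^1_k(C)$.

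First I would set up the parameter space and verify its unirationality, which is the formal part. Fixing $p$, the curves with an $m$-fold point at $p$ form the linear system $\vert\mathcal I_p^m(d)\vert$, a fixed projective space; imposing $\delta$ nodes at a variable $Z\in\Hilb_{\delta}(\mathbf P^2)$ carves out a Severi-type subvariety $V$ that fibres, over an open subset of the rational variety $\Hilb_{\delta}(\mathbf P^2)$, in projective spaces $\vert \mathcal I_p^m\otimes\mathcal I_Z^2(d)\vert$ of constant dimension. Exactly as in the nodal case of section 2.1, such a fibration over a rational base is birational to a projective bundle, hence rational; letting $p$ vary only twists the picture by the $\PGL(3)$-action and does not affect unirationality. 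So the total space of this family is unirational.

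The heart of the matter, and the expected main obstacle, is to prove that the natural moduli map $V\dashrightarrow\mathcal H_{k,g}$ is dominant; this is where the numerical restrictions on $(k,g)$ enter. The strategy is a dimension count reinforced by a deformation-theoretic surjectivity statement. On the numerical side one records that $\dim\mathcal H_{k,g}=\dim\mathcal W^1_k=3g-3+\rho(g,1,k)=2g+2k-5$, and checks that $\dim V$ minus the dimension of the reparametrizations fixing $p$ and its pencil of lines equals this value precisely in the ranges $3\le k\le 5$ with $g\ge k-1$, and in the sporadic cases $k=6$ with $5\le g\le 10$ or $g=12$, and $k=7$, $g=7$; for larger $g$ with $k\ge 6$ the count fails, consistently with the linear rigidity phenomenon of Proposition 2.11 and Segre's negative answers to Q2, Q3. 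The genuinely hard input is the infinitesimal one: one must show the map is generically submersive, i.e. that every first-order deformation of the cover is induced by an equisingular deformation of $\Gamma'$ inside the family. Via the normal sequence of $C$ in the blown-up surface this reduces to the surjectivity of the characteristic (Petri-type) map, which is exactly the content of Segre's computation $h^1(L^{\otimes 2})=g-2k+2$ together with the degree-one and surjectivity result of Arbarello--Cornalba recalled just above. The delicate point throughout is the non-nodal $m$-fold singularity: one must work with equisingular first-order deformations of $\Gamma'$ and verify the vanishing of the relevant obstruction space for general positions of the $\delta$ nodes, and it is precisely this vanishing that pins down the genus bounds in (1)--(3). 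I expect that, the unirationality of the parameter space being essentially formal, all the difficulty (and the case-by-case verification for the sporadic pairs) lives in this deformation-theoretic step, as the systematic use of deformation theory flagged above already suggests.
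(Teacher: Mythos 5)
Your proposal takes essentially the same route as the paper: both parametrize $k$-gonal curves by families of $\delta$-nodal curves on $\mathbb F_1$ (your plane model with an ordinary $m$-fold point at $p$ is exactly this after blowing up $p$), observe that the family is birationally a projective bundle over an open subset of the rational Hilbert scheme of the nodes, hence unirational, and reduce dominance over $\mathcal H_{k,g}$ to deformation-theoretic surjectivity statements deferred to Arbarello--Cornalba \cite{AC1}. The paper phrases the numerical range exactly as you do in substance, namely as the condition $\dim \vert C \vert - 3\delta \geq 0$ together with dominance of the map $p_{g,\delta}$ to the Hurwitz space, with the equisingular deformation theory supplying the dominance of $h_{g,\delta}$.
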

\par 
Continuing in our vein, we remark that this theorem  can be also viewed as a study of families of nodal curves on a Hirzebruch surface
$
S = \mathbb F_e.
$
We have $$  \Pic \ S \cong \mathbf Z[E] \oplus \mathbf Z[F], $$ where $[E]$ is the class of a minimal section, $E^2 = -e$, $F$ is the fibre of the natural  projection 
$
p: S \to \mathbf P^1.
$
Any curve $C$ in $S$ is endowed with the line bundle $L = \mathcal O_C(F)$. Clearly $L$ is in $W^1_k(C)$, where $k := \cdot$. Keeping $k$ fixed, we have  $m = E \cdot C$ and $ C \sim (m+ke)F + kE$. \par  
To parametrize the Hurwitz space $\mathcal H_{k,g}$, one can study in $S$ the families 
$$
\mathcal N_{g, \delta}(S)
$$
of integral $\delta$-nodal curves of genus $g = p_a(C) - \delta$ and fixed $m = E \cdot C$. \par 
As for Severi varieties of nodal curves in $\mathbf P^2$, we have natural maps
$$
\begin{CD}
{Hilb_{g, \delta}(S)} @<{h_{g, \delta}}<< {\mathcal N_{g, \delta}(S)} @>{p_{g,\delta}}>> {\mathcal H_{k,g}}.\\
\end{CD}
$$ 
By definition $h_{g, \delta}(\Gamma) =  \Sing \ \Gamma$ and $p_{g, \delta}(\Gamma) = p  \circ n$, where $n: C \to \Gamma$ is the normalization map. To prove  the latter theorem we can simply
assume $e = 1$. Then the range of $k$ in its statement is equivalent to the condition that $ \dim \ \vert C \vert - 3\delta \geq 0$ and $p_{g, \delta}$ be dominant. Adequate deformation theoretic arguments are then  needed
to deduce that $h_{g, \delta}$ is dominant and hence to obtain the unirationality results. \medskip \par 
A very general open problem, arising from the previous discussion, is about the structure of a finite cover of degree $k$ in the case of algebraic curves and not only. This could give more informations
about the unirationality  of  $\mathcal W^1_k$ for $k = 6$ and maybe more, see \cite{Ge} to have an update on the present knowledge on the case of degree 6 finite covers. It is shown in this paper that
$\mathcal H_{6,g}$ is unirational for $g \leq 28$ and $g = 30, 31, 35, 36, 40, 45$. \par 
The results in \cite{AC1} imply the unirationality of $\mathcal M_g$, $g \leq 10$. It is of some chronological interest to note that this paper still appeared before the radical change due to the 1982 first paper on 
the Kodaira dimension of $\mathcal M_g$ by Harris and Mumford, \cite{HM}. One can read in \cite{AC1}: \par
\it `We want to stress that the unirationality of $\mathcal M_g$, $g \leq 10$ (and of $\mathcal M^1_{g,3}$ of course) is classically known [10].  It has been recently proved by Sernesi [8], using entirely different
methods, that $\mathcal M_{12}$ is also unirational. The problem of deciding whether $\mathcal M_g$ is unirational is, at the moment, open for $g = 11$ or $g > 12$.' \rm
\medskip \par 
It is the right moment to recall the fundamental results, due to Eisenbud, Harris and Mumford, on the Kodaira dimension of $\mathcal M_g$, \cite{HM}, \cite{H1} and \cite{EH}:
\begin{theorem} $\mathcal M_g$ is of general type for $g \geq 24$. \end{theorem}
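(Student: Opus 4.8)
The plan is to realize $\mathcal M_g$ as the interior of its Deligne--Mumford compactification $\overline{\mathcal M}_g$, a projective variety with only finite quotient singularities, and to prove that $\mathcal M_g$ is of general type by showing that the canonical class $K_{\overline{\mathcal M}_g}$ is big. First I would dispose of the singularities: by the theorem of Harris and Mumford on the extension of pluricanonical forms, for $g \geq 4$ every pluricanonical form on the smooth locus extends holomorphically to any desingularization, so $\overline{\mathcal M}_g$ imposes no adjunction conditions and its Kodaira dimension computes that of $\mathcal M_g$. It then suffices to work with $K_{\overline{\mathcal M}_g}$, which by Mumford's calculation is
\[
K_{\overline{\mathcal M}_g} = 13\lambda - 2\delta_0 - 3\delta_1 - 2\delta_2 - \cdots - 2\delta_{\lfloor g/2\rfloor},
\]
where $\lambda$ is the Hodge class and the $\delta_i$ are the boundary divisors.

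The heart of the argument is to produce an effective divisor of small slope. Assuming $g+1$ composite, I would choose integers $r,d$ with $\rho(g,r,d)=-1$, so that the locus $\overline{\mathcal M}^r_{g,d}$ of curves carrying a $g^r_d$ is a divisor, and invoke the Eisenbud--Harris computation of its class, which up to a positive rational multiple reads
\[
(g+3)\lambda - \tfrac{g+1}{6}\delta_0 - \sum_{i\geq 1} i(g-i)\,\delta_i.
\]
Its slope, the ratio of the $\lambda$- to the $\delta_0$-coefficient, is $6(g+3)/(g+1)$, to be compared with the slope $13/2$ of $K_{\overline{\mathcal M}_g}$. The strict inequality $6(g+3)/(g+1) < 13/2$ holds precisely when $g \geq 24$, while equality occurs at $g = 23$; this is the numerical coincidence that governs the theorem.

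Next I would convert the slope comparison into an effectivity statement. Setting $s = 12/(g+1)$, so that $s$ times the $\delta_0$-coefficient of the Brill--Noether class equals $2$, subtraction cancels $\delta_0$ and a short computation gives
\[
K_{\overline{\mathcal M}_g} = s\,[\overline{\mathcal M}^r_{g,d}] + \frac{g-23}{g+1}\,\lambda + \sum_{i\geq 1} c_i\,\delta_i,
\]
where $c_i = -2 + s\,i(g-i)$ (and $-3 + s(g-1)$ for $i=1$) is nonnegative once $g \geq 24$. The coefficient $\frac{g-23}{g+1}$ of $\lambda$ is strictly positive exactly for $g \geq 24$. Since $\lambda$ is nef and big on $\overline{\mathcal M}_g$, the period map to the Satake compactification of $\mathcal A_g$ being generically finite onto its image, and since the boundary classes and the Brill--Noether divisor are effective, the right-hand side is the sum of an effective class and a positive multiple of a big, nef class; hence $K_{\overline{\mathcal M}_g}$ is big and $\mathcal M_g$ is of general type for such $g$.

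I expect two steps to carry the real difficulty. The first is the extension of pluricanonical forms across the quotient singularities of $\overline{\mathcal M}_g$: this is what legitimizes reading off the Kodaira dimension from $K_{\overline{\mathcal M}_g}$, and it requires a careful analysis of the local stabilizers. The second, and the genuinely hard geometric input, is the Eisenbud--Harris formula for the Brill--Noether divisor class, proved by degenerating to chains of elliptic curves and computing intersections with limit linear series. One must also treat the case $g+1$ prime, where no $\rho=-1$ Brill--Noether divisor exists and a substitute effective divisor of slope below $13/2$ has to be constructed by a separate argument.
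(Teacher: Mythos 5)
The paper offers no proof of this statement at all: it is quoted as the fundamental result of Harris, Mumford and Eisenbud, with bare references to \cite{HM}, \cite{H1} and \cite{EH}, and your outline is a correct, faithful summary of precisely that cited argument --- the extension of pluricanonical forms across the quotient singularities (valid for $g \geq 4$, hence harmless here), Mumford's formula $K_{\overline{\mathcal M}_g} = 13\lambda - 2\delta_0 - 3\delta_1 - 2\delta_2 - \cdots$, the Eisenbud--Harris Brill--Noether divisor class of slope $6 + \frac{12}{g+1}$, and the rewriting of $K_{\overline{\mathcal M}_g}$ as an effective class plus $\frac{g-23}{g+1}\lambda$ with $\lambda$ big and nef, correctly including the caveat that $g+1$ prime requires a substitute divisor (the actual content of \cite{H1} and \cite{EH}). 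Your numerics all check out ($6(g+3)/(g+1) < 13/2$ exactly for $g \geq 24$, and the boundary coefficients $c_i \geq 0$ in that range), and the slope machinery you use is the same one the paper itself recalls in its section on the slope of $\overline{\mathcal M}_g$, so there is nothing to correct.
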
 \par 
We have already described some typical, modern evolutions of the problem posed by Severi on the unirationality $\mathcal M_g$. We  continue describing further evolutions and perspectives, on the side of
uniruledness/unirationality of $\mathcal M_g$ in low genus. Reading the previous pages, it is quite clear that we are stressing three typical aspects of the modern evolution of our subject: \medskip \par 
(1) a first aspect is the study of deformation theory for $(C,S)$, where $C$ is a genus $g$ curve embedded in a smooth surface $S$. As an application, one can try to understand when a concrete family of
pairs $(C,S)$ dominates $\mathcal M_g$.  \medskip \par 
(2) A second aspect is the study of the families of singular curves $C$ of genus $g$ in a given smooth surface $S$. In particular one would like to describe the family of all $\delta$-nodal curves of genus $g$ having fixed
homology class.
\medskip \par 
(3) A third aspect is the study of families of pairs $(C, S)$ such that $C$ is a smooth curve in $S$, it has general moduli and $ \dim \ \vert C \vert > 0$. This  is related to the study of uniruledness and unirationality properties of $\mathcal M_g$.  
\medskip \par 
A common feature to (1), (2), (3) is that there is no restriction on the type of surface $S$ to be considered. The study of problems (1) and (2) is a central theme since many years and it is due to many authors, see e.g.  \cite{Ser2} for the related deformation theory. Also the recent study on Severi varieties of nodal curves on any surface has several sources, among them \cite{DH}, \cite{Tan1}, \cite{Tan2}, \cite{CS}, \cite{CC}, \cite{Fu1}.
We restrict now to problem (3). \par In 2007 E. Sernesi has given a partial answer to such a question. Even if these results appear later in the chronology, it  is now the moment to describe them.
 \medskip \par 
 The starting point can be a rational curve $R \subset \overline {\mathcal M}_g$.  Taking its normalization we have a morphism
 $$
 m: \mathbf P^1 \to \overline {\mathcal M}_g.
 $$
 The general idea is to study, in some sense, the normal bundle to the map $m$. This can give useful informations on the following question:
 \begin{itemize} \it
 \item[$\circ$]~When does $R$ move in a family of rational curves covering $\overline {\mathcal M}_g$?
 \end{itemize} \par 
 Of course the main purpose is to understand by a direct analysis of the deformations of $R$, whether $\overline {\mathcal M}_g$ is ruled or not. One can turn to effective applications of these ideas as follows. \par A \it rational fibration of genus $g$ \rm is just a relatively minimal morphism
 $$
 f: S \to \mathbf P^1
 $$
 such that $S$ is a smooth surface and each fibre is a stable curve of genus $g$.  Since its target space is $\mathbf P^1$, such a fibration is called a rational fibration.\par We will assume that $f$ is not isotrivial i.e.  the associated morphism $$ m_f: \mathbf P^1 \to \overline {\mathcal M}_g $$ is not constant.  In particular this implies, by Arakelov theory, that $h^0(T_S) = 0$ and $h^0(T_{S/\mathbf P^1}) = h^1(T_{S/\mathbf P^1}) = 0$, \cite{Ser3} 1.4. \par Starting from the functor
 of sheaves$f_* Hom$, one can consider its first derived functor. Denoting it by $Ext^1_f$, one can then consider the sequence associated to the local-global spectral sequence for $Ext_f$. This is in fact the exact sequence of sheaves
 $$
 0 \to R^1f_*T_{S/\mathbf P^1} \to Ext^1_f(\Omega^1_{S/\mathbf P^1}, \mathcal O_S) \to f_* Ext^1_S(\Omega_{S/\mathbf P^1}, \mathcal O_S) \to 0.
 $$
The sheaf in the middle is a vector bundle on $\mathbf P^1$. Since $f$ has fibres of genus $g$ its rank is $3g-3$, hence we have
$$
Ext^1_f(\Omega^1_{S/\mathbf P^1}, \mathcal O_S) \cong \oplus_{i = 1 \dots 3g-3} \mathcal O_{\mathbf P^1}(a_i),
$$
cfr. \cite{Ser3} 1.5. Recall that $f$ is said to be  \it free \rm rational fibration if  $a_i \geq 0$ $i = 1 \dots 3g-3$. We can also say that \it $f$ has general moduli \rm if the exists a dominant rational map $m: \mathbf P^1 \times B \to \overline {\mathcal M}_g$ such that $B$ is integral and $m / \mathbf P^1 \times \lbrace o \rbrace = m_f$ for some $o \in B$. The previous vector bundle contains many informations about the deformations of the map $m_f:  \mathbf P^1 \to \overline {\mathcal M}_g$. 
 In particular one has that
 \begin{theorem} If $f: S \to \mathbf P^1$ has general moduli, then $f$ is free. \end{theorem}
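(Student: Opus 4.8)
The plan is to read the middle term $E := Ext^1_f(\Omega^1_{S/\mathbf P^1},\mathcal O_S)$ of the displayed exact sequence as the bundle of infinitesimal deformations of the moduli map $m_f$, and then to deduce freeness from the elementary fact that a vector bundle on $\mathbf P^1$ is globally generated exactly when all of its splitting integers are nonnegative. So the first step is to identify $E$ with $m_f^*T_{\overline{\mathcal M}_g}$. The fibre of $E$ over $t\in\mathbf P^1$ is $Ext^1(\Omega^1_{S_t},\mathcal O_{S_t})$, the tangent space to $\overline{\mathcal M}_g$ at the moduli point $m_f(t)=[S_t]$; the Arakelov vanishings $h^0(T_{S/\mathbf P^1})=h^1(T_{S/\mathbf P^1})=0$ guarantee that the relative Ext-sheaf commutes with base change, so that $E$ is a vector bundle of rank $3g-3$ with these fibres. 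Under this identification $H^0(\mathbf P^1,E)$ is the space of first-order deformations of the morphism $m_f:\mathbf P^1\to\overline{\mathcal M}_g$, while the two outer terms of the exact sequence record the equisingular deformations (through $R^1f_*T_{S/\mathbf P^1}$) and the smoothings of the nodes of the singular fibres (through $f_*Ext^1_S(\Omega_{S/\mathbf P^1},\mathcal O_S)$), which together exhaust $T_{\overline{\mathcal M}_g}$.

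Next I would translate the general-moduli hypothesis. The dominant family $m:\mathbf P^1\times B\to\overline{\mathcal M}_g$ with $m/\mathbf P^1\times\{o\}=m_f$ is precisely an evaluation map for deformations of $m_f$: differentiating along $B$ yields a Kodaira--Spencer map $T_oB\to H^0(\mathbf P^1,E)$, and composing with evaluation at $t$ recovers the $B$-part of the differential $dm_{(t,o)}:T_t\mathbf P^1\oplus T_oB\to E_t$. Since $m$ is dominant and we are in characteristic zero, generic smoothness provides a general point $(t_0,o)$ — here I take $o$ general in $B$, which is the content of $f$ having general moduli — at which $dm_{(t_0,o)}$ is surjective onto $E_{t_0}$.

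Finally I would extract the splitting type. Write $E\cong\bigoplus_i\mathcal O_{\mathbf P^1}(a_i)$. The image of $T_oB$ lies in the image of $H^0(E)\to E_{t_0}$, which is the fibre at $t_0$ of the globally generated subbundle $\bigoplus_{a_i\geq0}\mathcal O(a_i)$. Moreover the derivative $dm_f$ is a global section of $\mathcal Hom(T_{\mathbf P^1},E)=E\otimes\Omega^1_{\mathbf P^1}=E(-2)$, so its component in a summand $\mathcal O(a_i)$ is a section of $\mathcal O(a_i-2)$ and is nonzero only when $a_i\geq2$; hence $dm_f(T_{t_0}\mathbf P^1)$ also lies in the fibre of $\bigoplus_{a_i\geq0}\mathcal O(a_i)$. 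Therefore the whole image of $dm_{(t_0,o)}$ is contained in $\bigoplus_{a_i\geq0}(\mathcal O(a_i))_{t_0}$, and its surjectivity onto $E_{t_0}$ forces this subbundle to be all of $E$, i.e. every $a_i\geq0$. This is exactly the statement that $f$ is free.

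The step I expect to be the main obstacle is the first one: making the identification $E\cong m_f^*T_{\overline{\mathcal M}_g}$ and the interpretation of $H^0(E)$ as deformations of $m_f$ fully precise, with due care for the stacky structure of $\overline{\mathcal M}_g$ along the boundary (and the quotient singularities of the coarse space) and for the base-change statement for the relative Ext-sheaf that produces $E$ as a genuine vector bundle. Once this dictionary between the deformation theory of the fibration $f$ and morphisms into moduli is in place, the remaining argument is the elementary splitting-type computation above.
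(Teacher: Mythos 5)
The paper itself gives no proof of this statement: it is survey material, quoted from Sernesi's paper \cite{Ser3}, and the sentence preceding the theorem ("the previous vector bundle contains many informations about the deformations of the map $m_f$") is all the paper says. So there is no in-paper argument to compare yours against; I can only judge your proposal on its merits, and against what Sernesi's proof must look like. Your argument is the standard free-rational-curve mechanism (deformations of the map, generic smoothness, splitting-type computation on $\mathbf P^1$), and the final splitting computation is correct. Two technical remarks. First, the stacky difficulties you flag as the main obstacle can be largely bypassed: since stable curves have no infinitesimal automorphisms, $\Hom(\Omega^1_{S_t},\mathcal O_{S_t})=0$ on every fibre, so $f_*T_{S/\mathbf P^1}=0$ and the local-to-global spectral sequence for relative Ext identifies $H^0(\mathbf P^1, E)$ with $\Ext^1_S(\Omega^1_{S/\mathbf P^1},\mathcal O_S)$, the space of first-order deformations of the fibration with the base $\mathbf P^1$ held fixed; Kodaira--Spencer theory then runs at the level of families of curves (after a finite base change on $B$ making $m$ induced by an actual family), and one only needs that the coarse space is smooth with tangent space $\Ext^1(\Omega^1_C,\mathcal O_C)$ at automorphism-free points, which general points of a dominant family are. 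Second, local freeness and base change for $Ext^1_f$ follow from the fibrewise vanishings $\Ext^0=0$ (stability) and $\Ext^2=0$ (nodal curves are local complete intersections), not from the Arakelov vanishings you invoke, which are global statements about $S$.

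The one genuine gap is your handling of the hypothesis. The paper defines ``$f$ has general moduli'' as: there exists a dominant rational map $m:\mathbf P^1\times B\dashrightarrow\overline{\mathcal M}_g$ with $m|_{\mathbf P^1\times\{o\}}=m_f$ for \emph{some} $o\in B$. You assert that taking $o$ general in $B$ ``is the content of $f$ having general moduli''; under the stated definition it is not, and the distinction is not cosmetic. Generic smoothness gives a dense open subset of $\mathbf P^1\times B$ where $dm$ is surjective, and you need this open set to meet $\mathbf P^1\times\{o\}$ for the \emph{given} $o$ --- which only follows when $o$ is general. For a special member of a dominating family the conclusion can genuinely fail: on a smooth cubic threefold the lines form an irreducible dominating family, yet a line $\ell$ of the second type has $T_X|_\ell\cong\mathcal O(2)\oplus\mathcal O(1)\oplus\mathcal O(-1)$ and is not free. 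So your argument proves the theorem for fibrations corresponding to \emph{general} points of the parametrizing family; that is surely the intended reading (the literal one would make the statement false for a general target), but it is a strengthening of the written hypothesis and must be stated as such rather than treated as equivalent.
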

 \par 
 Building on this basis, Sernesi tries to understand when no free rational fibration of genus $g$ exists. The goal is to find the maximal value of $g$ such that $\overline {\mathcal M}_g$ is uniruled.    The results obtained put in evidence once more  the beautiful interplay between the geometry of $\mathcal M_g$ and the algebraic surfaces. \par
 Let $C \subset S$ be general of genus $g \geq 3$ and $ \dim \ \vert C \vert \geq 1$. $S$ is a smooth surface of  Kodaira dimension $k(S)$.    
\begin{theorem}[Sernesi] Under the previous assumptions one has. 
\begin{enumerate} \it
\item Let $S$ be of general type, then $ \dim \ \vert C \vert \geq 2$ and $K_S^2 \geq 3\chi(\mathcal O_S) - 10$ implies $g \leq 19$.
\item Let $S$ be an elliptic surface such that $k(S) \geq 0$, then $g \leq 16$.
\item Let $S$ be a surface such that $k(S) \geq 0$, then
\begin{itemize} \it
\item[$\circ$] $g \leq 6$ if $p_g = 0$,
\item[$\circ$] $g \leq 11$ if $p_g = 1$,
\item[$\circ$] $g \leq 16$ if $p_g = 2$ and $h^0(\omega_S(-C)) = 0$.
\end{itemize}
\end{enumerate}
\end{theorem}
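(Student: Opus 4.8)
The plan is to argue by contradiction, converting the hypothesis into a statement about rational fibrations and then invoking Theorem 2.24. Since $C$ is general of genus $g$ with $\dim |C| \geq 1$, a general pencil $P \subset |C|$ moves $C$ in a \emph{non-isotrivial} family: the moduli map $P \to \overline{\mathcal M}_g$ is non-constant, because otherwise, arguing as in the proof of Proposition 2.10, a general $D \in P$ would be a copy of $C$ and one would build a dominant map $C \times P \to S$ restricting to an isomorphism on each fibre, forcing $S$ to be ruled, against $k(S)\geq 0$ (resp.\ against $S$ being of general type in part (1)). Resolving the base points of $P$ gives a relatively minimal non-isotrivial genus $g$ fibration $f: X \to \mathbf P^1$, with $X \to S$ a sequence of blow-ups; this $f$ has general moduli in the sense of the excerpt, hence by Theorem 2.24 it is \emph{free}, i.e.\ in
\[
Ext^{1}_{f}(\Omega^1_{X/\mathbf P^1}, \mathcal O_X) \cong \bigoplus_{i=1}^{3g-3} \mathcal O_{\mathbf P^1}(a_i)
\]
one has $a_i \geq 0$ for every $i$. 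The entire point is then to show that for $g$ above the stated bounds no such free fibration can exist on a surface of the prescribed type.

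The numerical engine is the degree of $E := \bigoplus_i \mathcal O_{\mathbf P^1}(a_i)$. Away from the singular fibres $E$ is $R^1f_* T_{X/\mathbf P^1} = R^1 f_*\, \omega_{X/\mathbf P^1}^{-1}$, which by relative duality is dual to $f_*\,\omega_{X/\mathbf P^1}^{\otimes 2}$; hence $\deg E = -\deg f_*\,\omega_{X/\mathbf P^1}^{\otimes 2}$ plus a non-negative correction supported on the singular fibres. Freeness forces $\deg E = \sum a_i \geq 0$. I would compute $\deg f_*\,\omega_{X/\mathbf P^1}^{\otimes 2}$ by Grothendieck--Riemann--Roch in terms of $\omega_{X/\mathbf P^1}^2$, $\chi(\mathcal O_X)$ and $g$, and then translate these back to invariants of $S$ using $\omega_{X/\mathbf P^1} = K_X + 2F$, $\chi(\mathcal O_X)=\chi(\mathcal O_S)$ and $K_X^2 = K_S^2 - C^2$ (the base points of $P$ number $C^2$). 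Since $f_*\,\omega_{X/\mathbf P^1}^{\otimes 2}$ is semipositive and, for a non-isotrivial fibration, has degree growing with $g$ by Arakelov-type positivity, the inequality $\deg E \geq 0$ converts into an upper bound on $g$.

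To extract the precise constants I would feed in the structure of $S$. When $k(S)\geq 0$ the canonical class is pseudo-effective, so adjunction $2g-2 = C^2 + C\cdot K_S$ together with $C^2\geq 0$ gives $C\cdot K_S \geq 0$ and controls the slope of $f$; the refinement by $p_g$ enters through $f_*\,\omega_{X/\mathbf P^1}$, whose degree is bounded below in terms of $p_g = h^0(\omega_S)$ via the Leray sequence, so a larger $p_g$ relaxes the estimate, yielding $g \leq 6, 11, 16$ for $p_g = 0,1,2$. The hypothesis $h^0(\omega_S(-C))=0$ in the case $p_g=2$ guarantees, through $0 \to \omega_S(-C) \to \omega_S \to K_S|_C \to 0$, that $H^0(\omega_S)\to H^0(K_S|_C)$ is injective, which is exactly what makes the $p_g$-contribution enter with the right multiplicity. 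For the elliptic case one instead uses the canonical bundle formula of the elliptic fibration to pin down $K_S\cdot C$ and $\chi(\mathcal O_S)$, again landing on $g\leq 16$. The general type case (1) is different in flavour: here $\dim |C|\geq 2$ is used together with the universal lower bound $\dim(\text{moduli of }S) \geq 10\chi(\mathcal O_S)-2K_S^2$ from \cite{Ca1}, and the assumption $K_S^2 \geq 3\chi(\mathcal O_S)-10$ keeps $S$ close enough to the Noether line that matching this dimension count against $\dim \overline{\mathcal M}_g = 3g-3$ forces $g\leq 19$.

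The main obstacle I expect is the passage from the \emph{averaged} inequality $\deg E \geq 0$ to a genuine contradiction: Riemann--Roch pins only the total degree $\sum a_i$, whereas freeness is the stronger condition $a_i \geq 0$ for every $i$, and non-freeness needs merely one $a_i < 0$. The real work therefore lies in (a) estimating the non-negative singular-fibre correction $f_* Ext^{1}_{X}(\Omega^1_{X/\mathbf P^1},\mathcal O_X)$ so that it does not swamp the main inequality, and (b) handling the non-minimality introduced by resolving the pencil, since $K_X^2$ drops by $C^2$ and one must verify the base points of $|C|$ do not spoil the estimate. Isolating the Kodaira--Spencer sub-line-bundle of $E$, which records the actual variation of moduli, and bounding its degree from above is what sharpens the crude bound into the precise constants $6, 11, 16, 19$.
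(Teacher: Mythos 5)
Your strategy coincides with the framework this subsection sets up — and it is worth noting that the paper itself gives no proof of this statement: it is quoted as Sernesi's theorem with the proof residing in \cite{Ser3}. So the route (pass from the pencil to a non-isotrivial fibration over $\mathbf P^1$, invoke theorem 2.24 to get freeness, rule out free fibrations numerically) is indeed the intended one. The problem is that the quantitative core of your plan cannot produce the stated constants, and the steps you defer are precisely where the theorem lives. Grant for the moment that your resolved pencil $f: X \to \mathbf P^1$ is relatively minimal with all fibres stable. The total degree you propose to compute is
$$
\deg \ \Ext^1_f(\Omega^1_{X/\mathbf P^1},\mathcal O_X) \;=\; 2\delta - 13\lambda \;=\; 11\lambda - 2K^2_{X/\mathbf P^1},
$$
where $\lambda = \deg f_*\omega_{X/\mathbf P^1} = \chi(\mathcal O_S)+g-1$, $\delta$ is the number of nodes in the fibres, and Noether's formula $12\lambda = K^2_{X/\mathbf P^1}+\delta$ is used. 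Since $K^2_{X/\mathbf P^1} = K_S^2 - C^2 + 8(g-1)$ and, by adjunction, $C^2 = 2g-2-C\cdot K_S$, the inequality $\deg \geq 0$ forced by freeness reads exactly $g-1 \leq 11\chi(\mathcal O_S) - 2K_S^2 - 2\,C\cdot K_S$. For a minimal surface with $k(S)\geq 0$ (so $K_S$ nef, $C$ nef, $C\cdot K_S \geq 0$) and $p_g = 0$ this gives $g \leq 12$, not $g \leq 6$; for $p_g=1$ it gives $g \leq 23$, not $11$; for $p_g=2$ it gives $g \leq 34$, not $16$; for elliptic surfaces, where $\chi(\mathcal O_S)$ is unbounded, it gives nothing at all until the canonical bundle formula is fed in; and for general type, $11\chi - 2K_S^2 \leq 5\chi+20$ under your hypothesis, again unbounded in $\chi$. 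So the "averaged" inequality — the only one you actually set up — misses every claimed bound, and the sharpening you describe as "isolating the Kodaira--Spencer sub-line-bundle and bounding its degree" is not a technical refinement to be supplied later: it is the entire content of the theorem, and your proposal contains no argument for it.

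There are two further gaps. First, theorem 2.24 applies to rational fibrations in the paper's sense, i.e.\ relatively minimal morphisms \emph{all} of whose fibres are stable curves. The fibres of your $X \to \mathbf P^1$ are the members of the pencil $P$, which may be non-reduced or have non-nodal singularities; when $\dim |C| = 1$ (allowed by the hypotheses) there is no generic pencil to choose, and stable reduction in general requires a base change $B' \to \mathbf P^1$ that either raises the genus of the base — after which theorem 2.24 is inapplicable — or rescales all the numerical invariants by the degree of the cover, which your computation does not track. Second, your sketch of part (1) is logically inverted: $10\chi(\mathcal O_S) - 2K_S^2$ is a \emph{lower} bound for the dimension of the moduli of $S$ (this is exactly how the paper uses \cite{Ca1} in section 2.2), whereas bounding $g$ by a dominance count against $\dim \mathcal M_g = 3g-3$ requires an \emph{upper} bound on the dimension of the family of pairs $(S,C)$, hence control of $h^1(T_S) = 10\chi(\mathcal O_S) - 2K_S^2 + h^2(T_S)$ and therefore of $h^2(T_S) = h^0(\Omega^1_S\otimes\omega_S)$ — which is presumably where the hypothesis $K_S^2 \geq 3\chi(\mathcal O_S)-10$ must actually enter. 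As written, matching a lower bound against $3g-3$ cannot force $g \leq 19$.
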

 \medskip \par 
  \subsection{Families and rulings of unirational varieties in $\overline {\mathcal M}_g$}
  Going back to the chronological order of  events, we start this section with K3 surfaces endowed with a genus $g$ polarization.  In 1983 Mori and Mukai  use these surfaces in \cite{MM} to prove the uniruledness of $\mathcal M_{11}$. This is a new step in the study of the global geometry of the moduli of curves of low genus. \par Furthermore \cite{MM} is a seminal paper  for the systematic use of K3 surfaces in the study of curves and their moduli. Since then Mukai, and many other authors, started to investigate the deep and beautiful relations between K3 surfaces and curves of low genus. \par  This has important consequences in our subject and we will see a few of them later.
Now, forgetting about rational surfaces, we want to profit of the K3-geometry and of some other surfaces. We want to discuss in low genus: \medskip \par   \it
  (1) the uniruledness of $\mathcal M_g$ and of some loci $\mathcal W^r_{d,g}$, \par
  (2) ruledness results for the same loci and for $\mathcal M_g$. \medskip \par  \rm 
 A \it polarized K3 surface of genus $g$ \rm is a pair $(S, \mathcal O_S(C))$ such that: $S$ is a K3 surface,   $p_a(C) = g$ and $\mathcal O_S(C)$ is very ample and primitive in $ \Pic \ S$.
 \par The uniruledness of $\mathcal M_{11}$ is a consequence of the more general result we are now going to state, see \cite{Mu1} and \cite{MM} as well. Let
$$
\mathcal F_g
$$
be the moduli space of  polarized K3 surfaces of genus $g$. $\mathcal F_g$ is known to be irreducible. Moreover it is endowed with a projective bundle
$$
p_g: P_g \to \mathcal F_g,
$$
having fibre $\vert C \vert$ at the moduli point of $(S, \mathcal O_S(C))$. Consider the map
$$
m_g: P_g \to \mathcal M_g.
$$
Counting dimensions we have $ \dim \ P_g = 19 + g$. Hence $m_g$ is not dominant for $g \geq 12$.  On the other hand one has:
\begin{theorem} $m_g$ is dominant for $g \leq 11$ and $g \neq 10$. \end{theorem}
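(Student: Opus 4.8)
The plan is to reduce dominance of $m_g$ to the surjectivity of its differential at a general point, and then to verify that surjectivity either by Mukai's explicit models or by the cohomological condition governing it. Since $P_g$ and $\mathcal M_g$ are irreducible and $m_g$ is a morphism, $m_g$ is dominant exactly when $\dim m_g(P_g) = 3g-3$. The excerpt already records $\dim P_g = 19+g$, so the inequality $19+g \geq 3g-3$, i.e. $g \leq 11$, is forced; within this range, by generic smoothness in characteristic zero, dominance is equivalent to the surjectivity of the tangent map $dm_g$ at a general pair $(S,C)$, where the target is $T_{[C]}\mathcal M_g = H^1(C,T_C)$. So the whole problem becomes a question about this differential.

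To study $dm_g$ I would use the deformation theory of the pair $(S,C)$. Writing $N_{C/S} \cong \mathcal O_C(C)$ and using the normal bundle sequence
$$ 0 \to T_C \to T_S|_C \to N_{C/S} \to 0, $$
the tangent space $T_{(S,C)}P_g$ maps to $H^1(T_C)$ through the deformations of $C$ inside $S$, recorded by the coboundary $\partial\colon H^0(N_{C/S}) \to H^1(T_C)$, together with the $19$-dimensional space of deformations of $S$ itself, which enters through the restriction $H^1(T_S) \to H^1(T_S|_C)$. The image of $\partial$ is $\ker\bigl(H^1(T_C)\to H^1(T_S|_C)\bigr)$, so surjectivity of $dm_g$ reduces to showing that, once $S$ is allowed to vary, every class in $H^1(T_C)$ is hit. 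Via the sequence above and Serre duality this can be rephrased as a non-speciality statement whose obstruction is, in the end, controlled by the Gaussian (Wahl) map $\Phi_{\omega_C}\colon \wedge^2 H^0(\omega_C) \to H^0(\omega_C^{\otimes 3})$: the pairs $(S,C)$ for which $dm_g$ fails to be surjective are precisely those for which $\Phi_{\omega_C}$ degenerates. I would carry out this reduction at a \emph{Brill--Noether general} curve, since Lazarsfeld's theorem guarantees that a general K3 section is Petri general, which is what makes the genericity argument legitimate.

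For the positive verification I would lean on the Mukai realizations of canonical curves as linear sections of homogeneous spaces announced in the introduction. For $6 \leq g \leq 9$ a general canonical curve is a transverse linear section of a fixed rational homogeneous variety $V_g \subset \mathbf P^N$ (a Gushel--Mukai/del Pezzo model for $g=6$, the spinor tenfold for $g=7$, $G(2,6)$ for $g=8$, a symplectic Grassmannian for $g=9$), while a general polarized K3 surface of genus $g$ is a section of $V_g$ by one fewer hyperplane; hence the general curve appears as a hyperplane section of such a K3 and a parameter count gives dominance of $m_g$. The small cases $g \leq 5$ are handled by classical complete-intersection descriptions of both $S$ and $C$. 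The case $g=11$ is special and I would treat it by Mukai's reconstruction: a general genus $11$ curve lies on an essentially unique K3 surface of genus $11$, which again produces dominance of $m_{11}$.

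The \textbf{main obstacle}, and the reason the statement stops exactly where it does, is the borderline high-genus behavior. For $g=11$ the count $\dim P_{11}=30=\dim\mathcal M_{11}$ is tight, so $m_{11}$ is generically finite and dominance rests on Mukai's nontrivial reconstruction, which is the most delicate ingredient to establish. The value $g=10$ is excluded precisely because the construction genuinely breaks there: by Wahl's theorem any curve on a K3 surface has non-surjective $\Phi_{\omega_C}$, yet for $g=10$ the source and target of $\Phi_{\omega_C}$ both have dimension $45$, and Ciliberto--Harris--Miranda show that $\Phi_{\omega_C}$ is an isomorphism for the general curve; hence the general genus $10$ curve is \emph{not} a K3 section and $m_{10}$ fails to be dominant. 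Thus the same cohomological mechanism that drives the proof in the other genera also pins down $g=10$ as the single critical exception.
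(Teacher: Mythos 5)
Your proposal is correct in its operative parts but takes a genuinely different route from the paper, with the difference concentrated exactly at $g=11$. The paper attributes the general statement to Mori--Mukai and Mukai, and the only case it proves in detail is $g=11$: for a general $(C,L)\in\mathcal W^1_{6,11}$ the line bundle $\omega_C(-L)$ embeds $C$ in $\mathbf P^5$ as a curve of degree $14$; using the surjectivity of the period map, Saint-Donat's results and a lattice computation, the base locus of $\vert\mathcal I_C(2)\vert$ is shown, for general such $C$, to be a smooth K3 surface $S$ with $\Pic S\cong\mathbb Z[F]\oplus\mathbb Z[C]$ and $L\cong\mathcal O_C(F)$, so that the image of $m_{11}$ contains the $6$-gonal (Petri) divisor $\mathcal M^1_{11,6}$; finally, since $P_{11}$ is irreducible, either $m_{11}$ is dominant or its image equals that divisor, and the latter is excluded by the Green--Lazarsfeld theorem, because on a K3 surface with $\Pic S=\mathbb Z[C]$ a $g^1_6$ would have to be cut by a class $F$ with $F^2=0$, which is numerically impossible. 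You instead argue directly for $2\le g\le 9$ (the general canonical curve is a linear -- for $g=6$ quadric-and-linear -- section of the Mukai homogeneous model, respectively a complete intersection for $g\le 5$; cutting by one more hyperplane yields a smooth K3 surface through the curve, and dominance follows from constructibility of the image, with no differential computation at all), while for $g=11$ you invoke Mukai's reconstruction theorem, i.e.\ the birationality of $m_{11}$, a result strictly stronger than the dominance to be proved. So the division of labor is inverted with respect to the paper: you argue where it cites ($g\le 9$) and cite where it argues ($g=11$), which is the case carrying the paper's actual mathematical content. Your explanation of the exclusion of $g=10$, playing Wahl's theorem against the Ciliberto--Harris--Miranda surjectivity in the square case $\binom{10}{2}=45=5\cdot 10-5$, is correct and is a nice complement: the paper records the genus-$10$ failure only later, through the divisor $E_{K3}$.

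One assertion in your deformation-theoretic paragraph is, however, false as stated: ``the pairs $(S,C)$ for which $dm_g$ fails to be surjective are precisely those for which $\Phi_{\omega_C}$ degenerates.'' By Wahl's theorem $\Phi_{\omega_C}$ is non-surjective for \emph{every} smooth curve on a K3 surface, so, read literally, your criterion would make $dm_g$ nowhere submersive and hence, by generic smoothness in characteristic zero, $m_g$ never dominant -- contradicting both the theorem and your own third paragraph. The correct infinitesimal bookkeeping lives on the surface: from the exact sequence $0\to T_S(-C)\to T_S\langle C\rangle\to T_C\to 0$ the cokernel of $dm_g$ at $(S,C)$ injects into $H^2(T_S(-C))\cong H^0(\Omega^1_S(C))^*$, and it is the (non)vanishing of this kind of group -- equivalently the size of the corank of the Gaussian map, not its mere positivity -- that decides surjectivity. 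Since your actual verification in the third paragraph never uses this reduction, the error is quarantined and the proof you outline still stands; but the paragraph should be deleted or corrected, as in its present form it asserts a criterion that would disprove the very statement it is meant to establish.
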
 \par 
Of course the uniruledness of $\mathcal M_{g}$ follows for the same values of $g$.  Let us sketch a recreative proof of the theorem for $g = 11$,
which is based on the Brill-Noether locus $\mathcal W^1_{6,11}$ parametrizing 6-gonal curves of genus 11.
\medskip \par 
\underline {\it $m_{11}: P_{11} \to \mathcal M_{11}$ is dominant} 
 \begin{proof}   Let $(C,L)$ be a pair defining a general point of $\mathcal W^1_{6,11}$. We have seen that then $\vert L \vert$ is a base point free pencil.
 Let $H := \omega_C(-L)$, then:
 \begin{lemma} $h^0(H) = 6$ and $H$ is very ample. \end{lemma}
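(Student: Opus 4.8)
The plan is to separate the two assertions: the cohomological equality $h^0(H)=6$ is immediate from Riemann--Roch, while the very ampleness of $H$ reduces, via the same duality, to a statement about the $g^1_6$ that I would establish by a Brill--Noether dimension count. For the first part I would apply Riemann--Roch to $H=\omega_C(-L)$. Since $\deg H=(2g-2)-6=14$ and $g=11$, we get $h^0(H)-h^1(H)=14-11+1=4$, and by Serre duality $h^1(H)=h^0(\omega_C\otimes H^{-1})=h^0(L)$. As $\vert L\vert$ is a base point free pencil, $h^0(L)=2$, whence $h^0(H)=6$. This step is routine.

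For very ampleness I would use the standard criterion that $H$ is very ample if and only if every length-two subscheme $p+q$ of $C$ (including the non-reduced $2p$, which handles tangent directions) imposes two independent conditions on $\vert H\vert$, i.e. $h^0(H(-p-q))=h^0(H)-2=4$ for all $p,q\in C$. Applying Riemann--Roch and Serre duality to $H(-p-q)$ exactly as above gives
$$ h^0(H(-p-q)) = 2 + h^0(L(p+q)). $$
Thus $H$ is very ample precisely when $h^0(L(p+q))=2=h^0(L)$ for every effective divisor $p+q$ of degree two; in other words, adding two points to the $g^1_6$ must never raise the number of sections.

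The heart of the matter is therefore to rule out such a jump on the general $6$-gonal curve. Suppose $h^0(L(p+q))\geq 3$ for some $p,q$. Removing the base locus of $\vert L(p+q)\vert$ yields a base point free series of dimension $\geq 2$ and degree $\leq 8$; since a genus $11$ curve carries no net of degree $\leq 6$ (a plane model of degree $\leq 6$ has geometric genus $\leq 10$), $C$ would then possess a $g^2_7$ or a $g^2_8$ mapping it birationally to a plane curve. I would bound the resulting locus in $\mathcal M_{11}$ through plane models: a birational plane octic of geometric genus $11$ is nodal with $\delta=\binom{7}{2}-11=10$ nodes, so by the codimension-$\delta$ theorem for Severi varieties its family has dimension $\dim\vert\mathcal O_{\mathbf P^2}(8)\vert-10=34$, and after quotienting by $\PGL(3)$ one finds $\dim\mathcal W^2_{8,11}=\dim\mathcal M_{11}+\rho(11,2,8)=30-4=26$ (the $g^2_7$ locus is even smaller, of dimension $23$). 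On the other hand the irreducible $6$-gonal locus has dimension $2g+2k-5=29$. By the irreducibility of $\mathcal H_{6,11}$ a pair defining a general point of $\mathcal W^1_{6,11}$ corresponds to a general point of this $29$-dimensional locus, which cannot be contained in the $\leq 26$-dimensional locus of curves carrying a plane net of degree $\leq 8$. Hence $h^0(L(p+q))=2$ for all $p,q$, and $H$ is very ample.

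The main obstacle is exactly this last comparison: because a $6$-gonal curve is itself Brill--Noether special, one must verify that the $g^1_6$ does not automatically force an auxiliary $g^2_8$. The dimension inequality $29>26$ is what resolves it, and it rests on two structural inputs already available in the text, namely the irreducibility and dimension of the Hurwitz (gonal) locus and the codimension-$\delta$ description of Severi varieties of plane curves. An equally acceptable alternative would be to exhibit a single $6$-gonal genus-$11$ curve with no $g^2_8$ and then invoke the openness of very ampleness together with the irreducibility of $\mathcal W^1_{6,11}$.
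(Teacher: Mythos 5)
Your strategy coincides with the paper's own proof: there too $h^0(H)=6$ comes from $h^1(H)=h^0(L)=2$ plus Riemann--Roch, and failure of very ampleness is converted into $h^0(L(d))\geq 3$ for an effective degree-two divisor $d$, i.e. $W^2_8(C)\neq\emptyset$, which is then excluded ``for dimension reasons, since $\dim \mathcal W^1_{6,11} > \dim \mathcal W^2_{8,11}$''. Your Severi-variety computation ($26$ versus $29$) supplies exactly the numbers the paper leaves implicit, so in that respect your write-up is more complete than the original.

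There is, however, a gap in your case analysis. The claim ``a genus $11$ curve carries no net of degree $\leq 6$'' is false as stated: the genus bound for plane models only rules out nets mapping $C$ \emph{birationally} onto its image, while a trigonal curve of genus $11$ carries the composed net $|2g^1_3|$ of degree $6$, a bielliptic one carries the pullback of a plane cubic model of its elliptic quotient, and a hyperelliptic one carries $|2g^1_2|$ of degree $4$. For the same reason, your conclusion that the base-point-free series of degree $7$ or $8$ maps $C$ birationally to a plane curve is unjustified for degree $8$: a base-point-free $g^2_8$ may be composed with a degree-$4$ map onto a conic ($C$ tetragonal) or with a degree-$2$ map onto a plane quartic ($C$ a double cover of a genus-$3$ curve); only degree $7$, being prime, forces birationality. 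Consequently your count $\dim \mathcal W^2_{8,11}=26$ bounds only the stratum whose moving part gives a birational plane model, not the whole locus. The gap closes by the very method you are already using: the trigonal, tetragonal, bielliptic and bi-genus-$3$ loci in $\mathcal M_{11}$ have dimensions $23$, $25$, $20$ and $18$ respectively (and adding base points raises these by at most $2$), all well below $29$, so a general $6$-gonal curve avoids every composed or degenerate case, and your comparison $29>26$ then disposes of the remaining birational ones. With that supplement the argument is complete and agrees with the paper's.
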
 \begin{proof} Since $h^1(H) = h^0(L) = 2$, we have $h^0(H) = 6$. Assume $H$ is
 not very ample, then $h^0(H(-d)) \geq 5$ for some effective divisor $d$ of degree 2. But then we would have $h^0(L(d)) \geq 3$ and $W^2_8(C) \neq \emptyset$. This is
impossible for dimension reasons, since  $ \dim \ \mathcal W^1_{6,11} >  \dim \ \mathcal W^2_{8, 11}$. \end{proof} So we can assume that $C$ is embedded in $\mathbf P^5$ 
by $\vert H \vert$ as a connected, smooth, linearly normal curve of degree 14 and genus 11.  Let $\mathcal I_C$ be the ideal sheaf of $C$, one computes that $h^0(\mathcal I_C(2)) \geq 3$.  
\par  Let $\mathcal H$ be the open set of the Hilbert scheme of $C$ whose elements 
are curves with the same properties. Note that, for each $D \in \mathcal H$, $\vert \omega_D(-1) \vert$ is a base point free $g^1_6$ and that $\mathcal H / PGL(6)$ is birational to $\mathcal W^1_{6,11}$. 
Consider the moduli map $m: \mathcal H \to \mathcal M_{11}$ and the Zariski closure of its image
$$
\mathcal M^1_{11,6}:= \overline {m(\mathcal H)}.
$$
Then $\mathcal M^1_{11,6}$ is the Petri divisor in $\mathcal M_{11}$ parametrizing 6-gonal curves.
\begin{lemma} For a general $D \in \mathcal H$ the following conditions are satisfied: \par
1) $h^0(\mathcal I_D(2)) = 3$,  the base locus of $\vert \mathcal I_D(2) \vert$ is a smooth K3 surface $S$. \par
2) $ \Pic \ S \cong \mathbb Z[F] \oplus \mathbb Z[D]$, where $\vert F \vert$ is an elliptic pencil and $L \cong \mathcal O_C(F)$.
\end{lemma}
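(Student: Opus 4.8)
The plan is to produce the K3 surface by reversing the construction: rather than cutting the given $C$ directly by quadrics, I first build a polarized K3 surface carrying a curve with exactly the numerical invariants we need, verify there that all the asserted properties hold, and then transport the conclusion to a general $D\in\mathcal H$ by openness together with the irreducibility of $\mathcal H$. Irreducibility of $\mathcal H$ is available: since $\mathcal H/\PGL(6)$ is birational to $\mathcal W^1_{6,11}$ and the Hurwitz space is irreducible (the theorem of Fulton quoted above), $\mathcal H$ is irreducible. Hence it suffices to exhibit a single $[C_0]\in\mathcal H$ for which 1) and 2) hold, all the relevant conditions being open.

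For the model I consider the even rank-two lattice $\Lambda$ with basis $h,F$ and intersection form $h^2=8$, $h\cdot F=6$, $F^2=0$; equivalently, writing $C:=h+F$, one has $C^2=20$, $C\cdot F=6$, $F^2=0$. By the surjectivity of the period map for K3 surfaces there is a K3 surface $S$ with $\Pic S\cong\Lambda$ and $h$ the ample generator. A short computation shows $\Lambda$ represents no $-2$: if $x=aF+bh$ then $x^2=12ab+8b^2$, and $12ab+8b^2=-2$ has no integral solution. Thus $S$ carries no smooth rational curve, and by the criteria of Saint-Donat the generator $h$ is very ample and projectively normal provided $S$ is neither hyperelliptic nor trigonal for $h$; the primitive isotropic classes of $\Lambda$ are $F$ and $3h-2F$, of $h$-degrees $6$ and $12$, none equal to $1,2$ or $3$, so these obstructions are absent. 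Consequently $\phi_h$ embeds $S$ in $\mathbf P^5$ as a projectively normal surface of degree $h^2=8$; since $h^0(\mathcal O_S(2h))=\tfrac12(2h)^2+2=18$, we get $h^0(\mathcal I_S(2))=21-18=3$ and $S$ is the complete intersection of this net of quadrics.

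Next I place $C_0$ on $S$. Because $F$ is nef, primitive and isotropic, $|F|$ is a base point free elliptic pencil with $h^0(\mathcal O_S(F))=2$. A general $C_0\in|C|=|h+F|$ is smooth and irreducible by Bertini, of genus $C^2/2+1=11$, and adjunction on the K3 gives $\omega_{C_0}\cong\mathcal O_{C_0}(C_0)$. Setting $L:=\mathcal O_{C_0}(F)$ we have $\deg L=C\cdot F=6$ and, since $h^1(\mathcal O_S(h))=0$, a base point free $g^1_6$; moreover $\omega_{C_0}(-L)\cong\mathcal O_{C_0}(C-F)\cong\mathcal O_{C_0}(h)$, so the $|h|$-embedding of $C_0$ is precisely the embedding by $|\omega_{C_0}(-L)|$ and $[C_0]\in\mathcal H$. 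This gives 2) on the nose, $\Pic S=\langle h,F\rangle=\langle C_0,F\rangle$. For 1) it remains to see that the quadrics through $C_0$ are exactly those through $S$: if a quadric $Q$ contained $C_0$ but not $S$, then $Q\cap S-C_0\in|2h-C|=|h-F|$ would be effective, which is impossible since $(h-F)^2=-4<-2$. Hence $h^0(\mathcal I_{C_0}(2))=h^0(\mathcal I_S(2))=3$ and the base locus of $|\mathcal I_{C_0}(2)|$ equals $S$.

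Finally I pass to the general $D$. The inequality $h^0(\mathcal I_D(2))\ge 3$ holds for every member, while $h^0(\mathcal I_D(2))\ge 4$ cuts out a closed subset of the irreducible $\mathcal H$ not containing $[C_0]$; so the general $D$ has $h^0(\mathcal I_D(2))=3$, with smooth complete-intersection base locus $S_D$, these being open conditions verified at $[C_0]$. For the Picard group, the classes $D$ and $F$ (the latter coming from the $g^1_6$ $=\mathcal O_D(F)$) are algebraic over all of $\mathcal H$, so $\Pic S_D\supseteq\langle D,F\rangle$; since the Picard number can only increase under specialisation and equals $2$ at $[C_0]$, it equals $2$ away from the countable union of Noether--Lefschetz loci, giving $\Pic S_D=\langle D,F\rangle$ for general $D$. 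I expect the main difficulty to be the verification on the model $S$ --- checking, through Saint-Donat's criteria and the lattice arithmetic, that $h$ is very ample and projectively normal and cuts $S$ out as a $(2,2,2)$ complete intersection; once this is secured, the descent to a general $D$ is formal, resting only on semicontinuity, the irreducibility of $\mathcal H$, and the Noether--Lefschetz principle.
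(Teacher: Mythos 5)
Your proposal is correct and takes essentially the same route as the paper's own proof: the paper also verifies the statement at a single curve $C_0\in|h+F|$ on a K3 surface with Picard lattice exactly $\mathbb Z h\oplus\mathbb Z F$ ($h^2=8$, $h\cdot F=6$, $F^2=0$) produced by the surjectivity of the period map, realizes $S\subset\mathbf P^5$ as a complete intersection of three quadrics via Saint-Donat, and rules out extra quadrics through $C_0$ by the non-effectivity of $h-F$, then spreads the conclusion over the irreducible $\mathcal H$ by openness. Your write-up is if anything slightly more careful, since you make explicit the lattice arithmetic and the semicontinuity/Noether--Lefschetz step that the paper compresses into the assertion that conditions 1) and 2) are open.
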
 
\begin{proof}  1 and 2 are open conditions on $\mathcal H$, hence it suffices to produce one $D \in \mathcal H$ satisfying them.
Fix an elliptic K3 surface $S$ such that: $$  \Pic \ S \cong \mathbb Z[H] \oplus \mathbb Z[F], $$ where $H$ is a very ample, smooth
integral curve of genus 5, $\vert F \vert$ is an elliptic pencil and $DF = 6$. By the surjectivity of the period map, $S$ exists. One
computes that no $E \in  \Pic \ S$ exists such that $E^2 = 0$ and $EH = 3$. Let $S \subset \mathbf P^5$ be the embedding of $S$ 
by $\vert H \vert$. Then, by \cite{SD}, $S$ is a complete intersection of 3 quadrics. Since $H$ is very ample the same is true for
$H + F$. It is easily seen that a general $D \in \vert H + F \vert$ is a smooth, linearly normal curve of genus 11 and degree 14,
so that $D \in \mathcal H$ and $ \vert \mathcal O_D(F) \vert$ is a base point free $g^1_6$. Then, to prove that $D$
satisfies 1 and 2, it remains to show that $h^0(\mathcal I_D(2)) = 3$. This follows from  the standard exact sequence
$$
0 \to \mathcal I_S(2H) \to \mathcal I_D(2H) \to \mathcal O_S(2H-D) \to 0
$$
because $h^0(\mathcal O_S(2H-D)) = 0$. Indeed we have $2H - D = H - F$ so that $(H - F)^2 = -4$ and $H(H - F) = 2$. Assume
that $L \in \vert H - F \vert$, then it follows that $L = L_1 + L_2$, $L_1$ and $L_2$ being disjoint lines. This is numerically impossible in $ \Pic \ S$,
hence $h^0(\mathcal I_S(2)) = h^0(\mathcal I_D(2)) = 3$.
 \end{proof}
We can now conclude our proof: consider the Zariski closure $ \mathcal P^1_{11,6} \subset \mathcal P_{11} $ of the divisor parametrizing pairs $(S,C)$ such
that \par (1) $C \subset S$ is a smooth, very ample curve of genus 11, \par (2) $ \ \Pic \ S \cong \mathbb Z[F] \oplus \mathbb Z[C]$, where $\vert F \vert$ is an elliptic pencil 
and $F \cdot C  = 6$.  \par By the  lemma, the image of $\mathcal P^1_{11,6}$ by $m_{11}: \mathcal P_{11} \to \mathcal M_{11}$ is $\mathcal M^1_{11,6}$. Then, since this is a divisor
and $\mathcal P_{11}$ is irreducible, either $m_{11}(\mathcal P_{11}) = \mathcal M^1_{11,6}$ or $m_{11}$ is dominant. Assume $m_{11}(\mathcal P_{11}) = \mathcal M^1_{11,6}$
and consider a pair $(S,C)$ defining a general $x \in \mathcal P_{11}$. Then $ \Pic \ S \cong \mathbb Z[C]$ and $m_{11}(x)$ is general in $\mathcal M^1_{11, 6}$, hence
$C$ is 6-gonal. Let $L \in W^1_6(C)$,  a well known theorem implies that $L \cong \mathcal O_C(F)$, where $F \subset S$ is a curve
such that $F^2 = 0$, \cite{GL}. This is numerically impossible in $ \Pic \ S$. Hence $m_{11}$ is dominant.
\end{proof} \par 
Let $g \leq 11$,  $g \neq 10$. Due to the geometry of K3 surfaces and Mukai-Mori theorem, the uniruledness of the universal Brill-Noether
locus $\mathcal W^r_d$ easily follows in many cases. Let $W \subset \mathcal W^r_d$ be an irreducible component, we have:
\begin{proposition}  Let $g \leq 11$ and $g \neq 10$. Assume that $W$ dominates $\mathcal M_g$ and parametrizes pairs $(C,L)$ with  $L$ special. 
 Then $W$ is uniruled. 
\end{proposition}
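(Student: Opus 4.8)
The plan is to pass through a general point of $W$ a rational curve obtained by deforming the curve inside a fixed K3 surface while transporting the special line bundle by means of a Lazarsfeld--Mukai bundle. Let $(C,L)$ define a general point of $W$, put $r+1 = h^0(L)$ and $d = \deg L$; by Riemann--Roch, specialness means $h^1(L) = g-d+r > 0$. Since $W$ dominates $\mathcal M_g$ the curve $C$ has general moduli, and because $g \leq 11$, $g \neq 10$, the dominance of $m_g : P_g \to \mathcal M_g$ lets me embed $C$ in a polarized K3 surface $(S,\mathcal O_S(C))$ with $\dim \vert C \vert = g > 0$.

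First I would form the Lazarsfeld--Mukai bundle $E$ attached to $(C,L)$, defined on $S$ through the evaluation sequence
\[
0 \to E^\vee \to H^0(L)\otimes \mathcal O_S \to \iota_* L \to 0,
\]
with $\iota : C \hookrightarrow S$. Then $E$ is locally free of rank $r+1$, with $\det E \cong \mathcal O_S(C)$ and $c_2(E)=d$; dualizing and using $N_{C/S}\cong \omega_C$ (adjunction on a K3) yields
\[
0 \to H^0(L)^\vee\otimes \mathcal O_S \to E \to \iota_*(\omega_C\otimes L^{-1}) \to 0,
\]
so that $h^0(E) = h^0(L)+h^1(L)$. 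Specialness is used precisely here: the summand $h^1(L) = h^0(\omega_C\otimes L^{-1}) > 0$ furnishes the extra sections that make the forthcoming construction move.

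Next I would reconstruct the pair from $E$ in families. Taking the pencil case $r=1$ as a model, a general section of $E$ vanishes on a length-$d$ scheme $Z$, and every $D \in \vert C \vert = \vert \det E\vert$ through $Z$ acquires the special pencil $L_D := \mathcal O_D(Z) \in W^1_d(D)$; in higher rank the analogous recovery of a special $g^r_d$ on $D$ from $E\vert_D$ is Mukai's. Keeping $S$ and $E$ fixed and letting the section (up to scale) and the curve $D$ vary, the resulting pairs $(D,L_D)$ are parametrized by a rational variety $B$, built as a tower of projective and Grassmannian bundles over $\vert C\vert$, carrying a natural map $B \to W$ with $(C,L)$ in its image. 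As the image of a rational variety, the image of $B \to W$ is unirational; hence it is enough to show it is positive-dimensional, since then it is covered by rational curves and one of them passes through $(C,L)$.

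That is the non-constancy point, and I would settle it exactly as in the proof that $\mathcal W^2_{10,10}$ is uniruled: were $B \to \mathcal M_g$ constant along $\vert C\vert$, a general $D \in \vert C\vert$ would be a copy of $C$, producing a dominant map $C \times \vert C\vert \to S$ restricting to an isomorphism on each slice $C \times \{D\}$ and forcing $S$ to be ruled, against $S$ being a K3 surface. Hence the image of $B$ is a positive-dimensional unirational subvariety of $W$ through the general point $(C,L)$, so a rational curve of $W$ passes through $(C,L)$; as $(C,L)$ was general, $W$ is uniruled. I expect the reconstruction step to be the main obstacle: one must guarantee that $E$ is sufficiently general --- globally generated and simple --- so that the recovered $L_D$ is a genuine special $g^r_d$ lying in the same component $W$ for $D$ general in $\vert C\vert$, and it is the hypotheses $g\leq 11$, $g\neq 10$, entering through the dominance of $m_g$, that secure the K3 geometry making this possible.
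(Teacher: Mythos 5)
Your proposal is correct in substance and rests on the same three pillars as the paper's own proof: the Mori--Mukai theorem to put the general curve of genus $g\leq 11$, $g\neq 10$ on a K3 surface $S$; the specialness of $L$ to make the curve move inside $S$ while carrying its linear series; and the non-ruledness of $S$ to exclude constancy of the moduli map (the paper's ``it is easy to see'' at that point is exactly the $\mathcal W^2_{10,10}$ argument you reproduce). The difference is in the implementation, and the paper's is much lighter: it fixes an effective divisor $Z\in \vert L \vert$ and uses the single exact sequence
$$
0 \to \mathcal O_S \to \mathcal I_Z(C) \to \omega_C\otimes L^{-1} \to 0
$$
(here $\mathcal O_C(C-Z)\cong \omega_C\otimes L^{-1}$ by adjunction on the K3), so that $h^1(L)>0$ together with $h^1(\mathcal O_S)=0$ gives $\dim \vert \mathcal I_Z(C)\vert = h^1(L) \geq 1$ at once; each $D\in \vert \mathcal I_Z(C)\vert$ carries $\mathcal O_D(Z)$ with $h^0\geq r+1$ (Riemann--Roch on $D$ plus the same sequence restricted to $D$), and this pencil maps non-constantly to $W$. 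Your Lazarsfeld--Mukai route builds the same family with more machinery: in the rank-two case your variety $B$ contains the paper's pencil as the fiber over the section of $E$ whose zero scheme is $Z\in\vert L\vert$, and your identity $h^0(E)=h^0(L)+h^1(L)$ is the paper's dimension count in disguise.

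The machinery is also where your loose ends sit, and they are real though repairable. First, the Lazarsfeld--Mukai bundle with the invariants you claim ($\det E\cong\mathcal O_S(C)$, $c_2(E)=d$) requires $\vert L\vert$ to be base point free; for an arbitrary irreducible component $W$ this is not automatic for the general $(C,L)$, and if $L$ has a base divisor $B_0$ then $E$ only remembers $L(-B_0)$, so your reconstructed pairs lie in $\mathcal W^r_{d-\deg B_0}$ rather than in $W$. Second, for $r\geq 2$ the sections of the rank-$(r+1)$ bundle $E$ no longer cut out length-$d$ schemes (a general section of a globally generated bundle of rank $\geq 3$ on a surface vanishes nowhere), so ``the analogous recovery is Mukai's'' really means passing to degeneracy loci of $(r+1)$-dimensional subspaces $\Lambda\subset H^0(E)$, which needs the global generation and genericity of $E$ that you flag but do not verify. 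The paper's argument needs only $h^0(L)\geq 1$ and $h^1(L)\geq 1$, which hold by definition of the Brill--Noether locus and by hypothesis, so none of these issues arises; if you want to keep your framing, the clean fix is to specialize it back to a fixed $Z\in\vert L\vert$ and run the displayed sequence directly.
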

\begin{proof} Let $(C,L)$ be a pair defining a general $x \in W$. We have 
 $C \subset S$, where $S$ is a K3 surface and $ \Pic \ S \cong \mathbb Z[C]$. Let
$Z \in \vert L \vert$ and let $\mathcal I_Z$ be its ideal sheaf in $S$. Since $Z$ is a special divisor, the standard exact sequence
$$
0 \to \mathcal O_S \to \mathcal I_Z(C) \to \mathcal O_C(C-Z) \to 0
$$
implies $ \dim \ \vert \mathcal I_Z(C) \vert \geq 1$. Moreover, one has $h^0 (\mathcal O_D(Z)) \geq r+1$ for each $D \in P :=  \vert \mathcal I_Z(C) \vert$. Then $(D, \mathcal O_D(Z))$ defines a point
of $W$. It is easy to see that the moduli map $P \to W$ is not constant. Hence $W$ is uniruled.
 \end{proof}   \par 
We recall that a  variety $X$ is $d$-ruled if it is is birational to $Y \times \mathbf P^d$ with $d > 0$. A  beautiful theorem of Mukai, \cite{Mu2}, says that:
\begin{theorem} $m_{11}: P_{11} \to \mathcal M_{11}$ is birational. Hence $\mathcal M_{11}$ is $g$-ruled.
\end{theorem}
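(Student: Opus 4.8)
The plan is to upgrade the dominance of $m_{11}$, already established in the previous theorem, to birationality by a dimension count together with the construction of a rational inverse. First I would observe that $\dim P_{11} = 19 + 11 = 30 = 3\cdot 11 - 3 = \dim \mathcal M_{11}$, and that $P_{11}$ is irreducible, being a projective bundle over the irreducible space $\mathcal F_{11}$. Since $m_{11}$ is dominant and source and target are irreducible of the same dimension, $m_{11}$ is generically finite, so it is birational if and only if its general fibre is a single reduced point. A point of the fibre over a general $[C] \in \mathcal M_{11}$ is a polarized K3 surface $(S, \mathcal O_S(C))$ of genus $11$ containing a member of $\vert C \vert$ isomorphic to $C$; hence it suffices to reconstruct, from the abstract curve $C$ alone, a polarized K3 surface $S_C$ together with an embedding of $C$ as a curve of the polarization, and to show this reconstruction is unique and inverse to $m_{11}$.

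The reconstruction I would use is Mukai's non-abelian Brill--Noether mechanism for genus $11$. On $C$ one considers the moduli space of semistable rank two vector bundles with canonical determinant $\det E \cong \omega_C$, and inside it the non-abelian Brill--Noether locus $M_C$ where $h^0(E) \geq 7$. For canonical determinant one has $\chi(E) = 0$ and Serre duality gives $h^0(E) = h^1(E)$, so the relevant symmetric Brill--Noether number predicts that the locus $h^0 \geq 7$ has codimension $\binom{8}{2} = 28$ in the $(3g-3)$-dimensional moduli space, that is, expected dimension $30 - 28 = 2$. Mukai shows that for a general curve $C$ of genus $11$ the space $S_C := M_C$ is indeed a smooth projective surface with trivial canonical bundle and vanishing irregularity, hence a K3 surface, carrying a natural polarization, and that $C$ embeds in $S_C$ as a curve of the polarization. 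The decisive point is the reciprocity: if $C$ is a general member of $\vert \mathcal O_S(C) \vert$ on a genus $11$ K3 surface $S$, then restriction of bundles from $S$ to $C$ identifies $S$ with $S_C$. This yields a rational inverse $\mathcal M_{11} \dashrightarrow P_{11}$, $[C] \mapsto (S_C, C)$, proving that $m_{11}$ is birational; the $g$-ruledness of $\mathcal M_{11}$ then follows at once, since $P_{11} \to \mathcal F_{11}$ is a $\mathbf P^{11}$-bundle.

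The hard part is entirely contained in the properties of $M_C$ and in the reciprocity statement. One must prove that the Brill--Noether locus has the expected dimension two and is smooth of the correct numerical type, so that it is genuinely a K3 surface rather than being empty, reducible, or of larger dimension; this requires controlling the stability and the cohomology $h^0(E)$ of the relevant rank two bundles for general $C$. Above all one must verify that the correspondence $C \mapsto S_C$ is inverse to $S \mapsto (\text{general } C \in \vert \mathcal O_S(C) \vert)$, which amounts to showing that the restriction map on bundles is an isomorphism of K3 surfaces for general data. This non-abelian Brill--Noether analysis is the technical heart of Mukai's theorem and lies well beyond the elementary deformation-theoretic and lattice arguments that sufficed for the mere dominance of $m_{11}$.
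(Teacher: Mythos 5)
Your reduction is the easy half and it is fine: $P_{11}$ is irreducible (a projective bundle over the irreducible $\mathcal F_{11}$) of dimension $19+11=30=\dim \mathcal M_{11}$, so dominance (the previous theorem) makes $m_{11}$ generically finite, and birationality amounts to showing the general fibre is a single point, i.e.\ to reconstructing from the abstract curve $C$ the unique polarized K3 through it. Note also that the paper itself offers no proof here: the theorem is quoted from Mukai \cite{Mu2}, so what you are really sketching is that reference, and your Brill--Noether numerology is the correct one (rank $2$, $\det E \cong \omega_C$, $h^0(E)\geq 7$, expected codimension $\binom{8}{2}=28$ in the $30$-dimensional moduli space, hence an expected surface).

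The genuine gap is your ``reciprocity'' step, which is false as stated. Restriction of bundles gives an isomorphism
$$
M_S\bigl(2,[C],5\bigr) \longrightarrow S_C := M_C\bigl(2,\omega_C,7\bigr),
$$
where the source is the moduli space of stable rank-two sheaves on $S$ with isotropic Mukai vector $v=(2,[C],5)$. This source is a Fourier--Mukai partner of $S$, \emph{not} $S$ itself: for a genus-$11$ K3 with $\Pic S=\mathbb Z[C]$, $[C]^2=20$, the partners correspond to the coprime factorizations of $10$, and the one attached to $(2,5)$ is generically non-isomorphic to $S$. Hence $S_C\not\cong S$; and since the theorem asserts that $S$ is the unique K3 containing a general $C$, the curve $C$ does not lie on $S_C$ at all, so the proposed inverse $[C]\mapsto (S_C,C)$ is not even a point of $P_{11}$. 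Mukai's actual reconstruction needs a second dualization: one proves $S_C$ is a smooth K3 of genus $11$, and then recovers $S$ as a moduli space of sheaves \emph{on} $S_C$ (with the dual Mukai vector $(2,h,5)$), the curve $C$ being embedded there via the universal bundle of the fine moduli space $S_C$; uniqueness of the K3 through $C$, and hence degree one of $m_{11}$, follow from this two-step Fourier--Mukai picture (carried out in full by Arbarello, Bruno and Sernesi in their work on Mukai's program). Omitting the second step is not a harmless simplification: the one-step statement you rely on is wrong, and with it the construction of the rational inverse collapses.
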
 \par  
Now we describe further uniruledness constructions going up to genus
$$
g \leq 16.
$$
Actually there are several of these constructions and they often provide stronger results than uniruledness, like for instance unirationality. We will concentrate on  this property
in the next section. Here we introduce the constructions to be used and prove for them some ruledness properties.  \par 
Besides K3 surfaces,  \it canonical surfaces  which are complete intersection  \rm in $\mathbf P^n$ or $\mathbf P^a \times \mathbf P^b$
will be used. \rm The list of them is of course short:  in $\mathbf P^n$ complete intersections $S$ of type (5), (3,3), (2,4),
(2,2,3), (2,2,2,2) are the  only possible cases. Let us see two reasons for using these surfaces. \par  (1) If $C$ of genus $g$ embeds in $S$ and $\mathcal O_C(1)$ is special, then
$ \dim \ \vert C \vert \geq 1$. Therefore $\vert C \vert$ defines a rational curve through the moduli point of $C$. \par (2) Possibly $C$ is linked to a second curve $B$ in $S$ of genus $p < g$. This
fact can be used to parametrize $\mathcal M_g$ via a family of curves of lower genus $p$. 
\begin{definition} A ruling of $X$ by unirational varieties of dimension $d$ is a dominant rational map $f: X \to Y$ with unirational fibres of dimension $d > 0$.
\end{definition} \par 
Here is a list of examples in low genus which are interesting for us: 
\begin{theorem} \ \par \rm
(1) \it genus 15: $\mathcal W^1_{9,15}$ has a ruling of rational surfaces, \rm \medskip \par 
(2) \it genus 14: $\mathcal W^1_{8, 14}$ is birational to $ \Pic_{14, 8} \times \mathbf P^{10}$,  \rm \medskip \par 
(3) \it genus 13: $\mathcal W^2_{11,13}$ is dominated by $ \Pic_{12,8} \times \mathbf P^8$, \rm  \medskip \par 
(4) \it genus 12: $\mathcal W^0_{5,12}$ is birational to $ \Pic_{15,9} \times \mathbf P^5$,  \rm \medskip \par 
(5) \it genus 11: $\mathcal W^0_{6,11}$ is  birational to $ \Pic_{13,9} \times \mathbf P^3$. \rm \medskip \par 
 \end{theorem}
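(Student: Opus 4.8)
The plan is to handle all five statements by a single liaison construction on the canonical complete intersection surfaces listed above, reducing each Brill--Noether locus to an auxiliary curve of small genus, where $\Pic_{d,g}$ is already known to be unirational. Starting from a general $(C,L)$ in the locus in question, I would embed $C$ by the residual complete series $|\omega_C\otimes L^{-1}|$: Riemann--Roch turns the data $(g,r,d)$ into a linearly normal curve of degree $d_0$ in $\mathbf P^N$, with $(N,d_0)=(4,14)$ in genus $11$, $(6,17)$ in genus $12$, $(6,18)$ in genus $14$ and $(6,19)$ in genus $15$. For the smallest relevant degree $e$ (namely $e=3$ for the $\mathbf P^4$ cases and $e=2$ for the $\mathbf P^6$ cases) I would compute $h^0(\mathcal I_C(e))$ from $h^0(\mathcal O_{\mathbf P^N}(e))-h^0(\mathcal O_C(e))$ and check that it is at least $N-2$, so that $N-2$ general forms cut out a complete intersection surface $S$ of the corresponding type from the list, with $K_S=\mathcal O_S(1)$, smooth by Bertini.

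On $S$ the curve $C$ is directly linked to a residual curve $B$ by a degree-$e$ hypersurface section, $C+B\sim eH$. Adjunction together with the liaison identity
\[
g_C-g_B=\tfrac{e+1}{2}\,(\deg C-\deg B)
\]
pins down $(\deg B,g_B)$: one finds $B$ of genus $9$ and degree $13$ in genus $11$, genus $9$ degree $15$ in genus $12$, and genus $8$ degree $14$ in genus $14$. The assignment $(C,L)\mapsto(B,\mathcal O_B(1))$ then lands in $\Pic_{13,9}$, $\Pic_{15,9}$ and $\Pic_{14,8}$ respectively, all with $g'\le 9$ and hence unirational by the remark following Severi's theorem. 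Crucially, $B$ is uniquely recovered from $C$ because $h^0(\mathcal I_C(e))=h^0(\mathcal I_{B\cup C}(e))$, so every degree-$e$ hypersurface through $C$ already contains $B$. The fibre over a fixed $(B,\mathcal O_B(1))$ is the Grassmannian $G\big(N-1,\,h^0(\mathcal I_B(e))\big)$ of the $e$-forms cutting the complete intersection $B\cup C$, that is $G(3,4)=\mathbf P^3$ in genus $11$, $G(5,6)=\mathbf P^5$ in genus $12$, and $G(5,7)$ in genus $14$. Since a Grassmann bundle is Zariski-locally trivial and $G(5,7)$ is rational of dimension $10$, this gives the birational identifications with $\Pic_{13,9}\times\mathbf P^3$, $\Pic_{15,9}\times\mathbf P^5$ and $\Pic_{14,8}\times\mathbf P^{10}$.

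Two cases require a modification. In genus $15$ the residual curves $B$ have genus $6$ and do \emph{not} fill $\Pic_{13,6}$ (the dimensions $43$ and $\dim\Pic_{13,6}+10=31$ disagree), so instead of the linkage I would use the linear system on the surface itself: adjunction gives $C^2=9$ and Riemann--Roch on $S$ gives $\dim|C|_S=2$, a plane $\mathbf P^2$. As in the proof of Proposition~2.10, the moduli map $|C|_S\to\mathcal W^1_{9,15}$ cannot contract a positive-dimensional family, since $S$ is of general type and hence not ruled; thus $\mathcal W^1_{9,15}$ is swept out by a family of rational surfaces $\cong\mathbf P^2$, i.e. admits a ruling of rational surfaces. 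In genus $13$ the same circle of ideas produces, through a genus-$8$ auxiliary curve, only a dominant generically finite rational map from a rational (of relative dimension $8$) Grassmann bundle over $\Pic_{12,8}$; since one checks dominance and the dimension match but not a clean birational inverse, the conclusion is stated in the weaker form ``dominated by $\Pic_{12,8}\times\mathbf P^8$''.

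The main obstacle is the surface step, not the bookkeeping. The delicate points are that the general $C$ lies on a \emph{smooth} complete intersection of the prescribed type, that the linkage is geometric (the residual $B$ reduced, smooth, and of exactly the predicted genus), and that $\mathcal O_B(1)$ is a \emph{general} point of $\Pic_{d',g'}$. I would secure these by proving the maximal-rank vanishing $h^1(\mathcal I_C(e))=0$, so that $h^0(\mathcal I_C(e))$ takes its minimal value and a general intersection of $N-2$ members of $|\mathcal I_C(e)|$ is smooth, and then by a single explicit degeneration: exhibiting one good triple $(C,S,B)$ — most efficiently by starting from a Picard-general $B$, verifying $h^0(\mathcal I_B(e))$ and the smoothness of a general residual $C$ — and spreading out by semicontinuity. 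Once this is in place, birationality in the three product cases reduces to the identity $h^0(\mathcal I_C(e))=h^0(\mathcal I_{B\cup C}(e))$ already noted, which returns $B$ from $C$ and inverts the construction.
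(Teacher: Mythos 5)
Your cases (1), (2), (4) and (5) follow essentially the paper's own proof: embed $C$ by the residual series $\vert \omega_C(-L)\vert$, link it on a canonical complete intersection (quadrics in $\mathbf P^6$, cubics in $\mathbf P^4$) to a curve $B$ whose universal Picard variety $\Pic_{d',g'}$, $g'\leq 9$, serves as the base, recover $B$ birationally from $C$ via the base locus of $\vert \mathcal I_C(e)\vert$, and identify the fibre with $G(N-1, h^0(\mathcal I_B(e)))$; in genus 15 replace liaison by the linear system $\vert C\vert$ on the $(2,2,2,2)$ surface $S$. Two small corrections there: in genus 15 liaison is not merely non-birational for dimension reasons, it is unavailable, since $h^0(\mathcal I_C(2))=4$ and no complete intersection of five quadrics through $C$ exists; and to obtain a \emph{ruling} in the sense of Definition 2.8, rather than a covering family of rational surfaces, you must exhibit the fibration. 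This is done exactly as in your liaison cases: $S$ is recovered from $C$ as the base locus of $\vert \mathcal I_C(2)\vert$, so $C\mapsto [S]$ gives a dominant map to the moduli of Noether--Lefschetz general $(2,2,2,2)$ surfaces whose fibres are the (unirational, hence rational) images of the planes $\vert C \vert$.

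The genuine gap is case (3), genus 13, where your general scheme cannot be run and your sketch does not supply the missing idea. For $L\in W^2_{11}(C)$ one has $h^0(\omega_C(-L))=3-11+12=4$, so the residual series embeds $C$ in $\mathbf P^3$, not $\mathbf P^4$, as a curve of degree 13; there no liaison of your type produces a genus-8 auxiliary curve (no quartics contain a general such $C$, a $(5,5)$ linkage gives a residual of genus 10, and $\Pic_{12,10}$ is not unirational by Remark 2.5). The paper's construction needs a different twist: pass to the degree-32 cover $\tilde{\mathcal W}^2_{11,13}$ of triples $(C,L,n)$, where $n=\nu^*o$ is the \emph{degree-two} preimage of a node $o$ of the plane model; then $H=\omega_C(-L)\otimes\mathcal O_C(n)$ has $h^0(H)=5$ and maps $C$ to a $1$-nodal curve $C_n\subset\mathbf P^4$ of degree 15, which is linked to a projectively normal curve $B$ of degree 12 and genus 8 by a nodal $(3,3,3)$ complete intersection whose cubics are constrained through the node; keeping track of the node and of the unique cubic in $\vert\mathcal I_B(3)\vert$ singular at it yields $\tilde{\mathcal W}^2_{11,13}\cong \Pic_{12,8}\times\mathbf P^4\times G(2,4)\cong \Pic_{12,8}\times\mathbf P^8$ birationally. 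Consequently your explanation of why (3) is stated as a domination ("no clean birational inverse") is also wrong: the linkage \emph{is} birational onto the pointed space, and the weaker conclusion for $\mathcal W^2_{11,13}$ comes solely from the $32:1$ forgetful map $\tilde{\mathcal W}^2_{11,13}\to\mathcal W^2_{11,13}$.
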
 \begin{proof} We refer to \cite{BV} and  \cite{Ve1} for a complete account of these cases and their proofs. See also Schreyer's paper \cite{Sch} in this Handbook,  where the same methods
 are simplified via an effective use of Computer Algebra.\par 
 (1) \underline {\it Genus 15, 14, and 12} \par 
Let $(C,L)$ be a general pair such that either $g = 14$ and $L \in W^1_8(C)$ or $g = 15$ and $L \in W^1_9(C)$ or $g = 12$ and $L \in W^0_5(C)$.  It is easy to see that $\omega_C(-L)$ is very ample and defines an embedding
$C \subset \mathbf P^6$. We summarize some further properties of $(C,L)$: see \cite{Ve1} section 4 and \cite{BV} section 3. \par Let $\mathcal I_C$ be the ideal sheaf of $C$ then: \par 
\begin{enumerate} \it
\item[(i)] $h^0(\mathcal I_C(2)) = 4$ for $g = 15$ and $h^0(\mathcal I_C(2)) = 5$ for $g = 14$ and $g = 12$.
\item[(ii)] $C$ is contained in a smooth complete intersection $S$ of 4 quadrics.
\item[(iii)] $g = 14$: $C$ is linked to a projectively normal curve $B$ of genus 8 by a complete intersection of 5 quadrics and $h^0(\mathcal I_B(2)) = 7$.
\item[(iv)] $g = 12$: $C$ is linked to a projectively normal curve $B$ of genus 9 by a complete intersection of 5 quadrics and $h^0(\mathcal I_B(2)) = 6$.
\end{enumerate} 
\medskip \par 
 (1.1) \it The case of genus $14$. \rm \par In particular $B$ is smooth of degree 14 and $\mathcal O_B(1)$ is a line bundle of degree 14. Clearly, all the  mentioned
properties of the pair $(B, \mathcal O_B(1))$ are true for a general smooth, connected curve in the Hilbert scheme of $B$. This  is irreducible and dominates 
$ \Pic_{14,8}$.  Let  $B' \subset \mathbf P^6$ a general curve of degree 14 and genus 8 such that $(B', \mathcal O_{B'}(1))$ defines a general point $x \in  \Pic_{14, 8}$. One has $h^0(\mathcal I_{B'}(2)) = 7$. Then, on an open set of $ \ \Pic_{14,8}$, there exists a Grassmann bundle with fibre $G(5,7)$
$$
\phi: \mathcal V_{14} \to  \Pic_{14,8}
$$
such that
$
\phi^{-1}(x) = G(5, H^0(\mathcal I_B'(2)).
$ 
Notice that, counting dimensions,
$$
 \dim \ \mathcal V_{14} =  \dim \ \mathcal W^1_{8, 14} =  \dim \ \mathcal M_{14}.
$$
The linkage now defines a rational map
$$
\psi: \mathcal W^1_{8,14} \dashrightarrow \mathcal V_{14}.
$$
Indeed $\Lambda := H^0(\mathcal I_C(2))$ is 5-dimensional and  the base locus of $\vert \mathcal I_C(2) \vert$ is $B \cup C$.  In particular $\Lambda$ defines a point of
the Grassmannian $G(5, H^0(\mathcal I_C(2))$. Hence $(B, \mathcal O_B(1), \Lambda)$ defines a point of $\mathcal V_{14}$.
The construction  is clearly modular. By definition, $\psi$ is the induced map of moduli spaces. 
Note that $\psi$ is invertible onto its image. Indeed the triple $(B, \mathcal O_B(1), \Lambda)$ uniquely defines $C$ and $\omega_C(-1) = L$. Since $\mathcal W^1_{8,14}$ and 
$\mathcal V_{14}$ are irreducible of the same dimension, it follows that $\psi$ is birational. Since $ \dim \ G(5,7) = 10$, we conclude that $ \mathcal W^1_{8, 14} \cong  \Pic_{14, 8} \times \mathbf P^{10}. $
  \medskip \par 
(1.2) \it The case of genus $12$. \rm \par
The proof of this case is completely analogous. Let  $B' \subset \mathbf P^6$ a general curve of degree 15 and genus 8 such that $(B', \mathcal O_{B'}(1))$ defines a general point $x \in  \Pic_{14, 8}$. One has $h^0(\mathcal I_{B'}(2)) = 6$.
Then there exists a $\mathbf P^5$-bundle
$$
\phi: \mathcal V_{12} \to  \Pic_{15,9}
$$
such that $\phi^{-1}(x) = \mathbf PH^0(\mathcal I_{B'}(2))^*$. Again one defines by linkage a  map
$$
\psi: \mathcal W^0_{5,12} \to \mathcal V_{12}.
$$
By definition $\psi$ sends  the moduli point of $(C,L)$ to the moduli point of $(B, \mathcal O_B(1))$, where $B \cup C$ is the base locus of $\vert \mathcal I_C(2) \vert$. One can show as above  that $\psi$ is birational. Hence it follows that $\mathcal W^0_{5, 12} \cong  \Pic_{15,9} \times \mathbf P^5$. \medskip \par 
(1.2) \it The case of genus $15$. \rm \par 
By \cite{BV} we have $h^0(\mathcal I_C(2)) = 4$ and $C \subset S \subset \mathbf P^6$,  where $S$ is a smooth (2,2,2,2) complete intersection. Moreover $ \dim \ \vert C \vert = 2$ and  the image of $\vert C \vert$
in $\mathcal W^1_{9,15}$, via the natural map, is a rational surface $R$. We can consider the Noether-Lefschetz family of smooth complete intersections $S \subset \mathbf P^6$ of type (2,2,2,2) such that
$ \Pic \ S$ is generated by $\mathcal O_S(1)$ and $\mathcal O_S(D)$, where $D$ is a smooth, integral curve of the Hilbert scheme of $C$. Let $\mathcal S$ be the GIT quotient of such a 
family. We have a dominant rational map $f: \mathcal W^1_{9,15} \dashrightarrow \mathcal S$ sending the moduli point of $(C,L)$ to the class of $S$ in $\mathcal S$. Since $S$ is the base locus of 
$\vert \mathcal I_C(2) \vert$, the fibre of $f$ at the moduli point of $S$is the rational surface $R$.
\medskip \par 
(2) \underline {\it Genus $13$ and $11$} \medskip \par 
(2.1) \it The case of genus $13$. \rm \par 
Let $\mathcal N_{13,32}$ be the irreducible family of 32-nodal plane curves of degree 11 and genus 13. We consider pairs $(\Gamma, o)$ such that $o \in  \Sing \ \Gamma$ and $\Gamma \in \mathcal N_{13,32}$. \par
Let $\nu: C \to \Gamma$ be the normalization map, $L := \nu^* \mathcal O_{\mathbf P^2}(1)$ and $n := \nu^*o$. Then $(C,L)$ defines a general point of $\mathcal W^2_{11,13}$.  We denote by $\tilde {\mathcal W}^2_{11,13}$ the
moduli space of triples $(C,L,n)$ and consider the natural  forgetful map 
$$
f: \tilde {\mathcal W}^2_{11,13} \to \mathcal W^2_{11,13}.
$$
The map $f$ has degree 32. We want to construct in our usual way a birational isomorphism
$$
\tilde {\mathcal W}^2_{11,13} \dashrightarrow  \Pic_{12,8} \times \mathbf P^{12}.
$$ 
It is standard to check that $H := \omega_C(-L) \otimes \mathcal O_C(n)$ defines a morphism
$$
h: C \to C_n \subset \mathbf P^4
$$
which is generically injective. Since $h^0(H(-n)) = 4$, it follows that the curve $C_n := h(C)$ is 1-nodal and its only node  is $h(n)$. It is known that $C_n$ is linked to a projectively normal curve $B$ of degree 12 and genus 8 by a  (3,3,3) nodal complete intersection, see \cite{Ve1} section 10. \par Since $h^0(\mathcal I_B(3)) = 6$, it follows that  there exists a \it unique \rm $F_n \in \vert \mathcal I_B(3) \vert$ having multiplicity two at $o := h(n)$. Note that this property is satisfied by a general pair $(B,o) \in \mathcal H \times \mathbf P^4$, where $\mathcal H$ is an irreducible open neighborhood of $B$, in its Hilbert scheme, parametrizing smooth curves. \par Notice also that the family of projectively normal
curves of genus 8 and degree 12 is irreducible and dominates $ \Pic_{12,8}$. Hence we can assume that $(B, \mathcal O_B(1))$ defines a general point $x \in  \Pic_{12,8}$.  \par Now let $p \in \mathbf P^4$ be general and let $F_p \in \vert  \mathcal I_B(3) \vert$ be the unique cubic with multiplicity two at $p$. Let $V_p \subset H^0(\mathcal I_{B/F_p}(3))$ be the subspace of sections vanishing at $p$. Then, on an open set of $\mathbf P^4$, there exists a natural Grassmann bundle $G_B$,  whose fibre at the point $p$ is  the Grassmannian $G(2,V_p)$. Hence the construction defines a dominant rational map
$$
\phi: \mathcal V_{13} \dashrightarrow  \Pic_{12,8},
$$
with fibre $G_B$ at the moduli point $x$. The proof then goes as previously: let $(C,L,n)$ be as above. Keeping our notations, we have a unique cubic $F_o$, containing $C_n$ and which is singular at $o$. Let
$\Lambda \subset H^0(\mathcal I_{F_o/B}(3))$ be the 2-dimensional image of $H^0(\mathcal I_{C_n}(3))$ via the restriction map. Then $\Lambda$ is an element of $G_B$. Hence the assignement 
$(C,L) \mapsto (B, \mathcal O_B(1), \Lambda)$ defines  a rational map
$$
\psi: \tilde {\mathcal W}^2_{11,13} \dashrightarrow \mathcal V_{13}.
$$
It turns out that $(C,L)$ is uniquely reconstructed from the triple $(B, \mathcal O_B(1), \Lambda)$. Hence $\psi$ is generically injective. Since it is a map between varieties of
the same dimension, then $\psi$ is birational. This implies that  
$$ \tilde {\mathcal W}^2_{11,13} \cong  \Pic_{12,8} \times \mathbf P^4 \times G(2,4) \cong  \Pic_{12,8} \times \mathbf P^8. $$
\begin{remark} \  \rm We will see  that $ \Pic_{12,8}$ is unirational. Hence it follows that: \it the family  $\mathcal N_{13,32}$ of nodal plane curves of genus 13 and degree 11 is unirational. \rm By
proposition 2.11 it is not ruled by linear spaces.  \end{remark}
\medskip \par 
(2.1) \it The case of genus $11$. \rm \par
We simply summarize our usual recipe for this case. Assume $(C,L)$ defines a general point of $\mathcal W^0_{6,11}$. Then $\omega_C(-L)$ embeds $C$ in $\mathbf P^4$ 
as a projectively normal curve. Moreover  $C$ is linked to a projectively normal curve $B$, of genus 9 and degree 13, by a (3,3,3) complete intersection. Then, over an open set of $ \Pic_{13,9}$, one has a $\mathbf P^3$-bundle
$
\phi: \mathcal V_{11} \to  \Pic_{13,9},
$
with fibre $\vert \mathcal I_B(3) \vert^*$ at the moduli point of $(B, \mathcal O_B(1))$. By linkage we can associate to $(C,L)$ the triple $(B, \mathcal O_B(1), P)$, where $P := \vert \mathcal I_{C \cup B}(3) \vert$ is an element of
$\vert \mathcal I_B(3 \vert^*$. This defines a birational map 
$\psi: \mathcal W^0_{6,11} \to \mathcal V_{11}$, see \cite{Ve3} section 8.
\end{proof} \medskip \par 
 \subsection{Unirationality results and rationality issues for $\mathcal M_g$}
 Finally we review the known rational parametrizations of $\mathcal M_g$ and their recent history. After Severi's unirationality results the first new step
 is due to Sernesi, \cite{Ser1}. In 1981 he proves the unirationality of $\mathcal M_{12}$.   In 1984 Chang and Ran prove the unirationality of $\mathcal M_g$, $g = 11, 12, 13$, \cite{CR1}. \par
 The focus was on curves in $\mathbf P^3$ and on vector bundles of rank two or higher on $\mathbf P^n$, a central topic for that period. Let $C \subset \mathbf P^3$ be a smooth, connected curve of degree $d$
 and genus $g$. The general idea is to represent $C$ as the degeneracy scheme of $(\mbox {rank} F - 1)$ global sections of a vector bundle $F$ on $\mathbf P^3$.  
 Assume $d$ is minimal to have  Brill-Noether number $\rho(d,g,3) \geq 0$ and that $C$ has general moduli. Furthermore let $h^1( \mathcal I_C(1)) = h^0(\mathcal I_C(2)) = h^0(\omega_C(-2)) = 0$. Then it is shown 
 in \cite{CR1} that a vector bundle $F$ as above always exists as  the cohomology sheaf of a complex $\Gamma$
 $$
 \mathcal O_{\mathbf P^3}^a(-1) \to \mathcal O_{\mathbf P^3}^b \to \mathcal O_{\mathbf P^3}^c(1),
 $$
where $a = \rho(d,g,3)$, $b = 5d-3g-17$, $c = 2d-g-9$, see \cite{CR1} I, Prop. 3 and Remark 3.1.  Let $\mathcal H_{d,g}$ be an irreducible open set of the Hilbert scheme of $C \subset \mathbf P^3$
which dominates $\mathcal M_g$ and contains the parameter point of $C$. Then there exists a quasi-projective variety $\mathcal F_{d,g}$ parametrizing triples $(\Gamma, F, \Lambda)$ such that $(F, \Gamma)$ is a pair as 
above and $\Lambda \subset H^0(F)$ is a vector space of dimension $(\rank \ F - 1)$ whose degeneracy scheme is an element of $\mathcal H_{d,g}$. Clearly the assignement $(F, \Gamma, \Lambda) \mapsto C$ induces
a dominant rational map
$$
f_{d,g}: \mathcal F_{d,g} \dashrightarrow \mathcal M_g.
$$
Notice also  that $\mathcal F_{d,g}$ is a Grassmann bundle over the family $\mathcal F$ parametrizing pairs $(\Gamma, F)$. Thus the unirationality of $\mathcal M_g$ follows if $\mathcal F$ is unirational. It turns out that this
approach to the unirationality problem works for the cases of genus $g = 11, 12, 13$, see \cite{CR1} Proposition 4. Hence we have: 
\begin{theorem} $\mathcal M_g$ is unirational for $g = 11, 12, 13$. \end{theorem}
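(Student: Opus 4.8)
The plan is to follow the Chang--Ran strategy recalled above and reduce, for each $g\in\{11,12,13\}$, the unirationality of $\mathcal M_g$ to that of the family $\mathcal F$ of monads. First I would record the numerics. Since $\rho(d,g,3)=4d-3g-12$, the minimal degree with $\rho(d,g,3)\ge 0$ is $d=12$ for $g=11$ (giving $\rho=3$), $d=12$ for $g=12$ (giving $\rho=0$), and $d=13$ for $g=13$ (giving $\rho=1$). In each case the integers $a=\rho(d,g,3)$, $b=5d-3g-17$, $c=2d-g-9$ are those occurring in the complex, so $(a,b,c)$ equals $(3,10,4)$, $(0,7,3)$, $(1,9,4)$ respectively, and $\rank F=b-a-c$ equals $3,4,4$. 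By the discussion preceding the theorem, a general $C\subset\mathbf P^3$ of genus $g$ with general moduli and of this minimal degree satisfies $h^1(\mathcal I_C(1))=h^0(\mathcal I_C(2))=h^0(\omega_C(-2))=0$, is the degeneracy scheme of $\rank F-1$ sections of $F$, and produces a dominant rational map $f_{d,g}\colon \mathcal F_{d,g}\dashrightarrow\mathcal M_g$ whose source is a Grassmann bundle over $\mathcal F$. Hence it suffices to prove that $\mathcal F$ is unirational.

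To this end I would parametrize the monads directly. Writing the complex on $\mathbf P^3$ as $\mathcal O^a(-1)\xrightarrow{\alpha}\mathcal O^b\xrightarrow{\beta}\mathcal O^c(1)$, the maps $\alpha$ and $\beta$ are matrices of linear forms and $F=\Ker\beta/\mathrm{im}\,\alpha$. The pairs $(\alpha,\beta)$ sweep out a product of two affine spaces $A\times B$, where $A=\Hom(\mathcal O^a(-1),\mathcal O^b)$ and $B=\Hom(\mathcal O^b,\mathcal O^c(1))$ are manifestly rational, and the monad condition is the single bilinear equation $\beta\alpha=0$; let $Z\subset A\times B$ be the locus it defines. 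As $F$ is determined by $(\alpha,\beta)$, the assignment sending $(\alpha,\beta)$ to the pair (complex, $F$) makes $Z$ dominate $\mathcal F$, so it is enough to show that the relevant component of $Z$ is unirational.

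The parametrization of $Z$ I would use is the projection $Z\to B$. For a general $\beta$, surjective as a map of sheaves (an open condition), the kernel $\Ker\beta$ is a vector bundle, and the fibre of $Z\to B$ over $\beta$ is the linear space $\Hom(\mathcal O^a(-1),\Ker\beta)=H^0(\Ker\beta(1))^{\oplus a}$, whose dimension is constant on a dense open $U\subseteq B$ by semicontinuity. Thus $Z|_U\to U$ is a vector bundle over an open subset of an affine space, so $Z$ is rational; when $g=12$ one has $a=0$, the equation $\beta\alpha=0$ is vacuous, and $Z=B$ is rational outright. This yields the unirationality of $\mathcal F$, and hence of $\mathcal M_g$, for $g=11,12,13$.

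The step I expect to be the \emph{real} obstacle is not this rational parametrization but the verification that a general member of $Z$ produces a smooth curve with general moduli, i.e. that $Z\to\mathcal F$ is genuinely dominant onto a component dominating $\mathcal M_g$. Local freeness of $F$, which amounts to injectivity of $\alpha$ and surjectivity of $\beta$ as bundle maps, is open and hence generic; but checking that the degeneracy scheme of $\rank F-1$ general sections of $F$ is a smooth, connected, Brill--Noether general curve of the prescribed degree and genus requires the delicate case-by-case analysis of \cite{CR1}, Prop. 4, together with the cohomological vanishings recorded above. This is precisely what singles out $g=11,12,13$ and fails for larger $g$. I would therefore invoke \cite{CR1} for the genericity of the construction, after which the formal fibration argument above closes the proof.
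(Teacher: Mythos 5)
Your proposal follows exactly the route the paper takes: both reduce the statement to the Chang--Ran monad construction, observe that the family $\mathcal F_{d,g}$ of triples is a Grassmann bundle over the family $\mathcal F$ of pairs $(\Gamma,F)$, and defer the genuinely hard point --- that a general monad yields a smooth Brill--Noether general curve, i.e.\ dominance onto $\mathcal M_g$ --- to \cite{CR1}, Proposition 4. Your numerics $(a,b,c)=(3,10,4),(0,7,3),(1,9,4)$ are correct, and your explicit rationality argument for the monad parameter space (projecting $\lbrace \beta\alpha=0\rbrace$ to the $\beta$-space, where the fibres are linear) is a sound filling-in of a step the paper leaves entirely to the citation, so the proposal is correct at the same level of completeness as the paper's own treatment.
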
 \par 
\begin{remark} \rm To apply the previous method a necessary condition on $F$ is that $h^1(F) = 0$. Interestingly, this is related to the existence of a quintic surface containing $C$, see \cite {CR1}ÊI remark 7.   Therefore it is related to the possibility that $C$ 
moves in a linear system on a surface of general type, a question already considered in the previous sections. \end{remark}
 Continuing with the recent history we come to the case of genus 14. The unirationality of $\mathcal M_{14}$ is a result appeared later, namely in 2005. The method of proof is relatively simple and also applies to the cases of lower genus $g \leq 14$, see \cite{Ve1}. Let us describe it with some more details. From the previous section we have \medskip \par 
\begin{itemize} \it
\item[$\circ$] $\mathcal W^1_{8,14} \cong  \Pic_{14,8} \times \mathbf P^{10}$,
\item[$\circ$] $\tilde  {\mathcal W}^2_{15,13} \cong  \Pic_{12,8} \times \mathbf P^8$,
\item[$\circ$] $\mathcal W^0_{5,12} \cong  \Pic_{15,9} \times \mathbf P^5$
\item[$\circ$] $\mathcal W^0_{6,11}  \cong  \Pic_{13, 9} \times \mathbf P^3$ 
\end{itemize}
\medskip \par 
Therefore $\mathcal M_g$ is unirational, for $g = 11, 12, 13, 14$,  if the universal Picard varieties considered above are unirational. This is true and it can be viewed as a concrete consequence of Mukai's theory
of canonical curves of low genus. \par A general canonical curve $C$ is a complete intersection for $3 \leq g \leq 5$. This just means that $C$ is a linear section of a suitable Veronese embedding: of
$\mathbf P^{g-1}$ if $g = 3,5$ and of a smooth quadric if $g = 4$. \par Is it possible to realize a general canonical curve as a linear section of a fixed variety for a few further values of $g$? The answer is yes when $g = 7, 8, 9$. Let us  provide it:
\begin{itemize} \it \medskip \par
\item[$\circ$]~For $g = 7, 8, 9$ a general canonical curve $C$ is linear section of a rational homogenous space
$
S_g \subset \mathbf P^{N_g}.
$
\end{itemize} \par 
More precisely consider the following homogeneous spaces: \medskip \par 
\rm (1) \it $S_7 :=$  the orthogonal Grassmannian $OG(5,10)$ in $\mathbf P^{15}$, \medskip \par 
\rm (2) \it $S_8 := $ the Grassmannian $G(2,6)$ in $\mathbf P^{14}$, \medskip \par 
\rm (3) \it $S_7 := $ the symplectic Grassmannian $S(3,6)$ in $\mathbf P^{13}$.
\rm \medskip \par  
These are subvarieties in their corresponding Grassmannian and we assume that the latter is embedded by its Pl\"ucker map. In particular $N_g$ is the dimension of the linear subspace spanned
by $S_g$. Let $C$ be the canonical model of a smooth integral curve of genus $g = 7, 8, 9$. Then:
\begin{theorem} \ 
\begin{itemize} \it
\item[$\circ$]~$C$ is a linear section of $S_7$  if and only if $W^1_4(C) = \emptyset$, 
\item[$\circ$] $C$ is a linear section of $S_8$  if and only if $W^2_7(C) = \emptyset$,
\item[$\circ$] $C$ is a linear section of $S_9$  if and only if $W^1_5(C) = \emptyset$.
\end{itemize} 
\end{theorem}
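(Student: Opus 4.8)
The plan is to reduce the statement, in each of the three cases, to the existence and the properties of a distinguished vector bundle on $C$, in the spirit of Mukai. First I would record the numerology that makes a linear section a canonical curve. Each $S_g$ is a homogeneous, hence Fano, variety; writing $n_g=\dim S_g$ one checks $n_g=10,8,6$ for $g=7,8,9$ and that the index equals $n_g-2$, so $\omega_{S_g}\cong\mathcal O_{S_g}(-(n_g-2))$. By iterated adjunction a transverse linear section of codimension $n_g-1$ is then a smooth curve $C$ with $\omega_C\cong\mathcal O_C(1)$, i.e. canonically embedded, and $\deg S_g=2g-2$ (namely $12,14,16$) forces the genus to be $7,8,9$. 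Equivalently the canonical span $\langle C\rangle=\mathbf P^{g-1}$ has codimension $N_g-(g-1)$ in $\mathbf P^{N_g}$, with $g=N_g-n_g+2$ in all three cases. Finally $\rho(7,1,4)=\rho(8,2,7)=\rho(9,1,5)=-1$, so the three conditions $W^1_4(C)=\emptyset$, $W^2_7(C)=\emptyset$, $W^1_5(C)=\emptyset$ are exactly the Brill--Noether genericity conditions for the relevant series.

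For the substantive (\emph{if}) direction I would build the Mukai bundle. The model case is $g=8$: assuming $W^2_7(C)=\emptyset$ one produces a stable rank two bundle $E$ with $\det E\cong\omega_C$ and $h^0(E)=6$ (here $\chi(E)=0$, and Serre duality together with $E^\ast\cong E\otimes\omega_C^{-1}$ gives $h^1(E)=h^0(E)$). Global generation of $E$ and the absence of a destabilizing sub-line-bundle are precisely what the emptiness of $W^2_7(C)$ guarantees; evaluation then yields a morphism $C\to G(2,H^0(E)^\ast)=S_8$ whose composition with the Pl\"ucker map is the canonical embedding, since the Pl\"ucker coordinates are the images of $\wedge^2 H^0(E)\to H^0(\det E)=H^0(\omega_C)$. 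For $g=9$ I would instead construct a rank three bundle carrying a symplectic form on the six dimensional space $H^0(E)$, which constrains the image to lie in the symplectic (Lagrangian) Grassmannian $S_9\subset G(3,6)$; and for $g=7$ a rank five bundle of spinor type whose orthogonal structure on a ten dimensional space of sections lands the curve in the spinor variety $S_7=OG(5,10)$ under its half-spin embedding. In each case the emptiness of the prescribed $W^r_d(C)$ is exactly what forces $E$ to be stable and globally generated, hence to define an embedding.

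Next I would show the image is a full linear section and not merely contained in $S_g$. Here I would use that $S_g$ is projectively normal and scheme-theoretically cut out by quadrics (the Pl\"ucker, resp. spinor, quadrics), and that a Brill--Noether general canonical curve is itself projectively normal and cut out by quadrics by Noether--Petri. Comparing $H^0(\mathcal I_C(2))$ with the restriction of $H^0(\mathcal I_{S_g}(2))$ to $\langle C\rangle$ shows the quadrics through $C$ are exactly those of $S_g\cap\langle C\rangle$; since $\dim\bigl(S_g\cap\langle C\rangle\bigr)=n_g-(N_g-(g-1))=1$ by the count above, this intersection is one dimensional and reduced, so $S_g\cap\langle C\rangle=C$. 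For the converse (\emph{only if}) direction I would restrict the tautological, resp. spinor, bundle of $S_g$ to a transverse linear section $C$, recovering a bundle with the same invariants and extra structure; a sub-line-bundle realizing a point of the forbidden $W^r_d(C)$ would destabilize this bundle, contradicting the stability forced by transversality (equivalently, it would exhibit $C$ inside a proper Schubert-type subvariety, contradicting non-degeneracy). Hence $W^r_d(C)=\emptyset$.

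The hard part will be the construction of the Mukai bundle together with the proof of its stability and the exact computation of its cohomology, and in particular the verification of the symplectic and orthogonal structures on $H^0(E)$ that pin the image inside $S_9$ and $S_7$ rather than inside the ambient Grassmannians. The genus seven case is the heaviest: the relevant bundle is of spinor type and the embedding is by the half-spin representation, so both the definition of the quadratic form on the space of sections and the identification of the image with $OG(5,10)$ require the representation theory of the spin group, rather than the elementary linear algebra that suffices for $g=8$. Granting the bundle construction, global generation and the ``exactly a linear section'' step are comparatively routine consequences of projective normality and quadric generation, so I would concentrate the effort on the bundle-theoretic core and refer to Mukai's papers for the detailed verifications.
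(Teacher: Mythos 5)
Your proposal is correct in outline and follows essentially the same route as the paper: the paper attributes the theorem to Mukai and gives no proof beyond describing the key object---the rank $5$ orthogonal, rank $2$, and rank $3$ symplectic bundle $E$ with $\det E \cong \omega_C$ and $h^0(E)=10,6,6$ respectively, whose space of sections embeds $C$ into $S_g$ as a linear section---deferring, exactly as you do, all substantive verifications (existence, uniqueness, stability, the orthogonal and symplectic structures on $H^0(E)$) to Mukai's papers. Your supplementary steps (the adjunction and degree numerology, and the quadric/projective-normality argument that the image is a full linear section rather than merely contained in $S_g$) are consistent with the paper's own more detailed treatment of the genus $8$ case, where the restriction map $H^0(\mathcal I_G(2)) \to H^0(\mathcal I_C(2))$ and the identification $C = \mathbf P K^{\perp} \cap G$ play precisely that role.
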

The theorem is due to Mukai. It follows from the study of higher rank Brill-Noether theory for a curve $C$ as above. For such a curve there exists a unique vector bundle $E$, of rank greater than 1 and determinant $\omega_C$, such that $H^0(E)$ defines an embedding of $C$ in $S_g$ as a linear section, cfr \cite{Mu1}, \cite{Mu3} and \cite{Mu4}. More precisely one has the following description of $E$:
\medskip \par 
\begin{itemize} \it
\item[$g = 7$:] $E$ is the unique  orthogonal rank 5 vector bundle on $C$ such that $h^0(E) = 10$ and $\\det E \cong \omega_C$, 
\item[$g = 8$:]$E$ is the unique  rank 2 vector bundle on $C$ such that $h^0(E) = 6$ and $\det E \cong \omega_C$, 
\item[$g = 9$:] $E$ is the unique symplectic rank 3 vector bundle on $C$ such that $h^0(E) = 6$.
\end{itemize} \medskip \par 
We remark that the spaces $S_g$ are rational. Moreover, by the previous description, a set $Z$ of $g$ general points on $S_g$ defines a divisor of degree $g$ on
a general curve $C$ of genus $g$. \par  Indeed the linear span $\langle Z \rangle$ of $Z$ cuts on $S_g$ a general canonical curve $C$ of genus $g$ and $Z$ turns out to be a general divisor of degree $g$ on $C$. 
Using appropriately this remark one obtains:
\begin{theorem} $ \Pic_{d,g}$ is unirational for $g \leq 9$ and  every $d$. \end{theorem}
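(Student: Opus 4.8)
The plan is to produce, for each $d$, a dominant rational map to $\Pic_{d,g}$ from a rational variety, using the homogeneous spaces $S_g$ of the preceding theorem. I would first treat the substantive range $g=7,8,9$, where the geometry of $S_g$ is available, and then note that $g\le 6$ is classical.

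Fix $g\in\{7,8,9\}$. By Mukai's theorem a general canonical curve $C$ of genus $g$ is a transverse linear section $C=S_g\cap\Lambda$ with $\Lambda\cong\mathbf P^{g-1}$, and by the remark preceding the statement a general $g$-tuple $p_1,\dots,p_g\in S_g$ spans such a $\Lambda$, cuts out a curve $C$ general in $\mathcal M_g$, and realises $p_1+\cdots+p_g$ as a general effective divisor of degree $g$ on $C$. The idea is to keep exactly these $g$ points — just enough to pin down $C$ — and to encode the degree $d$ not by adjoining further points (which would force symmetric products of $C$ and destroy rationality) but by \emph{weighting} the points with integers. Concretely, choose integers $a_1,\dots,a_g$, all nonzero, with $a_1+\cdots+a_g=d$ (possible for every $d$ once $g\ge 2$), and consider
\[
\beta_d\colon (S_g)^g \dashrightarrow \Pic_{d,g},\qquad (p_1,\dots,p_g)\longmapsto \bigl(C,\ \mathcal O_C(a_1p_1+\cdots+a_gp_g)\bigr),
\]
where $C=S_g\cap\langle p_1,\dots,p_g\rangle$. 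Since $S_g$ is rational, $(S_g)^g$ is rational, so it suffices to prove that $\beta_d$ is dominant.

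Dominance splits into a base statement and a fibre statement. On the base, the remark already gives that $(p_1,\dots,p_g)\mapsto C$ dominates $\mathcal M_g$, and the uniqueness of the Mukai bundle $E$ shows the embedding $C\hookrightarrow S_g$ is essentially unique, so the fibre over a general $C$ is, up to the finite group $\Aut(S_g)$, the set of $g$-tuples of points of $C$ spanning $\Lambda$ — a dense open subset of $C^{\,g}$. It then remains to prove the fibrewise statement: for a general curve $C$ the weighted Abel map
\[
u\colon C^{\,g}\dashrightarrow \Pic^d(C),\qquad (p_1,\dots,p_g)\longmapsto \mathcal O_C\Bigl(\textstyle\sum_i a_ip_i\Bigr)
\]
is dominant. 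As $\dim C^{\,g}=g=\dim\Pic^d(C)$, this is equivalent to the generic differential being an isomorphism. At a general point the image of $du$ is the span of the tangent lines to the Abel--Jacobi image of $C$ in $J(C)$ at the points $AJ(p_i)$, the nonzero weights $a_i$ being irrelevant to this span; and $g$ general such tangent lines fill $T_0J(C)$ precisely because $g$ general points of a general canonical curve impose independent conditions on $H^0(\omega_C)$. This is the one point that genuinely uses the generality of $C$ and is the main thing to verify carefully; everything else is formal.

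Granting this fibrewise dominance, $\beta_d$ is dominant and $\Pic_{d,g}$ is unirational for $g=7,8,9$ and every $d$. If one prefers to avoid negative weights, one may take all $a_i\ge 1$, which handles $d\ge g$, and then reach arbitrary $d$ through the canonical isomorphism $\Pic_{d,g}\cong\Pic_{d+2g-2,g}$, $L\mapsto L\otimes\omega_C$. Finally, for $g\le 6$ the same scheme applies with $S_g$ replaced by the fixed rational models recalled above — plane quartics for $g=3$, the complete intersections for $g=4,5$, and the quintic del Pezzo surface, respectively $G(2,5)$, for $g=6$ — the universal curve with marked points again being rational; alternatively these cases follow from the fixed-linear-system constructions of the previous sections. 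The main obstacle throughout is the weighted-Abel-map dominance; the rationality of the parameter spaces and the reduction in $d$ are routine.
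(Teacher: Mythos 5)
Your proposal is correct and takes essentially the same route as the paper's own proof: the paper also parametrizes $\Pic_{d,g}$ by the rational variety $S_g^g$ via $z \mapsto (C, \mathcal O_C(n_1z_1 + \cdots + n_gz_g))$ with nonzero integer weights summing to $d$, where $C$ is the canonical curve cut by the span of $z$, and its dominance claim is exactly your weighted Abel map statement, which the paper outsources to \cite{Ve3} 1.6 while you verify it directly (and correctly) by noting that the codifferential has kernel $H^0(\omega_C(-p_1-\cdots-p_g)) = 0$ for $g$ general points. Two harmless slips: $\Aut(S_g)$ is a positive-dimensional group, not finite, and the Abel map argument in fact needs no generality of $C$ at all; neither affects the proof.
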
 \par 
\begin{proof} See \cite{Ve1} thm1.2. Let $g = 7, 8, 9$. Consider the open set of elements $z = (z_1, \dots, z_g) \in S_g^g$ such that $z_1 \dots z_g$ are
linearly independent and the linear span $\mathbf P^{g-1}_z$ of them is transversal to $S_g$. Let
$$
\mathcal C := \lbrace (x,z) \in S_g \times S^g_g \ \vert \ x \in \mathbf P^{g-1}_z \cap S_g \rbrace.
$$
be the universal canonical section and $p: \mathcal C \to U$ its the projection map. Fix $n_1, \dots, n_g$ so that $n_1 + \dots + n_g = d$ and $n_1 \dots n_g \neq 0$. It is standard to construct a rational map 
$
\phi_d: S^g_g \to  \Pic_{d,g}
$
sending $z \in U$ is the moduli point of $(C,L)$, where $C = \mathbf P^{g-1}_z \cap S_g$ and $L = \mathcal O_C(n_1z_1+ \dots + n_gz_g)$. On the other hand a general
$L \in  \Pic^d(C)$ is isomorphic to $\mathcal O_C(n_1z_1+ \dots + n_gz_g)$ for some $z \in C^g$, cfr. \cite{Ve3} 1.6. Hence $\phi_d$ is dominant and $ \Pic_{d,g}$ is unirational.  
\end{proof}
\begin{remark} \rm The unirationality of $ \Pic_{d,g}$ for $g \leq 9$ and each value of $d$  is sharp. Indeed it has been recently proven that the Kodaira dimension of $ \Pic_{d,g}$
is not $-\infty$ for each $d$ as soon as $g \geq 10$: see \cite{FV} and \cite{BFV}.  For instance the Kodaira dimension of $ \Pic_{g,g}$ is 0 for  $g = 10$ and 19 for $g = 11$. Moreover it  is $3g - 3$ for $g \geq 12$, \cite{FV}.  
If $d$ and $2g-2$ are coprime, then the same result holds for $ \Pic_{d,g}$, \cite{BFV}. As an immediate corollary of the results considered above we have: \end{remark} \par
\begin{theorem} The following Brill-Noether loci are unirational: $$ \mathcal W^1_{8,14}, \ \mathcal W^2_{11,13}, \ \mathcal W^0_{5,12}, \ \mathcal W^0_{6,11}. $$
In particular $\mathcal M_g$ is unirational for $g = 11, 12, 13, 14$. 
\end{theorem}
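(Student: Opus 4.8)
The plan is to deduce everything formally from the two structural results already in hand: the birational descriptions of the four Brill--Noether loci given in the list (Theorem 2.31) and the unirationality of the universal Picard varieties $\Pic_{d,g}$ with $g \le 9$ (Theorem 2.34). I will not reprove either of these; the present statement is their bookkeeping consequence. The only ingredients beyond them are three elementary permanence properties of unirationality: it is a birational invariant; if $X$ is unirational then so is $X \times \mathbf P^N$ for every $N$ (a dominant map $\mathbf C^m \dashrightarrow X$ extends to $\mathbf C^{m+N} \dashrightarrow X \times \mathbf P^N$); and the image of a unirational variety under a dominant rational map is again unirational.

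First I would record that the Picard factors occurring in Theorem 2.31 all have genus at most $9$: they are $\Pic_{14,8}$, $\Pic_{12,8}$, $\Pic_{15,9}$ and $\Pic_{13,9}$. By Theorem 2.34 each of these is unirational, hence so is each product $\Pic_{14,8} \times \mathbf P^{10}$, $\Pic_{12,8} \times \mathbf P^8$, $\Pic_{15,9} \times \mathbf P^5$ and $\Pic_{13,9} \times \mathbf P^3$ by the product property. Now I invoke Theorem 2.31: since $\mathcal W^1_{8,14} \cong \Pic_{14,8} \times \mathbf P^{10}$, $\mathcal W^0_{5,12} \cong \Pic_{15,9} \times \mathbf P^5$ and $\mathcal W^0_{6,11} \cong \Pic_{13,9} \times \mathbf P^3$ are birational to unirational varieties, they are unirational. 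For $\mathcal W^2_{11,13}$ the list provides only a domination by $\Pic_{12,8} \times \mathbf P^8$ (through the degree-$32$ forgetful cover $\tilde{\mathcal W}^2_{11,13} \cong \Pic_{12,8} \times \mathbf P^8$), so here I use the third permanence property to conclude that $\mathcal W^2_{11,13}$ is unirational as the dominant image of a unirational variety.

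For the final clause I would check that in each case the forgetful map $\mathcal W^r_{d,g} \to \mathcal M_g$ is dominant, so that unirationality descends to $\mathcal M_g$ via the third permanence property once more. This is precisely the Brill--Noether inequality $\rho(g,r,d) \ge 0$: one computes $\rho(14,1,8) = 0$, $\rho(13,2,11) = 1$, $\rho(12,0,5) = 5$ and $\rho(11,0,6) = 6$, all nonnegative, and for the two $r = 0$ cases the map is even surjective since $W^0_d(C)$ is always nonempty. Hence $\mathcal M_g$ is unirational for $g = 11,12,13,14$.

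I expect no genuine obstacle here, as all the geometric work lies in Theorems 2.31 and 2.34. The single point deserving attention is the genus-$13$ case, where only a domination rather than a birational isomorphism is available: there one obtains unirationality of $\mathcal W^2_{11,13}$ and of $\mathcal M_{13}$ but not rationality, which is exactly why the permanence of unirationality under dominant rational maps, and not merely under birational equivalence, is the property that must be used.
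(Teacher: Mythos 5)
Your proposal is correct and follows essentially the same route as the paper, which likewise obtains this theorem as an immediate corollary of the structural descriptions of the four loci (birational to, or dominated by, $\Pic_{d',g'} \times \mathbf P^N$ with $g' \leq 9$) combined with the unirationality of $\Pic_{d,g}$ for $g \leq 9$. Your explicit verification of the permanence properties of unirationality, of the degree-$32$ forgetful cover in the genus-$13$ case, and of the dominance of the forgetful maps via $\rho(14,1,8)=0$, $\rho(13,2,11)=1$, $\rho(12,0,5)=5$, $\rho(11,0,6)=6$ merely makes explicit what the paper leaves implicit.
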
  \medskip \par 
\begin{remark} What about $g = 15, 16$? \rm \par The uniruledness of $\mathcal M_{16}$ follows from the general theory on the cone of effective divisors of an algebraic variety: it is proved in \cite{CR3} that the canonical 
class of $\overline {\mathcal M}_{16}$ is not a pseudo-effective class. Then the uniruledness follows from the main result of \cite{BDPP}, cfr. \cite{F2} thm. 2.7 . \par
In \cite{BV} it is proved that $\mathcal M_{15}$ is rationally connected.  The proof relies again on curves with general moduli moving on surfaces. Indeed a general curve $C_i$ of genus 15
moves in a Lefschetz pencil $P_i \subset \vert C_i \vert$ of a smooth (2,2,2,2) complete intersection of $S_i \subset \mathbf P^6$, $i = 1,2$.  The image of $P_i$ in $\overline {\mathcal M}_{15}$
is a rational curve $R_i$ through the moduli point $x_i$ of $C_i$. Since $P_i$ is Lefschetz $R_i$ intersects the divisor $\Delta_0$ parametrizing integral nodal curves. 
It turns out that $y_i \in R_i \cap  \Delta_0$ is general in $\Delta_0$. On the other hand $\Delta_0$ is unirational, \cite{BV} thm. 4.3. Then $y_1, y_2$ are connected by a rational curve $R_0$ of $\Delta_0$ and
$R_0 \cup R_1 \cup R_2$ is a chain of rational curves connecting $x_1$ to $x_2$. This implies the rational connectedness of $\overline {\mathcal M}_g$.
\end{remark} \par  
 \medskip \par 
We want to turn now to a very natural question:
\begin{itemize} \it
\item[$\circ$] What about the rationality of $\mathcal M_g$?
\end{itemize} \par 
Perhaps it will be considered a surprising phenomenon, in a future history of $\mathcal M_g$, that the rationality problem was still unsettled, for many unirational moduli spaces of curves, at the
present time. This is however a very difficult problem, often not approachable with the available techniques. \par We recall that the rationality of $\mathcal M_g$ is known for $g \leq 6$.  The case $g = 1$
is classical. Since $\mathcal M_1$ is a unirational curve, its rationality also follows from L\"uroth theorem. In 1960 J. Igusa proved the rationality of $\mathcal M_2$. The rationality of $\mathcal M_4$
and $\mathcal M_6$ came later: it was obtained by N. Shepherd-Barron in 1985, \cite{SB}. The rationality of $\mathcal M_5$ was then proved by Katsylo \cite{K1} and he also proved the rationality of
$\mathcal M_3$, cfr. \cite{Bo}. \par
The proof of the rationality of $\mathcal M_6$ is related to some arguments considered in this exposition. In particular it deals with 4-nodal plane sextics. Moreover
the point of view of the proof has some interplay with the parametrization of the Prym moduli space $\mathcal R_6$ considered in later section. \par  
Let $\mathbb P$ be the linear system of plane sextics passing with multiplicity $\geq 2$ through the points 
$F_1 (1:0:0)$, $F_2 (0:1:0)$, $F_3 (0:0:1)$, $U (1:1:1)$ of $\mathbf P^2$.  Consider a general $\Gamma$ in $\mathbb P$ and its normalization
$\nu: C \to \Gamma$, then $C$ is a genus 6 curve. Notice also that $\nu^* \mathcal O_{\Gamma}(1) \cong \omega_C(-L)$, where $L \in W^1_4(C)$.
We have also seen in section 2 that the degree of the natural map
$$
\phi: \mathbb P \to \mathcal M_6
$$
is 120. On the other hand consider the group of Cremona transformations
$$
G = \lbrace a \in \mathrm{Bir} (\mathbf P^2) \ \vert \text {\ \it the strict transform of $\mathbb P$ by $a$ is $\mathbb P$} \rbrace.
$$
$G$ contains the symmetric subgroup $S_4$ of projective automorphisms fixing the set $\lbrace F_1, F_2, F_3, U \rbrace$. The standard quadratic transformation
$q: \mathbf P^2 \to \mathbf P^2$, centered at $F_1, F_2, F_3$, is also in $G$. One can show that
\begin{lemma} $G$ is isomorphic to $S_5$ and it is generated by $S_4$ and $q$. \end{lemma}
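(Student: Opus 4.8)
The plan is to interpret $\mathbb P$ as the bianticanonical system of a del Pezzo surface and then identify $G$ with the automorphism group of that surface, which is the Weyl group $W(A_4)\cong S_5$. Since $F_1,F_2,F_3,U$ are four points of $\mathbf P^2$ in general position (no three collinear), the blow-up $\sigma\colon S\to\mathbf P^2$ at these four points is a del Pezzo surface of degree $K_S^2=9-4=5$, with $\Pic S=\mathbb Z H\oplus\bigoplus_{i=1}^4\mathbb Z E_i$, where $H=\sigma^*\mathcal O_{\mathbf P^2}(1)$, the $E_i$ are the exceptional divisors, and $K_S=-3H+\sum_i E_i$. A sextic singular at the four points has strict transform in the class $6H-2\sum_i E_i=-2K_S$, and since $(-2K_S)\cdot E_i=2$ the converse holds, so $\sigma_*\lvert -2K_S\rvert=\mathbb P$. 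A dimension count (plane sextics form a $\mathbf P^{27}$ and four double points impose $4\cdot 3=12$ conditions) gives $\dim\mathbb P=15=\dim\lvert -2K_S\rvert$, while the general member has arithmetic genus $1+K_S^2=6$, matching the stated genus. Thus $\mathbb P$ \emph{is} the system $\lvert -2K_S\rvert$.

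First I would establish $G\cong\Aut(S)$. On one side, every $g\in\Aut(S)$ fixes $K_S$, hence preserves $\lvert -2K_S\rvert$; conjugating by $\sigma$ yields a Cremona transformation $\bar g=\sigma g\sigma^{-1}$ whose strict transform of $\mathbb P$ is $\mathbb P$, so $\bar g\in G$, and the assignment is injective because $\sigma$ is birational. On the other side, in degree $5$ the class $-K_S$ is very ample, so $-2K_S$ embeds $S$ as a surface $\bar S\subset\mathbf P^{15}$ and $\phi_{\mathbb P}=\phi_{\lvert -2K_S\rvert}\circ\sigma^{-1}$. Any $a\in G$ preserves $\mathbb P$, hence induces a linear automorphism of $\mathbf P^{15}$ carrying $\bar S$ to itself; this is an automorphism of $S$ inducing $a$ on $\mathbf P^2$. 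Therefore $G=\Aut(S)$.

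Finally I would compute $\Aut(S)$ and locate $S_4$ and $q$ inside it. The action of $\Aut(S)$ on $\Pic S$ preserves $K_S$ and the intersection form, so it lands in the Weyl group $W$ of the root lattice $K_S^{\perp}\cong A_4$; this representation is faithful, since an automorphism acting trivially on $\Pic S$ preserves each $E_i$ and so descends to an automorphism of $\mathbf P^2$ fixing four points in general position, i.e. the identity. Choosing simple roots $\alpha_1=E_1-E_2$, $\alpha_2=E_2-E_3$, $\alpha_3=E_3-E_4$, $\alpha_4=H-E_1-E_2-E_3$, one checks $\alpha_i^2=-2$, $\alpha_i\cdot\alpha_{i+1}=1$ and $\alpha_i\cdot\alpha_j=0$ otherwise, the Gram data of the linear $A_4$ diagram, whence $W\cong S_5$. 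The reflections $s_{\alpha_1},s_{\alpha_2},s_{\alpha_3}$ permute $E_1,E_2,E_3,E_4$ and realize the $S_4$ of projective automorphisms permuting $\{F_1,F_2,F_3,U\}$, while a direct computation gives $s_{\alpha_4}(H)=2H-E_1-E_2-E_3$, $s_{\alpha_4}(E_i)=H-E_j-E_k$ for $\{i,j,k\}=\{1,2,3\}$ and $s_{\alpha_4}(E_4)=E_4$, which is exactly the lift of the quadratic transformation $q$ centered at $F_1,F_2,F_3$ (consistent with $q$ fixing $U$). Since $s_{\alpha_1},s_{\alpha_2},s_{\alpha_3},s_{\alpha_4}$ generate $W(A_4)$, the subgroup $\langle S_4,q\rangle$ maps onto $W$; as the representation is faithful this forces $\langle S_4,q\rangle=\Aut(S)=G\cong S_5$, and surjectivity of $\Aut(S)\to W$ comes for free.

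The main obstacle is the two-sided identification $G=\Aut(S)$. The inclusion $\Aut(S)\hookrightarrow G$ is formal, but proving that \emph{every} Cremona transformation preserving $\mathbb P$ is biregular on $S$ relies on the very ampleness of $-2K_S$ and on $\phi_{\lvert -2K_S\rvert}$ being a closed embedding, so that preserving $\mathbb P$ is equivalent to a linear automorphism of the bianticanonical model $\bar S$. The remaining items—faithfulness of the Weyl representation and the explicit matching of $q$ with $s_{\alpha_4}$—are then routine lattice computations.
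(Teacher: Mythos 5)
Your proposal is correct and takes essentially the same route the paper intends: the paper states the lemma with ``one can show'' and immediately afterwards identifies $G$ with $\Aut \ S$ for the degree~5 Del Pezzo surface $S$ obtained by blowing up $F_1,F_2,F_3,U$, whose automorphism group is classically $S_5 \cong W(A_4)$. Your write-up simply supplies the details left implicit there --- the very ampleness of $-2K_S$ to prove $G=\Aut(S)$, the faithfulness of the action on $\Pic \ S$, and the identification of $q$ with the reflection in the root $H-E_1-E_2-E_3$ --- and these steps are all sound.
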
 \par 
If $ \sigma: S \to \mathbf P^2$ is the blow up of $F_1, F_2, F_3, U$ then $S$ is a Del Pezzo surface of degree 5. Moreover it turns out that
$$
G = \mbox{Aut} \ S,
$$
the group of biregular automorphisms of $S$. Then $G$ acts linearly on the strict transform of $\mathbb P$ by $\sigma$ which is  $ \vert \omega_S^{-2} \vert$.
In particular it follows that
$$
\mathcal M_6 \cong \vert \omega_S^{-2} \vert / G.
$$
The rationality of $\mathcal M_6$ then follows because one can prove that the quotient $\mathbf P / G$ is rational. This relies on the analysis
of the linear representation of $G$ on $H^0(\omega_S^{-2})$, see \cite{SB}. 
\medskip \par 
What about the \it rationality \rm  of $\mathcal M_g$ for $7 \leq g \leq 16$? As already remarked, this appears to be a difficult question.   Even the \it ruledness \rm
of $\mathcal M_g$ seems to be unknown for the same values of $g$. We have seen examples of ruled universal Brill-Noether loci dominating
$\mathcal M_g$. We don't know similar examples for $\mathcal M_g$ when $7 \leq g \leq 16$. Since rational implies 1-ruled, a criterion of non rationality
is the non existence of a ruling of rational curves. So we conclude our discussion with the following problem:   \par \it costruct, when it is possible, rulings of
$\mathcal M_g$ by rational curves. \rm
\subsection{Slope of $\overline {\mathcal M}_g$ and related questions}
A large part of  the knowledge on the birational geometry of $\mathcal M_g$ was made accessible thanks to  Deligne-Mumford compactification $\overline{\mathcal M}_g$  of $\mathcal M_g$
and to the analysis of its cone of effective divisors. Here a substantial difference appears with respect to classical methods. However, also from this new point of view, the bridge between classical and 
modern times stays well visible. We end this part of the paper with an account on the slope of the cone of effective divisors of $\overline{\mathcal M}_g$, as well as with some free speculations: \par
Let us recall some basic facts and notations. The boundary $\overline {\mathcal M}_g - \mathcal M_g$ is the union of the following irreducible divisors:  \par  
(1)  $\Delta_0$ whose general point represents an integral, nodal curve of arithmetic genus $g$ with exactly one node, \par
(2) $\Delta_i$ whose general point represents a nodal curve $C_1 \cup C_2$, $C_1, C_2$ being integral, smooth curves 
of genus $i$ and $g-i$, where $i = 1 \dots [\frac g2]$. \par
By definition  $\delta_i$ is the class of $\Delta_i$ and $\delta$ is  the class of  $\overline {\mathcal M}_g - \mathcal M_g$. Furthermore we  denote by $\lambda$ the determinant of the Hodge bundle, having fibre $H^0(\omega_C)$ at the moduli point of $C$.  It is well known that $\lambda, \delta_0, \dots, \delta_{[\frac g2]}$ is a basis of  $ \Pic \ \overline {\mathcal M}_g \otimes \mathbf R$.  It is a well known fundamental fact that
$$
K_{\overline {\mathcal M}_g} = 13 \lambda - 2\delta - \delta_1
$$
where $K_{\overline {\mathcal M}_g}$ is the canonical class of $\overline {\mathcal M}_g$ and $g \geq 4$. Let $d = a\lambda - \sum b_i \delta_i \in  \Pic \ \overline {\mathcal M}_g$ be any divisorial class such that $b_i \geq 0$ for $i = 0, \dots, [\frac g2]$. Assume that $D$
is a divisor of class $d$, then we introduce the following:
\begin{definition} The slope of $D$ is $s(D) := \frac ab$ if $b = \mathrm{min} \lbrace b_1 \dots b_{\frac g2} \rbrace > 0$. For the case $b = 0$ we set $s(D) := \infty$. Moreover the slope of $\overline {\mathcal M}_g$ is 
$$
s(\overline {\mathcal M}_g) \  := \ \mathrm{min} \lbrace s(E), \ / \ E \  \text {\it is an effective divisor} \rbrace.
$$
\end{definition} \par 
For the slope of a canonical divisor $K$ of $\overline {\mathcal M}_g$ we  have of course $s(K) = \frac {13}2$. Notice also that $s(E) < \infty$ for every effective $E$ such that $E \cap (\overline {\mathcal M}_g - \mathcal M_g)$ is non empty: see \cite{HM}. Hence it follows that $mK_{\overline {\mathcal M}_g}$ is not an effective class if $s(g) > \frac{13}2$ and $m \geq 1$. \par
Let $E$ be an effective divisor of class $a\lambda - b\delta$ such that $b > 0$. Assume that $R \subset \overline {\mathcal M}_g$ is an integral curve moving in a family which covers $\overline {\mathcal M}_g$. An obvious remark is that then $ER \geq 0$. But then it follows 
$$
s(E) = \frac ab \geq \frac {\delta R}{\lambda R}.
$$
Hence lower bounds for the slope of $\overline {\mathcal M}_g$ can be obtained by considering  curves $R$ moving in a family as above. An effective application of this remark
is possible when
$
R = m(P')
$,
where $P'$ is a base point free pencil of stable curves of genus $g$ on a smooth surface $S'$.  In such a case one has the well known formulae
$$
\lambda R = \chi(\mathcal O_{S'}) + g - 1 \ , \ \delta R = c_2(S') + 4(g - 1)
$$
cfr. \cite{CR2}. It is of course tempting to test them on a K3 surface $(S, \mathcal O_S(C))$ of genus $g$. In this case $P'$ is the strict transform on $S'$ of a pencil $P \subset \vert C \vert$ and $S'$ is the blowing up of $S$ at
the base locus of $P$. One computes that
$$
\frac {\delta R}{\lambda R} = 6 + \frac {12}{g + 1}.
$$
This is a fascinating formula because it equals the slope of the canonical class $K_{\overline {\mathcal M}_g}$ for $g = 23$. Now $g = 23$ was, before of later results of Farkas, the minimum value of $g$ for 
which the non uniruledness of $\mathcal M_g$ was known.  Indeed Farkas proved in \cite{F6} that $\overline {\mathcal M}_{23}$ has Kodaira dimension $k(\overline {\mathcal M}_{23}) \geq 2$.
The slope conjecture, see \cite{HMo}, relies on this and further motivations. \medskip \par 
\underline{\it Slope conjecture} 
The conjecture says that 
$$
s(\overline {\mathcal M}_g) \leq 6 + \frac {12}{g + 1}.
$$
The conjecture implies that $\overline {\mathcal M}_g$ has Kodaira dimension $-\infty$ for $g \leq 22$. The formula yelds a lower bound to $s(\overline {\mathcal M}_g)$ for $g \leq 11$ and $g \neq 10$, because for these values a general
$C$ is a hyperplane section of a K3 surface and moves in a pencil $P$ as above. For $g = 10$ the conjecture says that $s(10) = 7 + \frac 1{11} > 7$.  On the other hand the family of stable curves of genus 10 which can be embedded in a K3 surface defines an integral effective divisor
$$
E_{K3} \subset \overline {\mathcal M}_{10}.
$$
This divisor is a first counterexample to the conjecture. In \cite{FP} Farkas and Popa show that \it $s(E_{K3})$ is exactly 
7, \rm so that the slope conjecture is false in genus 10. This is a starting point of a series of counterexamples to the conjecture constructed in \cite{F4}.  In particular one has:
\begin{theorem} The slope conjecture is false for genus $g = 6i + 10$.
\end{theorem}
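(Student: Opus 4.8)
The strategy is to exhibit, for each $i \geq 0$ and $g = 6i+10$, a single effective divisor $\overline{\mathcal Z} \subset \overline{\mathcal M}_g$ whose slope falls strictly below the value $6 + \frac{12}{g+1}$ predicted by the slope conjecture. Since $s(\overline{\mathcal M}_g)$ is by definition the minimum of $s(E)$ over all effective divisors $E$, producing one such $\overline{\mathcal Z}$ already contradicts the conjecture, exactly as the divisor $E_{K3} \subset \overline{\mathcal M}_{10}$ with $s(E_{K3}) = 7 < 7 + \tfrac{1}{11}$ does in the case $i = 0$. The general construction, following \cite{F4}, is a syzygy-theoretic upgrade of the genus-$10$ example of \cite{FP}: instead of detecting an extra quadric, one detects an unexpected $i$-th linear syzygy.

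First I would fix Brill--Noether data $(r,d)$ with $\rho(g,r,d) = g-(r+1)(g-d+r) = 0$, chosen so that the Koszul cohomology group $K_{i,2}(C,L)$ of a pair $(C,L)$ with $L \in W^r_d(C)$ has expected dimension zero; for $i = 0$, $g = 10$ this is $(r,d) = (4,12)$, where $K_{0,2}(C,L) = H^0(\mathcal I_{C/\mathbf P^4}(2))$ and the balance of quadrics is exactly zero, so $\overline{\mathcal Z}$ becomes the Farkas--Popa locus of curves on an extra quadric, i.e. on a K3. Because $\rho = 0$, the forgetful map $\sigma : \overline{\mathcal G}^r_d \to \overline{\mathcal M}_g$ from the compactified space of such pairs is generically finite, of degree the corresponding Castelnuovo--Eisenbud--Harris number. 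Over $\overline{\mathcal G}^r_d$ I would globalize Koszul cohomology by means of the universal curve and the tautological (Lazarsfeld--Mukai type) bundles, computing $K_{i,2}$ fibrewise as the cohomology of a complex; the decisive point is to present the syzygy locus as the degeneracy locus of a morphism $\phi : \mathcal A \to \mathcal B$ between two vector bundles of \emph{equal} rank. The genus relation $g = 6i+10$ is precisely what forces this equal-rank condition, so that $\tilde{\mathcal Z} = \{\det \phi = 0\}$ is a divisor of class $c_1(\mathcal B) - c_1(\mathcal A)$.

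The computational heart is then to evaluate $c_1(\mathcal A)$ and $c_1(\mathcal B)$, hence $[\tilde{\mathcal Z}]$, by applying Grothendieck--Riemann--Roch to the projection from the universal curve, and to push the result forward along $\sigma$. This yields
$$
[\overline{\mathcal Z}] = \sigma_*[\tilde{\mathcal Z}] = a\lambda - b_0\delta_0 - \sum_{j \geq 1} b_j \delta_j \in \Pic \ \overline{\mathcal M}_g \otimes \mathbf R,
$$
with $a, b_0 > 0$ explicit functions of $i$. Reading off $s(\overline{\mathcal Z}) = a/b_0$ and comparing with $6 + \frac{12}{g+1}$ then reduces to a single rational-function inequality in $i$, which specializes to $7 < 7 + \tfrac{1}{11}$ at $i = 0$.

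The hard parts are twofold. The class computation is long and must treat the boundary with great care, since the value of $b_0$, and therefore the slope, is dictated by how $\tilde{\mathcal Z}$ meets the locus lying over $\Delta_0$; this is where the limit-linear-series and admissible-cover bookkeeping enters, and where any slip would invalidate the inequality. Equally essential is to prove that $\overline{\mathcal Z}$ is a genuine effective divisor rather than all of $\overline{\mathcal M}_g$: I would exhibit one explicit pair $(C,L)$ --- for instance a curve on a suitable K3 surface, or a controlled degeneration on which $K_{i,2}$ can be evaluated directly --- with $K_{i,2}(C,L) = 0$, so that $\phi$ is generically an isomorphism and $\tilde{\mathcal Z}$ has the expected codimension one. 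Granting these two points, the resulting inequality $s(\overline{\mathcal Z}) < 6 + \frac{12}{g+1}$ disproves the slope conjecture for every $g = 6i+10$.
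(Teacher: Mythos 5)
Your outline follows essentially the same route as the paper, which for this theorem offers no independent argument but points to Farkas's construction in \cite{F4}: the counterexamples are the syzygy (Koszul) divisors, realized as equal-rank degeneracy loci over the compactified space of pairs $(C,L)$ with $\rho=0$, with class computed by Grothendieck--Riemann--Roch and pushed forward to $\overline{\mathcal M}_g$, the relation $g = 6i+10$ being exactly what forces the two bundles to have equal rank. The data you leave unspecified for $i>0$ is pinned down in the paper: one takes $L \in W^1_{3i+6}(C)$ and $H := \omega_C(-L) \in W^{3i+4}_{9i+12}(C)$ (so $\rho(g,3i+4,9i+12)=0$), and the divisor is the locus where $H$ fails the Green--Lazarsfeld property $N_i$, i.e. $K_{i,2}(C,H) \neq 0$; for $i=0$ this is your quadric condition (strictly, $K_{0,2}$ is the cokernel of the multiplication map and $H^0(\mathcal I_{C/\mathbf P^4}(2))$ its kernel, the two being simultaneously nonzero precisely because of the equal-rank coincidence), and the divisor is $E_{K3}$.

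There is, however, one concrete misstep in your plan for the step you rightly call essential, namely showing that $\overline{\mathcal Z}$ is a proper divisor. You propose to exhibit a pair with $K_{i,2}(C,H)=0$ by taking ``a curve on a suitable K3 surface''. This can never work. Any effective divisor $E$ of slope less than $6 + \frac{12}{g+1}$ must contain the entire locus of curves lying on K3 surfaces: the K3 pencils $R$ considered in the paper cover that locus and satisfy $\delta R/\lambda R = 6 + \frac{12}{g+1}$, so $E \cdot R \leq a\,\lambda R - b\,\delta R < 0$, which forces $R \subset E$ for every such pencil. Hence, if your class computation succeeds --- which is the whole point --- then K3 curves lie \emph{inside} $\overline{\mathcal Z}$, never outside it; consistently, the paper's proposition on the incarnations of $E_{K3}$ says that in genus $10$ the condition $K_{0,2}\neq 0$ holds exactly for curves on K3 surfaces. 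The vanishing witness must therefore be a curve that lies on no K3 surface, i.e. the degeneration route you mention only as an alternative; making that degeneration work is in fact the delicate heart of \cite{F4} (and of \cite{FP} in genus $10$), not an interchangeable option.
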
 \par 
Counterexamples are in this case the divisors $D \subset \overline {\mathcal M}_g$ parametrizing the locus of curves $C$ such that, for some $L \in W^1_{3i+6}(C)$,  the line bundle $\omega_C(-L)$ does not satisfy the $N_i$ condition of Green-Lazarsfeld,  \cite{GL1}. \par  Going back to genus 10 case, we have that $E_{K3}$ is one of these divisors. As pointed out in \cite{FP} the divisor $E_{K3}$ has indeed different incarnations. Let $C$ be a general curve of genus 10 and $L \in W^1_6(C)$. Consider $H := \omega_C(-L)$ and the multiplication map
$$
v: \mathrm{Sym}^2 H^0(H) \to H^0(H^{\otimes 2}).
$$
Since $h^0(H^{\otimes 2}) =  15$ and $h^0(H) = 5$, it follows that $v$ is a map between vector spaces of the same dimension. Then the condition that $v$ is not an isomorphism is a divisorial condition on $\mathcal M_{10}$. On the other hand this is just condition $N_1$ and characterizes the K3 divisor, see  \cite{FP}:
\begin{proposition} A stable $C$ as above defines a general point of $E_{K3}$  if and only if $v$ is not an isomorphism for some $L \in W^1_6(C)$.
\end{proposition}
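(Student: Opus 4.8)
The plan is to reinterpret the map $v$ projectively and then show that the locus it cuts out coincides with $E_{K3}$. First I would record that $H=\omega_C(-L)$ has degree $12$ and $h^0(H)=5$, so $|H|$ maps $C$ into $\mathbf P^4=\mathbf P(H^0(H)^*)$; for general $(C,L)$ this is a linearly normal embedding, so the domain $\mathrm{Sym}^2H^0(H)$ of $v$ is exactly $H^0(\mathcal O_{\mathbf P^4}(2))$ and the exact sequence
$$
0\to H^0(\mathcal I_{C}(2))\to \mathrm{Sym}^2H^0(H)\xrightarrow{\,v\,} H^0(H^{\otimes 2})
$$
identifies $\Ker v$ with the space of quadrics through $C\subset\mathbf P^4$. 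Since $v$ is a map between spaces of equal dimension $15$, it fails to be an isomorphism precisely when $C$ lies on a quadric in $\mathbf P^4$. Thus the proposition amounts to showing that the locus $\mathcal D$ of curves admitting some $L\in W^1_6(C)$ for which $\phi_{|\omega_C(-L)|}(C)$ lies on a quadric coincides, at a general point, with $E_{K3}$. As the text records that the condition $\det v=0$ is divisorial, I would prove $E_{K3}=\mathcal D$ by establishing the inclusion $E_{K3}\subseteq\mathcal D$ and then invoking irreducibility together with equality of dimensions.

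For the inclusion, the key is to place a general curve of $E_{K3}$ on a K3 surface carrying a convenient elliptic pencil. A dimension count supports this: K3 surfaces whose Picard lattice contains the rank two lattice spanned by classes $C,F$ with $C^2=18$, $C\cdot F=6$, $F^2=0$ form an $18$-dimensional Noether--Lefschetz family, and adjoining the choice of $C\in|C|$ yields a $28$-dimensional family of pairs $(S,C)$ whose image in $\mathcal M_{10}$ lies in the $26$-dimensional divisor $E_{K3}$; checking that the fibres are two dimensional shows this family dominates $E_{K3}$. Hence a general $[C]\in E_{K3}$ lies on such an $S$, with $|F|$ an elliptic pencil and $L:=\mathcal O_C(F)\in W^1_6(C)$. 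Now put $M:=\mathcal O_S(C-F)$. Adjunction on the K3 gives $\mathcal O_C(C)\cong\omega_C$ and $\mathcal O_C(F)\cong L$, so $M|_C\cong\omega_C(-L)=H$; moreover $M^2=6$, and the restriction sequence $0\to\mathcal O_S(-F)\to M\to H\to 0$ gives $h^0(M)=h^0(H)=5$ with $H^0(M)\xrightarrow{\sim}H^0(H)$. Therefore $|M|$ maps $S$ to a sextic surface in $\mathbf P^4$ whose restriction to $C$ is exactly $\phi_{|H|}$. Since $(2M)^2=24$, Riemann--Roch gives $h^0(\mathcal O_S(2M))=2+2M^2=14<15$, so there is a quadric $Q$ through the image surface (indeed a general such surface is a $(2,3)$ complete intersection, \cite{SD}). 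As $\phi_M(C)=\phi_{|H|}(C)\subset Q$, the curve $C$ lies on a quadric in $\mathbf P^4$ and $v$ is not an isomorphism; thus $[C]\in\mathcal D$ and $E_{K3}\subseteq\mathcal D$.

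To conclude I would argue that both divisors are irreducible of dimension $26$. The space $E_{K3}$ is irreducible because $\mathcal F_{10}$, and hence the projective bundle whose image it is, are irreducible. For $\mathcal D$, the variety of pairs $(C,L)$ with $L\in W^1_6(C)$ is irreducible by Fulton's theorem on the irreducibility of $\mathcal H_{6,10}$ stated above; the condition $\det v=0$ cuts out an effective divisor on it whose image in $\mathcal M_{10}$ is the irreducible divisor $\mathcal D$. An irreducible variety contained in an irreducible variety of the same dimension must equal it, so $E_{K3}=\mathcal D$, and the asserted equivalence for general points follows. Finally, unwinding the identification of $\Ker v$ with the quadrics through $\phi_{|H|}(C)$ gives the incarnation in terms of property $N_1$ for $\omega_C(-L)$.

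The hard part will be the first inclusion, and specifically the claim that a general curve of $E_{K3}$ lies on a K3 carrying an elliptic pencil realising one of its finitely many $g^1_6$'s: a general $E_{K3}$ curve lies on a Picard rank one K3, on which no elliptic pencil exists, so one must genuinely exploit that the family of K3's through a fixed such $C$ is positive dimensional (Mukai) and specialise within it to a higher rank Noether--Lefschetz member. Making the dimension count rigorous, and checking that $M=\mathcal O_S(C-F)$ is base point free with $h^0(M)=5$ rather than degenerating, is where the substantive work lies. A secondary technical point is the irreducibility of the degeneracy locus $\det v=0$, which is what upgrades the established containment of divisors to the equality, and hence to the two-sided equivalence in the statement.
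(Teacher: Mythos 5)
Before anything else, a point of reference: the paper does not prove this proposition at all. It is recorded as a result of Farkas and Popa, with the preceding sentence (that degeneracy of $v$ is a divisorial condition on $\mathcal M_{10}$) and the citation \cite{FP} carrying the entire weight; so your proposal has to be measured against the argument of \cite{FP} rather than against anything in the text. Within your proposal, the opening reduction is correct and is the right way to read the statement: $\Ker v = H^0(\mathcal I_{\phi_H(C)}(2))$, so $v$ fails to be an isomorphism exactly when the degree $12$ model of $C$ in $\mathbf P^4$ lies on a quadric. Your mechanism for producing such a quadric when $C$ lies on a Noether--Lefschetz special K3 is also sound: with $M = \mathcal O_S(C-F)$ one has $H^0(M) \cong H^0(H)$ and $h^0(2M) = 14 < 15 = \dim \mathrm{Sym}^2 H^0(M)$, so $\phi_M(S)$, hence $\phi_H(C)$, lies on a quadric; these surfaces are exactly the $(2,3)$ complete intersections in $\mathbf P^4$.

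Nevertheless there are two genuine gaps, one flagged and one not. (a) Your inclusion $E_{K3} \subseteq \mathcal D$ rests on the claim that a \emph{general} curve of $E_{K3}$ lies on an NL-special K3, and the dimension count you offer does not prove it: a $28$-dimensional family mapping into the $26$-dimensional irreducible $E_{K3}$ dominates it if and only if its fibres are $2$-dimensional, and bounding the fibres is precisely the content of the claim, not a routine check. (It can be done: the NL-special surfaces through a fixed embedded $\phi_H(C_0)$ are the intersections $Q_0 \cap T$, with $Q_0$ the unique quadric and $T$ a cubic through $C_0$ not divisible by $Q_0$, hence a $\mathbf P^2$ for each of the finitely many $g^1_6$'s, since $h^0(\mathcal I_{C_0}(3)) = 8$; but nothing of this sort appears in your text.) (b) The unflagged and fatal gap is the irreducibility of $\mathcal D$: the degeneracy locus $\lbrace \det v = 0 \rbrace$ inside the irreducible variety $\mathcal W^1_{6,10}$ is a hypersurface, and hypersurfaces in irreducible varieties --- determinantal ones in particular --- are very often reducible, so Fulton's theorem gives you nothing here; with this, your final deduction of $\mathcal D = E_{K3}$ from $E_{K3} \subseteq \mathcal D$, hence the whole ``if'' direction, collapses. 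Both gaps are avoided by running the argument in the opposite direction, which is essentially what \cite{FP} does: if $v$ degenerates, then $\phi_H(C)$ lies on a quadric $Q$, and since $h^0(\mathcal I_C(3)) \geq 35 - 27 = 8 > 5$ it also lies on a cubic $T$ not divisible by $Q$; the sextic surface $Q \cap T$ has trivial dualizing sheaf and irregularity zero, so (after dealing with possibly degenerate intersections) $C$ lies on a K3 and $[C] \in E_{K3}$. This gives $\mathcal D \subseteq E_{K3}$; your NL-special construction is then only needed to show $\mathcal D \neq \emptyset$ and of dimension $26$, for which no generality of the curve is required; and the irreducibility of $E_{K3}$ --- the one irreducibility statement you did establish correctly --- forces the equality, without ever needing the hard forward inclusion as an independent step.
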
  \par 
\begin{remark} \rm Let us give, building on the previous properties, a recipe to give a proof that $s(E_{K3}) \leq 7$. We leave to the reader its completion. \par Fix a smooth complete intersection of three quadrics $ X \subset \mathbf P^5$,
which contains an integral, non degenerate, sextic elliptic curve $E$. Then fix a pencil  $P$ of degree 2 on $E$. This defines the rational quartic scroll
$$
Y = \cup \langle d \rangle, \ d \in P,
$$
where $\langle d \rangle$ is the line spanned by $d$. Then consider the surface $S = X \cup Y$ and observe that the hyperplane sections of $S$ are reducible, stable curves of arithmetic genus 10 and degree 12. Let $P \subset \vert \mathcal O_S(1) \vert$ be a general pencil of hyperplane sections and $m: P \to \overline {\mathcal M}_{10}$ be the moduli map. Computing as usual the numbers $m^* \delta$ and $m^* \lambda$ one obtains $\frac {m^* \delta }{m^* \lambda} = 7$. \par This  implies that $s(E) \leq 7$ for every effective divisor $E$ such that $m^{-1}(E)$ is empty. But this is the case when $E = E_{K3}$. Indeed, it is standard to show that the singular surface $S$ is regular and projectively normal. The regularity of $S$ follows from the Mayer-Vietoris type exact sequence 
$$
0 \to \mathcal O_S \to \mathcal O_X \oplus \mathcal O_Y \to \mathcal O_E \to 0,
$$
passing to its associated long exact sequence. In a similar way, the property $h^1(\mathcal I_S(m)) = 0$, $m \geq 1$, follows from the exact sequence of ideal sheaves
$$
0 \to \mathcal I_S(m) \to \mathcal I_X(m) \oplus \mathcal I_Y(m) \to \mathcal I_E(m) \to 0.
$$
Finally let $C$ be any hyperplane section of $S$. Then one can deduce that $h^1(\mathcal I_C(m)) = 0$, $m \geq 1$, using the standard exact sequence
$$
0 \to \mathcal I_S(m) \to \mathcal I_C(m) \to \mathcal O_C(m-1) \to 0.
$$
But then the previous map $v$ is an isomorphism for each $C \in P$ and $m^{-1}(E_{K3})$ is empty. \par 
 \end{remark} Let us conclude this section by stressing the fact that an important step is still missed in the knowledge of the moduli spaces $\overline {\mathcal M}_g$, namely: \medskip \par
\centerline { \it What is the Kodaira dimension for $17 \leq g \leq 21$? } \medskip \par 
In recent times, sporadic examples have been discovered of moduli spaces $\mathcal X_g$, related to curves of genus $g$, of intermediate Kodaira dimension. This brings about a change of perspective on moduli spaces related to curves. \par
Let us denote the Kodaira dimension of $\mathcal X_g$ by $ k({\mathcal X}_g)$. We can say that  \it $\mathcal X_{g_0}$ has intermediate Kodaira dimension \rm if the value of the function $k_{\mathcal X}$ is not $-\infty$ nor equal
to its maximum.  \par It would be of course very interesting  to discover examples of intermediate Kodaira dimension in the sequence of the moduli spaces $\mathcal M_g$.

 \section {Moduli of spin curves}   \rm
 \subsection{Modern origins and fundamental constructions}
Passing from $\mathcal M_g$ to the universal Picard varieties
$$
p:  \Pic_{d,g} \to \mathcal M_g,
$$
we  want now to consider some remarkable multisections of $p$. These are the moduli space of pairs $(C,L)$ such that $C$ is a smooth, integral curve of genus $g$
and $L \in Pic^d C$ is a line bundle satisfying some special property. For a multisection as well, we want to discuss the same type of problems considered in the previous
section for $\mathcal M_g$. The pairs $(C,L)$ to be considered in this section have a name:
\begin{definition} A spin curve is a pair $(C,L)$ as above  such that $L$ is a theta characteristic. \end{definition} \par 
Recall that a theta characteristic $L$ is even (odd) if $h^0(L)$ is even (odd). A spin  curve $(C,L)$ is said to be \it even \rm (\it odd \rm) if $L$ is even (odd). 
A modern study of families of spin curves was taken up by Mumford in 1971, \cite{M7}. In particular he proved the following theorem.
\begin{theorem}[Mumford] Let $\lbrace (C_t, L_t), \ t \in T \rbrace$ be a family of spin curves. Then $h^0(L_t) \mathrm{mod} \ 2$ is constant on each connected component
of $T$. \end{theorem}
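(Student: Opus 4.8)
The plan is to reduce to a one-parameter family and to read $h^0(L_t)\bmod 2$ off the parity of the corank of a self-dual map of vector bundles. Since ``locally constant on connected components'' is equivalent to ``constant along every arc'', it suffices to treat a family $\pi\colon \mathcal C \to T$ over a smooth connected curve (or a disc) $T$, equipped with a line bundle $\mathcal L$ on $\mathcal C$ satisfying $\mathcal L^{\otimes 2}\cong \omega_{\mathcal C/T}$, and to compare the central fibre $t_0$ with the general fibre. Because $\deg L_t = g-1$ we have $\chi(L_t)=0$ for every $t$, which will be what forces the two relevant bundles to have equal rank.

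Next I would present the relative cohomology by a two-term complex. Twisting $\mathcal L$ by a sufficiently $\pi$-ample effective divisor and pushing forward the associated short exact sequence, one obtains a homomorphism $\phi\colon E^0 \to E^1$ of locally free $\hol_T$-modules with $\ker\phi_t = H^0(L_t)$ and $\operatorname{coker}\phi_t = H^1(L_t)$; since $\chi(L_t)=0$, the bundles $E^0$ and $E^1$ have the same rank $N$, so fibrewise $\phi_t$ is a square matrix and $h^0(L_t)=\dim\ker\phi_t$. Here the theta-characteristic hypothesis enters: relative Serre duality gives $R^1\pi_*\mathcal L \cong \bigl(\pi_*(\omega_{\mathcal C/T}\otimes \mathcal L^{-1})\bigr)^{\vee}$, and $\omega_{\mathcal C/T}\otimes \mathcal L^{-1}\cong \mathcal L$ upgrades this to a self-duality $E^1\cong (E^0)^{\vee}$ of the whole complex, under which $\phi$ becomes a self-adjoint bilinear form. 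The assertion to be proved is then that $\dim\ker\phi_t\bmod 2$ does not jump as $t$ specializes to $t_0$.

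The crux, and the step I expect to be the main obstacle, is that self-adjointness alone is insufficient: a merely symmetric family such as $\operatorname{diag}(t,1,\dots,1)$ has corank jumping by one, so one must show that the form \emph{governing the jump} is alternating rather than symmetric. Concretely, I would pass to the quotient $V$ of $H^0(L_{t_0})$ by the subspace of sections that extend to the neighbouring fibres, and study the induced pairing $b\colon V\times V \to \mathbb C$ obtained by composing the first-order obstruction map $H^0(L_{t_0})\to H^1(L_{t_0})$ (cup product with the Kodaira--Spencer class of the family, carried along by the square-root structure of $\mathcal L$) with the Serre pairing $H^1(L_{t_0})\times H^0(L_{t_0})\to H^1(\omega)=\mathbb C$. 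The delicate point is a sign: because this composite factors through cohomology in the odd degree $1$ on a curve, the symmetry of the multiplication map $\operatorname{Sym}^2 H^0(L)\to H^0(\omega)$ gets converted into skew-symmetry of $b$; equivalently, the spin (square-root) condition should force $b(s,s)=0$ for every $s$. Granting that $b$ is alternating and nondegenerate on $V$, its rank $\dim V = h^0(L_{t_0})-h^0(L_{\mathrm{gen}})$ is even, so the corank of $\phi$, and hence $h^0(L_t)$, changes only by even amounts along $T$, giving the local constancy of $h^0(L_t)\bmod 2$. Pinning down the Serre-duality sign, and verifying that it is genuinely the square-root condition rather than mere self-duality that produces the alternating form, is where the real work lies.
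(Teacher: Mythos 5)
Before comparing, a point of calibration: this survey does not prove the theorem at all; it quotes it from Mumford's paper \cite{M7}. Mumford's own argument is genuinely different from yours: he twists by a relative divisor $D$ of large degree, endows $V_t = H^0(L_t(D)/L_t(-D))$ with the \emph{symmetric} nondegenerate residue form $(s,s') \mapsto \sum \mathrm{Res}(ss')$ coming from $L \otimes L \to \omega$, and observes that $W_1 = \mathrm{im}\, H^0(L_t(D))$ and $W_2 = H^0(L_t/L_t(-D))$ are maximal isotropic subspaces with $W_1 \cap W_2 = H^0(L_t)$; constancy of the parity then follows from the classical fact that the even orthogonal Grassmannian has two connected components, distinguished exactly by $\dim (W_1 \cap W_2) \bmod 2$. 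Your mechanism --- a self-dual two-term complex whose \emph{alternating} structure forces even rank drops --- is instead the one underlying Atiyah's proof via real skew-adjoint operators; the two are related, since writing $W_1$ as the graph of a map $W_2 \to W_2^{\vee}$ with respect to a Lagrangian splitting turns isotropy of $W_1$ into precisely the condition that this map be alternating. So your route is viable, but the two steps carrying all the weight are the two you do not prove, and one of them is false as stated.

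The step that fails is the nondegeneracy of $b$ on $V$. Your $b$ is a first-order invariant, so its radical in $H^0(L_{t_0})$ is the space of sections extending to \emph{first order}, which contains --- in general strictly --- the space of sections that actually extend to nearby fibres. Concretely, take a one-parameter family through a general point of the theta-null divisor which is \emph{tangent} to that divisor: then $h^0$ jumps from $2$ to $0$, so $V$ is $2$-dimensional, yet $b$ vanishes identically on $V$ (for a $2$-dimensional $H^0$ an alternating form is a single scalar, depending linearly on the Kodaira--Spencer direction, and it vanishes exactly along the tangent hyperplane to the theta-null divisor). Hence ``alternating and nondegenerate on $V$'' cannot be granted; what your argument actually yields is only $h^0(L_{t_0}) \equiv \dim \lbrace \text{first-order extendable sections} \rbrace \bmod 2$, with higher-order jumps uncontrolled. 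The known repair is to make the skew-symmetry global rather than infinitesimal: over the disc or DVR one needs a presentation $\phi \colon E^0 \to (E^0)^{\vee}$ of the total direct image with $\phi^{\vee} = -\phi$, after which ``an alternating matrix over a field has even rank,'' applied at the special and at the generic point, settles all orders at once. This leads to the second gap: the alternating property itself, which you rightly isolate as the crux but support only with a sign heuristic. At first order there is a clean proof you could have given --- by the Leibniz rule, $b(s,s') + b(s',s)$ equals the obstruction class of $ss' \in H^0(\omega_{C_{t_0}})$ for the family of dualizing sheaves, and this vanishes because $h^0(\omega_{C_t}) = g$ is constant, so sections of $\omega$ always extend --- but promoting this to the global presentation $\phi$ is genuine work, and via the graph construction above it is essentially equivalent to setting up Mumford's isotropic-subspace configuration. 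As your own example $\mathrm{diag}(t,1,\dots,1)$ shows, nothing short of that alternating structure will close the argument.
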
 \par 
The  theorem implies that the moduli space $\mathcal S_g$ of spin curves is not connected. It is well known that the connected components of $\mathcal S_g$
are exactly two and that they are irreducible. As usual we denote them as
$$
\mathcal S^+_g \ , \ \mathcal S^-_g.
$$
They are the moduli spaces of even and odd spin curves respectively. The first natural compactifications 
$$
\overline {\mathcal S}^+_g \ , \ \overline {\mathcal S}^-_g
$$
of these moduli spaces were constructed by Cornalba. They are normal projective variety whose fundamental properties are studied in \cite{C}. A further analysis of
these spaces and their compactifications can be found in \cite{CC} and \cite{BF}. See also \cite{AJ} for the moduli spaces of generalized spin curves, that is, pairs $(C,L)$
such that $L$ is a $p$-root of $\omega_C$, for a fixed $p$. \par  
In order to improve the picture of the birational geometry of $\overline {\mathcal S}^+_g$ and $\overline {\mathcal S}^-_g$, it is useful to recall the boundary divisors
of Cornalba's compactification, see \cite{C} section 7. 
One has to consider \it semistable \rm curves $C$ of genus $g$. 
\begin{definition} An irreducible component $E$ of a semistable curve $C$ is exceptional if $E = \mathbf P^1$ and $E \cap  \Sing \ C$ is a set of two
points. \end{definition} \par 
\begin{definition} A spin structure on $C$ is a pair $(a, L)$ such that: \par 
(1) $L \in  \Pic \ C$ and $\deg L \otimes \mathcal O_E = 1$, for any exceptional component $E \subset C$. \par 
(2) $a$ is a homomorphism $a: L^{\otimes 2} \to \omega_C$ wich is not zero on each irreducible, non exceptional component.
\end{definition} \par 
Let $D \subset C$ be the union of the  non exceptional components. The locally free sheaf $\mathcal O_D(L)$ is a square root of $\omega_D$, see \cite{C} section 2 p.564. $L$ is a theta characteristic
if $C$ is smooth. Cornalba's compactification of $\mathcal S^{\pm}_g$ is the moduli space of triples $(C,a,L)$. To have a quick view of the boundary divisors,
consider the forgetful map
$$
f: \overline {\mathcal S}^+_g \cup \overline {\mathcal S}^-_g \to \overline {\mathcal M}_g,
$$
sending the moduli of $(C, a, L)$ to the moduli point of the stable reduction of $C$. Let $1 \leq i \leq \frac g2$. For each boundary divisor $\delta_i \subset \overline {\mathcal M}_g$ one has: \medskip \par 
\begin{itemize} \it
\item[$\circ$]
$f^*\delta_i \cdot \overline {\mathcal S}^+_g = 2 (\alpha^+_i + \beta^+_i)$, 
\item[$\circ$] $f^*\delta_i \cdot \overline {\mathcal S}^-_g = 2(\alpha_i^- + \beta_i^-)$.
\end{itemize} \medskip \par 
where $\alpha_i^+ , \beta_i^+ , \alpha_i^- , \beta_i^-$ are integral divisors. They are defined as follows. A general $x$ in $f^{-1}( \delta_i)$ is the moduli point of a $(C, a, L)$ such that: \medskip \par  \it
(i) $C = C_1 \cup E \cup C_2$, where $C_1, C_2$ are smooth, integral curves respectively of genus $i$ and $g-i$, $E$ is an exceptional component; \par
(ii) if $x \in \alpha_i^+$, ($x \in \beta_i^+$), then $L$ restricts to an even (odd) theta on $C_1$, $C_2$; \par
(iii) if $x \in \alpha_i^-$, ($x \in \beta_i^-$), then $L$ restricts to an even (odd) theta on $C_1$ and to an odd (even) theta on $C_2$.
\rm \medskip \par 
Moreover one has
$$
f^*(\delta_0) = (\alpha_0^+ + \alpha_0^-) + 2(\beta_0^+ + \beta_0^-),
$$ 
where $\alpha^{\pm}$, $\beta^{\pm}$ are integral divisors and
$$
\alpha_0^+ \cup \beta_0^+ \subset \overline {\mathcal S}_g^+ \ , \ \alpha_0^- \cup \beta_0^- \subset \overline {\mathcal S}_g^-.
$$
Here a general $x \in f^{-1}(\delta_0)$, is a triple $(C, a, L)$ such that: \medskip \par  \it
(i) if $x \in \alpha_0^+ \cup  \alpha_0^-$ then $C$ is integral with exactly one node, \par
(ii)  if $x \in \beta_0^+ \cup \beta_0^-$ then $C = D \cup E$, $D$ is integral, $E$ is exceptional.
\rm  \medskip \par 
From now on we denote as
$$
f^+: \overline {\mathcal S}^+_g \to \overline {\mathcal M}_g \ , \ f^-: \mathcal S^-_g \to \overline {\mathcal M}_g,
$$
the restrictions of $f$  to $\overline {\mathcal S}^+_g$ and $\overline{\mathcal S}^-_g$. The maps $f^+$, $f^-$ are finite and their degrees are  respectively the numbers of even and odd thetas on a smooth $C$. 
\subsection{The picture of the Kodaira dimension} The ramification divisor 
of $f$ is supported on the boundary. Moreover the previous characterizations of the divisors $f^*(\delta_i)$ explicitely describe such a divisor.  Therefore, using the formula for the canonical class of
$\overline {\mathcal M}_g$ and the standard formula for the canonical class of a finite covering, one can compute the canonical classes of 
$\overline {\mathcal S}^+_g$ and $\overline{\mathcal S}^-_g$, cfr. \cite{F1} and \cite{FV} p.4.  Let $\lambda$ be the first Chern class of the Hodge bundle on $\overline {\mathcal M}_g$, we fix the further notation:
$$
\lambda^{\pm} := f^{\pm*}\lambda.
$$
Then the canonical classes we are looking for are the following:
$$
K_{\overline {\mathcal S}^+_g} \equiv f^{+*} K_{\overline {\mathcal M}_g} + \beta^+_0 \equiv 13 \lambda^+ - 2\alpha^+_0 - 3\beta^+_0  - 2 \sum_{1\leq i \leq \frac g2}(\alpha^+_i + \beta^+_i) 
- (\alpha^+_1 + \beta^+_1)
$$
and
$$
K_{\overline {\mathcal S}^-_g} \equiv f^{-*} K_{\overline {\mathcal M}_g} + \beta^-_0 \equiv 13 \lambda^- - 2\alpha^-_0 - 3\beta^-_0  - 2 \sum_{1\leq i \leq \frac g2}(\alpha^-_i + \beta^-_i) 
- (\alpha^-_1 + \beta^-_1)
$$
A systematic approach to the birational geometry of the moduli spaces of spin curves, with special regard to the Kodaira dimension, is due 
to Farkas. In \cite{F1} some crucial divisorial classes are studied as well as some analogs of the slope conjecture for $\overline{\mathcal M}_g$. See also \cite{F3} for a general account on this subject.  
Among the most interesting divisors of $\overline {\mathcal S}^+_g$ one has at least to mention the \it  spin Brill-Noether divisors \rm considered
in \cite{F5}. \par Fix $r \geq 0$ such that $(r+1)s = g$ and $d = 2i = r(s+1)$. Then the Brill-Noether number $\rho(r,d,g)$ is zero. Consider the Zariski closure
$$
E^{\pm}_{r,d,g} \subset \overline {\mathcal S}_g^{\pm}
$$
of the moduli of general spin curves $(C,L)$ endowed with some $H \in W^r_d(C)$ satisfying the next condition $\star$.   Let $\phi: C \to \mathbf P^n$ be the morphism defined by $H \otimes L$, then:
 \medskip \par    \it
$\star$ there exists  a subspace $\Lambda$ of dimension $i-2$ such that $\phi^* \Lambda$ has degree $i$. 
\rm  \medskip  \par   
A linear subspace $\Lambda$ as above is said to be $i$-secant to the map $\phi$.   Farkas shows that the $E^{\pm}_{r,g,d}$ is a divisor. The divisors $E^{\pm}_{r,g,d}$ are in some sense analogs of the Brill-Noether divisors of $\overline {\mathcal M}_g$.
They have good extremality properties in the cone of effective divisors, so one can use them to test the effectivity of the canonical class
of $\overline {\mathcal S}_g^{\pm}$. \medskip \par   \underline {\it Even spin curves } \par Building on these methods Farkas proves in \cite{F1} that:
 \begin{theorem} The moduli space of even spin curves is of general type for $g \geq 9$ and  uniruled for $g \leq 7$.
\end{theorem}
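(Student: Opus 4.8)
The plan is to exploit the fact, recorded in the introduction, that a general curve of genus $g \le 7$ (with the single exception $g = 6$) is a smooth hyperplane section of a Nikulin surface $S$. A Nikulin surface carries a distinguished quasi-\'etale double cover $\pi : \tilde S \to S$, and its defining line bundle restricts on a smooth $C \in \vert C \vert$ to a theta characteristic $L$; the point is that the Nikulin configuration of eight disjoint $(-2)$-curves forces $L$ to be \emph{even}. Thus each pair $(S,C)$ produces a point $(C,L) \in \mathcal S^+_g$, and letting $C$ vary in the polarization gives a moduli map $m : \vert C \vert \to \mathcal S^+_g$. Since the moduli of Nikulin surfaces of genus $g$ is $11$-dimensional and $\dim \vert C \vert = g$, the family of pairs has dimension $11 + g$, which is $\geq 3g - 3 = \dim \mathcal S^+_g$ precisely when $g \le 7$; in this range one checks the map is dominant. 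Finally $m$ is nonconstant on a general $\vert C \vert$: otherwise a general $D \in \vert C \vert$ would be a copy of $C$, yielding a dominant map $C \times \vert C \vert \to S$ contradicting that the K3 surface $S$ is not ruled — exactly the argument used earlier for the uniruledness of $\mathcal W^2_{10,10}$. Hence through a general point of $\mathcal S^+_g$ passes the rational image of a line in $\vert C \vert$, and $\mathcal S^+_g$ is uniruled. The genus $6$ case, being the unique failure of the Nikulin construction, I would treat separately, producing the covering rational curves from a different surface model of the general even spin curve.

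\textbf{General type for $g \ge 9$ via the canonical class.} For the opposite range the plan is to show that $K_{\overline{\mathcal S}^+_g}$ is big and that pluricanonical forms extend to a desingularization. Starting from the formula
$$
K_{\overline{\mathcal S}^+_g} \equiv 13 \lambda^+ - 2\alpha^+_0 - 3\beta^+_0 - 2 \sum_{1 \le i \le \frac g2}(\alpha^+_i + \beta^+_i) - (\alpha^+_1 + \beta^+_1),
$$
I would use the spin Brill--Noether divisors $E^+_{r,g,d}$, with $r,d$ chosen so that $\rho(r,d,g) = 0$, i.e. $(r+1)s = g$ and $d = r(s+1)$. The key computation is the expression of $[E^+_{r,g,d}]$ in the basis $\lambda^+, \alpha^{\pm}_i, \beta^{\pm}_i$; these divisors have controlled, small slope and good extremality in the effective cone. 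I would then exhibit a combination
$$
K_{\overline{\mathcal S}^+_g} = c\,\lambda^+ + \eta\,[E^+_{r,g,d}] + (\text{effective boundary with nonnegative coefficients})
$$
with $c > 0$ and $\eta \ge 0$. Since $\lambda^+ = f^{+*}\lambda$ is big and nef ($f^+$ is finite and $\lambda$ is big and nef on $\overline{\mathcal M}_g$), while $E^+_{r,g,d}$ and the remaining boundary terms are effective, this forces $K_{\overline{\mathcal S}^+_g}$ to be big. The inequality making $c > 0$ is exactly where the hypothesis $g \ge 9$ enters.

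\textbf{The main obstacle.} The genuinely hard part is twofold. Numerically, the precise class of $E^+_{r,g,d}$ must be determined — this is the heart of the argument and rests on test-curve and degeneration calculations in the spirit of the Brill--Noether divisor computations on $\overline{\mathcal M}_g$ — and shown to cross the threshold at $g = 9$; here the coefficient $3$ on $\beta^+_0$, coming from the ramification of $f^+$ along the boundary, is what lowers the threshold far below the value $g \ge 24$ that governs $\overline{\mathcal M}_g$. The more delicate obstacle, however, is the \emph{extension of pluricanonical forms}: one must show that for a resolution $\pi : \widetilde X \to \overline{\mathcal S}^+_g$ every pluricanonical form on the smooth locus extends across the exceptional divisor, so that bigness of $K_{\overline{\mathcal S}^+_g}$ genuinely computes the Kodaira dimension of a smooth model. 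This requires analyzing the quotient singularities of $\overline{\mathcal S}^+_g$ produced by automorphisms of spin curves, adapting the Harris--Mumford analysis together with its later refinements in the spin setting, and verifying that the problematic loci impose no adjunction conditions. It is this step, rather than the divisor-class numerics, where the real difficulty lies.
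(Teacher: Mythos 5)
Your uniruledness argument for $g \le 7$ breaks down at its very first step, and the error is not cosmetic. On a Nikulin surface $(S, \mathcal O_S(H), E)$ one has $H \cdot E_i = 0$, so for a smooth $C \in \vert H \vert$ the restriction $E \otimes \mathcal O_C$ has \emph{degree zero}: it is a nontrivial $2$-torsion element $\eta \in \Pic^0(C)$ (this is exactly the paper's Lemma 3.6), not a theta characteristic, and in particular there is no parity for the eight $(-2)$-curves to ``force''. Consequently the moduli map $m : \vert C \vert \to \mathcal S^+_g$ you propose does not exist: to map a family of curves to $\mathcal S^+_g$ you must equip \emph{every} member with a spin structure, and $E \otimes \mathcal O_D$ never is one. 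So the proposed ruling of $\mathcal S^+_g$ by the $g$-dimensional linear systems $\vert C \vert$ collapses; your dimension count, the nonconstancy argument via non-ruledness of the K3, and the genus-$6$ caveat are all fine, but they are hanging on a map that is not defined.

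The paper's proof (Theorem 3.7, via the theta pencils of Definition 3.4) supplies precisely the missing construction. Given a general even spin curve $(C,L)$, choose an effective divisor $Z$ with $\mathcal O_C(Z)$ an \emph{odd} theta characteristic and set $\eta := L(-Z)$, a nontrivial $2$-torsion bundle; the evenness of $L = \eta(Z)$ is arranged by the \emph{choice} of $Z$ (tensoring by $\eta$ permutes the theta characteristics nontrivially, and some odd ones are sent to even ones), not by the Nikulin geometry. One then needs the Prym-pair refinement of your starting fact: a Nikulin surface with $C \in \vert H \vert$ \emph{and} $E \otimes \mathcal O_C \cong \eta$, which is what the proof of the paper's Theorem 3.10 actually produces. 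The covering rational curves are then the images of \emph{pencils} $P \subset \vert \mathcal I_Z(H) \vert$, i.e.\ pencils with $Z$ in their base locus: for each $D \in P$ the bundle $E \otimes \mathcal O_D(Z)$ is a theta characteristic, and Mumford's theorem on the constancy of $h^0 \bmod 2$ in families keeps the entire pencil inside $\mathcal S^+_g$. So the ruling is by $\mathbf P^1$'s through a fixed odd theta divisor, not by $\vert C \vert$. As for the range $g \ge 9$, the paper offers no proof either \textemdash{} it records the canonical class formula and the spin Brill--Noether divisors $E^{\pm}_{r,d,g}$ and cites \cite{F1} \textemdash{} so your sketch there (bigness of $K$ from $\lambda^+$ plus effective divisors, plus the pluricanonical extension across the singularities) is at the same level of detail as the text and identifies the right ingredients; it is the $g \le 7$ half, which the paper does prove, where your proposal genuinely fails.
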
 
 \par 
 \underline {\it Genus 8} \ \par 
Let us discuss the only left case, namely the case of genus 8. A further very interesting divisor in $\overline {\mathcal S}^+_g$ is the theta-null divisor
$$
\theta^+_{null}.
$$
This is just the Zariski closure in $\overline {\mathcal S}^+_g$ of the moduli of spin curves $(C,L)$ such that $L$ is a theta-null, that is,
$h^0(L) = 2$. Its image by $f$ is the usual theta-null divisor: $\theta_{null} \subset \overline {\mathcal M}_g$. In particular the equivalence
$$
\theta^+_{null} \equiv \frac 14 \lambda^+ -  \frac 1{16} \alpha_0^+  - \frac 12 \sum_{i = 1 \dots [\frac g2]} \beta_i \in \Pic \overline {\mathcal S}^+_g
$$
is a remarkable expression for the class of $\theta_{null}^+$, proved in \cite{F1} theorem 0.2. Note that the formula does not depend on the genus $g$ of $C$.   Using it one computes that  
in genus 8 the canonical class is effective too. Indeed the formula implies that
$$
K_{\overline {\mathcal S}^+_8} \equiv c\pi^+ + 8\theta^+_{null} + \sum_{i = 1 \dots 4} (a_i \alpha^+_i + b_i \beta^+_i),
$$
see \cite{F1}. Here $c, a_i, b_i$ are in $\mathbb Q^+$ and $\pi^+ := f^*P$, where  
$
P  \subset \overline {\mathcal M}_8
$
 denotes the Zariski closure of the moduli of plane curves of  degree 7 and geometric genus 8.   
 To conclude the picture, it is shown in \cite{FV}  that:
 \begin{theorem} The moduli space of even spin curves of genus 8 has Kodaira dimension zero.
 \end{theorem}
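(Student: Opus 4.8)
The plan is to bound the Kodaira dimension from both sides, proving $\kappa(\overline{\mathcal S}^+_8) \geq 0$ and $\kappa(\overline{\mathcal S}^+_8) \leq 0$. The lower bound is immediate from the canonical class formula recorded above: since
$$
K_{\overline{\mathcal S}^+_8} \equiv c\,\pi^+ + 8\,\theta^+_{null} + \sum_{i=1}^4 \bigl(a_i\alpha^+_i + b_i\beta^+_i\bigr)
$$
with $c,a_i,b_i \in \mathbb Q^+$, and since each of $\pi^+ = f^{+*}P$, of the theta-null divisor $\theta^+_{null}$, and of the boundary classes $\alpha^+_i,\beta^+_i$ is represented by an effective divisor, a suitable integral multiple $mK_{\overline{\mathcal S}^+_8}$ is linearly equivalent to an effective divisor. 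Hence $h^0(mK_{\overline{\mathcal S}^+_8}) \geq 1$ and $\kappa(\overline{\mathcal S}^+_8) \geq 0$. All the difficulty lies in the reverse inequality, where one must exclude that the pluricanonical systems grow, i.e.\ show that the Iitaka dimension of $K_{\overline{\mathcal S}^+_8}$ is \emph{exactly} $0$ and not positive.

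For the upper bound the plan is to exhibit a covering family of curves on $\overline{\mathcal S}^+_8$ along which the canonical class has degree zero, and to invoke the general principle that if a family $\{R_t\}_{t\in T}$ of irreducible curves sweeps out $\overline{\mathcal S}^+_8$ and $K_{\overline{\mathcal S}^+_8}\cdot R_t = 0$ for general $t$, then every $R_t$ is contracted by the Iitaka fibration of $K_{\overline{\mathcal S}^+_8}$; as the $R_t$ cover the space, the Iitaka image is a point and $\kappa(\overline{\mathcal S}^+_8) \leq 0$. To produce such curves I would use the domination of $\mathcal M_8$ by polarized K3 surfaces, valid since $8 \leq 11$ and $8 \neq 10$ by the theorem of Mukai and Mori recalled above. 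A general genus $8$ curve $C$ lies on a K3 surface $S$ and moves in a Lefschetz pencil $P \subset |C|$; its image $R \subset \overline{\mathcal M}_8$ is a rational curve, and these $R$ cover $\overline{\mathcal M}_8$. Pulling back under the finite cover $f^+$, the curves $R_t$ occurring as components of $(f^+)^{-1}(R)$ sweep out $\overline{\mathcal S}^+_8$, once the even theta characteristics are carried along the family through Cornalba's compactification.

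The decisive computation is then the evaluation of $K_{\overline{\mathcal S}^+_8}\cdot R_t$. On the base curve $R$ one has the standard K3 intersection numbers (as in \cite{CR2}): writing $S'$ for the blow-up of $S$ at the $2g-2$ base points of the pencil, $\lambda\cdot R = \chi(\mathcal O_{S'}) + g - 1 = 9$ and $\delta_0\cdot R = c_2(S') + 4(g-1) = 66$, with $\delta_i\cdot R = 0$ for $i \geq 1$, so that $K_{\overline{\mathcal M}_8}\cdot R = 13\cdot 9 - 2\cdot 66 = -15$; this negativity is just the uniruledness of $\mathcal M_8$. The point is that the spin canonical class carries the correction $K_{\overline{\mathcal S}^+_8} = f^{+*}K_{\overline{\mathcal M}_8} + \beta^+_0$, and the extra boundary term raises the degree along $R_t$. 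Using the ramification description $f^*\delta_0 = (\alpha^+_0 + \alpha^-_0) + 2(\beta^+_0 + \beta^-_0)$ to split $f^*\delta_0\cdot R_t$ into its $\alpha^+_0$ and $\beta^+_0$ parts, one aims to show that $\beta^+_0\cdot R_t$ exactly compensates $f^{+*}K_{\overline{\mathcal M}_8}\cdot R_t$, so that $K_{\overline{\mathcal S}^+_8}\cdot R_t = 0$. Genus $8$ is precisely the value at which the compensation is exact: for $g \leq 7$ it overshoots (uniruledness) and for $g \geq 9$ it falls short (general type), in accordance with the stated trichotomy. Here the exceptional geometry of genus $8$ enters, since by Mukai's theorem a general $C$ with $W^2_7(C) = \emptyset$ is a linear section of $G(2,6)$, so that $\pi^+ = f^{+*}P$ is exactly the pullback of the complementary plane-septic locus; this both identifies the special component of the effective $K_{\overline{\mathcal S}^+_8}$ and underlies why the numerics land at zero.

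The main obstacle is the boundary bookkeeping in this last step: one must determine how the intersection of $R_t$ with $f^*\delta_0$ distributes between $\alpha^+_0$ and $\beta^+_0$, i.e.\ analyze, for a node acquired in the Lefschetz pencil, whether a given even theta characteristic extends as a genuine theta on the nodal curve (contributing to $\alpha^+_0$) or forces the Cornalba exceptional component (contributing to $\beta^+_0$), and with which multiplicities. This local study of the finite cover $f^+$ over $\Delta_0$, together with the verification that the pencils $R_t$ genuinely cover $\overline{\mathcal S}^+_8$ and that the mild (canonical) singularities of Cornalba's compactification impose no extra conditions on the extension of pluricanonical forms, is where the real work sits; the two-sided estimate then pins the Kodaira dimension of $\overline{\mathcal S}^+_8$ to $0$.
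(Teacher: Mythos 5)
Your lower bound is the same as the paper's and is fine. The upper bound, however, fails at its decisive step: the claimed ``exact compensation'' $K_{\overline{\mathcal S}^+_8}\cdot R_t=0$ is numerically false. Keep your own intersection numbers $\lambda\cdot R=9$, $\delta_0\cdot R=66$, so $K_{\overline{\mathcal M}_8}\cdot R=13\cdot 9-2\cdot 66=-15$, and compute on the full preimage cycle $(f^+)^*R$ by the projection formula. A general point of $\beta^+_0$ over $\delta_0$ is a curve $D\cup E$ with $E$ exceptional and $L|_D$ an \emph{even} theta characteristic on the genus $7$ curve $D$, so $f^+_*\beta^+_0=2^6(2^7+1)\,\delta_0=8256\,\delta_0$, while $\deg f^+=2^7(2^8+1)=32896$. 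Hence
$$
K_{\overline{\mathcal S}^+_8}\cdot (f^+)^*R=\deg(f^+)\,K_{\overline{\mathcal M}_8}\cdot R+(f^+_*\beta^+_0)\cdot R=32896\cdot(-15)+8256\cdot 66=51456>0
$$
(the same value comes out of the explicit class $13\lambda^+-2\alpha^+_0-3\beta^+_0-\cdots$, using $f^+_*\alpha^+_0=2^{14}\delta_0$), so the components of $(f^+)^{-1}(R)$ cannot all be $K$-trivial: the boundary correction overshoots zero rather than landing on it. Indeed the analogous number in genus $7$ is negative, so the sign change between genus $7$ and genus $8$ skips zero, and no bookkeeping of how $f^*\delta_0$ splits into $\alpha^+_0$ and $\beta^+_0$ can rescue this family. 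Moreover, even if the degree had been zero, your ``general principle'' is not valid: a covering family of $K$-trivial curves only shows that the general member is contracted by the Iitaka fibration, i.e. $\kappa\leq\dim-1$, not $\kappa\leq 0$. For $X=C\times E$ with $g(C)\geq 2$ and $E$ elliptic, the fibres $\{c\}\times E$ cover $X$ and satisfy $K_X\cdot(\{c\}\times E)=0$, yet $\kappa(X)=1$.

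The paper's argument is built differently, precisely to get around this. It writes $K_{\overline{\mathcal S}^+_8}\sim t\theta^+_{null}+p\pi^++\sum_{i}(a_iA^+_i+b_iB^+_i)$ with positive rational coefficients, and then produces covering families not of the whole space but of each of the two interior divisors, with \emph{strictly negative} degree on the divisor they cover: theta pencils on K3 surfaces carrying two elliptic pencils $|F_1|,|F_2|$ with $F_1\cdot F_2=7$ give curves $R\subset\theta^+_{null}$ with $R\cdot\theta^+_{null}=-1$, and pencils of nodal plane septics give $R'\subset\pi^+$ with $R'\cdot\pi^+<0$, while both families meet every other summand of $K_{\overline{\mathcal S}^+_8}$ in degree $0$. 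This forces $m\theta^+_{null}$ and $m\pi^+$ into the fixed part of $|mK_{\overline{\mathcal S}^+_8}|$ for every $m\geq 1$; the moving part is then contained in $|m\sum(a_iA^+_i+b_iB^+_i)|$, which turns out to be $0$-dimensional. So $\dim|mK_{\overline{\mathcal S}^+_8}|=0$ for all $m\geq 1$, which together with effectivity gives Kodaira dimension zero. If you want to salvage your strategy, the curves must be adapted to the pieces of this effective decomposition (families sweeping out each divisor and negative against it), not pullbacks of general K3 pencils sweeping out the whole space.
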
   \par 
 Differently from the case of $\overline {\mathcal M}_g$, the description of the Kodaira dimension of $\overline {\mathcal S}^+_g$ is therefore 
 complete. Still a natural question is open: \medskip \par 
 QUESTION: \it Does it exists a Calabi-Yau variety birational to $\overline {\mathcal S}^+_8$?
 \medskip \par  \rm
  \underline {\it Odd spin curves} \par
Passing to the moduli space $\overline {\mathcal S}^-_g$, the complete picture of the Kodaira dimension is obtained in  \cite{FV2}. Here there is no intermediate
 case and the Kodaira dimension assume only two values. Actually one has:
 \begin{theorem} The moduli space of odd spin curves is of general type for $g \geq 12$ and uniruled for $g \leq 11$.
 \end{theorem}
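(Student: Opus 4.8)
The plan is to establish the two regimes by completely different means: a geometric K3 parametrization for the uniruledness when $g\le 11$, and a divisor-class computation on Cornalba's compactification $\overline{\mathcal S}^-_g$ for the general type statement when $g\ge 12$. The divisor $\beta^-_0$ appearing in the canonical class formula, which records the ramification of $f^-$ over $\delta_0$, already signals that the general type direction is the analytically delicate one.

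\emph{Uniruledness for $g\le 11$.} The starting point is the fact recorded in 2.3 that a general curve $C$ of geometric genus $g\le 11$ (smooth for $g\ne 10$, one-nodal for $g=10$) is a hyperplane section of a polarized K3 surface $(S,\mathcal O_S(C))$, and that such pairs vary in a unirational family dominating $\mathcal M_g$ in this range, exactly as in the dominance of $m_g$ in 2.4. First I would fix $S$ together with the $g$-dimensional linear system $|C|$; since $\omega_S\cong\mathcal O_S$, adjunction gives $\omega_D\cong\mathcal O_D(D)$ for every smooth $D\in|C|$, so an odd theta characteristic on $D$ is precisely an effective square root of $\mathcal O_D(D)$. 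Over the open locus $|C|^{\circ}$ of smooth members the odd theta characteristics assemble into a finite \'etale cover $\mathcal T^-\to|C|^{\circ}$, and composition with the moduli map gives $\mathcal T^-\to\overline{\mathcal S}^-_g$. The step that requires care is to exhibit a \emph{rational} irreducible component $T\subset\mathcal T^-$ over a rational base: one degenerates $C$ to a nodal section inside a Lefschetz pencil $P\subset|C|$ and follows the Picard--Lefschetz monodromy, whose Dehn twists act on the set of odd spin structures by the vanishing cycle, thereby pinning down which odd theta characteristics extend over $P$. Once such a $T$ is produced, one checks that $T\to\overline{\mathcal S}^-_g$ is non-constant, arguing as in the proof that $m_{11}$ is dominant via the irreducibility of $\mathcal S^-_g$ and a dimension count, so that a general point of $\overline{\mathcal S}^-_g$ lies on a moving rational curve. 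I expect the control of the monodromy action on odd theta characteristics along the pencil to be the main obstacle in this direction.

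\emph{General type for $g\ge 12$.} Here the plan is the canonical-class method, whose point of departure is the formula recorded above,
\[
K_{\overline{\mathcal S}^-_g}\equiv 13\lambda^- - 2\alpha^-_0 - 3\beta^-_0 - 2\sum_{1\le i\le \frac g2}(\alpha^-_i+\beta^-_i) - (\alpha^-_1+\beta^-_1).
\]
First I would invoke the spin Brill--Noether divisors $E^-_{r,d,g}$ of \cite{F5} with $\rho(r,d,g)=0$, so that $(r+1)s=g$ and $d=r(s+1)$ for a suitable pair $(r,s)$, whose effectivity and divisor class are available. The core step is a positivity decomposition: to write
\[
K_{\overline{\mathcal S}^-_g}\equiv c\,[E^-_{r,d,g}] + \sum_{i}\bigl(p_i\,\alpha^-_i + q_i\,\beta^-_i\bigr) + t\,\lambda^-
\]
with $c>0$, all $p_i,q_i\ge 0$, and $t>0$. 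Since $\lambda^-=f^{-*}\lambda$ is nef and big on $\overline{\mathcal S}^-_g$, being the finite pullback of the nef and big class $\lambda$ on $\overline{\mathcal M}_g$, such a decomposition exhibits $K_{\overline{\mathcal S}^-_g}$ as effective plus big, hence big. The comparison of the $\lambda^-$-coefficient against the slope of $[E^-_{r,d,g}]$ is exactly where the threshold enters: the required inequality holds precisely for $g\ge 12$, consistently with the uniruledness obtained below it.

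Finally, bigness of $K_{\overline{\mathcal S}^-_g}$ yields general type only after verifying that pluricanonical forms defined on the smooth locus extend to a resolution. This demands a Reid--Tai analysis of the quotient singularities of $\overline{\mathcal S}^-_g$ together with the behaviour of $f^-$ along the boundary divisors $\alpha^-_0,\beta^-_0,\dots$, showing that these impose no adjoint conditions; the coefficients $2$ and $3$ in the formula for $K_{\overline{\mathcal S}^-_g}$ are precisely what must be reconciled with the discrepancies along the boundary. I expect this singularity verification, jointly with the construction of an effective spin Brill--Noether divisor of small enough slope, to be the principal technical obstacle in establishing the general type statement.
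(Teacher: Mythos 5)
Your uniruledness argument for $g\le 11$ has a genuine gap, and it sits exactly at the step you flag as delicate. After embedding a general $C$ in a K3 surface $S$, you pass to the finite cover $\mathcal T^-\to |C|^{\circ}$ of odd spin structures and propose to extract a \emph{rational} irreducible component of it (or of its restriction to a Lefschetz pencil) via Picard--Lefschetz monodromy. This cannot work: the monodromy of the family of smooth curves in $|C|$ on a K3 surface is the full group $Sp(2g,\mathbb Z/2)$, hence acts transitively on odd theta characteristics, and the same holds over a general pencil since $\pi_1$ of its smooth locus surjects onto $\pi_1(|C|^{\circ})$. Consequently the restriction of $\mathcal T^-$ to a Lefschetz pencil is a single irreducible curve, of degree $2^{g-1}(2^g-1)$ over $\mathbf P^1$ and ramified over the many nodal members, so by Riemann--Hurwitz it has large genus: there is no rational component and no monodromy-invariant section, and no rational curve in $\overline{\mathcal S}^-_g$ is produced. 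The paper's construction (theorem 3.7, via the theta pencils of section 3.3) removes the monodromy problem instead of solving it: because $L$ is \emph{odd} it is effective, $L\cong\mathcal O_C(Z)$ with $\deg Z=g-1$, and one moves $C$ only inside the linear system of curves through the fixed divisor $Z$, i.e. in a pencil $P\subset\vert\mathcal I_Z(C)\vert$, which is nonempty because $h^0(\mathcal I_Z(C))\ge (g+1)-(g-1)=2$. Along $P$ the assignment $D\mapsto\mathcal O_D(Z)$ is a tautological section of the spin cover, odd for every smooth member by Mumford's parity theorem (theorem 3.1), so $m(P)$ is a rational curve through the general point $(C,L)$; effectivity of $Z$ is precisely what makes this possible (and why the even case needs Nikulin surfaces instead). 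Note also that $g=10$ requires separate treatment even in the odd case, since a general genus 10 curve lies on no K3: the paper uses 2-pointed curves on a one-point blow-up of a K3 (proposition 3.8, \cite{FKPS}, \cite{FV2}) and the pencil with base locus $Z-x-y$; your parenthetical ``one-nodal for $g=10$'' does not by itself yield the needed pencil.

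For $g\ge 12$ your outline has the correct overall shape --- write $K_{\overline{\mathcal S}^-_g}$ as a positive multiple of the big and nef class $\lambda^-$ plus an effective combination of geometric and boundary divisors, then control singularities by a Reid--Tai argument --- and this is indeed the shape of the proof in \cite{FV2}, to which the paper defers. But the effective divisor actually used there is not a spin Brill--Noether divisor $E^-_{r,d,g}$: it is the divisor $D_g$ of theorem 3.5, the closure of the locus of odd spin curves whose effective theta divisor is singular, with class
$$
\sigma=(g+8)\lambda-\tfrac{g+2}{4}\alpha_0-2\beta_0-\sum_i 2(g-i)\alpha_i-\sum_i 2i\beta_i,
$$
which exists and is computed for \emph{every} $g$, whereas the $\rho=0$ numerology you impose ($g=(r+1)s$, $d=r(s+1)$) and the known classes from \cite{F5} do not obviously furnish a suitable divisor for all $g\ge12$ in the odd case. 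Moreover, your assertion that ``the required inequality holds precisely for $g\ge 12$'' is where the entire content of the theorem lies --- the binding constraints are the $\alpha_0$ and especially the $\beta_0$ coefficients, where $K_{\overline{\mathcal S}^-_g}$ carries $-3$ --- so as written this part is a restatement of the result rather than a proof of it.
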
 \par 
The picture of the Kodaira dimension of the moduli of spin curves is therefore complete for every genus.   The methods of proof, in the even
and the odd case, are in some sense related. Concerning the odd case, it is worth to mention  that the proof relies on another effective divisor 
of $\overline {\mathcal M}_g$, different from the divisor $\theta_{null}$. This is the Zariski closure
$$
D_g \subset \overline {\mathcal S}^-_g
$$
of the moduli of pairs $(C,L)$ such that $C$ is smooth and $L \cong \mathcal O_C(d)$, where $d = 2x_1 + x_2 + \dots + x_{g-2}$ is a \it singular \rm 
effective divisor of $C$. Computing the class of $D_g$ is an important step, see \cite{FV2} 6.1:
\begin{theorem} Let $\sigma$ be the class of $f^{-*}D_g$ in $\overline {\mathcal S}^-_g$. Then:
$$
\sigma = (g + 8)\lambda - \frac {g + 2}4 \alpha_0  - 2\beta_0 - \sum_{i = 1 \dots \frac g2} 2(g-i)\alpha_i - \sum_{i = 1 \dots \frac g2} 2i\beta_i.
$$
\end{theorem}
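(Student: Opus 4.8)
The plan is to express $\sigma$ as the pushforward of a Chern class from the universal odd spin curve, and then to determine the boundary contributions separately. Let $\pi\colon \mathcal{C}\to \widetilde{\mathcal S}$ be the universal curve over a suitable model of $\overline{\mathcal S}^-_g$, equipped with Cornalba's universal spin sheaf $\mathcal L$ together with the homomorphism $a\colon \mathcal L^{\otimes 2}\to \omega_\pi$, which is an isomorphism over the interior $\mathcal S^-_g$. Over the open locus where $h^0(L)=1$ the direct image $N:=\pi_*\mathcal L$ is a line bundle, and the tautological generator of $\vert L\vert$ on the fibres is a section $s$ of $\mathcal L\otimes \pi^*N^\vee$. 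The divisor whose class is $\sigma$ is by definition the locus where the unique $d\in\vert L\vert$ is non-reduced, i.e. where $s$ vanishes to order at least two at some point of the fibre; equivalently, where the first jet $j^1(s)$ vanishes.

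First I would introduce the bundle of first principal parts $P^1(\mathcal L)$, a rank two bundle fitting in
$$0\to \omega_\pi\otimes \mathcal L\to P^1(\mathcal L)\to \mathcal L\to 0,$$
and note that $j^1(s)$ is a section of $P^1(\mathcal L)\otimes \pi^*N^\vee$ whose zero locus $Z\subset\mathcal C$ has the expected codimension two, since a general odd theta divisor is reduced. Hence $[Z]=c_2\bigl(P^1(\mathcal L)\otimes \pi^*N^\vee\bigr)$, and because the projection $Z\to D_g$ is birational (a general curve of $D_g$ carries exactly one double point) one obtains
$$\sigma=\pi_*\,c_2\bigl(P^1(\mathcal L)\otimes \pi^*N^\vee\bigr).$$

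For the interior coefficients I would expand this $c_2$ using the exact sequence, giving $c_1(P^1(\mathcal L))=c_1(\omega_\pi)+2c_1(\mathcal L)$ and $c_2(P^1(\mathcal L))=(c_1(\omega_\pi)+c_1(\mathcal L))\,c_1(\mathcal L)$, and then substitute the interior relation $2c_1(\mathcal L)=c_1(\omega_\pi)$. The class $c_1(N)$ is supplied by Grothendieck--Riemann--Roch applied to $\pi_!\mathcal L$: Serre duality together with $\omega_\pi\otimes\mathcal L^{-1}\cong\mathcal L$ gives $R^1\pi_*\mathcal L\cong N^\vee$, so $c_1(N)=\tfrac12 c_1(\pi_!\mathcal L)$, and the degree two part of $\mathrm{ch}(\mathcal L)\,\mathrm{Td}(\omega_\pi^{-1})$ is $-\tfrac1{24}c_1(\omega_\pi)^2$ on the interior. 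Collecting the several $\pi_*$ terms and using Mumford's identity $\kappa_1=\pi_*(c_1(\omega_\pi)^2)=12\lambda-\delta$, the coefficient of $\lambda$ comes out to be exactly $g+8$, matching the statement; the naive boundary coefficient $\tfrac{g+8}{12}\delta$ produced at this stage is, however, only a first approximation.

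The main obstacle is the boundary, where $a$ is no longer an isomorphism and the relation $2c_1(\mathcal L)=c_1(\omega_\pi)$ fails --- for instance $\mathcal L$ has degree one on every exceptional $\mathbf P^1$. To obtain the precise coefficients $-\tfrac{g+2}{4}\alpha_0,\ -2\beta_0,\ -2(g-i)\alpha_i,\ -2i\beta_i$ one must analyse, stratum by stratum, how $Z$ meets each boundary divisor: distinguishing irreducible one-nodal curves ($\alpha_0$) from curves carrying an exceptional component ($\beta_0$), and, along $\alpha_i$ and $\beta_i$, tracking whether the spin structure restricts to an even or an odd theta on each of $C_1$ and $C_2$. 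The cleanest way to pin these down is to complement the Chern class computation with well chosen test curves --- Lefschetz pencils of spin curves on surfaces, and one parameter families degenerating to each boundary type --- whose intersection numbers against $\lambda,\alpha_\bullet,\beta_\bullet$ are known, and then to solve the resulting linear system. The asymmetry between the $\alpha$ and $\beta$ coefficients, and in particular the $i$ dependence $2(g-i)$ versus $2i$, reflects the degree one twist of $\mathcal L$ on the exceptional component and the forced vanishing it imposes on the limiting section; correctly computing every one of these local multiplicities is where the real work lies.
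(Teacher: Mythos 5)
A preliminary remark: the paper you were asked to match does not actually contain a proof of this theorem; it states the formula and refers to \cite{FV2}, Theorem 6.1, so the comparison has to be made with the argument given there. Your strategy is in substance the same as that argument: realize $\sigma$ as the pushforward from the universal curve of the vanishing locus of the first jet of the tautological section of the spin bundle, evaluate the interior part by Grothendieck--Riemann--Roch, and correct at the boundary. Your interior computation is in fact correct and complete: with $2c_1(\mathcal L)=c_1(\omega_\pi)$ one gets $\pi_*c_2\bigl(P^1(\mathcal L)\bigr)=\tfrac34\kappa_1$, Serre duality gives $R^1\pi_*\mathcal L\cong N^\vee$ and hence $c_1(N)=\tfrac12 c_1(\pi_!\mathcal L)=-\lambda/4$, and the two contributions combine to $(g+8)\lambda$; your observation that a blind extension across the boundary would produce $-\tfrac{g+8}{12}\delta$ is also right, and usefully explains why the boundary needs separate treatment.

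The genuine gap is that everything beyond the $\lambda$-coefficient --- that is, all four families of boundary coefficients $-\tfrac{g+2}{4}\alpha_0$, $-2\beta_0$, $-2(g-i)\alpha_i$, $-2i\beta_i$, which are the actual content of the theorem and the part used in the paper to prove that $\overline{\mathcal S}^-_g$ is of general type for $g\geq 12$ --- is only announced, not derived. You correctly identify the relevant phenomena (the degree-one twist of $\mathcal L$ on exceptional components, the even/odd splitting types along $\alpha_i$ and $\beta_i$), but this identification is qualitative. To close the gap one must: (i) show that the closure of your codimension-two locus $Z$ acquires no excess components over the boundary, since otherwise $\pi_*c_2$ no longer computes $\sigma$ there; (ii) compute the multiplicity with which the limiting jet section vanishes along exceptional $\mathbf P^1$'s, where $\deg(\mathcal L\vert_E)=1$ forces degeneration; and (iii) actually exhibit the test curves and, crucially, evaluate their intersection with $D_g$ itself --- this last step needs independent enumerative input (counting non-reduced theta divisors in a one-parameter family of spin curves), and your outline gives no indication of how to obtain it. As it stands you have a correct plan with one coefficient verified, not a proof; the remaining coefficients, where the formula is non-uniform in $i$ and disagrees with the naive $\delta$-extension, are precisely where the work of \cite{FV2} lies.
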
 
 \subsection{K3 surfaces and the uniruledness of $\mathcal S^{\pm}_g$ in low genus}
Once more the uniruledness results for $\overline {\mathcal S}_g^{\pm}$, as well as the Kodaira dimension 
zero of $\overline {\mathcal S}^+_8$, appear as completely related to the world of K3 surfaces.  \par
Let $S$ be a smooth surface and $\vert H \vert$ a linear system on $S$ whose general element is a smooth, integral curve. We introduce the following: 
\begin{definition} 
\par 
A theta pencil of $(S, \vert H \vert)$ is a triple $(P, Z, E)$ such that:
  \begin{itemize} \it
\item[$\circ$] $P \subset \vert H \vert$ is a pencil whose general member is smooth, irreducible;
\item[$\circ$] $Z$ is a subscheme of the base locus of $P$ and $E \in  \Pic \ S$;
\item[$\circ$] $E \otimes \mathcal O_C(Z)$ is a theta characteristic for any smooth $C \in P$.
 \end{itemize} \par 
A theta pencil $(P, Z, E)$ is  even (odd) if $E \otimes \mathcal O_C(Z)$ is even (odd).
\end{definition} \par 
Assume that $(C,L)$ is a spin curve of genus $g$, defining a \it general \rm point $x$ in the
moduli space $\overline {\mathcal S}_g$ of even or odd spin curves.  The aim of this section is to put in evidence the following, so to say,  principle:
 \medskip \par 
$\star$ \it $x$ moves in a rational curve of $\overline {\mathcal S}_g$  if and only if $(C,L)$ moves in a theta pencil on a K3 surface $S$.
\rm \medskip \par  
\medskip \par \underline {\it Odd theta pencils on K3 surfaces} \rm \par 
An easy, but useful,  example of theta-pencil is provided by a K3 surface $S$ polarized by $\mathcal O_S(H)$. Assume $C \in \vert H \vert$ is integral, smooth and let 
$Z \subset C$ be an effective divisor defining a theta characteristic $L = \mathcal O_C(Z)$.  Consider the ideal
sheaf $\mathcal I_Z$ of $Z$ in $S$ and the standard exact sequence 
$$
0 \to \mathcal I_Z(C) \to \mathcal O_S(C) \to \mathcal O_Z(C) \to 0.
$$
From $\mbox {deg} \  Z = g-1$ and $h^0(\mathcal O_S(C)) = g+1$, it follows $h^0(\mathcal I_Z(C)) \geq 2$. Let 
$$
P \subset \vert \mathcal I_Z(C) \vert
$$
be any pencil such that $C \in P$. Then $(P, Z, \mathcal O_S)$ is a theta-pencil. More geometrically assume $H$ is very ample and consider the embedding 
$$
S \subset \mathbf P^g,
$$
defined by $\vert H \vert$. Then $C$ is a hyperplane section of $S$ and a canonical curve. In particular $Z$ spans a linear space $\Lambda$ of codimension
$h^1(\mathcal O_C(Z)) + 1 \geq 2$. The pencil $P$ is cut on $S$ by a pencil of hyperplanes through $\Lambda$. \par In the general case we have $h^0(\mathcal O_C(Z)) = 1$, 
so that $\mathcal O_C(Z)$ is an odd theta characteristic.  It remains to produce examples of even theta pencils
$(P, Z, E)$, such that $E(Z) \otimes \mathcal O_C$ is non effective. \medskip \par 
\underline {\it Even theta pencils on Nikulin surfaces} \rm \par 
\begin{definition} A Nikulin surface of genus $g$ is a triple $(S, \mathcal O_S(H), E)$, where $(S, \mathcal O_S(H))$ is a K3 surface  of genus $g$ and $E \in  \Pic \ S$  is  such
that \begin{itemize} \item[$\circ$] $ E^{\otimes 2} \cong \mathcal O_S(E_1 + \dots + E_8)$, \item[$\circ$]  $E_1, \dots, E_8$ are two by two disjoint copies of $\mathbf P^1$, \item[$\circ$]  $HE = 0$.
\end{itemize}
\end{definition} \par 
Nikulin surfaces are well known, see \cite{N} and \cite{vGS} for a recent account. The divisor $E_1 + \dots + E_8$ is the branch locus of the finite double covering 
$$
\hat \pi: \hat S \to S
$$
defined by $E$. As is well known $\pi^{-1}(E_1), \dots, \pi^{-1}(E_8)$ are exceptional lines and their contraction is a minimal K3 surface $\tilde S$ endowed with an involution
$$
\iota: \tilde S \to \tilde S
$$
with 8 fixed points. An involution $i$ on a K3 surface $X$ with exactly 8 fixed points is known as a \it Nikulin involution. \rm   
\begin{lemma} Let $(S,\mathcal O_S(H), E)$ be a Nikulin surface of genus $g \geq 2$. Then $E \otimes \mathcal O_C$ is a non trivial 2-torsion element of $ \Pic \ C$ for any $C \in \vert H \vert$.
\end{lemma}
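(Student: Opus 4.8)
The plan is to split the assertion into its two halves: first that $E \otimes \mathcal O_C$ squares to the trivial bundle, which is a short restriction computation, and then that it is nontrivial, which is the real content. The common preliminary observation is that a smooth $C \in \vert H \vert$ is disjoint from the branch locus $E_1 + \dots + E_8$. Indeed, since $H$ is a (nef) polarization and each $E_i$ is effective, $H \cdot E_i \geq 0$; but $E^{\otimes 2} \cong \mathcal O_S(E_1 + \dots + E_8)$ gives $\sum_i H\cdot E_i = 2\,H\cdot E = 0$, so in fact $H \cdot E_i = 0$ for every $i$. As $C \equiv H$ and $C$ is an integral curve of genus $g \geq 2$ while each $E_i \cong \mathbf P^1$, no $E_i$ is a component of $C$, and $C \cdot E_i = 0$ then forces $C \cap E_i = \emptyset$. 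For the torsion statement I would simply restrict the defining relation: since $C$ meets none of the $E_i$, the bundle $\mathcal O_S(E_1 + \dots + E_8)\otimes\mathcal O_C$ is trivial, so $(E \otimes \mathcal O_C)^{\otimes 2} \cong \mathcal O_C$, and $E\otimes \mathcal O_C$ is $2$-torsion in $\Pic C$.

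The nontriviality is where the geometry of the cover enters. I would use the finite double covering $\hat\pi: \hat S \to S$ determined by $E$, whose branch divisor is $E_1 + \dots + E_8$. Because $C$ avoids the branch locus, $\tilde C := \hat\pi^{-1}(C) \to C$ is an \'etale double cover whose associated $2$-torsion bundle is exactly $E \otimes \mathcal O_C$ (equivalently its inverse, which for a $2$-torsion class is the same bundle). Hence $E \otimes \mathcal O_C$ is nontrivial if and only if $\tilde C$ is connected. To prove connectedness I would work on $\hat S$, which is smooth (the branch divisor is smooth) and connected (its contraction along the $\hat\pi^{-1}(E_i)$ is the K3 surface $\tilde S$), so that $h^0(\mathcal O_{\hat S}) = 1$. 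The curve $\tilde C$ lies in $\vert \hat\pi^* H \vert$, and the crucial point is that $\hat\pi^* H$ is nef and big: nef because it is the pullback of a nef class under a finite morphism, and big because $(\hat\pi^* H)^2 = 2\,H^2 = 2(2g-2) > 0$. Kawamata-Viehweg vanishing then gives $h^1(\mathcal O_{\hat S}(-\hat\pi^* H)) = 0$. Feeding this and $h^0(\mathcal O_{\hat S}) = 1$ into the structure sequence $0 \to \mathcal O_{\hat S}(-\hat\pi^* H) \to \mathcal O_{\hat S} \to \mathcal O_{\tilde C} \to 0$ yields $h^0(\mathcal O_{\tilde C}) = 1$, so $\tilde C$ is connected and $E \otimes \mathcal O_C$ is nontrivial.

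The step I expect to be delicate, and the reason for passing to $\hat S$ rather than arguing on $S$, is the vanishing itself. A restriction sequence on $S$ identifies $h^0(E \otimes \mathcal O_C)$ with $h^1(\mathcal O_S(H+E))$ (equivalently with $h^1(\mathcal O_S(H-E))$), but neither class is nef on $S$: one computes $(H+E)\cdot E_i = -1$, so Kawamata-Viehweg is unavailable downstairs. Pulling back to the double cover cures exactly this defect, since $\hat\pi^* H$ is genuinely nef; indeed pushing $h^1(\mathcal O_{\hat S}(-\hat\pi^* H))=0$ back down, via $\hat\pi_*\mathcal O_{\hat S} = \mathcal O_S\oplus\mathcal O_S(-E)$ and Serre duality on the K3 surface $S$, recovers precisely $h^1(\mathcal O_S(H+E))=0$. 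I would emphasize that the argument is uniform in $g \geq 2$ and, unlike a hands-on estimate of $h^0(\mathcal O_S(H-E))$, requires no analysis of the full Picard lattice of $S$, which is what makes the double-cover formulation the efficient route.
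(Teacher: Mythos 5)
Your proof is correct, and it agrees with the paper's only up to the initial reduction: both arguments rest on the fact that nontriviality of $E \otimes \mathcal O_C$ is equivalent to connectedness of $\hat\pi^{-1}(C)$, where $\hat\pi \colon \hat S \to S$ is the double cover defined by $E$. From there the two proofs genuinely diverge. The paper settles connectedness numerically: if $\hat\pi^{*}C = C_1 + C_2$ were disconnected, then $C_1^2 = C_2^2 = 2g-2 > 0$ while $C_1 \cdot C_2 = 0$, contradicting the Hodge Index Theorem on $\hat S$ --- a two-line lattice argument using no vanishing theorems. You instead argue cohomologically: $\hat\pi^{*}H$ is nef and big, Kawamata--Viehweg gives $h^1(\mathcal O_{\hat S}(-\hat\pi^{*}H)) = 0$, and the structure sequence forces $h^0(\mathcal O_{\tilde C}) = 1$. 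Your route invokes heavier machinery for the same conclusion, but it buys something the Hodge-index argument cannot see: pushing the vanishing down via $\hat\pi_{*}\mathcal O_{\hat S} \cong \mathcal O_S \oplus E^{-1}$ yields $h^1(\mathcal O_S(H+E)) = 0$, an effective cohomological statement about the polarization, and your diagnosis of why one must climb to $\hat S$ (namely $(H+E)\cdot E_i = -1$, so $H+E$ is not nef and vanishing fails to apply downstairs) is a real insight. You also make explicit the $2$-torsion half of the statement --- disjointness of $C$ from the $E_i$ via $H \cdot E_i = 0$ and integrality of $C$ --- which the paper leaves implicit. One shared caveat: both proofs tacitly assume $C$ avoids the branch locus, which holds for smooth (or integral) members of $\vert H \vert$ but not for literally every $C \in \vert H \vert$, since $h^0(\mathcal O_S(H - E_i)) > 0$ shows some members contain an $E_i$; this restriction is harmless, as the lemma is only ever applied to smooth curves.
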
 
\begin{proof} It suffices to show that $\pi^{-1}(C)$ is connected for any $C \in \vert H \vert$. If not we would have $\pi^*C = C_1 + C_2$,  where $C_1$ and $C_2$ are disconnected copies of $C$. 
Since $g \geq 2$, it would follow $C_1^2 = C_2^2 = 2g-2 > 0$ and  $C_1C_2 = 0$. This contradicts Hodge Index Theorem because then $C_1^2C_2^2 - (C_1C_2)^2 > 0$.
 \end{proof} \par 
Let $(S, \mathcal O_S(H),E)$ be a Nikulin surface of genus $g \geq 2$. To construct some examples of theta pencils such that $E \otimes \mathcal O(Z)_C$ is even, fix a smooth $C \in \vert H \vert$ 
and consider $\eta := E \otimes \mathcal O_C$. \par
 Via tensor product $\eta$ defines a permutation $p$ of the set of all thetas  of $C$. Since $\eta$ is not trivial $p$ is not the identity. Then it is well known that there exists effective divisors $Z \subset C$
 such that $\eta(Z)$ is an even theta characteristic. Fixing such a $Z$ we can consider any pencil
$$
P \subset \vert \mathcal I_Z(H) \vert.
$$
Then $(P, Z, E)$ is a theta-pencil such that $E \otimes \mathcal O_C$ is even. Indeed, by Mumford's theorem stated in 3.1, $E \otimes \mathcal O_C(Z)$ is  even for each  $C \in P$. Notice also that
$h^0(E \otimes \mathcal O_C)$ is constant too, since it is equal to $h^0(\mathcal I_Z(C)) - 1$. \par  \medskip \par 
Let $(P, Z, E)$ be a theta pencil on a smooth surface $S$ birational to a K3 surface. Then we have the natural map in the moduli space
$$
m: P \to \overline {\mathcal S}^{\pm}_g.
$$
\begin{definition} A K3 rational curve $R$ of $ \overline {\mathcal S}^{\pm}_g$ is a curve
$$
R = m(P).
$$
\end{definition} \par 
The above examples are all what we essentially need to prove that:
\begin{theorem} \  \begin{enumerate}
\item $\mathcal S^-_g$ is covered by the family of K3 rational curves for $g \leq 11$,
\item $\mathcal S^+_g$ is covered by the family of K3 rational curves for $g \leq 7$.
\end{enumerate}
In particular:  \ \begin{enumerate} \item $\mathcal S^-_g$ is uniruled for $g \leq 11$, 
\item ${\mathcal S}^+_g$ is  uniruled for $g \leq 7$.
\end{enumerate}
\end{theorem}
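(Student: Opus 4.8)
The plan is to reduce the theorem to a single statement: through a general point of $\mathcal{S}^{\pm}_g$ there passes a K3 rational curve $R = m(P)$. Granting this, $\mathcal{S}^{\pm}_g$ is covered by such curves and hence uniruled, since $P \cong \mathbf{P}^1$ and these rational curves move together with the surface defining them. Everything rests on the two constructions of the previous subsection, the odd theta pencils on polarized K3 surfaces and the even theta pencils on Nikulin surfaces, together with two inputs: the dominance onto $\mathcal{M}_g$ of the relevant families of pairs $(S,C)$, and the non-constancy of the moduli map $m$. The Nikulin lemma and Mumford's theorem (3.2) guarantee that $m(P)$ lies entirely in the prescribed component $\mathcal{S}^+_g$ or $\mathcal{S}^-_g$.

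For the odd case I would argue as follows. A general curve $C$ of genus $g \le 11$ is a hyperplane section of a polarized K3 surface $(S,\mathcal{O}_S(H))$ --- smooth for $g \ne 10$ and the normalization of a $1$-nodal section for $g = 10$. Take a general odd theta characteristic $L$ on $C$; being odd it is effective, so $L = \mathcal{O}_C(Z)$ with $Z$ effective of degree $g-1$. The exact sequence $0 \to \mathcal{I}_Z(C) \to \mathcal{O}_S(C) \to \mathcal{O}_Z(C) \to 0$ together with $h^0(\mathcal{O}_S(C)) = g+1$ forces $h^0(\mathcal{I}_Z(C)) \ge 2$, so a pencil $P \subset |\mathcal{I}_Z(C)|$ through $C$ exists and $(P,Z,\mathcal{O}_S)$ is an odd theta pencil with $(C,L) \in m(P)$. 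The dimension heuristic $\dim \mathcal{F}_g + \dim |H| = 19 + g \ge 3g-3$ is an equality exactly at $g = 11$, matching the range, and the dominance of $\{(S,C)\}$ onto $\mathcal{M}_g$ (known for $g \le 11$) upgrades this to the covering statement. The genus $10$ case must be run through the nodal version of the construction on the blow-up of $S$.

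For the even case I would use Nikulin surfaces, whose moduli has dimension $11$, so that $\dim\{(S,C)\} = 11 + g \ge 3g-3$ is an equality at $g = 7$. For a general genus $g \le 7$ curve $C$ (with $g \ne 6$) realized as a smooth hyperplane section of a Nikulin surface $(S,\mathcal{O}_S(H),E)$, the lemma gives that $\eta := E \otimes \mathcal{O}_C$ is a non-trivial $2$-torsion class; hence one may choose an effective $Z$ with $E \otimes \mathcal{O}_C(Z)$ an even theta, and any pencil $P \subset |\mathcal{I}_Z(H)|$ is an even theta pencil. Mumford's theorem keeps the parity constant along $P$, so $m \colon P \to \mathcal{S}^+_g$ is well defined and $(C, E\otimes\mathcal{O}_C(Z)) \in m(P)$. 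The exceptional genus $6$, where a general curve is not a smooth Nikulin section, has to be treated separately.

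The main obstacle is the covering (dominance) itself, not the construction of a single rational curve. Two things have to be controlled. First, one must know that the family of pairs $(S,C)$ genuinely dominates $\mathcal{M}_g$ in the stated ranges; this is the quoted input and is the most delicate point, the Nikulin case being sharp precisely at $g = 7$. Second, one must show the moduli map $m$ is non-constant, so that $m(P)$ is a rational curve rather than a point: if $m$ were constant, all smooth members of $P$ would be isomorphic spin curves, producing a dominant family of mutually isomorphic curves sweeping out $S$ and forcing $S$ to be ruled, contradicting $\kappa(S) = 0$. The remaining care is bookkeeping --- realizing a general even theta through the $2$-torsion twist $\eta$, and handling the degenerate genus $10$ (nodal) and the exceptional genus $6$ cases by their own ad hoc constructions.
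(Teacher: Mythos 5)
Your proposal follows the paper's own proof essentially step for step: odd theta pencils $P \subset \vert \mathcal I_Z(C) \vert$ on Mukai's K3 sections for $g \leq 11$, $g \neq 10$; even theta pencils $(P,Z,E)$ on Nikulin surfaces for $g \leq 7$, $g \neq 6$, using the twist $\eta = E \otimes \mathcal O_C$ and Mumford's parity theorem; and the blow-up (2-pointed) constructions for the exceptional genera $10$ and $6$. The inputs you isolate --- dominance of the K3 family (Mori--Mukai) and of the Nikulin family (sharp at $g=7$), plus non-constancy of the moduli map $m$ (which the paper leaves implicit here but argues exactly as you do, via non-ruledness of the surface, in its genus-10 example) --- are precisely the ones the paper invokes.
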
  
\begin{proof} To give the proof of the theorem we distinguish the two cases:  \medskip  \par 
(1) \underline { \it Odd spin curves} \rm \par 
Let $(C,L)$ be a smooth, general  odd spin curve of genus $g $. Then $L$ is isomorphic to $\mathcal O_C(Z)$,  where $Z$ is  effective, supported on $g - 1$ distinct points and isolated. Assume $g \leq 11, 
g \neq 10$. From Mukai's description of canonical curves of low genus, we know that there exists an embedding of $C$ in a K3 surface $S$ of genus $g$ so that $\mathcal O_S(C)$ is very ample, see \cite{Mu1}. 
Let $P = \vert \mathcal I_Z(C) \vert$ be as above. Then $(P, Z, \mathcal O_S)$ is a theta pencil and $m(P)$ is a K3 rational curve through the moduli point of $(C,L)$. \par 
Since a general smooth curve $C$ of genus 10 does not embed in a K3 surface, the latter argument does not work for $g = 10$. However, let $(C, x, y)$ be a general 2-pointed curve of genus 10. It follows
from \cite{FKPS} theorem 5.1 and remark 5.2  that:
\begin{proposition} \ \begin{enumerate} \item $C$ embeds in a surface $S$ which is a K3  blown up in one point, \item $x+y = C \cdot E$, where $E$ is the exceptional line in $S$, \item the image of $C$ in the minimal model
of $S$ is very ample.
\end{enumerate} 
\end{proposition} \par 
Now assume that $x, y \in Z$, where $L := \mathcal O_C(Z)$ is an odd theta. From $\mathcal O_C(C+E) \cong \omega_C$ and $\mathcal O_C(E) \cong \mathcal O_C(x+y)$, we have  the standard exact sequence
$$
0 \to \mathcal O_S \to \mathcal O_S(C) \to \omega_C(-x-y) \to 0.
$$ 
Since $S$ is regular, it follows that there exists a pencil
$
P \subset \vert C \vert
$
with base locus the effective divisor $Z - x - y$.    In particular $(P, Z-x-y, \mathcal O_S(E))$ is a theta-pencil. So it defines a K3 rational curve $m(P)$ passing through the moduli point of $(C,L)$.  
Assume that $(C,L)$ is a general, odd spin curve of genus 10 and that $x, y \in Z$, where $Z$ is an effective divisor and $L \cong \mathcal O_C(Z)$.  It follows from \cite{FV2}, theorem 3.10 and its proof,  that then $C$ is embedded, exactly as in the previous proposition, 
in a K3 surface $S$ blown in one point.  This extends statement (1) to genus 10 and completes the proof for the moduli of odd spin curves.
\medskip \par 
(2) \underline{\it Even spin curves} \rm \par 
In this case we take profit of  Nikulin surfaces and their theta pencils. As is well known,
the moduli space of Nikulin surfaces of genus $g$ has  dimension 11 while $ \ \dim \ \mathcal F_g = 19$. Hence, counting dimensions, we have
\begin{lemma} Let $C$ be a general curve of genus $g$. Then:
\begin{enumerate}
\item $C$ does not embed in a K3 surface of genus $g$ if $g \geq 12$,
\item $C$ does not embed in a Nikulin surface of genus $g$ if $g \geq 8$.
\end{enumerate}
\end{lemma} 
To complete the proof of theorem 3.7, we previously answer the following question.
 Let $C$ be a smooth, general curve of genus $g$ 
\medskip \par 
\begin{itemize} \it
\item[$\circ$]  When there is a Nikulin surface $(S, \mathcal O_S(H), E)$ such that $C \in \vert H \vert$?  
\end{itemize} \medskip \par 
This is the analog for Nikulin surfaces of the same question for general K3 surfaces of genus $g$.  As we know the answer to the latter one is $g \leq 11$ with the exception $g = 10$.
Interestingly, an exception appears in the case of Nikulin surfaces too.
\begin{theorem} Let $C$ be a smooth, general curve of genus $g$. Then there exists a Nikulin surface $(S, \vert H \vert, E)$ such that $C \in \vert H \vert$
 if and only if $g \leq 7$ and $g \neq 6$.
\end{theorem} 
\begin{proof} We refer to \cite{FV}  for the complete argument. Because of the nice geometry behind it, we sketch the proof in genus 7. 
 Fix a non trivial line bundle $L$ on $C$ such that
$L^{\otimes 2} \cong \mathcal O_C$. Since $C$ is general $\omega_C \otimes L$ defines an embedding
$
C \subset \mathbf P^5
$
of $C$ as a projectively normal curve. In particular one has $h^0(\mathcal I_C(2)) = 3$. One can show that
$$
C \subset S,
$$
where $S$ is a smooth complete intersection of 3 quadrics. Actually, by \cite{FV}  2.3,  $S$ is a Nikulin surface polarized by $\mathcal O_S(C)$. To
see this consider $E := C - D$, where $D$ is a hyperplane section of $S$. Then observe that $2E$ is an effective class. This follows considering the standard exact sequence
$$
0 \to \mathcal O_S(C-2D) \to \mathcal O_S(2C - 2D) \to L^{\otimes 2} \to 0.
$$
Since $C$ is quadratically normal, it follows that $h^1(\mathcal O_S(2D - C)) = 0$. Then, by Serre duality, we have $h^1(\mathcal O_S(C - 2D)) = 0$. Passing to the associated long exact sequence,
it follows that $h^0(\mathcal O_S(2E)) = h^0(L^{\otimes 2}) = 1$. Let $F$ be the unique element of $\vert 2E \vert$. Note that $F^2 = -16$ and that $DF = 8$. A more careful analysis
shows that $F = F_1 + \dots + F_8$, the summands being two by two disjoint lines. Hence $(S, \mathcal O_S(C), E)$ is a Nikulin surface of genus 7. \end{proof} \par 
 \begin{remark} \rm  Consider a general Nikulin surface $(S, \mathcal O_S(C), E)$ of genus 6. It is possible to show that  $\vert C - E \vert$ defines an embedding $S \subset \mathbf P^4$ so that $S$
  is a complete  intersection of type (2,3). Then it follows $\mathcal O_C(1) \cong \omega_C(E)$. Moreover $L := \mathcal O_C(E)$ is a non trivial element of $ \Pic^0_2(C)$.
 In other words $C$ is embedded in $\mathbf P^4$ as a Prym canonical curve, (see the next section 4). \par It is well known that a general Prym canonical curve $C$ of genus 6 is quadratically normal. This follows,
 for instance, from the description of the Prym map in genus 6 and of its ramification, see \cite{DS} section 4.  In particular this implies that no quadric contains 
 $C$ and contradicts the existence of $S$. This also explains why the theorem fails in genus 6.
 \end{remark} \medskip \par We can now complete the proof of theorem 3.7 for even spin curves: \par
 Let $(C,L)$ be a general even spin curve of genus $g \leq 7$, $g \neq 6$. Then there exists a Nikulin surface $(S, \vert H \vert, E)$ so that $\eta \cong E \otimes \mathcal O_C$.  Fix $\eta := L(-Z)$, where $Z \subset C$ is an odd theta characteristic. Let $P = \vert \mathcal I_Z(C) \vert$,  then $(P, Z, E)$ is a theta pencil on $S$, as in the example, and $m(P)$ passes through the moduli point of $(C,L)$. 
It remains to show  that $\mathcal S^+_g$ is covered by K3 rational curves when $g = 6$. Replacing the word 'K3 surface' by 'Nikulin surface', the proof is completely analogous to the one considered when $g = 10$ in the proof of proposition 3.8. We omit further details about this case. \end{proof}  \medskip \par 
 Finally we remark that the $\star$ principle, mentioned at the beginning of this section, holds true: $\mathcal S^{\pm}_g$ is uniruled  if and only if it is covered by K3 rational curves. This is a difference with respect to $\mathcal M_g$ as soon as $g \geq12$.
\subsection{Geometry of the moduli of spin curves in genus 8}
We have already seen that the geometry of curves of genus 8 is in many ways related to the study of other moduli spaces of curves of low genus. Such an experimental fact is confirmed for the moduli of spin curves. So there are good reasons to concentrate on the genus 8 geometry.  \par In this section we prove the unirationality of $\mathcal S^-_8$, which is representative of other unirationality results for $\mathcal S^-_g$. 
Then we will discuss the transition from negative to non negative Kodaira dimension in the even and odd cases. 
  \medskip \par 
\underline{\it Canonical curves of genus 8} \par
A crucial property is the realization of a general canonical curve $C$ of genus 8 as a linear section of the Pl\"ucker embedding 
$
G \subset \mathbf P^{14}
$
of the Grassmannian of lines of $\mathbf P^5$.   We recall something more on this, see \cite{Mu1} and \cite{Mu4}. \medskip \par 
Let $C$ be general of genus 8, so that $W^1_5(C)$ is finite. Then there exists a unique rank 2 vector bundle $E$ on $C$ such that $\det E \cong \omega_C$ and $h^0(E) = 6$.
Such an $E$ fits in an exact sequence
$$
0 \to A \to E \to  \omega_C(-A) \to 0,
$$
where $A \in W^1_5(C)$. This is uniquely defined by some $e \in \mathbf PExt^1(\omega_C(-A),A)$. As we will see, it turns out
that $E$ does not depend on the choice of $A$ in $W^1_5(C)$.  Let $V = H^0(E)^*$. Then $E$ defines a map
$$
f_E: C \to G \subset \mathbf P ( \wedge^2 V),
$$
where $G$ is the Pl\"ucker embedding of the Grassmannian $G(2,V)$. $f_E(x)$ is the point represented by the vector $\wedge^2 E^*_x$.  One can show the surjectivity of the natural  determinant map
$$
d: \wedge^2 H^0(E) \to H^0(\omega_C).
$$
Let $K^{\perp} \subset \wedge^2 V$ be the space orthogonal to $K := \mbox{Ker} \  d$, then $\mathbf PK^{\perp}$ is the linear span of $f_E(C)$.
Since $d$ is surjective, $f_E: C \to \mathbf PK^{\perp}$ is the canonical embedding.  Let us identify $C$ to $f_E(C)$. Then we conclude that
$$
C \subseteq \mathbf PK^{\perp} \cap G.
$$
\begin{theorem}[Mukai] Let $C$ be a smooth curve of genus $g$. The following condition are equivalent
\begin{enumerate} 
\item $W^2_7(C)$ is empty,
\item $C = \mathbf PK^{\perp} \cdot  G$.
\end{enumerate}
\end{theorem}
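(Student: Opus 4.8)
The plan is to prove both implications by tying the geometry of the linear section to the stability of the Mukai bundle $E$ (here of course $g=8$, as the ambient $G=G(2,6)$ requires). First I record the numerology that drives everything. With $h^0(E)=6$ we have $\dim\wedge^2H^0(E)=15$; since the determinant map $d$ is onto the $8$-dimensional space $H^0(\omega_C)$, we get $\dim K=7$ and $\dim K^\perp=8$, so $\mathbf P K^\perp=\mathbf P^7$ and $f_E$ realizes $C$ as a canonical curve spanning it, whence the inclusion $C\subseteq\mathbf P K^\perp\cap G$ already noted. On the ambient side, $G(2,6)\subset\mathbf P^{14}$ is projectively normal and cut out scheme-theoretically by exactly its $15$ Pl\"ucker quadrics, while for a genus $8$ canonical curve $h^0(\mathcal I_{C/\mathbf P^7}(2))=\binom 9 2-(3\cdot 8-3)=15$. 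The coincidence of these two $15$'s is what makes the statement plausible and organizes the proof.

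Restriction of quadrics gives a linear map $\rho\colon H^0(\mathcal I_G(2))\to H^0(\mathcal I_{C/\mathbf P K^\perp}(2))$ between two $15$-dimensional spaces, with image inside the quadrics through $C$. Under the identification $H^0(\mathcal I_G(2))\cong\wedge^4V^*$, a class lies in $\ker\rho$ precisely when the corresponding Pl\"ucker quadric contains the span $\mathbf P K^\perp$, i.e. $\langle\xi,\omega\wedge\omega\rangle=0$ for all $\omega\in K^\perp$; polarizing, $\ker\rho=0$ iff $K^\perp\wedge K^\perp=\wedge^4V$. For the implication $W^2_7(C)=\emptyset\Rightarrow C=\mathbf P K^\perp\cdot G$ I would argue as follows: $W^2_7=\emptyset$ forces $C$ to be neither hyperelliptic nor trigonal (each would produce a $g^2_7$, e.g. $2A+p$ for $A$ a $g^1_3$), so by Petri's theorem its homogeneous ideal is generated by the $15$ quadrics through it; granting $\ker\rho=0$, the map $\rho$ is an isomorphism, the restricted Pl\"ucker quadrics span all quadrics through $C$, and their common zero scheme is therefore $C$, that is $\mathbf P K^\perp\cap G=C$.

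The crux is thus the equivalence $\ker\rho=0\iff W^2_7(C)=\emptyset$, which I would route through the stability of $E$. Under the isomorphism $\wedge^4V^*\cong\wedge^2V$ (contraction with a volume form), a nonzero $\xi\in\ker\rho$ becomes a $2$-vector $\eta\in\wedge^2V$ with $\eta\wedge\omega\wedge\omega=0$ for all $\omega\in K^\perp$, and I expect its rank-degeneracy to yield a sub-line-bundle $M\hookrightarrow E$ of degree $\ge\mu(E)=7$; since $\deg M=7$ together with $h^0(E)=6$ forces $h^0(M)\ge 3$ by Riemann-Roch, one obtains $M\in W^2_7(C)$. Conversely, starting from $M\in W^2_7(C)$ — with $N:=\omega_C\otimes M^{-1}$ also in $W^2_7$ by the self-duality $D\mapsto\omega_C-D$ — a suitable extension $0\to M\to E'\to N\to 0$ has $\det E'\cong\omega_C$ and $h^0(E')=6$, hence $E'\cong E$ by the uniqueness recalled in the setup; then $E$ is strictly semistable, $M$ is a destabilizing sub-bundle, and the associated Pl\"ucker form lies in $\ker\rho$.

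For the remaining implication $C=\mathbf P K^\perp\cdot G\Rightarrow W^2_7(C)=\emptyset$ I would take the contrapositive: if $E$ is strictly semistable, the destabilizing sub-bundle $M$ (together with its companion $N$) forces $f_E(C)$ to lie on a two-dimensional Schubert-type scroll inside $\mathbf P K^\perp\cap G$, so the section strictly contains $C$ and is not clean. The principal obstacle, I expect, is exactly this excess analysis together with the matching in the previous paragraph: pinning down which rank of $\eta$, equivalently which type of destabilizing sub-bundle, actually occurs, and proving that in the semistable case $\mathbf P K^\perp\cap G$ genuinely acquires a positive-dimensional component rather than merely a non-reduced structure along $C$. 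The Brill-Noether count $\rho(8,2,7)=8-3\cdot 3=-1$, which yields $W^2_7(C)=\emptyset$ for general $C$, reassures us that the clean case is the generic one and isolates the plane-septic locus as the true exception.
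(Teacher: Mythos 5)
The first thing to say is that the paper itself contains no proof of this statement: it is quoted as Mukai's theorem with a pointer to [Mu1], [Mu4], so your attempt can only be measured against Mukai's own argument --- and the entire mathematical content of that argument is exactly the step you label ``the crux''. Your frame is the correct one and does match Mukai's: the two $15$-dimensional spaces of quadrics, the identification $\ker\rho=0\iff K^\perp\wedge K^\perp=\wedge^4V$, the observation that $W^2_7(C)=\emptyset$ excludes hyperelliptic and trigonal curves so that Petri's theorem applies, and the passage from ``the restricted Pl\"ucker quadrics span all quadrics through $C$'' to the scheme-theoretic equality $C=\mathbf P K^\perp\cdot G$. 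But the equivalence $\ker\rho=0\iff W^2_7(C)=\emptyset$ is never proved: ``I expect its rank-degeneracy to yield a sub-line-bundle $M\hookrightarrow E$ of degree $\ge\mu(E)=7$'' is not a step, it is the theorem. You give no mechanism that extracts a subsheaf of $E$ from a $2$-vector $\eta$ satisfying $\eta\wedge\omega\wedge\omega'=0$ on $K^\perp$, and no reason why its degree should be at least $7$.

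Moreover the surrounding bookkeeping has genuine errors. Riemann--Roch does not give ``$\deg M=7$ and $h^0(E)=6$ force $h^0(M)\ge3$'': from $0\to M\to E\to E/M\to 0$ one gets $h^0(M)+h^0(E/M)\ge 6$ with both line bundles of degree $7$, so one of $M$, $E/M$ lies in $W^2_7(C)$ --- this repairs the case $\deg M=7$, but if the destabilizing subbundle has degree $\ge 8$ your argument only produces a $g^2_{\ge 8}$, and a base-point-free $g^2_8$ does not contain a $g^2_7$ (subtracting a point drops $h^0$), a case you never exclude. More seriously, the converse direction is circular: the uniqueness of $E$ you invoke to get $E'\cong E$ is stated in the paper only for curves with $W^1_5(C)$ finite, i.e.\ as part of Mukai's theory for curves \emph{without} a $g^2_7$; whether a bundle with $\det E'\cong\omega_C$ and $h^0(E')=6$ is unique when $W^2_7(C)\neq\emptyset$ is part of what must be proved, and stability and uniqueness of $E$ are precisely what a $g^2_7$ threatens to destroy. (Also, the existence of an extension with $h^0(E')=6$ needs an argument, though it is true: the obstruction map $\mathrm{Ext}^1(N,M)\to\mathrm{Hom}(H^0(N),H^1(M))$ is dual to the multiplication $H^0(N)\otimes H^0(N)\to H^0(N^{\otimes 2})$, which factors through the $6$-dimensional $\mathrm{Sym}^2H^0(N)$ while $h^0(N^{\otimes 2})\ge 7$, so a class killing all sections of $N$ exists.) Finally, the ``Schubert-type scroll'' that should yield $(2)\Rightarrow(1)$ is asserted, not constructed, as you concede. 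In sum: the skeleton is right and agrees with Mukai's, but every load-bearing step is either missing, flawed, or assumed.
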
  
\begin{remark} \ \rm Let $G^* \subset \mathbf P ( \wedge^2 V)^*$ be the dual Grassmannian. One expects that $\mathbf PK \cdot G^*$ is 0-dimensional of length 14. This is indeed the  degree of $G^*$ and it is also the length of $W^1_5(C)$. Actually there is a biregular map
 $$
w:  \mathbf PK \cdot G^* \to W^1_5(C).
$$
defined as follows: let $b_1 \wedge b_2$ be a decomposable, non zero vector defining a point $b \in \mathbf PK$. Then $b_1, b_2$ generate a line budle $B \subset E$. One can
compute that $B \in W^1_5(C)$. Then, by definition, $w(b) = B$. The inclusion $B \subset E$ induces an exact sequence
$$
0 \to B \to E \to \omega_C(-B) \to 0.
$$
In particular $E$ is independent on the choice of $B$ in $W^1_5(C)$, cfr. \cite{Mu4}.  \end{remark}  \par 
Let us fix from now on the following notations:
\begin{definition} \    \it
$\overline {\mathcal C}$ is the family of linearly normal, stable canonical curves of genus 8  in $G$. 
$\mathcal C$ denotes the open
subset parametrizing smooth curves. 
\end{definition} \par 
 We want to show that
\begin{theorem} $\mathcal S^-_8$ is unirational. \end{theorem}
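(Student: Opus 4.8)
The plan is to combine the Mukai model $C=\mathbf P K^{\perp}\cap G$ of a general canonical curve of genus $8$ with the classical description of an odd theta characteristic as a \emph{contact divisor}, and to parametrize the pair $(C,\theta)$ by its contact data on $G=G(2,6)$. First I would fix the base. Let $\mathcal C$ be the family introduced above, an open subset of the Grassmannian $\mathbb G$ of linear spaces $\Pi\cong\mathbf P^6$ (spanning $\mathbf P^7$'s) in $\mathbf P^{14}=\mathbf P(\wedge^2\mathbf C^6)$ for which $C_\Pi:=\mathbf P K^{\perp}\cap G$ is a smooth canonical curve. Since $\mathbb G$ is rational and, by the theorem of Mukai recalled above, a general genus $8$ curve (having $W^2_7(C)=\emptyset$) is such a section, the map $\mathcal C/\PGL_6\to\mathcal M_8$ is dominant; the count $\dim\mathbb G-\dim\PGL_6=56-35=21=\dim\mathcal M_8$ confirms this. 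Thus $\mathcal C$ is a rational variety dominating $\mathcal M_8$, and the task is to upgrade it by a rationally varying odd theta characteristic.

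Next I would translate the spin datum geometrically. For a general odd spin curve one has $h^0(\theta)=1$, so $\theta=\mathcal O_C(Z)$ with $Z$ a unique effective divisor of degree $7$; as $2Z\in|\omega_C|$, the divisor $2Z$ is cut on the canonical curve $C_\Pi$ by a hyperplane $H\subset\Pi$, and geometric Riemann--Roch gives $\langle Z\rangle=\langle 2Z\rangle=H\cong\mathbf P^6$, with $H$ tangent to $C_\Pi$ along $Z$. Hence, up to $\PGL_6$, a general point of $\mathcal S^-_8$ is encoded by a configuration of $7$ points $p_1,\dots,p_7\in G$ (the support of $Z$) spanning a $\mathbf P^6=H$, together with an extension $\Pi=\mathbf P^7\supset H$, subject to the \emph{contact conditions} that $H$ be tangent to $\Pi\cap G$ at each $p_i$. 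Writing $U_i\subset\mathbf C^6$ for the $2$-plane with $p_i=[\wedge^2U_i]$ and using $\widehat{T_{p_i}G}=U_i\wedge\mathbf C^6$, one checks that the contact condition at $p_i$ amounts exactly to the linear dependence of the six projected Pl\"ucker points $\overline U_j\in G(2,4)\subset\mathbf P(\wedge^2(\mathbf C^6/U_i))\cong\mathbf P^5$, $j\neq i$: a single determinantal condition per index.

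I would then assemble the parameter space $\Sigma\subset G^7$ of admissible configurations and observe that the remaining freedom is the $\mathbf P^7$ of extensions $\Pi\supset H$. The dimension bookkeeping $\dim\Sigma+7-\dim\PGL_6=49+7-35=21$, together with the modularity of the construction, shows that the natural rational map $\Sigma\times\mathbf P^7\dashrightarrow\mathcal S^-_8$ is dominant, so the whole problem is reduced to the unirationality of $\Sigma$: once a rational (or merely unirational) variety dominates $\mathcal S^-_8$, unirationality follows, the $\mathbf P^7$ of extensions being a genuine projective bundle that transfers the property to the total space.

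The hard part is precisely to prove that $\Sigma$ is unirational, i.e.\ to solve the seven determinantal contact conditions so as to exhibit free parameters rather than a bare existence statement; a naive intersection of the seven hypersurfaces in $G^7$ need not be visibly rational. Here I would exploit the self-associated, Gale-dual character of these dependency conditions on point sets of the quadric fourfold $G(2,4)\subset\mathbf P^5$ to construct admissible configurations explicitly: one aims to choose a subset of the $U_i$ freely and then to realize the dependency constraints as linear (hence rationally solvable) conditions on the remaining data, producing a dominant rational map from a rational variety onto $\Sigma$. Granting this — which is the substantive content carried out in the Farkas--Verra construction for genus $8$ — the rationality of $\Sigma\times\mathbf P^7$ and its dominance over $\mathcal S^-_8$ yield the unirationality of $\mathcal S^-_8$.
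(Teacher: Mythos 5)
Your reduction is set up correctly and even sharpens a point the paper leaves implicit: the identification of an odd theta characteristic on a Mukai-general curve with a $7$-point contact configuration $Z\subset G$ spanning a $\mathbf P^6$, the computation that contact at $p_i$ is the single determinantal condition that the six projected Pl\"ucker points $\overline U_j$, $j\neq i$, be dependent in $\wedge^2(\mathbf C^6/U_i)$, and the $\mathbf P^7$-bundle of extensions $\Pi\supset\langle Z\rangle$ are all sound, and every general odd spin curve does arise this way, so $\Sigma\times\mathbf P^7\dashrightarrow\mathcal S^-_8$ is dominant. But the proof then stops exactly where the theorem begins: the unirationality of $\Sigma$ is the entire content, and you "grant" it. The mechanism you sketch (choose a subset of the $U_i$ freely, solve the remaining constraints linearly, invoking Gale duality) fails as stated, because the seven determinantal conditions are fully coupled: condition $i$ involves all seven planes, so, for instance, fixing $U_1,\dots,U_6$ generally leaves seven conditions on the single remaining point $U_7$ of the $8$-dimensional Grassmannian $G$, an overdetermined and generically empty system. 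There is also no argument that $\Sigma$ (an intersection of seven hypersurfaces in $G^7$) is irreducible, or that the component you can reach is the one dominating $\mathcal S^-_8$. Finally, the attribution is inaccurate: the Farkas--Verra construction does not solve these determinantal equations on $G^7$ at all, so you cannot discharge the gap by citing it for "this" step.

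What the paper (following Farkas--Verra) actually does is replace $\Sigma$ by a base that can be parametrized by hand: integral $7$-nodal \emph{elliptic} linear sections $N=\Pi_0\cap G$. If $p_i\in\Sing N$, then $T_{p_i}G\cap\Pi_0$ is a plane, so it meets the hyperplane $\langle\Sing N\rangle\subset\Pi_0$ in at least a line; hence node sets of such sections automatically satisfy your contact conditions, and the nontrivial theorem of \cite{FV2} is the converse statement that a general odd spin curve arises from such an $N$, giving $\mathcal S^-_8\cong\mathbb B\times\mathbf P^7$ with $\mathbb B$ the $14$-dimensional moduli space of $7$-nodal elliptic curves. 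Unirationality then comes from a completely explicit parametrization of $\mathbb B$: a $7$-nodal elliptic curve is obtained from a smooth plane cubic $E$ by gluing, for each of seven flags $o_i\in l_i$ in $\mathbf P^2$, the two residual points of $l_i\cap E$; the space of triples $(\underline o,\underline l,E)$ is a $\mathbf P^2$-bundle over a product of seven flag varieties, hence rational, and it dominates $\mathbb B$. This gluing construction (or some substitute for it) is the missing idea; without it, or a genuine proof that $\Sigma$ is unirational, your argument does not close.
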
 \par 
We outline the proof given in \cite{FV2}. To this purpose we consider the moduli map $m: \overline {\mathcal C} \to \overline {\mathcal M}_8$. Due to the mentioned results of Mukai, it is known that $m$
factors through the quotient map $\overline {\mathcal C} \to \overline {\mathcal C}/Aut \ G$ and a dominant  birational morphism $\overline m: \overline {\mathcal C}/Aut \ G \to \overline {\mathcal M}_8$.
Let us introduce some preliminary constructions. \medskip \par 
\underline {\it Moduli of 7-nodal elliptic curves} \par  
At first we consider the Zariski closure
$$
\mathbb B \subset \overline {\mathcal M}_8
$$
of the moduli of 7-nodal integral elliptic curves $N$. A general $N$ admits an embedding $N \subset G$ as a linear section of $G$ and hence as an element of $\overline {\mathcal C}$. This
just follows because a general K3 surface $S$ of genus 8 is a linear section of $G$. Hence $\vert \mathcal O_S(1) \vert$ contains a 1-dimensional family of integral, stable curves with moduli in $\mathbb B$. Notice also
that, for a general element $N$ in this family, the seven points of $ \Sing \ N$ are linearly independent, cfr. \cite{Ch}.
$\mathbb B$ is an integral projective variety of dimension 14.
 \begin{definition} $\mathcal N = m^{-1}(U)$, where $U$ is the open set of $\mathbb B$ parametrizing curves $N$ such that:
\begin{itemize} \it
\item[$\circ$] $N$ is an integral linear section of $G$,
\item[$\circ$]~the 7 points of $ \Sing \ N$ are linearly independent.
\end{itemize}
\end{definition} \par 
 Now we relate the odd spin moduli space $\overline {\mathcal S}^-_8$ to a $\mathbf P^7$-bundle over $U$. 
 Let $(C,L)$ be a general odd spin curve of genus 8. We can assume $h^0(L) = 1$ and $L = \mathcal O_C(Z)$, where $Z$ is a smooth effective divisor of degree $g-1$.
We can also assume that $C$ is canonically embedded as a linear section of $G$. Consider the universal singular locus
$$
\mathcal S = \lbrace (N, o) \in  \mathcal N \times G \ / \ o \in  \Sing \ N \rbrace.
$$
and its ideal sheaf  $\mathcal I$ in $\mathcal N \times G$.  On $\mathcal N$ we have the rank 8 vector bundle
$$
\mathcal E := p_{1*} \mathcal I \otimes p_2^* \mathcal O_G(1)
$$
where  $p_1, p_2$ are the projection maps of $\mathcal N \times G$.  The fibre of $\mathcal E^*$ at $N$ is $H^0(\mathcal I_Z(1))^*$. The projectivization of this vector space is naturally isomorphic to the following  family  of odd spin curves:  
$$
\mathbf PH^0(\mathcal I_Z(1))^* = \lbrace (C, \mathcal O_C(Z) \ / \ C \in \vert \mathcal I_Z(1) \vert^* \rbrace.
$$
The next properties are proved in \cite{FV2}: \medskip  \par 
\begin{enumerate} \it
\item $\mathcal E$ descends to a vector bundle on $U$,
\item $\mathcal N$ is smooth and irreducible,
\item  the natural morphism $\overline m: \mathcal N / Aut \ G \to \mathbb B$ is birational.
\end{enumerate} \medskip \par 
Therefore we conclude that
$$
\mathcal S^-_8 \cong \mathbb B \times \mathbf P^7.
$$
Furthermore we can show the following
\begin{theorem} $\mathbb B$ is unirational.
\end{theorem}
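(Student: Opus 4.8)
The plan is to exhibit a rational variety together with a dominant rational map onto $\mathbb{B}$, which is exactly what unirationality requires. First I would describe the general point of $\mathbb{B}$ intrinsically. Since a general $N$ is integral, $7$-nodal of arithmetic genus $8$, its geometric genus is $8-7=1$, so its normalization $\nu\colon E\to N$ is a smooth elliptic curve and $N$ is obtained from $E$ by gluing seven pairs of distinct points $\{p_i,q_i\}$, $i=1,\dots,7$. Hence $\mathbb{B}$ is birational to the moduli of such data $(E;\{p_i,q_i\}_{i=1}^{7})$, taken up to $\Aut(E)$ and reordering of the seven pairs. Encoding each pair by its class $c_i:=\mathcal O_E(p_i+q_i)\in\Pic^2(E)$ together with the point it determines in the pencil $\vert c_i\vert\cong\mathbf P^1$, one checks $\dim\mathbb{B}=1+7\cdot 2-1=14$, in agreement with the excerpt.

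The heart of the argument is that only seven free moduli on the varying elliptic curve are actually needed, namely the seven classes $c_i$, and that seven points lie in the rationally accessible range for plane cubics. I would set up the incidence variety
$$
\mathcal W=\bigl\{(C;y_1,\dots,y_7;D_1,\dots,D_7)\bigr\},
$$
where $C\subset\mathbf P^2$ is a smooth plane cubic, $y_i\in C$, and $D_i\in\vert\mathcal O_C(2y_i)\vert$. Projecting $\mathcal W$ to $(\mathbf P^2)^7$ by remembering only $(y_1,\dots,y_7)$, the fibre is a tower of projective bundles: the cubics through the seven points form a $\mathbf P^2=\vert\mathcal I_{\{y_i\}}(3)\vert$, and over each such $C$ the seven linear systems $\vert\mathcal O_C(2y_i)\vert$ are seven copies of $\mathbf P^1$. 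Thus $\mathcal W$ is an iterated projective bundle over $(\mathbf P^2)^7$, hence a rational variety.

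Finally I would define the gluing map $g\colon\mathcal W\dashrightarrow\mathbb{B}$ sending $(C;y_i;D_i)$ to the class of the curve obtained from $E=C$ by identifying the two points of each divisor $D_i=\{p_i,q_i\}$; for a general choice this $N$ is an integral $7$-nodal curve of arithmetic genus $8$, so it defines a point of $\mathbb{B}$. The key step is dominance. Given a general $N\in\mathbb{B}$ with normalization $(E;\{p_i,q_i\})$, the classes $c_i=\mathcal O_E(p_i+q_i)$ are seven general points of $\Pic^2(E)$, and since the isogeny $y\mapsto\mathcal O_E(2y)$ is surjective one may choose $y_i\in E$ with $\mathcal O_E(2y_i)\cong c_i$ and $D_i=p_i+q_i\in\vert c_i\vert$; realizing $E$ as a plane cubic then produces a point of $\mathcal W$ lying over $N$. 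Dominance of $g$ follows, and since $\mathcal W$ is rational we conclude that $\mathbb{B}$ is unirational. The one point demanding care — the main obstacle — is the verification that gluing a general $(E;\{p_i,q_i\})$ indeed yields a curve whose moduli lie in $\mathbb{B}$ (equivalently, that such $N$ embeds in $G$ as a linear section with linearly independent nodes); I would settle this by matching the irreducible family of gluings, which has dimension $14=\dim\mathbb{B}$, against the definition of $\mathbb{B}$ as the Zariski closure of precisely these curves.
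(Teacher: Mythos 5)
Your proof is correct and takes essentially the same route as the paper: the paper also dominates $\mathbb B$ by a rational incidence variety of plane cubics carrying seven pieces of gluing data, namely a marked point $o_i$ on the cubic together with a line $l_i$ through it, the glued pair being the residual intersection of $l_i$ with the cubic (a point of the pencil $\vert\mathcal O(1)(-o_i)\vert$, playing exactly the role of your $D_i\in\vert\mathcal O_C(2y_i)\vert$). The only difference is this bookkeeping: the paper's line-based encoding makes dominance immediate (take $l_i$ to be the line spanned by $p_i,q_i$ and $o_i$ the third intersection point), while yours instead invokes the surjectivity of $y\mapsto\mathcal O_E(2y)$, a standard fact, so your argument goes through.
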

\begin{proof} Consider the correspondence
$$ I \subset  (\mathbf P^2)^7 \times (\mathbf P^{2*})^7 \times \vert \mathcal O_{\mathbf P^2}(3) \vert $$ 
parametrizing triples $(\underline o, \underline l, E)$ such that:
\par   
$\circ$ $\underline o = (o_1, \dots, o_7) \in (\mathbf P^2)^7$ and the points $o_1, \dots, o_7$ are in general position. \par
$\circ$ $\underline l = (l_1, \dots, l_7) \in (\mathbf P^2)^7$ and the lines $l_1, \dots, l_7$ are in general position. \par
$\circ$ $o_i$ belongs to the line $l_i$, $i = 1 \dots 7$. \par
$\circ$ $E$ is a smooth cubic passing through $o_i$ and $l_i$ is transversal to $E$. \par  
The correspondence $I$ is rational: see \cite{FV2} theorem 4.16 for the details of the proof. Furthermore $I$ is endowed with the rational map
$
\phi: I \to \mathbb B
$
sending $(\overline o, \overline l, E)$ to the moduli point of the 7-nodal curve $ \overline E$, obtained from $E$ by gluing together the two
points of $L_i \cap E$ different from $o_i$, $i = 1 \dots 7$. It is easy to see that $\phi$ is dominant. Hence $\mathbb B$ is unirational.
\end{proof}
\medskip \par 
The unirationality of $\mathcal S^-_8$ then follows from the theorem, because $\mathcal S^-_8$ is birational to $\mathbb B \times \mathbf P^7$.
\subsection{From uniruled to general type: the transition for $\overline {\mathcal S}^+_g$ and $\overline {\mathcal S}^-_g$ } \ \medskip \par
\underline {\it Moduli of even spin curves} \par
For the moduli of even spin curves the transition from the uniruledness to the general type case is represented by $\overline {\mathcal S}^+_8$,
whose Kodaira dimension is zero. We already remarked that $K_{\overline {\mathcal S}^+_8}$ is effective and now we want to 
see that any positive multiple of it is rigid. \par
To this purpose we introduce further geometric properties of the family of curvilinear sections of the Grassmannian $G$.
 We keep the previous notations. Let $X$ be a projective variety in $\mathbf P^n$. In what follows we denote by $\mathcal I_X$ 
the ideal sheaf of $X$ in the linear space spanned by $X$. \medskip \par 
\underline{\it Quadrics through $G$ and special divisors in $\overline {\mathcal M}_8$} \par
Special curves of genus 8 are characterized by special properties with respect to the  quadrics containing $G$. Let $C \in \mathcal C$, consider the restriction  
$$
r_C: H^0(\mathcal I_G(2)) \to H^0(\mathcal I_C(2)).
$$
It is useful to remark that $r_C$ is a map between spaces of the same dimension. Let $C \in \mathcal C$, we consider two natural divisorial conditions on $\mathcal C$: 
\medskip \par  
(1) \it $r_C$ is not an isomorphism, \par  \rm
(2) \it an element of $\vert \mathcal I_C(2) \vert$ is a rank 3 quadric. \rm
\medskip \par 
Both conditions (1) and (2) define effective divisors in $\mathcal M_8$ and hence in $\overline {\mathcal M}_8$, passing to their closure. They can be easily described as follows.
\medskip \par 
(1)  $C$ satisfies (1)  if and only if $C$ is not a linear section of $G$.  
By Mukai's theorem 3.11 this happens precisely when  $W^2_7(C)$ is non empty. \medskip \par  
(2) This condition is well known for any genus $g \geq 4$: it characterizes curves $C$ whose moduli point is in the divisor $\theta_{null}$. 
\medskip \par  
As above, $\pi$ will denote the divisor defined in (1)  or its class. $\pi^+$ will be its pull-back by the forgetful map $f^+: \overline{\mathcal S}^+_8 \to \overline {\mathcal M}_8$.  
\par  $\theta_{null}^+$ has been already defined, for every genus $g$, as the locus of moduli of spin curves $(C,L)$ such that $L$ is a theta null.
\medskip \par 
\underline{\it Geometry of the divisor $\theta_{null}^+$} \par 
In genus 8 the theta null condition (2) induces, via the quadrics containing $G$, an interesting covering family of rational curves
$$
R \subset \theta^+_{null}.
$$
The geometric reason for the existence of $R$ can be explained as follows. Let $(C,L)$ be a general  even spin curve of genus 8 such that $L$ is a theta null.
Since $\pi$ and $\theta_{null}$ are distinct irreducible divisors, we can assume that $$ C = G \cdot \langle C \rangle. $$
In $\langle C \rangle$ we have a unique quadric $q$ of rank three containing $C$. As is well known $q$  is characterized by the condition that its ruling of linear subspaces 
of maximal dimension cuts the pencil $\vert L \vert $ on $C$. Since $(C,L)$ defines a general point of $\theta_{null}^+$, we can assume that the restriction
$$
r_C: H^0(\mathcal I_G(2)) \to H^0(\mathcal I_C(2))
$$
is an isomorphism. Hence there exists exactly one $Q \in \vert \mathcal I_G(2) \vert$ such that $q = Q \cdot \langle G \rangle$.  One can show that $Q$ is smooth, provided
$(C,L)$ is sufficiently general, \cite{FV} proposition 6.1. Consider in the Pl\"ucker space $\mathbf P ( \wedge^2 V)$ the linear subspace
$$
\mathbf P_q = \cap \ Q_x, \ x \in \  \Sing \ q,
$$
where $Q_x$ denotes the tangent hyperplane to the quadric $Q$ at $x$. Note that $\langle C \rangle \subset Q_x$, since $x \in  \Sing \ q$.
Then, under the previous assumptions, one can show that:
\medskip \par 
(1)  \it Since $Q$ is smooth, it follows $ \ \dim \ \mathbf P_q = 9$. Since $x \in  \Sing \ q$. \rm \par 
(2)  \it Since each $Q_x$ contains $\langle C \rangle$, $\langle C \rangle$ is a hyperplane in $\mathbf P_q$. \rm \par 
(3) \it $\tilde q := Q \cdot \mathbf P_q$ is a quadric of rank 4 and $ \Sing \ \tilde q =  \Sing \ q$. \rm \par 
(4) \it $S := G \cdot \mathbf P_q$ is a smooth K3 surface contained in $\tilde q$. \rm \medskip \par 
See \cite{FV} section 6. The rulings of $\tilde q$ cut on $S$ two
elliptic pencils, say $\vert F_1 \vert$ and $\vert F_2 \vert$.  They
 restrict to the same ruling of $q$. This just means that
$$
\mathcal O_C(F_1) \cong L \cong \mathcal O_C(F_2).
$$
 One has $\mathcal O_S(F_1 + F_2) \cong \mathcal O_S(1)$ and $\vert F_1 \vert$, $\vert F_2 \vert$ define a product map
$$
\phi: S \to \mathbf P^1 \times \mathbf P^1
$$
of degree $7 = F_1 \cdot F_2$. In particular $\phi / C$ is the map defined by $\vert L \vert$ and $C$ belongs to $I$, where
$$
I :=   \vert \phi^* \mathcal O_{\mathbf P^1 \times \mathbf P^1}(1,1) \vert.
$$
 Moreover each smooth $D \in I $ is a canonical curve contained in a rank 3 quadric, namely the quadric $\tilde q \  \cap   \langle D \rangle$.
As in the case of $C$, it turns out that $\phi/D$ is defined by a theta null $L_D \cong \mathcal O_D(F_1) \cong \mathcal O_D(F_2)$. \par
The outcome of this construction is clear: let $P \subset I$ be a general pencil containing $C$, then the base locus of $P$ is
$Z + Z'$, for some $Z, Z' \in \vert L \vert$. Hence the triple $(P, Z, \mathcal O_S(F_i))$ is a theta pencil and $$ R = m(P)$$ is a rational curve in 
$\theta_{null}^+$, passing through the moduli point $x$ of $(C,L)$. \medskip \par 
The next theorem summarizes the property of genus 8 curves with a theta null we have described. 
\begin{theorem} Let $C$ be a general smooth integral curve of genus 8. The following conditions are equivalent:
\begin{itemize}
\item[$\circ$] there exists a theta null $L$ on $C$,
\item[$\circ$] $C \in \vert F_1 + F_2 \vert$, where $\vert F_1 \vert$, $\vert F_2 \vert$ are distinct pencils of elliptic curves on a K3
surface $S$ and $ L \cong \mathcal O_C(F_1) \cong \mathcal O_C(F_2). $
\end{itemize}
\end{theorem}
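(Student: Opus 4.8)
The plan is to treat the two implications separately, observing that the forward direction is essentially a repackaging of the geometric construction carried out immediately before the statement, while the converse reduces to a short cohomological computation on the K3 surface $S$.

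For the implication that a theta null produces the two-pencil structure I would proceed as follows. Since $C$ is general of genus $8$, Mukai's Theorem 3.11 gives $W^2_7(C)=\emptyset$, so $C=G\cdot\langle C\rangle$ is a transverse linear section of the Grassmannian $G\subset\mathbf P(\wedge^2V)$. A theta null $L$ on $C$ is encoded by the unique rank $3$ quadric $q\subset\langle C\rangle$ through $C$ whose ruling of maximal linear subspaces cuts the pencil $\vert L\vert$. As $C$ is general it avoids the divisor $\pi$, so the restriction $r_C\colon H^0(\mathcal I_G(2))\to H^0(\mathcal I_C(2))$ is an isomorphism and $q$ lifts to a unique $Q\in\vert\mathcal I_G(2)\vert$, which one checks to be smooth. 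Forming $\mathbf P_q=\bigcap_{x\in\Sing q}Q_x$ one obtains $\dim\mathbf P_q=9$, the rank $4$ quadric $\tilde q=Q\cdot\mathbf P_q$ with $\Sing\tilde q=\Sing q$, and the smooth K3 surface $S=G\cdot\mathbf P_q\subset\tilde q$. The two rulings of $\tilde q$ then cut on $S$ two elliptic pencils $\vert F_1\vert$, $\vert F_2\vert$ with $\mathcal O_S(F_1+F_2)\cong\mathcal O_S(1)$ that restrict to the single ruling of $q$ on $C$, whence $\mathcal O_C(F_1)\cong L\cong\mathcal O_C(F_2)$ and $C\in\vert F_1+F_2\vert$. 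These are exactly the facts (1)--(4) established above, for which I would invoke \cite{FV}.

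For the converse I would work entirely on $S$. Suppose $C\in\vert F_1+F_2\vert$ with $\vert F_1\vert,\vert F_2\vert$ distinct elliptic pencils and set $L:=\mathcal O_C(F_1)\cong\mathcal O_C(F_2)$. As $\omega_S\cong\mathcal O_S$, adjunction gives $p_a(C)=1+\tfrac12 C^2$; since $C$ has genus $8$ and $F_i^2=0$ this yields $C^2=14$ and hence $F_1F_2=7$. Adjunction on $C$ then gives $\omega_C\cong\mathcal O_C(C)\cong\mathcal O_C(F_1+F_2)\cong L^{\otimes2}$, so $L$ is a theta characteristic of degree $CF_1=7=g-1$. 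To see that it is a theta null it remains to compute $h^0(L)$ from the restriction sequence
$$
0\to\mathcal O_S(-F_2)\to\mathcal O_S(F_1)\to\mathcal O_C(F_1)\to0.
$$
Using $h^0(\mathcal O_S(-F_2))=0$ together with $h^1(\mathcal O_S(-F_2))=h^1(\mathcal O_S(F_2))=0$ (Serre duality on the K3 surface, and $\vert F_2\vert$ an elliptic pencil, so $h^0(\mathcal O_S(F_2))=2$ and $h^1=0$), one reads off $h^0(L)=h^0(\mathcal O_S(F_1))=2$. Hence $L$ is an even theta characteristic with $h^0(L)=2$, i.e. a theta null.

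The routine parts are the cohomology and adjunction bookkeeping of the converse. The genuine difficulty is concentrated in the forward implication, specifically in the geometric claims (1)--(4): that $Q$ may be taken smooth, that $\tilde q$ has rank exactly $4$ with the same singular locus as $q$, and above all that $S=G\cdot\mathbf P_q$ is a \emph{smooth} K3 surface whose two rulings induce genuinely distinct elliptic pencils restricting to $\vert L\vert$. Verifying these rests on the detailed analysis of the quadrics through $G$ and on the generality of $(C,L)$ in $\theta^+_{null}$, and I would defer to the argument of \cite{FV} rather than reproduce it.
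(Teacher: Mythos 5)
Your proposal is correct, and its forward implication is exactly the paper's argument: use generality to avoid the divisor $\pi$, so that $C = G\cdot\langle C\rangle$ and $r_C$ is an isomorphism, lift the rank $3$ quadric $q$ to the unique (smooth) $Q\in\vert\mathcal I_G(2)\vert$, and quote the facts (1)--(4) from \cite{FV} to produce $\mathbf P_q$, $\tilde q$, the K3 surface $S$ and the two rulings; the paper defers to \cite{FV} at the same points you do. Where you genuinely depart from the paper is in the converse. The paper stays inside the Grassmannian picture: a smooth $D\in I=\vert F_1+F_2\vert$ is a canonical curve lying on the rank $3$ quadric $\tilde q\cap\langle D\rangle$, and the theta null is read off from the classical correspondence between rank $3$ quadrics through a canonical curve and its effective even theta characteristics, the ruling cutting $\vert L_D\vert$. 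You instead compute directly on $S$: adjunction and $\mathcal O_C(F_1)\cong\mathcal O_C(F_2)$ give $L^{\otimes 2}\cong\omega_C$, and the sequence $0\to\mathcal O_S(-F_2)\to\mathcal O_S(F_1)\to L\to 0$ together with $h^0(\mathcal O_S(-F_2))=h^1(\mathcal O_S(-F_2))=0$ (Serre duality plus $h^1(\mathcal O_S(F_2))=0$ for an elliptic pencil on a K3) pins down $h^0(L)=2$, hence a theta null in the paper's sense. Your computation is sound, more elementary, and has a real advantage: it proves the converse for an \emph{arbitrary} K3 surface carrying two distinct elliptic pencils whose restrictions to $C$ agree, which is literally what the second condition of the theorem quantifies over, whereas the paper's converse, as written, is phrased only for the particular $S$ arising from the forward construction. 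What the paper's route buys in exchange is exactly what it needs afterwards: by exhibiting the theta null of every smooth $D\in I$ as cut by the rulings of $\tilde q$, it recognizes a general pencil $P\subset I$ as a theta pencil $(P,Z,\mathcal O_S(F_i))$, which is the input for the covering family of rational curves $R=m(P)$ in $\theta^+_{null}$ and the key intersection number $R\cdot\theta^+_{null}=-1$.
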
 \medskip \par 
 \underline {\it The Kodaira dimension of $\overline {\mathcal S}^+_8$ is zero} \par
Let us consider a general rational curve $R$  of the previous family of rational curves covering $\theta_{null}^+$. This means that $R = m(P)$, 
where $(P, Z; \mathcal O_S(F_i))$ is a theta pencil as above. We begin our discussion on the Kodaira dimension of $\overline {\mathcal S}^+_8$ 
with a result which is sharp and crucial.
\begin{proposition} $R \cdot \theta_{null}^+ = -1$. \end{proposition}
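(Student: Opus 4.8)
The plan is to exploit that the entire rational curve $R=m(P)$ lies in $\theta^+_{null}$: every smooth $D\in I$ carries the theta null $L_D\cong\mathcal O_D(F_1)\cong\mathcal O_D(F_2)$, so $R\subset\theta^+_{null}$ and $R\cdot\theta^+_{null}$ is just the degree of $\mathcal O_{\overline{\mathcal S}^+_8}(\theta^+_{null})$ restricted to $R$. I would evaluate it through the class
$$
\theta^+_{null}\equiv\tfrac14\lambda^+-\tfrac1{16}\alpha^+_0-\tfrac12\sum_{i=1}^{4}\beta^+_i ,
$$
so that everything reduces to the three numbers $\lambda^+\cdot R$, $\alpha^+_0\cdot R$ and $\beta^+_i\cdot R$.

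First I would realise $R$ as a fibred surface. Since $L_D$ is cut by the common ruling, the theta-null curves are exactly the preimages $\phi^{-1}(\gamma)$ of $(1,1)$-curves $\gamma$, so $P=\phi^{-1}(\Pi)$ for a pencil $\Pi$ of $(1,1)$-curves on $\mathbf P^1\times\mathbf P^1$. Blowing up the $C^2=2\,F_1\cdot F_2=14$ base points of $P$ produces a genus $8$ fibration $f\colon S'\to\mathbf P^1$, and I put $R_0=f^+(R)\subset\overline{\mathcal M}_8$, the map $R\to R_0$ being of degree $1$ because we follow the single theta null $L_D$. From $\chi(\mathcal O_{S'})=\chi(\mathcal O_S)=2$ and $c_2(S')=c_2(S)+14=38$, the standard pencil formulae give
$$
\lambda^+\cdot R=\lambda\cdot R_0=\chi(\mathcal O_{S'})+g-1=9,\qquad \delta\cdot R_0=c_2(S')+4(g-1)=66 .
$$

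Next I would sort the singular fibres. A general pencil $\Pi$ has exactly two reducible members $l\cup l'$ — its line meets twice the smooth quadric of reducible $(1,1)$-curves inside $|\mathcal O(1,1)|=\mathbf P^3$ — whose preimages are the two reducible fibres $F'_1\cup F'_2$ with $F'_i\in|F_i|$, meeting in $F_1\cdot F_2=7$ nodes; all other singular fibres are integral and $1$-nodal, produced by the members of $\Pi$ tangent to the branch curve of $\phi$. Every node of $F'_1\cup F'_2$ is non-separating, so $R_0$ meets only $\delta_0$ and $\alpha^+_i\cdot R=\beta^+_i\cdot R=0$ for $i\ge1$; moreover the $66$ nodes split as $52$ on the integral nodal fibres and $2\cdot 7=14$ on the two reducible fibres. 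On an integral $1$-nodal fibre $D_0$ the limiting theta null is the honest line bundle $\mathcal O_{D_0}(F_1)$ on the integral curve, hence of type $\alpha^+_0$; this already gives $\alpha^+_0\cdot R=52$, which via $f^{+*}\delta_0=\alpha^+_0+2\beta^+_0$ forces $\beta^+_0\cdot R=7$. Granting this,
$$
R\cdot\theta^+_{null}=\tfrac14\cdot 9-\tfrac1{16}\cdot 52-0=\tfrac94-\tfrac{13}4=-1 .
$$

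The main obstacle is precisely the boundary analysis at the two reducible fibres. Here the theta null cannot persist as a line bundle, because $\deg(\omega|_{F'_1})=\deg\omega_{F'_1}+F_1\cdot F_2=7$ is odd; Cornalba's semistable reduction of the spin structure must then insert an exceptional component, placing the limit in $\beta^+_0$ rather than $\alpha^+_0$. I expect the delicate point to be running this reduction across each reducible fibre and checking, against the ramification of $f^+$ along $\beta^+_0$, that the two fibres contribute exactly $14=2\cdot 7$ to $\delta\cdot R_0$, i.e. $\alpha^+_0\cdot R=66-14=52$. By comparison the Hodge and Euler-number inputs, and the vanishing $\alpha^+_i\cdot R=\beta^+_i\cdot R=0$ for $i\ge1$, are routine.
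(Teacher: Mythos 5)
Your proposal is correct and takes essentially the same route as the paper: the paper's own proof is precisely this count of the singular members of the pencil $P$ --- the $52$ integral one-nodal curves and the two members of type $F_1\cup F_2$ with $7$ nodes --- evaluated against the class $\theta^+_{null}\equiv\frac14\lambda^+-\frac1{16}\alpha^+_0-\frac12\sum_{i\geq 1}\beta^+_i$, with the delicate limit analysis at the two reducible members deferred to \cite{FV}, lemma 6.6. Your parity argument showing that those two members contribute only to $\beta^+_0$ (which does not appear in the theta-null class), so that $\lambda^+\cdot R=9$, $\alpha^+_0\cdot R=52$ and $R\cdot\theta^+_{null}=\frac94-\frac{52}{16}=-1$, is exactly the content of that lemma.
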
 \par 
The result follows counting with appropriate multiplicity the elements of a general $P \subset I$ which are singular.  In particular two elements of $P$ are 
of type $F_1 \cup F_2$, where $F_1, F_2$ are integral elliptic curves intersecting transversally in 7 points, see \cite{FV} lemma 6.6.  
Since $R$ moves in a family of irreducible curves which cover $\theta_{null}^+$, it follows that
\begin{corollary} $m\theta_{null}^+$ is rigid for each $m \geq 0$. \end{corollary}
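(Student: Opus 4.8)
The plan is to turn the negative intersection number of the preceding proposition into a rigidity statement by showing that $\theta^+_{null}$ must occur, with full multiplicity, as a fixed component of every effective divisor in the class $m\theta^+_{null}$. Recall that rigidity of $m\theta^+_{null}$ means $\dim |m\theta^+_{null}| = 0$; since $\overline{\mathcal S}^+_8$ is an irreducible normal projective variety, this is equivalent to the assertion that the only effective divisor linearly equivalent to $m\theta^+_{null}$ is $m\theta^+_{null}$ itself. The case $m=0$ being trivial, I would fix $m\geq 1$ and let $D$ be an arbitrary effective divisor with $D\sim m\theta^+_{null}$.

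First I would split off $\theta^+_{null}$ from $D$, writing $D = a\,\theta^+_{null}+D'$, where $a=\mathrm{mult}_{\theta^+_{null}}(D)\geq 0$ is the multiplicity of the prime divisor $\theta^+_{null}$ in $D$ and $D'$ is effective with $\theta^+_{null}\not\subseteq \Supp(D')$. The target is the inequality $a\geq m$. Once this is established, $D-m\theta^+_{null}=(a-m)\theta^+_{null}+D'$ is an effective divisor linearly equivalent to $0$, hence equal to $0$ because $h^0(\mathcal O_{\overline{\mathcal S}^+_8})=1$; thus $D=m\theta^+_{null}$, which gives rigidity.

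To bound $a$ I would intersect with a general member $R$ of the covering family of rational curves of $\theta^+_{null}$ produced by the construction preceding the proposition. Since intersection with a curve is a numerical (hence linear-equivalence) invariant, $D\cdot R = m\,(\theta^+_{null}\cdot R) = -m$ by the relation $\theta^+_{null}\cdot R = -1$, while the decomposition yields $D\cdot R = a\,(\theta^+_{null}\cdot R)+D'\cdot R = -a+D'\cdot R$. Comparing the two expressions gives $D'\cdot R = a-m$.

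The crux, and the step I expect to demand the most care, is the inequality $D'\cdot R\geq 0$ for general $R$. Here I would exploit the fact that the curves $R$ sweep out all of $\theta^+_{null}$, whereas $D'$ has no component equal to $\theta^+_{null}$: consequently $\Supp(D')\cap\theta^+_{null}$ is a proper closed subset of $\theta^+_{null}$, so a general $R$ of the covering family is not contained in $\Supp(D')$ and meets it in dimension $0$, whence $D'\cdot R\geq 0$. Together with $D'\cdot R = a-m$ this forces $a\geq m$ and closes the argument. The one technical point to monitor is that $\overline{\mathcal S}^+_8$ is only normal, so I would either record that $\theta^+_{null}$ is $\mathbb Q$-Cartier, making the numbers $\theta^+_{null}\cdot R$ and $D'\cdot R$ well defined and additive as used above, or perform the computation on a suitable resolution, checking that both the covering property of the family $\{R\}$ and the positivity $D'\cdot R\geq 0$ survive the pullback.
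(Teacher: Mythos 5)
Your proposal is correct and is essentially the paper's own argument: the paper deduces the corollary in one line from the fact that $R\cdot\theta_{null}^+=-1$ for a family of irreducible curves covering $\theta_{null}^+$, and the standard mechanism it invokes is exactly what you spell out (split off the multiplicity of $\theta_{null}^+$, use $D'\cdot R\geq 0$ for a general member of the covering family, conclude full multiplicity, and kill the residual effective divisor linearly equivalent to zero). Your extra care about intersection numbers on the normal variety $\overline{\mathcal S}^+_8$ is legitimate and unproblematic, since this moduli space has finite quotient singularities and is therefore $\mathbb Q$-factorial.
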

\begin{remark} \ \rm The rigidity of $\theta_{null}^+$ can be proved for other moduli $\overline {\mathcal S}^+_g$ of even spin curves, namely for
$g \leq 9$. The proof  relies on a different family of rational curves $R'$ covering the theta null divisor of $\overline {\mathcal S}^+_g$, 
$g \leq 9$. One proves that $R' \cdot \theta_{null}^+ = - 2$, so that the rigidity follows. The condition $R \cdot \theta_{null} = -1$ is of
course sharp in genus 8. It is not clear wether $\theta^+_{null}$ is rigid for any $g$.
\end{remark} \par 
An analogous proposition holds true for the effective divisor $\pi^+$. This parametrizes nodal plane septics of geometric genus 8. Let 
$P' \subset \vert \mathcal O_{\mathbf P^2}(7) \vert$ be a general pencil of nodal septics of such a genus. We have again the moduli
map $m': R' \to \overline {\mathcal M}_8$ and we can consider the curve
$$
R' := f^{+*}(m(P')) \subset \pi^+.
$$
One computes that:
\begin{proposition} $R' \cdot \pi^+ < 0$. \end{proposition}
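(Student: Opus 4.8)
The plan is to exploit a structural difference between $\pi^+$ and the companion divisor $\theta^+_{null}$: whereas $\theta^+_{null}$ is intrinsic to $\overline{\mathcal S}^+_8$, the divisor $\pi^+ = f^{+*}\pi$ is pulled back from $\overline{\mathcal M}_8$, so the entire computation descends to a test-curve computation downstairs. Write $\gamma := m'(P') \subset \pi$ for the moduli image of the pencil. Since $\pi^+ = f^{+*}\pi$ and $R' = f^{+*}\gamma$, the projection formula for the finite map $f^+$ gives $R' \cdot \pi^+ = \deg(f^+)\,(\gamma \cdot \pi)$. Over the generic point of $\gamma$ the cover $f^+$ is unramified, so the pullback cycle $f^{+*}\gamma$ is the reduced preimage and the formula is legitimate; as $\deg(f^+) > 0$, it suffices to prove the single inequality $\gamma \cdot \pi < 0$ in $\overline{\mathcal M}_8$, where $\gamma \cdot \pi = \deg\big(\mathcal O_{\overline{\mathcal M}_8}(\pi)|_\gamma\big)$ makes sense precisely because $\gamma$ is a curve contained in the divisor $\pi$.

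Next I would realize $\gamma$ concretely as a test curve on a rational surface and compute its numerical invariants. Fix seven general points and let $S$ be their blow-up in $\mathbf P^2$; the pencil $P'$ of $7$-nodal septics singular at these points is a general pencil in $|7H - 2\sum_{i=1}^{7} E_i|$, whose members have arithmetic genus $8$. After blowing up the $(7H-2\sum E_i)^2 = 21$ base points one obtains a fibration $S' \to \mathbf P^1$ with $S' = \mathrm{Bl}_{28}\mathbf P^2$, so $\chi(\mathcal O_{S'}) = 1$ and $c_2(S') = 31$. Applying the formulas $\lambda\gamma = \chi(\mathcal O_{S'}) + g - 1$ and $\delta\gamma = c_2(S') + 4(g-1)$ recalled earlier (cf. \cite{CR2}) with $g=8$ yields $\lambda\gamma = 8$ and $\delta\gamma = 59$. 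For a general such pencil every singular member is irreducible with a single additional node, so $\gamma$ meets only $\Delta_0$ and $\delta_i\gamma = 0$ for $i \geq 1$; hence $\delta_0\gamma = 59$.

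Finally, $\pi = P$ is the Brill-Noether divisor $\{C : W^2_7(C) \neq \emptyset\}$, with $\rho(8,2,7) = -1$, whose class in $\overline{\mathcal M}_8$ is the positive multiple $c\big((g+3)\lambda - \tfrac{g+1}{6}\delta_0 - \sum_{i\geq 1} i(g-i)\delta_i\big)$ of the standard Eisenbud-Harris expression, with $g = 8$ and $c > 0$. Intersecting with $\gamma$ and using $\delta_i\gamma = 0$ for $i \geq 1$,
$$
\gamma \cdot \pi = c\Big(11\cdot 8 - \tfrac{3}{2}\cdot 59\Big) = c\Big(88 - \tfrac{177}{2}\Big) = -\tfrac{c}{2} < 0.
$$
Equivalently, the slope $\delta\gamma/\lambda\gamma = 59/8$ of the septic pencil is strictly larger than the Brill-Noether slope $6 + \tfrac{12}{g+1} = \tfrac{22}{3}$ of $\pi$, which forces the intersection to be negative. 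Combined with the projection formula this gives $R' \cdot \pi^+ < 0$.

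The step I expect to be delicate is the justification of the test-curve numbers in $\overline{\mathcal M}_8$: one must check that a general pencil of $7$-nodal septics is a Lefschetz pencil whose degenerate members are irreducible one-nodal curves, so that no $\Delta_i$ with $i\geq 1$ is met and the class of $\pi$ applies without boundary corrections, and that $\gamma$ is general enough to avoid the loci where the Brill-Noether class degenerates. The numerical near-coincidence—that $59/8$ exceeds $\tfrac{22}{3}$ by the single unit $\tfrac{1}{24}$, reflecting that plane septics of genus $8$ sit just below the K3 locus, whose slope equals the Brill-Noether slope—is what makes the inequality hold and sharp, mirroring the sharpness $R \cdot \theta^+_{null} = -1$ of the companion proposition; verifying it rigorously is where the care lies, but the computation itself is routine once the test curve is set up.
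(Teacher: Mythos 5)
Your proof is correct and is essentially the computation the paper has in mind (it is stated there as "one computes," with the details delegated to \cite{FV}): reduce to $\overline{\mathcal M}_8$ by the projection formula, compute $\lambda\gamma = 8$ and $\delta_0\gamma = 59$ (with $\delta_i\gamma = 0$ for $i\geq 1$) for a Lefschetz pencil of $7$-nodal septics, and intersect with the Eisenbud--Harris class of the Brill--Noether divisor $\overline{\mathcal M}^2_{8,7}$, obtaining $c\bigl(11\cdot 8 - \tfrac32\cdot 59\bigr) = -\tfrac{c}{2} < 0$. All the numerical details, including the slope comparison $59/8 > 22/3$ and the identification of $\pi$ with the $\rho=-1$ Brill--Noether divisor, check out.
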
 \par 
$R'$ moves in a family of irreducible curves covering $\pi^+$. Hence it follows:
\begin{corollary} $\pi^+$ is rigid in $\overline {\mathcal S}^+_8$. \end{corollary}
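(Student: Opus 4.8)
The plan is to derive the rigidity of $\pi^+$ formally from the two facts already in hand: the negative intersection number $R' \cdot \pi^+ < 0$ furnished by the preceding proposition, and the statement that the curves $R'$ move in a family covering $\pi^+$. The governing principle is the elementary negativity criterion: an effective divisor swept out by a covering family of irreducible curves along which its degree is negative is rigid, together with all of its positive multiples.

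I would argue by descending induction on the multiple. Assume first, as in the model case of $\theta^+_{null}$, that $\pi^+$ is irreducible. Let $E$ be any effective divisor with $[E] = m[\pi^+]$, $m \geq 1$. For a general member $R'$ of the covering family the proposition gives $R' \cdot E = m\,(R' \cdot \pi^+) < 0$. Since $E$ is effective and $R'$ is an irreducible curve, a negative intersection number is possible only if $R' \subseteq \mathrm{Supp}(E)$; as the $R'$ cover $\pi^+$ and $\pi^+$ is irreducible, a general point of $\pi^+$ lies in $\mathrm{Supp}(E)$, whence $\pi^+ \subseteq \mathrm{Supp}(E)$ by closedness. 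Thus $\pi^+$ appears in $E$ with coefficient at least one, so $E - \pi^+$ is effective of class $(m-1)[\pi^+]$. The induction (the case $m = 0$ forcing $E = 0$) then yields $E = m\,\pi^+$, so $\vert m\pi^+ \vert$ is a single point and $h^0(\mathcal{O}(m\pi^+)) = 1$ for every $m \geq 0$. This is the asserted rigidity.

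Since the genuinely geometric content, the inequality $R' \cdot \pi^+ < 0$, is supplied by the preceding proposition, the only care-points in this last step are bookkeeping. If $\pi^+$ fails to be irreducible one applies the same argument to each component swept out by the family $R'$; here it is relevant that $\pi^+ = f^{+*}\pi$ and that $R' = f^{+*}(m'(P'))$ is the pullback of a covering family of rational curves $m'(P')$ of the irreducible divisor $\pi \subset \overline{\mathcal{M}}_8$, so that the $R'$ sweep all of $\pi^+$ under the finite map $f^+$. The main obstacle is therefore not in this corollary but upstream, in establishing the negativity $R' \cdot \pi^+ < 0$: one must count, with appropriate multiplicities, the singular members of a general pencil $P' \subset \vert \mathcal{O}_{\mathbf{P}^2}(7) \vert$ of genus-$8$ nodal septics and compare the resulting degrees $(m')^*\lambda$ and $(m')^*\delta$ against the class of $\pi^+$, exactly in the spirit of the computation giving $R \cdot \theta^+_{null} = -1$.
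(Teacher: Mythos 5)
Your argument is correct and is essentially the paper's own: the corollary there is deduced in one line from precisely the two facts you use, namely that $R'$ moves in a family of irreducible curves covering $\pi^+$ and that $R' \cdot \pi^+ < 0$, with the standard negativity/subtraction argument (which you spell out via descending induction) left implicit by the ``Hence it follows.'' Your extra care about possible reducibility of $\pi^+ = f^{+*}\pi$ is a harmless refinement, not a departure from the paper's route.
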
 \par 
Finally we can summarize the proof of the following:
\begin{theorem} The Kodaira dimension of $\overline {\mathcal S}^+_8$ is zero. \end{theorem}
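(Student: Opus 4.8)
The plan is to bracket $\kappa(\overline{\mathcal S}^+_8)$ from both sides, the lower bound being already available. Indeed the displayed expression
\[
K_{\overline {\mathcal S}^+_8} \equiv c\,\pi^+ + 8\,\theta^+_{null} + \sum_{i=1}^{4}\bigl(a_i\,\alpha^+_i + b_i\,\beta^+_i\bigr),
\qquad c,a_i,b_i\in\mathbb Q^+,
\]
presents a pluricanonical class as an effective $\mathbb Q$-combination of $\pi^+$, $\theta^+_{null}$ and the boundary divisors, so (after clearing denominators) $mK_{\overline{\mathcal S}^+_8}$ is effective and $\kappa\ge 0$. Everything then reduces to the rigidity statement $h^0(\overline{\mathcal S}^+_8, mK)=1$ for all $m\ge 0$, i.e. that each pluricanonical system $|mK|$ consists of the single divisor $mK$ with no moving part; this is what yields $\kappa\le 0$ and hence $\kappa=0$.

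The engine is the rigidity criterion already exploited for the individual summands: an irreducible divisor $D$ swept out by a family of irreducible curves $\{R_t\}$ with $R_t\cdot D<0$ must occur as a fixed component of every effective divisor in whose class it appears. This is exactly how the rigidity of $\theta^+_{null}$ was deduced from $R\cdot\theta^+_{null}=-1$ and that of $\pi^+$ from $R'\cdot\pi^+<0$. First I would upgrade this to the remaining components: record covering families of curves of negative self-intersection for the boundary divisors $\alpha^+_i,\beta^+_i$, either directly or by transporting the (well understood) rigidity of the boundary of $\overline{\mathcal M}_8$ through the finite forgetful map $f^+\colon\overline{\mathcal S}^+_8\to\overline{\mathcal M}_8$, whose behaviour along $\delta_0$ and the $\delta_i$ is described in 3.1.

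With rigidity in hand for every component, the conclusion would run as follows. For any effective $E\equiv mK$ one shows that $E$ vanishes along each of $\pi^+,\theta^+_{null},\alpha^+_i,\beta^+_i$ to at least the multiplicity with which that divisor enters $mK$; dividing the corresponding section by the tautological section cutting out $mK$ then identifies $H^0(\overline{\mathcal S}^+_8,mK)$ with $H^0(\overline{\mathcal S}^+_8,\mathcal O)=\mathbb C$, so $h^0(mK)=1$. The multiplicity statement for a fixed component $D_j$ is read off by writing $E=\mu_j D_j+E_0$ with $D_j\not\subset\Supp E_0$ and intersecting with a general member $R_j$ of its covering family, using $E_0\cdot R_j\ge 0$ and $D_j\cdot R_j<0$.

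The main obstacle is precisely this last combination. The curves $R_j$ sweeping out one component will in general meet the other components positively — for instance a general member of the pencil $m(P)$ covering $\theta^+_{null}$ is a genuine linear section of $G$ that nonetheless acquires extra special linear series in finitely many fibres, so $R\cdot\pi^+$ and $R\cdot(\text{boundary})$ need not vanish. These positive cross-intersections pull the naive multiplicity bound in the wrong direction, and the argument only decouples cleanly when each covering family is numerically orthogonal to the remaining components; securing enough of this orthogonality, or else running the fixed-part/moving-part decomposition and showing the moving part is numerically trivial against all the $R_j$ simultaneously, is the real work. It ultimately rests on the \emph{sharp} value $R\cdot\theta^+_{null}=-1$ (rather than a merely negative number), which comes from the K3 description of $\theta_{null}$-curves as pencils in $|F_1+F_2|$ on the surfaces $S=G\cdot\mathbf P_q$ together with the exact count of their singular members. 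This sharpness is exactly what places genus $8$ on the boundary between uniruledness and general type and forces the Kodaira dimension to be $0$ rather than $-\infty$.
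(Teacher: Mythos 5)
Your skeleton coincides with the paper's: the lower bound $\kappa(\overline{\mathcal S}^+_8)\ge 0$ comes from the effectivity of the displayed expression for $K_{\overline{\mathcal S}^+_8}$, and the upper bound is to come from forcing each summand to be a fixed component of $\vert mK_{\overline{\mathcal S}^+_8}\vert$ via covering families of curves meeting it negatively. But the step you set aside as ``the real work'' --- the vanishing of the cross-intersections --- is exactly the content of the paper's proof, and you leave it open. The paper establishes (and this is where the genus-$8$ geometry of theta pencils and nodal septics is actually used) that the covering curves $R\subset\theta^+_{null}$ and $R'\subset\pi^+$ satisfy
$$
R\cdot\pi^+ \ = \ R\cdot\alpha_i^+ \ = \ R\cdot\beta_i^+ \ = \ 0, \qquad
R'\cdot\theta^+_{null} \ = \ R'\cdot\alpha_i^+ \ = \ R'\cdot\beta_i^+ \ = \ 0,
$$
so that $R\cdot mK<0$ and $R'\cdot mK<0$ outright, whence $m\theta^+_{null}$ and $m\pi^+$ split off as fixed parts of $\vert mK\vert$ for every $m\ge 1$. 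Without these vanishings your multiplicity bound genuinely fails, for precisely the reason you state (positive cross-terms push the inequality the wrong way); so the proposal as written identifies the difficulty correctly but does not resolve it, and the difficulty is the theorem.

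Two further discrepancies. For the boundary, you propose producing covering curves of negative self-intersection for each $\alpha_i^+,\beta_i^+$, possibly transported from $\overline{\mathcal M}_8$ through $f^+$; the paper instead argues that, once $\theta^+_{null}$ and $\pi^+$ are fixed parts, the moving part of $\vert mK\vert$ lies in $\vert m\sum_{i}(a_i\alpha_i^+ + b_i\beta_i^+)\vert$ and then invokes that this linear system is $0$-dimensional --- a statement about the residual system as a whole, not a rigidity statement for each boundary divisor separately. Your route would in any case run into the same decoupling problem (e.g.\ the standard test curves for $\delta_1$ meet $\delta_0$ positively, and $f^{+*}\delta_i = 2(\alpha_i^+ + \beta_i^+)$ only controls the sum, not the two pieces individually). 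Finally, your closing claim that the argument ``ultimately rests on the sharp value $R\cdot\theta^+_{null}=-1$'' is off: the rigidity mechanism needs only negativity --- indeed the paper remarks that for $g\le 9$ a different covering family gives intersection $-2$ with $\theta^+_{null}$ and rigidity still follows. What places genus $8$ at Kodaira dimension $0$ rather than $-\infty$ is the effectivity of $K_{\overline{\mathcal S}^+_8}$, i.e.\ the positivity of the coefficients $c$, $8$, $a_i$, $b_i$ in its expression, not the sharpness of the intersection number.
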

\begin{proof} See \cite{FV}, proof of theorem 0.1. We know that the canonical class is effective and that
$$
K_{\overline {\mathcal S}^+_8} \sim t\theta_{null}^+ \ + \  p \pi^+ \  + \ \sum_{i = 1 \dots 4} a_i A^+_i \ + \ b_i B^+_i,
$$
where $t, p, a_i, b_i$ are positive rational numbers. $A_i^+$, $B_i^+$ are the boundary divisors
already considered. We know that there exist covering families of irreducible curves $R \subset \theta_{null}^+$ and $R' \subset \pi^+$ such that 
$
R \cdot \theta_{null}^+ < 0 \ , \ R' \cdot \pi^+ < 0.
$
Moreover it is not difficult to show that
$$
R \cdot \pi^+  = R \cdot A_i^+ = R \cdot B_i^+ = 0 \ , \ R' \cdot \theta_{null}^+ = R' \cdot A_i^+ = R' \cdot B_i^+ = 0.
$$
This implies that $m \theta_{null}^+$ and $m \pi^+$ are fixed components of the $m$-canonical linear system for each $m \geq 1$. To see this consider a general $x \in \theta_{null}^+$. Then $x$ belongs to
a curve $R$ of the family considered above. Since $R \cdot m K_{\overline {S}^+_8} < 0$, it follows that $x$ is in  the base locus of $\vert mK_{\overline {\mathcal S}^+_8}vert$. Hence $m \theta_{null}^+$
is a fixed component of $\vert mK_{\overline {\mathcal S}^+_8} \vert$. The same argument works for $m\pi^+$. It follows that the moving part of the $m$-canonical linear system is contained in $\vert m \sum (a_iA_i^+ + b_iB_i^+) \vert$, for $m \geq 1$.
But it turns out that the latter is 0-dimensional too. Hence $ \ \dim \ \vert mK_{\overline {S}^+_8} \vert = 0$, for $m \geq 1$. \end{proof} \medskip \par 
\underline{\it Moduli of odd spin curves} \par
The transition from uniruledness to general type has no intermediate case for the moduli of odd spin curves of genus $g$. The last case where $\overline {\mathcal S}^-_g$ is uniruled is for $g = 11$.
One would like to understand better some geometric reasons for the change of the Kodaira dimension from genus 11 to genus 12.  We conclude this section with a kind of free speculation on this question. \par 
Let $X$ be a an integral projective variety, one could define the \it degree of uniruledness of $X$ \rm as
$$
u(X) :=  \mbox{max} \ \lbrace d \in \mathbb Z \ / \ \text {\it it exists a generically finite map $f: Y \times \mathbf P^d \to X $} \rbrace.
$$
$Y$ is assumed to be integral. Of course $Y$ is not uniruled if $u(X) = 0$. Mumford's conjecture on varieties of Kodaira dimension $-\infty$ says that in this case $u(X)$ is strictly positive . Notice also that  $X$ is unirational  if and only if $u(X) =  \ \dim \ X$. \par Now let $u(g) = u(\overline {\mathcal S}^-_g)$. We have seen that the unirationality of $\mathcal S^-_g$ is known for $g \leq 8$ so that $u(g) = 3g-3$ in this case. For $g = 9$ the unirationality of $\mathcal S^-_9$ seems plausible, though no complete proof is appeared until now. What about $u(10)$ and $u(11)$? \par  At least in genus 11 the situation could be quite different. Let us
see a possible reason. 
Consider the moduli space $\mathcal F_g$ of K3 surfaces $(S, \mathcal O_S(C))$ of genus $g$ and then the universal Hilbert scheme of points
$$
q: \mathcal F_g[g-1] \to \mathcal F_g
$$
with fibre at the moduli point of $(S, \mathcal O_S(C))$ the Hilbert scheme $S[g-1]$ of 0-dimensional subscheme of length $g-1$. Since $C^2 = 2g - 2$ we can define a natural involution
$$
i: \mathcal F_g[g-1] \to \mathcal F_g[g-1].
$$
Indeed let $x$ be the moduli point of $(S, \mathcal O_S(C), Z)$, where $Z \in S[g-1]$. Then the base locus of $\vert \mathcal I_{Z/S}(C) \vert$ is $Z + \overline Z$ where $\overline Z \in S[g-1]$. By
definition, $i(x)$ os the moduli point of $(S, \mathcal O_S(C), \overline Z)$. Let
$$
\mathcal T_g \subset \mathcal F_g[g-1]
$$
be the locus of fixed points of $i$. For a general pair $(S, \mathcal O_S(C))$ the fibre of $q / \mathcal T_g$ at the moduli point of $(S, \mathcal O_S(C))$ is  the family of the 0-dimensional schemes $Z$ such
that $(P_Z, Z, \mathcal O_S)$ is an theta pencil of genus $g$, where $$ P_Z := \vert \mathcal I_{Z/S}(C) \vert. $$
Equivalently this locus is the closure of the family of all $Z \subset S$ such that $Z$ embeds as an odd theta characteristic in a smooth element of $\vert C \vert$.   
On the other hand let us consider the standard projective bundle
$$
p: \mathbb P \to \mathcal K_{11}
$$
with fibre $\vert C \vert$ at the moduli point of $(S, \mathcal O_S(C))$.  Then the pull-back $(q / \mathcal T_g)^* \mathbb P$ contains a natural $\mathbf P^1$-bundle
$
P \subset \mathbf P,
$
with fibre $P_Z$ at the moduli point of $(S, \mathcal O_S(C), Z)$. Note that the diagram
$$
\begin{CD}
{\mathcal T_g} @<<< P @>{m^-}>> {\mathcal S^-_{11}} \\
@VVV @VV{f'^-}V @VV{f^-}V \\
{\mathcal K_{11} }@<<< {\mathbb P} @>m>> {\mathcal M_{11}} \\
\end{CD}
$$
is commutative and that the vertical arrows $f_P$ and $f^-$ are just the obvious forgetful map. In particular they have the same degree, which is the number of odd theta characteristics on curve of genus 11.
Since the moduli map $m$ is birational, the same is true for the moduli map $m^-$. This shows that
\begin{theorem} $\overline {\mathcal S}^-_{11}$ is ruled and  birational to $\mathcal T_{11} \times \mathbf P^1$. \end{theorem}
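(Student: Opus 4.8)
The plan is to read off the result from the commutative diagram displayed just above the statement, whose bottom row is exactly Mukai's parametrization of $\mathcal M_{11}$ by hyperplane sections of K3 surfaces. I would first isolate the three structural inputs that make the diagram do the work. First, the map $m\colon \mathbb P\to\mathcal M_{11}$ is the moduli map $m_{11}\colon P_{11}\to\mathcal M_{11}$ (so $\mathcal K_{11}=\mathcal F_{11}$ and $\mathbb P=P_{11}$), and this is \emph{birational} by Mukai's theorem quoted earlier. Second, $P\to\mathcal T_{11}$ is a $\mathbf P^1$-bundle: its fibre over the moduli point of $(S,\mathcal O_S(C),Z)$ is the pencil $P_Z=\vert\mathcal I_{Z/S}(C)\vert=\mathbf P H^0(\mathcal I_{Z/S}(C))$ with $h^0(\mathcal I_{Z/S}(C))=2$ for general $Z$; hence $P=\mathbf P(\mathcal W)$ for a rank two sheaf $\mathcal W$ on $\mathcal T_{11}$, and in particular $P$ is birational to $\mathcal T_{11}\times\mathbf P^1$. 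Third, the two vertical forgetful maps $f^-$ and $f'^-$ are finite of the same degree $N=2^{g-1}(2^g-1)$ with $g=11$, the number of odd theta characteristics on a smooth genus $11$ curve.

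Granting these, the conclusion is a degree count on the right-hand square, which commutes as
\[
f^-\circ m^- = m\circ f'^-.
\]
First I would check that $m^-$ is dominant: the composite $m\circ f'^-$ is dominant (as $m$ and $f'^-$ are), so $f^-\circ m^-$ is dominant, and since $f^-$ is finite the image of $m^-$ has dimension $\dim\mathcal M_{11}=3g-3=30=\dim\overline{\mathcal S}^-_{11}$, forcing $m^-$ dominant. All four maps being generically finite and dominant, multiplicativity of degrees gives
\[
\deg(f^-)\cdot\deg(m^-)=\deg(m)\cdot\deg(f'^-),
\]
that is $N\cdot\deg(m^-)=1\cdot N$, whence $\deg(m^-)=1$ and $m^-$ is birational. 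Combined with the previous paragraph, $\overline{\mathcal S}^-_{11}$ is birational to $P$, which is birational to $\mathcal T_{11}\times\mathbf P^1$; in particular it is ruled, as claimed. Equivalently, one may verify generic injectivity of $m^-$ directly: by Mukai's birationality a general $C$ of genus $11$ embeds in a \emph{unique} K3 surface $S$ with $\Pic S=\mathbb Z[C]$, and an odd theta $L=\mathcal O_C(Z)$, having $h^0=1$, determines $Z$ uniquely, so $((S,\mathcal O_S(C),Z),C)$ is the only point of $P$ over $(C,L)$.

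The step that carries the real content is the verification that $f'^-$ has degree exactly $N$, and I would argue it on the surface. For $Z\in S[g-1]$ general the base locus of the pencil $P_Z$ splits as $Z+\overline Z$ with $\deg Z=\deg\overline Z=g-1$, and since any two members of $\vert C\vert$ meet $C'\in\vert C\vert$ in the divisor class $\mathcal O_{C'}(C)\cong\omega_{C'}$ by adjunction (recall $\omega_S\cong\mathcal O_S$), one gets $\mathcal O_{C'}(Z)\otimes\mathcal O_{C'}(\overline Z)\cong\omega_{C'}$. Thus $\mathcal O_{C'}(Z)$ is a theta characteristic precisely when $\mathcal O_{C'}(Z)\cong\mathcal O_{C'}(\overline Z)$, i.e.\ exactly when $i(Z)=Z$. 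Consequently the fibre of $f'^-$ over a point $(S,\mathcal O_S(C),C')$ of $\mathbb P$ is the set of fixed $Z\subset C'$, which are precisely the effective odd theta divisors of $C'$; there are $N$ of them, pinning $\deg f'^-=N$ and legitimizing the degree count. The subtlety to be careful about is that the fixed locus $\mathcal T_g$ itself is positive-dimensional over $\mathcal F_g$ (a fixed $Z$ is shared by the whole pencil $P_Z$), so the bijection with the $N$ odd thetas must be asserted fibrewise over the \emph{curve} $C'$ in $\mathbb P$, not over the K3 surface in $\mathcal F_g$; once this is arranged, the dimension bookkeeping $\dim P=\dim\mathcal T_{11}+1=30$ matches $\dim\overline{\mathcal S}^-_{11}$ and everything is consistent.
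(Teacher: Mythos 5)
Your proposal is correct and follows essentially the same route as the paper: the commutative diagram, the observation that the two vertical forgetful maps $f^-$ and $f'^-$ have the same degree (the number of odd theta characteristics in genus $11$), and Mukai's birationality of $m$ then force $\deg m^- = 1$, so that $\overline{\mathcal S}^-_{11}$ is birational to the $\mathbf P^1$-bundle $P$ over $\mathcal T_{11}$. The only difference is that you spell out details the paper merely asserts (dominance of $m^-$, the adjunction computation $\mathcal O_{C'}(Z)\otimes\mathcal O_{C'}(\overline Z)\cong\omega_{C'}$ identifying the fibres of $f'^-$ with the odd thetas), which is a faithful expansion rather than a new argument.
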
 \par 
$\mathcal T_g$ is a very interesting variety whose Kodaira dimension does not seem to be known. Knowing it is non negative in genus 11 could place $\mathcal S^-_{11}$, though uniruled,
in an intermediate position between uniruledness and general type.

\section {Prym moduli spaces}
\subsection{Prym pairs} 
Continuing our review of remarkable multisections of $ \Pic_{d,g}$, we naturally  come to the degree zero universal Picard variety
$$
p:  \Pic_{0,g} \to \mathcal M_g.
$$
$p$ is endowed with the zero section $C \to (C, \mathcal O_C)$ and the multiplication by $n$ map  $\mu_n:  \ \Pic_{0,g} \to  \ \Pic_{0,g}$. 
We can define in $ \Pic_{0,g}$ the following locus:
\begin{definition} $\mathcal R_{g,n}$ is the moduli space of pairs $(C,L)$ such that
  $C$ is a smooth, integral curve of genus $g$ and $L$ is a primitive $n$-root of $\mathcal O_C$.
 
\end{definition} \par 
In particular $\mathcal R_{g,n}$ is a component of the inverse image of $\mu_n$. We recall some constructions related to a pair $(C,L)$. Consider the following sets.
\begin{enumerate}
\item $S_1(C)$: the set of  primitive $n$-roots of $\mathcal O_C$;
\item $S_2(C)$: the set of pairs $(\pi, i)$ such that  $ \pi: \tilde C \to C $ is a cyclic \'etale cover of degree $n$ and $i: \tilde C \to \tilde C$ generates the Galois group of $\pi$;
\item $S_3(C)$: the set  of pairs $( \tilde C, i)$ such that $i: \tilde C \to \tilde C$ is an automorphism acting freely on a smooth, integral curve $\tilde C$ and $\tilde C / \langle i \rangle \cong C$.
\end{enumerate}
\begin{proposition} There exist natural bijective maps between the sets $S_1(C)$, $S_2(C)$ and $S_3(C)$.
\end{proposition}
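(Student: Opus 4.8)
The plan is to fix once and for all a primitive $n$-th root of unity $\zeta \in \mathbb{C}$, which serves to identify the abstract cyclic group $\langle i \rangle$ with the group $\mu_n$ of $n$-th roots of unity, and then to produce explicit maps
$$
S_1(C) \to S_2(C) \to S_3(C) \to S_2(C) \to S_1(C)
$$
that are mutually inverse. The two more formal identifications are $S_2(C) \leftrightarrow S_3(C)$. Given $(\pi, i) \in S_2(C)$ one forgets $\pi$ but remembers that it is recovered as the quotient $\tilde C \to \tilde C / \langle i \rangle \cong C$, obtaining $(\tilde C, i) \in S_3(C)$; conversely, given $(\tilde C, i) \in S_3(C)$, since $i$ acts freely the quotient morphism $\pi : \tilde C \to \tilde C / \langle i \rangle \cong C$ is étale of degree $n$ and $i$ generates its Galois group, so $(\pi, i) \in S_2(C)$. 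That these two constructions are inverse to one another is immediate from the definitions.

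The heart of the matter is the bijection $S_1(C) \leftrightarrow S_2(C)$. First I would construct $S_2(C) \to S_1(C)$ by eigensheaf decomposition. Given $(\pi, i)$, the group $\langle i \rangle \cong \mu_n$ acts on the rank-$n$ sheaf of algebras $\pi_* \mathcal{O}_{\tilde C}$, which therefore splits into eigensheaves
$$
\pi_* \mathcal{O}_{\tilde C} = \bigoplus_{j=0}^{n-1} L_j,
$$
where $i$ acts on $L_j$ by $\zeta^{j}$. Here $L_0 = \mathcal{O}_C$ is the invariant part, each $L_j$ is a line bundle because $\pi$ is étale, and the algebra multiplication induces isomorphisms $L_1^{\otimes j} \xrightarrow{\sim} L_j$, and in particular $L_1^{\otimes n} \xrightarrow{\sim} L_n = L_0 = \mathcal{O}_C$. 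Setting $L := L_1$ produces a candidate element of $S_1(C)$, subject to the primitivity check discussed below.

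Conversely, for $S_1(C) \to S_2(C)$ I would use the standard cyclic-cover construction. An element $L \in S_1(C)$ comes with a trivialization $\varphi : L^{\otimes n} \xrightarrow{\sim} \mathcal{O}_C$, unique up to a scalar since $C$ is connected. One forms the sheaf of $\mathcal{O}_C$-algebras $\mathcal{A} = \bigoplus_{j=0}^{n-1} L^{\otimes j}$, where the product $L^{\otimes j} \otimes L^{\otimes k} \to L^{\otimes (j+k)}$ is the obvious map when $j+k < n$ and is followed by $\varphi$ (to land in $L^{\otimes(j+k-n)}$) when $j + k \ge n$, and sets $\tilde C := \mathrm{Spec}_C \mathcal{A}$, with $\pi$ the structure morphism and $i$ the automorphism scaling the summand $L^{\otimes j}$ by $\zeta^{j}$. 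Since $\varphi$ is invertible, $\mathcal{A}$ is locally free of rank $n$ and $\pi$ is étale; rescaling $\varphi$ by $c \in \mathbb{C}^*$ produces an isomorphic pair $(\pi, i)$ via the diagonal algebra automorphism scaling $L^{\otimes j}$ by $t^j$ for any $t$ with $t^n = c^{-1}$, so the construction is well defined on $S_1(C)$. A comparison of graded pieces shows at once that this map and the eigensheaf map are mutually inverse.

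The main obstacle, and the only point requiring genuine argument, is the matching of the primitivity and integrality conditions: $L$ has exact order $n$ in $\Pic(C)$ if and only if the cover $\tilde C$ is integral (equivalently connected), in which case $\langle i \rangle$ automatically has order exactly $n$ and hence generates the full Galois group. Here I would argue that if $L$ had order $d$ properly dividing $n$, then a trivialization of $L^{\otimes d}$ would split off a nontrivial idempotent in $\mathcal{A}$, via the subalgebra generated by $L^{\otimes d}$, forcing $\tilde C$ to be disconnected; conversely any decomposition of $\tilde C$ would, through the grading, exhibit a trivialization of a proper power of $L$ and thus lower its order. Thus the exactness of the order of $L$, the connectedness of $\tilde C$, and the primitivity of $i$ are three faces of a single condition, so the bijections restrict correctly to the loci defining $S_1(C)$, $S_2(C)$, and $S_3(C)$. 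The residual verifications — independence from all auxiliary choices beyond $\zeta$, and that the round-trip composites are the identity — are routine diagram chases on the eigensheaf grading.
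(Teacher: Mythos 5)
Your proof is correct and follows essentially the same route as the paper: the cyclic-cover construction $S_1(C)\to S_2(C)$ (your $\mathrm{Spec}_C\bigl(\bigoplus_j L^{\otimes j}\bigr)$ is exactly the paper's curve inside $\mathbf P E$, $E=\bigoplus L^{\otimes i}$), the forgetful identification $S_2(C)\leftrightarrow S_3(C)$, and the eigensheaf splitting of $\pi_*\mathcal O_{\tilde C}$ going back. The only difference is that you spell out the points the paper dismisses as ``easy to see'' — independence of the trivialization of $L^{\otimes n}$ and the equivalence of primitivity of $L$ with connectedness of $\tilde C$ — and your arguments for these are sound.
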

\begin{proof} Let $L \in S_1(C)$ and $E := \bigoplus_{i = 0 \dots n} L^{\otimes i}$. Consider the curve
$$
\tilde C := \lbrace (x; 1, v, v^{\otimes 2}, \dots, v^{\otimes n-1}) \ / \ v \in L_x \ , \ v^{\otimes n} = 1 \rbrace \subset \mathbf PE.
$$
Then $\tilde C$ is smooth, connected and the projection $p: \mathbf PE \to C$ restricts to an \'etale cyclic cover $\pi := p/\tilde C$. $\tilde C$ is endowed
with the obvious automorphism $i$ induced by tensor product with $v$. This defines a map $ u_1: S_1(C) \to S_2(C) $ such that $u_1(L) := (\pi,i)$. A second map  $u_2: S_2(C) \to S_3(C)$ is defined as follows: $u_2(\pi,i) = (\tilde C, i)$.
Finally a pair $(\tilde C,i) \in S_3(C)$ defines an \'etale cyclic cover $ \pi: \tilde C \to C = \tilde C / \langle i \rangle. $ In particular  $\pi_* \mathcal O_{\tilde C}$ splits as a vector bundle $E$ as above and its summand  $L$ is uniquely defined by $i$. This
defines a map $u_3: S_3(C) \to S_1(C)$ such that $u_3(\tilde C,i) = L$. It is easy to see that  these maps are bijective.
\end{proof} \par 
From now on we will keep the notations
$$
\pi: \tilde C \to C \ \ , \ \ i: \tilde C \to \tilde C
$$
for the maps constructed from a pair  $(C,L)$ as above. 
 $\mathcal R_{g,n}$ is a very natural multisection and it deserves to be studied in its full generality for every $n$. Actually only the case $n = 2$ has been object of a detailed study 
 since the seventies of the last century. As exceptions see however \cite{CCC} for the compactification of $\mathcal R_{g,n}$ and \cite{LO} for the Prym map associated to $\mathcal R_{g,n}$.
 See also \cite{BC} and \cite{BaV} for the rationality problem when $n = 3$ and $g = 3,4$. From now on we will assume $n =  2$.
\begin{definition} A pair $(C,L)$ as above is called a Prym pair. \end{definition} \par 
For a Prym pair $\pi: \tilde C \to C$ is an \'etale double covering and $i: \tilde C \to \tilde C$ is a fixed point free involution. For $\mathcal R_{g,2}$ we will use the standard notation
$$
\mathcal R_g.
$$
The space $\mathcal R_g$ is well known as the \it Prym moduli space. \rm 
The special attention owed on Prym pairs is related to the notion of Prym variety, a principally polarized abelian variety of dimension $g-1$ which is associated to $(C,L)$ when $n = 2$. This bring us  to  the theory of principaly polarized abelian varieties and to the parametrizations of their moduli spaces.  \par Postponing further comments, we recall the definition of Prym variety. Let $(C,L)$ be a Prym pair consider the Norm map
$$
Nm_0:  \ \Pic^0(\tilde C) \to  \ \Pic^0(C),
$$
sending $\mathcal O_{\tilde C}(d)$ to $\mathcal O_C(\pi_*d)$. $Nm_0$ is a surjective morphism of abelian varieties. It turns out that the principal polarization of $  \Pic^0( \tilde C)$ restricts to twice a principal polarization $\Xi$ on the connected component $A$ of $\mbox{Ker} \  Nm_0$, see \cite{M8}. The pair $(A, \Xi)$ is therefore a principally polarized abelian variety:
\begin{definition} The Prym variety of $(C,L)$ is the pair $(A, \Xi)$.\end{definition}  
 \medskip \par 
What is known on the existence of rational parametrizations of $\mathcal R_g$ 
can be easily summarized as follows:
\begin{theorem}
 $\mathcal R_g$ is unirational for $g \leq 7$ and rational for $g \leq 4$.
\end{theorem}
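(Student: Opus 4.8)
The plan is to treat the two assertions separately, since they rest on genuinely different inputs. The rationality statement for $g \le 4$ I would dispatch by recalling the classical and near-classical constructions: $\mathcal R_1$ is a rational curve (an elliptic curve together with a nonzero $2$-torsion point admits a one-parameter normal form), while for $g = 2,3,4$ I would reproduce the explicit birational models of the various cited authors, each of which exhibits $\mathcal R_g$ as the quotient of a rational variety of plane or space curves, together with their \'etale double cover, by a connected linear group, proving the quotient rational by the no-name method or by exhibiting enough invariants. The common geometric mechanism, to be emphasized, is the embedding $C \subset S$ of the Prym curve in a surface $S$ carrying a quasi-\'etale double cover $\pi: \tilde S \to S$ whose defining line bundle restricts to the prescribed $L$; for small $g$ this surface can be chosen so rigidly that the entire moduli problem linearizes.

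For the unirationality range the engine is the conic-bundle construction sketched in the Introduction. First I would fix the numerical data: a rank-$3$ bundle, equivalently a symmetric $3\times 3$ matrix of forms, on $\mathbf P^2$ whose discriminant $\Gamma = \{\, x : \rank q^*(x) = 2 \,\}$ is an integral $\delta$-nodal plane curve of geometric genus $g$, the $\delta$ nodes being exactly the locus where the fibre conic drops to rank $1$. The normalization $n : C \to \Gamma$ then carries the \'etale double cover $\pi : \tilde C \to C$ parametrizing the two lines of each rank-$2$ fibre, and the line bundle $L$ defining $\pi$ is a nontrivial $2$-torsion class precisely when $\pi$ is connected; this assigns to each such conic bundle a Prym pair $(C,L)$, hence a point of $\mathcal R_g$. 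The parameter space of these conic bundles is an open subset of a \emph{linear} system of symmetric matrices, and is therefore rational; quotienting by the fibrewise action of $\PGL$ leaves a unirational family, and the construction is modular. Thus $\mathcal R_g$ becomes unirational as soon as the resulting classifying map to $\mathcal R_g$ is shown to be dominant.

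The crux, and the step I expect to be the main obstacle, is exactly this dominance for the largest relevant genera. A dimension count arranges $\dim(\text{conic bundles})/\PGL \ge 3g-3 = \dim \mathcal R_g$, but this does not by itself force a \emph{general} Prym pair to be obtained: one must show both that the discriminant curve $\Gamma$ is a general curve of genus $g$ in $\mathcal M_g$ and that the induced $2$-torsion $L$ is general among the $2^{2g}-1$ nontrivial classes. I would establish this by computing the differential of the classifying map at a carefully chosen conic bundle and proving it surjective, equivalently by showing that all first-order deformations of the pair $(C,L)$ are realized by deforming the matrix of forms; the delicate point is to keep the discriminant nodal and the cover \'etale and connected along these deformations, which reduces to the postulation and regularity of the relevant linear systems of forms, in the spirit of the Severi-variety analysis of Section 2. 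For $g = 6$ I would in addition run the alternative Enriques-surface argument: prove the unirationality of the moduli of pairs $(S, \mathcal O_S(C))$, with $S$ an Enriques surface and $|C|$ a genus-$6$ Fano polarization, exploiting that $\omega_S$ is $2$-torsion and restricts to a nontrivial $L$ on $C$ by a Hodge-index argument identical to the Nikulin lemma, and then verify that the family of pairs $(S,C)$ dominates $\mathcal R_6$.

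Finally, for $g = 7$, where the conic-bundle family no longer dominates, I would invoke the K3-based realization of Farkas and Verra: a general Prym pair of genus $7$ is cut on a special surface so that $(C,L)$ varies in a positive-dimensional, rationally parametrized family, reducing the unirationality of $\mathcal R_7$ to that of an auxiliary rational moduli space of such surfaces together with their double cover. Assembling the genus-by-genus statements then yields the theorem.
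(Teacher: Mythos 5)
Your overall architecture coincides with the paper's: rationality for $g \leq 4$ via the cited classical proofs (Katsylo for binary forms and $\mathcal R_3$, Dolgachev for $\mathcal R_2$ and $\mathcal R_3$, Catanese's Cayley cubic for $\mathcal R_4$), unirationality for $g \leq 6$ via nodal conic bundles over $\mathbf P^2$ together with the Enriques-surface construction for $\mathcal R_6$, and the Farkas--Verra Nikulin-surface realization for $g = 7$. The genuine gap is at the step you yourself call the crux: dominance of the conic-bundle family. You propose to prove it by computing the differential of the classifying map and controlling nodality and \'etaleness along first-order deformations, but this is never carried out, and that route would in addition require showing that the locus $\vert V \vert_{\delta}$ of admissible $\delta$-nodal conic bundles is nonempty of the expected dimension --- a Severi-type problem you also leave open. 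The paper needs none of this, because it invokes the symmetric determinantal representation theorem (\cite{Be1} Lemma 6.8, see also \cite{DI} 6.1): giving an integral $\delta$-nodal plane sextic $C'$ together with a nontrivial $2$-torsion $L' \in \Pic^0(C')$ is \emph{equivalent} to writing the equation of $C'$ as the determinant of a symmetric $3 \times 3$ matrix $M$ of quadratic forms, i.e.\ to an admissible conic bundle $\lbrace xMy = 0 \rbrace$ in $\vert \mathcal O_{\mathbf P^2 \times \mathbf P^2}(2,2) \vert$ with discriminant $C'$ and induced cover defined by $L'$. Since a general Prym pair $(C,L)$ of genus $g \leq 6$ has a $(10-g)$-nodal plane sextic model (Brill--Noether/Severi theory, as in Section 2), and since $\nu^*: \Pic^0(C') \to \Pic^0(C)$ is surjective with divisible kernel $(\mathbf C^*)^{\delta}$, so that $L$ lifts to a $2$-torsion $L'$, every general point of $\mathcal R_g$ is hit by an explicitly constructed conic bundle: dominance (and nonemptiness of $\vert V \vert_{\delta}$) is immediate, with no deformation theory. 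This determinantal equivalence is the missing idea; without it your outline leaves the theorem unproved exactly at its hardest point.

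Two further inaccuracies are worth flagging. First, you describe the $\delta$ nodes of the discriminant as ``the locus where the fibre conic drops to rank $1$''; in the construction actually used (both in the Introduction and in the definition of admissible conic bundles) \emph{every} fibre has rank $\geq 2$, and the nodes of the discriminant correspond bijectively to the nodes of the total space $V$, over which the fibre is still a pair of distinct lines. This is not cosmetic: rank $\geq 2$ everywhere is precisely what makes $\tilde C' \to C'$ \'etale, hence $\pi: \tilde C \to C$ \'etale after normalization, while integrality of the surface swept by the singular fibres is what makes $\pi$ non-split; allowing a rank-$1$ fibre at a node would degenerate the covering datum there and the Prym pair would not be defined as claimed. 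Second, on the rationality side, the relevant quotients are by \emph{finite} groups ($S_4$ acting on $\vert 2H \vert$ on the Cayley cubic, $S_4 \times S_2$ acting on the Siegel threefold $\mathbb M$), whose rationality is the delicate content of those proofs and is obtained by explicit invariant-theoretic work; the no-name method over a connected linear group does not dispatch them, so your parenthetical ``or by exhibiting enough invariants'' is in fact the whole story.
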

\par 
We  discuss at first the rationality and then the unirationality results. Once more this gives the opportunity to uncover beautiful  constructions and  see
their common geometric links.
\subsection{ Rationality of  Prym moduli spaces of hyperelliptic curves} 
We start with the \it hyperelliptic locus in $\mathcal R_g$. \rm This has many irreducible components. The rationality of all of them, possibly but one, can be deduced from Katsylo's theorem on the rationality of moduli of binary forms, \cite{K1}. \par 
Let $C$ be  hyperelliptic of genus $g \geq 2$ and let $u: C \to \mathbf P^1$ be its associatd double cover. Then the ramification divisor of $u$ is supported on the set 
$$
W = \lbrace w_1, \ \dots, \ w_{2g+2} \rbrace
$$
of the Weierstrass points of $C$. Let $H$ be the hyperelliptic line bundle of $C$ and $E_t$ be the family of effective divisors of cardinality $2t$,  where $1 \leq t \leq [\frac {g+1}2]$, which are
supported on $2t$ \it distinct \rm Weierstrass points. For $e \in E_t$ the line bundle $H^{\otimes t}(-e)$ is a square root of $\mathcal O_C$, that is an element of
$  \Pic^0_2(C)$. Let
$$
\beta_t: E_t \to   \Pic^0_2 (C) - \lbrace \mathcal O_C \rbrace
$$
be the map sending $e \in E_t$ to the line bundle $H^{\otimes t}(-e)$ and let $$B_t := \beta_t(E_t).$$ The next lemma is standard:
 \begin{lemma} \ \par  (1) The map  $\beta_t$ is injective for $t \leq [\frac g2]$. \par (2)  For $t = \frac {g+1}2$ the map $\beta_t$ is 2:1 over its image.
 \par (3) $  \Pic^0_2(C) - \lbrace \mathcal O_C \rbrace = \bigcup_{ 1 \leq t \leq [\frac{g+1}2]} B_t$.
 \end{lemma}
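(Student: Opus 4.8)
The plan is to reduce all three assertions to the classical combinatorial description of the $2$-torsion subgroup $\Pic^0_2(C)$ of a hyperelliptic Jacobian. Write $B=\{1,\dots,2g+2\}$ for the index set of the Weierstrass points, so that an element $e\in E_t$ is the reduced divisor $\sum_{i\in S}w_i$ attached to a subset $S\subset B$ with $|S|=2t$, and
\[
\beta_t(e)=H^{\otimes t}\Bigl(-\sum_{i\in S}w_i\Bigr)=:e_S .
\]
First I would record the two structural relations $\mathcal O_C(2w_i)\cong H$ for every $i$ and $\mathcal O_C\bigl(\sum_{i\in B}w_i\bigr)\cong H^{\otimes(g+1)}$ (the latter being the pullback of the degree $2g+2$ branch divisor). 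The first relation shows each $e_S$ is $2$-torsion, exactly as in the excerpt. Using it together with $2\sum_{i\in S\cap S'}w_i\sim|S\cap S'|H$ one verifies that $S\mapsto e_S$ is a group homomorphism from the even subsets of $B$, under symmetric difference $\triangle$, to $\Pic^0_2(C)$; and the second relation gives at once $e_B=\mathcal O_C$, hence $e_{S^c}=e_S$ for every $S$.

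The key step, and the main obstacle, is to identify the kernel of this homomorphism as exactly $\{\emptyset,B\}$; that $e_\emptyset=e_B=\mathcal O_C$ is immediate, so the content is the converse. I must show that for an even subset $S$ the relation $\sum_{i\in S}w_i\sim tH$, with $2t=|S|$, forces $S\in\{\emptyset,B\}$. After replacing $S$ by its complement, which does not change $e_S$, I may assume $1\le|S|\le g+1$, hence $1\le t\le g$. For $t\le g$ one has $h^0(tH)=t+1$, so the pullback map $H^0(\mathcal O_{\mathbf P^1}(t))\to H^0(tH)$ is surjective and every divisor in $|tH|$ is $\iota$-invariant, i.e. a union of fibres of the hyperelliptic map $u$. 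A fibre meeting the Weierstrass locus equals $2w_i$, so an $\iota$-invariant effective divisor contains each $w_i$ with even multiplicity and can never equal the reduced divisor $\sum_{i\in S}w_i$ unless $S=\emptyset$. This is the only genuinely geometric input; everything after is counting. The even subsets form a group of order $2^{2g+1}$ and the kernel has order $2$, so $S\mapsto e_S$ descends to an isomorphism between even subsets modulo complementation and $\Pic^0_2(C)$ (of order $2^{2g}$). In particular $e_S=e_{S'}$ holds if and only if $S'\in\{S,S^c\}$.

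With this dictionary the three statements become purely combinatorial. For (1), if $t\le[\frac g2]$ then $|S|=2t\neq 2g+2-2t=|S^c|$, so the alternative $S'=S^c$ is excluded and $e_S=e_{S'}$ forces $S=S'$; since $S\neq\emptyset,B$ we also have $e_S\neq\mathcal O_C$, so $\beta_t$ is injective into $\Pic^0_2(C)-\{\mathcal O_C\}$. For (2), when $t=\frac{g+1}2$ one has $|S|=g+1=|S^c|$ with $S\neq S^c$ and both $S,S^c\in E_t$, so every fibre of $\beta_t$ is precisely the pair $\{S,S^c\}$ and $\beta_t$ is $2{:}1$ onto $B_t$. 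For (3), every nonzero class is $e_S$ with $\emptyset\neq S\neq B$; replacing $S$ by $S^c$ if necessary gives $|S|=2t\le g+1$, that is $t\le[\frac{g+1}2]$, so the class lies in some $B_t$, and thus the $B_t$ exhaust $\Pic^0_2(C)-\{\mathcal O_C\}$. The whole argument therefore hinges on the kernel computation of the previous paragraph, the remaining work being bookkeeping with subsets of $B$.
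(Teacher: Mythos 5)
Your proof is correct, but it takes a genuinely different route from the one in the paper. The paper treats the three parts separately and tersely: for (1) it notes that $\beta_t(e)=\beta_t(f)$ gives $\mathcal O_C(e)\cong\mathcal O_C(f)$ and then invokes rigidity (a reduced divisor supported on distinct Weierstrass points of degree $2t\le g$ is isolated in its linear system, so $e=f$); for (2) it checks directly that $\beta_t^{-1}(\beta_t(e))=\lbrace e, w-e\rbrace$, where $w$ is the sum of all $2g+2$ Weierstrass points; and (3) is dismissed as ``an easy well known property.'' You instead build the full classical dictionary: $S\mapsto e_S$ is a homomorphism from even subsets of the branch locus under symmetric difference into $\Pic^0_2(C)$, its kernel is $\lbrace\emptyset,B\rbrace$ --- proved via the fact that all sections of $tH$ with $t\le g$ are pulled back from $\mathbf P^1$, so every divisor in $\vert tH\vert$ contains each Weierstrass point with even multiplicity --- and the order count $2^{2g+1}/2=2^{2g}=\vert\Pic^0_2(C)\vert$ shows the induced map on even subsets modulo complementation is an isomorphism. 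All three assertions then reduce to counting cardinalities of $S$ versus $S^c$. What your approach buys: it is uniform and self-contained, it actually proves part (3) rather than citing it, and it makes the $2{:}1$ structure in (2) transparent (fibres are exactly complementary pairs). What the paper's approach buys: brevity, and part (1) needs only the rigidity of $e$ rather than surjectivity onto the whole $2$-torsion group. Note also that your kernel computation supplies a rigorous substitute for the paper's slightly elliptical justification ``isolated, since $\deg e\le g$,'' which as literally stated is not sufficient (the divisor $H$ itself has degree $2$ and moves); both arguments ultimately rest on the same geometric input, namely the structure of the linear systems $\vert tH\vert$ on a hyperelliptic curve.
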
 
  \begin{proof} (1) Let $\beta_t(e) = \beta_t(f)$ for some $e, f \in E_t$, then $\mathcal O_C(f) \cong \mathcal O_C (e)$. 
  Now $e$, $f$ are isolated, since $deg \ e \leq g$. Hence $e = f$.  (2) Let $e = w_1 + \dots + w_{g+1}$, it is easy to see that $\beta_t^{-1}(\beta_t(e)) = \lbrace e , w-e \rbrace$, where $w = w_1 + \dots + w_{2g+2}$. (3) This is
  an easy well known property as well.
   \end{proof} \par
 \begin{definition}  $\mathcal {RH}^t_g$ is the locus in $\mathcal R_g$ of the moduli points of pairs $(C,L)$ such that $L \cong H^{\otimes t}(-e)$ for some $e \in E_t$.
 \end{definition} \par 
Let $\mathcal {RH}_g \subset \mathcal R_g$ be the hyperelliptic locus, parametrizing pairs $(C,L)$ such that $C$ is hyperelliptic. It is clear that
$$
\mathcal {HR}_g = \bigcup_{1 	\leq t \leq [\frac{g+1}2]} \mathcal {RH}^t_g.
$$
  \begin{theorem} $\mathcal {RH}^t_g$ is rational for $t \leq [\frac g2]$.
 \end{theorem}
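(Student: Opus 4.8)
The plan is to reduce the statement to Katsylo's rationality theorem for moduli of binary forms by realizing $\mathcal{RH}^t_g$ as a configuration space of points on $\mathbf P^1$ modulo $\PGL_2$. First I would invoke the preceding lemma: since $\beta_t$ is injective for $t\le[\frac g2]$, the datum of a Prym pair $(C,L)$ with $L\cong H^{\otimes t}(-e)$ is equivalent to the datum of the pair $(C,e)$, where $e\in E_t$ is a reduced divisor of $2t$ distinct Weierstrass points. A hyperelliptic $C$ is in turn determined, up to isomorphism, by its branch divisor $W\subset\mathbf P^1$ of degree $2g+2$ modulo $\PGL_2$, and $e$ is recorded by the choice of the $2t$-point subset $W':=\Supp(e)$, with complement $W'':=W\setminus W'$ of degree $b:=2g+2-2t$. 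Since the hyperelliptic involution acts trivially on $W$ and fixes $L$, no further identifications occur, so $\mathcal{RH}^t_g$ is birational to the quotient
$$
Q_{t,g}\ :=\ \bigl(\mathbf P^{2t}\times\mathbf P^{b}\bigr)/\PGL_2,
$$
where the two projective spaces are $\Sim^{2t}\mathbf P^1$ and $\Sim^{b}\mathbf P^1$ (binary forms $f,h$ cutting out $W',W''$) and one restricts to the open locus where $fh$ is reduced.

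Next I would fibre $Q_{t,g}$ over the moduli of binary forms of degree $b$. Projecting to the second factor gives a $\PGL_2$-equivariant map $\mathbf P^{2t}\times\mathbf P^{b}\to\mathbf P^{b}$, and hence a dominant map $Q_{t,g}\dashrightarrow Y:=\mathbf P^{b}/\PGL_2$. Because $t\le[\frac g2]$ one has $b=2g+2-2t\ge g+2$, so $b\ge 5$ as soon as $g\ge 3$; a general binary form of degree $\ge 5$ has trivial stabiliser in $\PGL_2$, whence $\PGL_2$ acts generically freely on $\mathbf P^{b}$ and $\mathbf P^{b}\dashrightarrow Y$ is generically a $\PGL_2$-torsor. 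By Katsylo's theorem \cite{K1} the base $Y$, being the moduli space of binary forms of degree $b$, is rational.

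It then remains to identify the generic fibre of $Q_{t,g}\to Y$. Over the generic point $\eta$ of $Y$, with $K=\mathbf C(Y)$, the fibre of $\mathbf P^{b}\to Y$ is a $\PGL_2$-torsor $T$ over $K$, and the fibre of $Q_{t,g}$ is the descent of $T\times\mathbf P^{2t}$. Here the essential point, and the place where the hypothesis that $e$ has \emph{even} degree is used, is that $\Sim^{2t}\mathbf C^2$ is a genuine linear representation of $\PGL_2$ (the centre of $\mathrm{SL}_2$ acts by $(\pm 1)^{2t}=1$). Hence the descent of $T\times\Sim^{2t}\mathbf C^2$ is a $(2t+1)$-dimensional $K$-vector space, free because $K$ is a field, and its projectivisation is $\mathbf P^{2t}_K$ with no Brauer obstruction. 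Therefore $\mathbf C(Q_{t,g})=K(x_1,\dots,x_{2t})$ is purely transcendental over $K$, and since $Y$ is rational so is $Q_{t,g}$, proving the theorem for $g\ge 3$.

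The main obstacle I expect is precisely this last no-name step: for a $\PGL_2$-linearised $\mathbf P^{2t}$ the associated bundle over $Y$ is a priori only a Brauer--Severi fibration, and rationality of the total space would fail if a non-trivial Brauer class intervened; the evenness of $2t$ is exactly what removes this obstruction, by turning the projective fibre into the projectivisation of an honest representation. The only case left aside is $g=2$, $t=1$, where $b=4$ and the $\PGL_2$-action on $\mathbf P^{4}$ is not generically free. There I would argue the other way: the two marked points can be normalised to $\{0,\infty\}$ by $\PGL_2$, so $Q_{1,2}$ reduces to the quotient of $\mathbf P^{4}=\Sim^4\mathbf P^1$ by the solvable stabiliser of $\{0,\infty\}$ (the one-dimensional torus $z\mapsto\lambda z$ extended by the involution $z\mapsto 1/z$), whose rationality follows by an elementary direct (toric) computation.
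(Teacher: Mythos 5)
Your proposal is correct, and its skeleton is the same as the paper's: identify a pair $(C,L)\in\mathcal{RH}^t_g$ with the partition of the branch locus into $2t$ and $2g+2-2t$ points, so that $\mathcal{RH}^t_g$ becomes birational to $\bigl(\mathbf P^1(2g+2-2t)\times\mathbf P^1(2t)\bigr)/\PGL(2)$; fibre this over the moduli of binary forms of degree $2g+2-2t$ (i.e.\ over $\mathcal H_{g-t}$), which is rational by Katsylo; and conclude by trivializing the $\mathbf P^{2t}$-fibration. The difference is in how the last step is treated. The paper simply asserts that this quotient ``is well known'' to be a $\mathbf P^{2t}$-bundle over an open set of $\mathcal H_{g-t}$, whereas you actually prove it: you check generic freeness of the $\PGL(2)$-action on the base factor, and you run the descent (no-name) argument, isolating the genuinely necessary point that $\Sim^{2t}\mathbf C^2$ is a representation of $\PGL(2)$ itself \emph{because} $2t$ is even, so the twisted fibre is the projectivization of an honest $K$-vector space (Hilbert 90) and no Brauer class can intervene. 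More importantly, you catch the one case where the paper's assertion is literally false: for $g=2$, $t=1$ the base factor is $\mathbf P^1(4)$, on which $\PGL(2)$ is not generically free (a general $4$-point set has the Klein four-group as stabilizer), so the natural map to $\mathcal H_1$ has generic fibre $\mathbf P^2/(\mathbf Z_2\times\mathbf Z_2)$ rather than $\mathbf P^2$; your slice reduction to $\mathbf P^4$ modulo the normalizer of the torus, followed by an explicit invariant computation, correctly repairs this — and this is exactly the case the paper needs for its Corollary on the rationality of $\mathcal R_2$. So your write-up is not a different route, but it is a strictly more complete one: the two points you flag as ``obstacles'' are precisely the points the paper's proof leaves unargued, one of which it gets wrong as stated.
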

\begin{proof} Let $(C,L)$ be a Prym pair defining a general $x \in \mathcal R \mathcal H^t_g$. Assume $t \leq [\frac g2]$. Then, by the previous lemma, there exists a unique $e \in E_t$ such that  $L \cong H^{\otimes t}(-e)$. Therefore  we can uniquely associate to $(C,L)$ the set  $W - Supp \ e$. Consider the 2.1 cover $h: D \to \mathbf P^1$, branched on $W - \\Supp \ e$, then $D$ is hyperelliptic of genus $g - t$. Let $\mathcal H_p$ be the moduli
space of hyperelliptic curves of genus $p$. Then we have a dominant rational map
$$
\phi: \mathcal {HR}^t_g \to \mathcal H_{g-t},
$$
induced by the assignement $(C,L) \mapsto D$. As is well known $\mathcal H_{g-t}$ is birational to the quotient $\mathbf P^1(2g - 2t + 2) / PGL(2)$, where $\mathbf P^1(n)$ denotes
the $n$-th symmetric product of $\mathbf P^1$. This quotient is rational by  Katsilo's theorem, \cite{K1}.  Moreover let $\mathbb P := (\mathbf P^1(2g - 2 - 2t) \times \mathbf P^1(2t)) / PGL(2)$. It
is well known that then $\mathbb P$ is a $\mathbf P^{2t}$-bundle over an open set $U \subset \mathcal H_{g-t}$. Hence $\mathbb P$ is rational. Let $x \in \mathbb P$ be the orbit of $(p,q) \in \mathbf P^1(2g + 2 - 2t) \times \mathbf P^1(2t)$. $x$ uniquely defines a pair $(C,L)$ such that $h: C \to \mathbf P^1$ is the 2:1 cover  branched on $p + q$ and $L \cong H^{\otimes t}(-e)$, where $e := h^*q$. Hence we have a rational map
$$
\psi: \mathbb P \to \mathcal R \mathcal H^t_g
$$
sending $x$ to the moduli point of $(C,L)$.    An inverse to $\psi$ exists: let $y \in \mathcal {HR}^t_g$ be the moduli point of $(C,L)$. Keeping our notations, this uniquely defines $e \in E$ such that $L \cong H^{\otimes t}(-e)$. This isomorphism uniquely defines the orbit $x \in \mathbb P$ of the pair $(p,q)$, where $p+q$ is the branch divisor of $h$ and $q = h_*e$. Therefore $\psi$ is generically injective. It is also birational because 
$\mathbb P$ and $\mathcal {HR}^t_g$ are integral of the same dimension. Then $\mathcal {HR}^t_g$ is rational for $t \leq [\frac g2]$.    \end{proof} \par  
In particular  it follows:
\begin{corollary} $\mathcal R_2$ is rational for $g = 2$.
\end{corollary}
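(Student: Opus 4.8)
The plan is to deduce the corollary immediately from the rationality theorem for the hyperelliptic loci $\mathcal{RH}^t_g$ just established, using the special features of genus two. The first step is to observe that every smooth projective curve of genus $2$ is hyperelliptic. Consequently the hyperelliptic locus exhausts the entire Prym moduli space: one has $\mathcal R_2 = \mathcal{RH}_2$, with no non-hyperelliptic part to account for separately.

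Next I would pin down which strata of the decomposition $\mathcal{RH}_g = \bigcup_{1 \leq t \leq [\frac{g+1}2]} \mathcal{RH}^t_g$ actually occur when $g = 2$. Since $[\frac{g+1}2] = [\frac 32] = 1$ in this case, the union collapses to a single piece, and therefore $\mathcal R_2 = \mathcal{RH}_2 = \mathcal{RH}^1_2$. The only thing to verify is the numerical compatibility with the hypothesis of the rationality theorem: that theorem asserts rationality of $\mathcal{RH}^t_g$ precisely in the range $t \leq [\frac g2]$, and for $g = 2$ we have $[\frac g2] = 1$. Hence the relevant index $t = 1$ sits exactly at the upper boundary of the admissible range, and the theorem applies with $(g,t) = (2,1)$ to give that $\mathcal{RH}^1_2$ is rational.

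Combining the two identifications, $\mathcal R_2 = \mathcal{RH}^1_2$ is rational, which is the assertion. I do not expect any genuine obstacle here: the argument is a direct specialization, and the single point requiring care is confirming that $t = 1$ \emph{meets} rather than \emph{exceeds} the bound $[\frac g2] = 1$. In particular the delicate case $t = \frac{g+1}2$, in which the map $\beta_t$ is only $2{:}1$ onto its image and rationality of $\mathcal{RH}^t_g$ is not claimed, simply does not arise for $g = 2$, so no supplementary analysis of that boundary stratum is needed.
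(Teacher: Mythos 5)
Your proof is correct and is exactly the paper's argument: the paper's entire proof reads ``Just observe that $\mathcal R_2 = \mathcal H\mathcal R^1_2$,'' which compresses precisely the three points you spell out (genus-two curves are all hyperelliptic, the decomposition collapses to the single stratum $t=1$, and $t = 1 \leq [\frac g2] = 1$ puts it inside the range of the rationality theorem). Your check that $t=1$ meets rather than exceeds the bound is the only substantive verification, and you handle it correctly.
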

\begin{proof} Just observe that $\mathcal R_2 = \mathcal H\mathcal R^1_2$. \end{proof}
 
 \medskip \par 
\underline {\it $\mathcal R_2$ and the Segre primal} \rm \par 
A beautiful proof of the rationality of $\mathcal R_2$ is due to Dolgachev and relies on Segre cubic primal and its associated geometry.  We want to introduce  this
proof and describe the main geometric constructions behind it. See \cite{D1} and \cite{VdG} for more details.   
The Segre primal is the cubic threefold $V$ defined in $\mathbf P^5$ by the equations
$$
y_1 + \dots + y_5 = y_1^3 + \dots + y_5^3 = 0.
$$
Clearly we have
$$
V \subset \mathbf P^4 := \lbrace y_1 + \dots + y_5 = 0 \rbrace.
$$
$V$ is a nodal cubic threefold with the maximal number of nodes, namely 10, and contains 15 planes. It was discovered
by Corrado Segre in \cite{Sg}. $V$ is very related to the moduli space $\overline {\mathcal M}_2^{(2)}$ of genus 2 curves
with a level 2 structure.  It is indeed the strict dual of the natural embedding
$$
\mathbb M \subset \mathbf P^{4*}
$$
of $\overline {\mathcal M}^{(2)}_2$ as a Siegel modular threefold of degree 4. Of course there is a lot of beautiful classical geometry 
behind this relation, see \cite{VdG}: \medskip \par 
(1) $ \Sing \ \mathbb M$  is the connected union of 15 lines, each of multiplicity 2. This curve
has 35 nodes which are triple points for $\mathbb M$.
\medskip \par 
(2) A point $o \in \mathbb M -  \Sing \ \mathbb M$, represents a genus 2 curve $C$ with a level 2 structure.  Let  $K_o := T_o \cap \mathbb M$,
$T_o$ being the tangent hyperplane at $o$. Then
$$
K_o =   \Pic^0(C) / \langle-1\rangle.
$$ 
$K_o$ is a Kummer quartic surface.
 \par 
(3)  Let $q:   \Pic^0(C) \to K_o$ be the quotient map, then $$q^{-1}( \Sing \ \mathbb M) =   \Pic^0_2(C) - \lbrace \mathcal O_C \rbrace. $$
\par 
(4) The forgetful map
$
f: \mathbb M \to \overline {\mathcal M}_2
$
is the quotient map for the action of $S_6 \cong Sp(4, \mathbb Z_2)$ on $\mathbb M$.
It is  induced by the permutations of $(y_1, \dots, y_6)$.
 
  \medskip \par  \underline{\it The rationality of $\mathcal R_2$} \par  Let $T$ be the set of 15 double lines of $\mathbb M$. It follows from (3) that each $o \in \mathbb M -  \Sing \ \mathbb M$ is endowed with a bijection
$
T \longrightarrow   \Pic^0_2(C) - \lbrace \mathcal O_C \rbrace,
$
defined by the assignement $l \mapsto q^{-1}(l)$. We can now deduce the rationality of $\mathcal R_2$: \medskip \par  Fix $l \in T$ and consider the map
$$
\phi: \mathbb M \to \mathcal R_2
$$
sending $o$ to the moduli point of $(C,L)$, where $L = q^{-1}(l)$.  Observe that $S_6$ acts on $T$ and that $\phi$ is the quotient map with respect to the stabilizer of $l$. 
It is known that this is conjugate to the subgroup $S_4 \times S_2$ of $S_6$. Therefore we conclude that: 
$$ 
\text { \it ${\mathcal R}_2$ is birational to $\mathbb M / S_4 \times S_2$. }
$$
The last step is the rationality of $\mathbb M / S_4 \times S_2$, proved in \cite{D1}.   
\bigskip \par 
 \subsection { Rationality of $\mathcal R_3$} \ \rm \par 
Various proofs of the rationality of $\mathcal R_3$ appear to be known since the eighties of the last century.  The subject was indeed considered by several authors, 
see \cite{D1} for some historical account.  In particular, the first complete paper containing this result  is due to Katsylo, \cite{K2}.  
The rationality of $\mathcal R_3$ stems from the vein of classical geometry we considered in the last part of the previous section. We continue
in this vein, keeping the same notations. \medskip \par 
\underline {\it $\mathcal R_3$ and the Segre primal} \par 
Consider the Siegel modular quartic threefold $\mathbb M = \overline {\mathcal M}^{(2)}_2$ and the sheaf
$$
 \Omega^1_{\mathbb M} \oplus \mathcal O_{\mathbb M}.
$$
The fibre of  $\mathbf P (\Omega^1_{\mathbb M} \oplus \mathcal O_{\mathbb M})$ at any smooth $o \in \mathbb M$ is the dual of the tangent hyperplane to $\mathbb M$ at $o$. In other words we have
an obvious identification
$$
\mathbf P (\Omega^1_{\mathbb M} \oplus \mathcal O_{\mathbb M})_{_o} = \vert \mathcal O_{K_o}(1) \vert,
$$
where $K_o =   \Pic^0(C) / \langle-1\rangle$ is the Kummer quartic surface considered in the previous section. Let $f: \mathbb M \to \overline {\mathcal M}_2$ be the forgetful map. $S_6$ acts linearly and almost freely on the hypersurface $\mathbb M$. Hence it acts as well on the fibres of $\mathbf P (\Omega^1_{\mathbb M} \oplus \mathcal O_{\mathbb M})$ and descends to a $\mathbf P^3$-bundle 
$
u: \mathbb K \to U
$
over an open set $U$ of $\mathcal M_2$. Let $p = f(o) \in \mathcal M_2$, we point out that there exists a well defined surface which is naturally associated to $\mathbb K_p$. This is the dual 
$$
D_p \subset \mathbb K_p
$$
of $K_o$. It is well known that $D_p$ is the union of 16 planes, corresponding to the 16 nodes of $K_o$, and of the strict dual surface
$$
\hat K_p \subset \mathbb K_p
$$
of $K_o$. This is a Kummer quartic surface again. Note that any $$ H \in \mathbb K_p - D_p$$ is  a smooth plane section of $K_o$. Moreover $H$ is endowed with a non split \'etale double covering $\pi: \tilde H \to H$. Indeed let $C$ be the moduli point of $p$, then $\pi$ is induced by the quotient map $  \Pic^0(C) \to K_o$. Let $L_H \in   \Pic^0_2(C)$ be the line bundle defining $\pi$,
we have reconstructed from $(H,p)$ a Prym pair $(H, L_H)$ and hence a point of $\mathcal R_3$. This defines a rational map
$$
\phi: \mathbb K \to \mathcal R_3.
$$
Its description $\phi$ follows from \cite{Ve2}:  $\phi$ fits in the commutative diagram
$$
\begin{CD}
{\mathbb K} @>{\phi}> >{\mathcal R_3} \\
@VV{u}V @VV{p_3}V \\
{\mathcal M_2} @>j>> {\mathcal A_2,} \\
\end{CD}
$$
where $p_3: \mathcal R_3 \to \mathcal A_2$ is the Prym map and $j: \mathcal M_2 \to \mathcal A_2$ is the Torelli map. It turns out that the fibre of the compactified Prym map at $j(p)$ is a suitable modification of the
quotient space
$$
\mathbb K_p / \mathbb Z_2^4.
$$
Here the group $\mathbb Z_2^4$ acts on $\mathbb K_p = \mathbf P^3$ as the group of projective automorphisms induced by the group of translations on $  \Pic^0(C)$ by 2-torsion elements. It is a well known property, from classical theta functions theory, that 
$$
\mathbf P^3 / \mathbb Z_2^4 = \mathbb M^*.
$$
$\mathbb M^*$ denotes the Segre cubic primal, that is, the strict dual of $\mathbb M$. This is shown in Hudson's book \cite{Hu}, where $\mathbb P^3 / \mathbb Z_2^4$ is explicitely described. This is the image of the
map $q: \mathbf P^3 \to \mathbf P^4$ defined by the linear system of the $\mathbb Z_2^4$-invariant quartic surfaces
$$
a(x_1^2x_4^2 + x_2^2x_3^2) + b(x_2^2x_4^2 + x_1^2x_3^2) + c(x_1^2x_4^2 + x_2^2x_3^2) + 2dx_1x_2x_3x_4 + e\sum x_i^4 = 0.
$$
Its cubic equation is
$$
4e^3-a^2e^-b^2e - c^2e + abc + d^2e = 0.
$$
It is shown in \cite{Ve2} that $q = \phi / \mathbb K_p$. Then we can conclude that
\begin{theorem} $\mathcal R_3$ is birational to a fibration over $\mathcal M_2$ with fibre $\mathbb M^*$.
\end{theorem}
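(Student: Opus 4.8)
The plan is to assemble the theorem from three ingredients already in place: the commutative square $p_3\circ\phi = j\circ u$, the fibrewise identification of $\phi$ with a $\mathbb Z_2^4$-quotient, and the classical computation of that quotient as the Segre primal. Since genus $2$ Torelli $j:\mathcal M_2\to\mathcal A_2$ is birational, the base $\mathcal A_2$ of the Prym map may be freely replaced by $\mathcal M_2$ throughout; the whole argument is then a fibrewise analysis of $\phi$ followed by this substitution.

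First I would fix a general $p\in\mathcal M_2$ with curve $C$ and Kummer surface $K_o=\Pic^0(C)/\langle-1\rangle$, so that the fibre $\mathbb K_p=u^{-1}(p)$ is the $\mathbf P^3=\vert\mathcal O_{K_o}(1)\vert$ of plane sections of $K_o$. Because $u$ is a $\mathbf P^3$-bundle over an open set of $\mathcal M_2$, the map $u$ already realizes $\mathbb K$ as a fibration over $\mathcal M_2$. Commutativity of the diagram sends each $\mathbb K_p$ into the Prym fibre $p_3^{-1}(j(p))$, and the key input, established in \cite{Ve2}, is that the restriction $\phi\vert_{\mathbb K_p}$ is exactly the quotient map $q:\mathbf P^3\to\mathbf P^3/\mathbb Z_2^4$, the group $\mathbb Z_2^4$ acting through the $2$-torsion translations on $\Pic^0(C)$. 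Concretely, two plane sections $H,H'$ of $K_o$ give isomorphic Prym pairs precisely when they lie in a common $\mathbb Z_2^4$-orbit, and the general fibre of $\phi\vert_{\mathbb K_p}$ is such an orbit. Invoking Hudson's identification \cite{Hu} $\mathbf P^3/\mathbb Z_2^4\cong\mathbb M^*$, the image $\phi(\mathbb K_p)$ is birational to the Segre primal $\mathbb M^*$, and agrees with the compactified Prym fibre $p_3^{-1}(j(p))$ up to the modification noted earlier.

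Finally I would read off the fibration directly from the Prym map. By the fibrewise analysis the general fibre $p_3^{-1}(j(p))$ of $p_3:\mathcal R_3\to\mathcal A_2$ is birational to $\mathbb M^*$; since $j$ is birational onto $\mathcal A_2$, composing with $j^{-1}$ turns $p_3$ into a rational map $\mathcal R_3\dashrightarrow\mathcal M_2$ whose general fibre is $\mathbb M^*$, which is precisely the asserted fibration structure. The genuine mathematical weight sits in the fibrewise statement $q=\phi\vert_{\mathbb K_p}$: showing that isomorphism of the Prym pairs attached to plane sections of a Kummer quartic is governed exactly by the $2$-torsion translations. If one had to prove this from scratch, rather than cite \cite{Ve2}, it would be the main obstacle, requiring a careful study of the étale double covers induced on the plane sections of $K_o$; the remaining steps are formal manipulation of the diagram together with the cited quotient descriptions and the birationality of Torelli.
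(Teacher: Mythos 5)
Your proposal follows the paper's own argument essentially verbatim: the same construction of the $\mathbf P^3$-bundle $u:\mathbb K \to \mathcal M_2$ of plane sections of the Kummer quartics, the same commutative square $p_3\circ\phi = j\circ u$, the same citation of \cite{Ve2} for the fibrewise identification $q = \phi\vert_{\mathbb K_p}$ with the $\mathbb Z_2^4$-quotient, and the same appeal to Hudson \cite{Hu} for $\mathbf P^3/\mathbb Z_2^4 = \mathbb M^*$, with the birationality of the genus-$2$ Torelli map effecting the passage from $\mathcal A_2$ to $\mathcal M_2$. The paper leaves that last substitution implicit, but otherwise the two arguments coincide, including your (correct) assessment that the genuine mathematical weight is carried by the result quoted from \cite{Ve2}.
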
 
\underline {\it $\mathcal R_3$ and the Coble-Roth map} \par 
%\subsubsection*{$\mathcal R_3$ and the Coble-Roth map}
A proof of the rationality of $\mathcal R_3$, due to Dolgachev, relies on the classical Coble-Roth map. We want to introduce it and then summarize such a proof. Let
$$
\mathcal R^b_4
$$
be the moduli space of Prym pairs $(F,L_F)$ such that $F$ is a bielliptic curve of genus 4 and $L_F \in \sigma^*   \Pic^0_2(E)$, where $ \sigma: F \to E $ is a bielliptic map. For proving the rationality of 
$\mathcal R_3$, the main tool used is a rational map
$$
\phi: \mathcal R_3 \to \mathcal R^b_4.
$$
This map is classical, it is defined in \cite{D1} as the \it Coble-Roth map. \rm  The rationality of $\mathcal R_3$ is then a consequence of the following steps:
\begin{theorem} \ \begin{enumerate}
\item $\phi$ is birational,
\item $\mathcal R^b_4$ is rational.
\end{enumerate} 
\end{theorem}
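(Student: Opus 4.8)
The plan is to treat the two assertions separately, using throughout that both $\mathcal R_3$ and $\mathcal R^b_4$ are irreducible of dimension $6$: for $\mathcal R_3$ this is the stated irreducibility of $\mathcal R_g$ together with $\dim \mathcal R_3 = 3g-3 = 6$, while $\mathcal R^b_4$ is dominated by the irreducible family of pairs $(E,Q)$, where $E\subset\mathbf P^2$ is a smooth plane cubic, $Q$ a conic, and the branch divisor of $\sigma$ is $E\cap Q$. The finite choice of $\epsilon \in \Pic^0_2(E) \setminus \{\mathcal O_E\}$ adds only an irreducible $3{:}1$ cover, since the monodromy on the nonzero $2$-torsion of a varying elliptic curve is the full symmetric group. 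Hence $\dim \mathcal R^b_4 = 2g-2 = 6$ as well, matching $\dim \mathcal R_3$.

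For part (1) I would first recall the Coble--Roth map $\phi$ as a rational map, and then observe that, since source and target are irreducible of the same dimension, it suffices to prove $\phi$ dominant and generically injective; a dominant, generically injective rational map of irreducible varieties of equal dimension is birational. The substantive step is to exhibit a rational inverse, that is, to reconstruct the genus $3$ Prym pair $(C,\eta)$ from a general bielliptic Prym pair $(F,\sigma^*\epsilon)$. Concretely I would unwind the tower $\tilde F \to F \to E$ attached to $\sigma^*\epsilon$ and to the bielliptic involution, whose covering group is $(\mathbb Z/2)^2$, and identify the intermediate quotients: the quotient producing a genus $3$ curve, together with the residual involution, should carry the $2$-torsion $\eta$, and one checks that $(F,\sigma^*\epsilon)\mapsto(C,\eta)$ is inverse to $\phi$ on a dense open set. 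The main obstacle is precisely this reconstruction, namely verifying that the classical Coble--Roth correspondence is generically one-to-one, which is where the detailed projective geometry of the construction (as in \cite{D1}) must be carried out.

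For part (2) I would parametrize $\mathcal R^b_4$ by the data $(E,Q,\epsilon)$ modulo $\PGL(3)$; for generic $(E,Q)$ the stabilizer in $\PGL(3)$ is trivial, so the quotient has the expected dimension $14-8=6$. The idea is then to rigidify by projecting to the conic: since $\PGL(3)$ acts transitively on smooth conics with stabilizer $\Aut(Q)\cong\PGL(2)$, one identifies $\mathcal R^b_4$ birationally with a $\PGL(2)$-quotient of the $\mathbf P^9$ of plane cubics, where $\PGL(2)$ acts through its embedding as $\Aut(Q)$, i.e. as the projectivization of a $\PGL(2)$-representation on ternary cubics. As $\PGL(2)$ is connected and acts generically freely on this $\mathbf P^9$, the no-name lemma reduces the rationality, before incorporating $\epsilon$, to that of a linear $\PGL(2)$-quotient of a projective space, which follows from the rationality of the moduli of binary forms, \cite{K1}.

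The final step combines the two: by part (1) $\mathcal R_3$ is birational to $\mathcal R^b_4$, and by part (2) the latter is rational, so $\mathcal R_3$ is rational. The most delicate point overall is the generic injectivity of the Coble--Roth map in (1). In (2) the remaining care lies in carrying the finite $2$-torsion cover through the quotient; its irreducibility, via the transitive monodromy on nonzero $2$-torsion, together with the rationality of the base modular curve $Y_1(2) = \{(E,\epsilon)\}/\PGL(3)$, keeps this finite cover within reach of the linear-quotient rationality of \cite{K1}, as carried out in \cite{D1}.
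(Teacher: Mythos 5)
Your plan for part (1) rests on a step that cannot work as stated. You propose to recover the genus~$3$ pair $(C,\eta)$ by ``unwinding the tower $\tilde F \to F \to E$'' and finding, among the intermediate quotients of the $(\mathbb Z/2)^2$-cover $\tilde F \to E$, ``the quotient producing a genus $3$ curve.'' There is no such quotient. Here $\tilde F$ has genus $7$, and the three involutions in the Galois group are: the free involution $i_{\tilde F}$, with quotient $F$ of genus $4$; the involution $j$, with $12$ fixed points and quotient $\tilde E$ of genus $1$; and the composite $i_{\tilde F}\circ j$, which is fixed point free by the Riemann--Hurwitz count for the whole $(\mathbb Z/2)^2$-cover ($\chi(\tilde F)=4\chi(E)-\sum_{g\neq 1}|\mathrm{Fix}(g)|$, i.e. $-12 = -(0+12+|\mathrm{Fix}(i_{\tilde F}\circ j)|)$), so its quotient is again a curve of genus $4$ (concretely, the twist of $F$ obtained from $\tilde F \cong F\times_E \tilde E$ by dividing out the diagonal involution). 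So the intermediate quotients have genera $4,1,4$, and $C$ simply does not appear as a curve in the tower. The paper's actual reconstruction is abelian-variety-theoretic, not a quotient construction: by Recillas' construction the Prym variety of $\pi_F:\tilde F\to F$ is $\Pic^0(C)$ as a principally polarized abelian variety, so Torelli recovers $C$; and $L$ is then pinned down inside the $2$-torsion of this Prym as the generator of $\Pic^0_2(C)\cap \tilde\sigma^*\Pic^0_2(\tilde E)=\pi_F^*\Pic^0_2(E)\cong \mathbb Z_2$, using that $\sigma^*$ and $\tilde\sigma^*$ are injective (the covers being ramified) and that $\tilde\sigma_*\tilde\sigma^*$ is multiplication by $2$. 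This Prym-plus-Torelli step is the entire content of the birationality of $\phi$, and it is missing from your proposal; worse, your proposal points in a direction that is provably closed.

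On part (2), your reduction is also incomplete at the decisive point. After rigidifying the conic, $\mathcal R^b_4$ is birationally a $\PGL(2)$-quotient not of the $\mathbf P^9$ of plane cubics but of the degree-$3$ cover of it given by the choice of $\epsilon\in\Pic^0_2(E)\smallsetminus\{\mathcal O_E\}$. The no-name lemma handles vector-bundle (linear) quotients, not finite covers, and irreducibility of the cover via monodromy says nothing about rationality: finite covers of rational varieties are in general irrational, and for Prym-type moduli this $2$-torsion level structure is exactly where the difficulty lives. To be fair, the paper itself does not prove (2) but cites Dolgachev's \cite{D1}, so deferring there is legitimate; but then your contribution to (2) reduces to a dimension count, and what remains unproved is precisely the nontrivial assertion.
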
\par 
This theorem is due to Dolgachev, \cite{D1}. We describe here the proof of (1). This is an almost immediate consequence of the existence of $\phi$. \par 
Let $(C,L)$ be a \it general \rm Prym pair,  defining a point of $\mathcal R_3$. Since $L^{\otimes 2}$ is trivial,   the assignement
$N \mapsto L \otimes N$ defines  a fixed points free involution
$$
t_L:   \Pic^2(C) \to  \Pic^2(C).
$$
In $ \Pic^2(C)$ consider the theta divisor $W^0_2(C) = C^{(2)}$ and the curve
$$
\tilde F := \lbrace x+y \in C^{(2)} \ / \ h^0(\omega_C \otimes L(-x-y) \geq 1 \rbrace.
$$ 
$\tilde F$ is obtained from the 4-gonal pencil $\vert \omega_C \otimes L \vert$ via Recillas' construction, \cite{Re}.
It turns out that $\tilde F$ is a smooth, integral curve of genus 7. Moreover it is easy to check that $\tilde F = W^0_2 \cap t_L(W^0_2)$.
Hence $t_L$ induces a fixed points free involution $i_{\tilde F}$ on $\tilde F$. Then its  quotient map is an \'etale double covering 
$$
\pi_F: \tilde F \to F,
$$
induced by some $L_F$ in $ \Pic^0_2(F)$. Recillas' construction also implies that:
\begin{proposition} The Prym variety of $(F,L_F)$ is $ \Pic^0(C)$. \end{proposition}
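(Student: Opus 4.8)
The plan is to read the statement as the instance of Recillas' theorem attached to the tetragonal pencil $|\omega_C \otimes L|$, and to produce the isomorphism of principally polarized abelian varieties through the correspondence underlying the construction of $\tilde F$. First I would make the correspondence between $C$ and $\tilde F$ explicit. For a general $c \in C$ the linear system $|\omega_C \otimes L(-c)|$ has degree $3$ and dimension $0$, so it consists of a single effective divisor $y_1 + y_2 + y_3$, and the three divisors $c + y_1,\ c + y_2,\ c + y_3$ are exactly the points of $\tilde F = W^0_2 \cap t_L(W^0_2)$ incident to $c$. This defines a degree three correspondence, that is a morphism $\sigma : C \to \tilde F^{(3)}$ sending $c$ to $(c+y_1)+(c+y_2)+(c+y_3)$, and, after the choice of base points, an induced homomorphism $\sigma_* : \Pic^0(C) \to \Pic^0(\tilde F)$.

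Next I would check that $\sigma_*$ takes values in the Prym variety $P := (\Ker \, \mathrm{Nm})^0$, where $\mathrm{Nm} : \Pic^0(\tilde F) \to \Pic^0(F)$ is the norm map of $\pi_F$. Composing $\sigma$ with the push-forward $\pi_{F*}$ sends $c$ to the degree three divisor $\pi_F(c+y_1)+\pi_F(c+y_2)+\pi_F(c+y_3)$ on $F$; the content of Recillas' construction is precisely that, as $c$ varies, this divisor describes the trigonal pencil of $F$, so that its class in $\Pic^3(F)$ is constant. Hence $\mathrm{Nm} \circ \sigma_* = 0$ and $\sigma_*$ factors through $P$, which by the definition recalled in section 4.1 is the Prym variety of $(F, L_F)$ and carries the principal polarization $\Xi$ induced by that of $\Pic^0(\tilde F)$.

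It then remains to prove that the resulting homomorphism $\sigma_* : \Pic^0(C) \to P$ is an isomorphism of principally polarized abelian varieties. Both sides have dimension $3$, so I would first show $\sigma_*$ is an isogeny, most cleanly through the infinitesimal statement: the correspondence $\sigma$ induces by trace a map $H^0(\omega_{\tilde F})^- \to H^0(\omega_C)$ from the anti-invariant (Prym) differentials of $\tilde F$ to $H^0(\omega_C)$, and one verifies that it is an isomorphism of three dimensional spaces, whence $\sigma_*$ has finite kernel. The hard part will be the comparison of polarizations: one must show that this isogeny has degree one and pulls back the principal polarization $\Xi$ of $P$ to the canonical theta divisor of $\Pic^0(C)$. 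This is the core of Recillas' theorem, and I would establish it by a theta divisor computation matching $\Theta_C$ with the restriction to $P$ of the theta divisor of $\Pic^0(\tilde F)$; granting it, $\sigma_*$ is the desired isomorphism and the Prym variety of $(F, L_F)$ is $\Pic^0(C)$.
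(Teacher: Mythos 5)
The paper gives no argument for this proposition at all: it is presented as an immediate consequence of Recillas' theorem, with the citation \cite{Re}. Your proposal therefore amounts to sketching the proof of Recillas' theorem itself through the incidence correspondence between $C$ and $\tilde F$. Your first step is correct: for general $c\in C$ one has $|\omega_C\otimes L(-c)|=\{y_1+y_2+y_3\}$, and the points of $\tilde F$ through $c$ are exactly $c+y_1,\,c+y_2,\,c+y_3$.

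The genuine gap is in your second step, and it is not a technicality: it is the point where one must decide \emph{which} fixed point free involution of $\tilde F$ defines $\pi_F$. There are two natural ones: the involution $i_{\tilde F}$ used in the paper, induced by the translation $t_L$ (so $i_{\tilde F}(x+y)$ is the unique effective divisor in $|x+y+L|$), and the Recillas involution $k$ sending $x+y$ to the complementary divisor $z+t$ with $x+y+z+t\in|\omega_C\otimes L|$. Since $k=j\circ i_{\tilde F}$, these are different automorphisms of $\tilde F$. Your key claim, that $\pi_F(c+y_1)+\pi_F(c+y_2)+\pi_F(c+y_3)$ has constant class so that $\mathrm{Nm}\circ\sigma_*=0$, is true precisely for the quotient by $k$: the $k$-images of the $c+y_i$ are the three partitions into pairs of the fiber $c+y_1+y_2+y_3$, i.e.\ one full fiber of the degree $3$ map $\tilde F/k\to|\omega_C\otimes L|\cong\mathbf P^1$; this is Recillas' trigonal pencil. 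For the quotient by $i_{\tilde F}$ it is false. Indeed, the correspondence map $\alpha:\Pic^0(\tilde F)\to\Pic^0(C)$, $(x+y)-(x'+y')\mapsto\mathcal O_C(x+y-x'-y')$, visibly satisfies $\alpha\circ (i_{\tilde F})_*=\alpha$ (tensoring by $L$ cancels) and $\alpha\circ k_*=-\alpha$; taking transposes, your $\sigma_*$ is $(i_{\tilde F})_*$-invariant, hence lands in $\pi_F^*\Pic^0(F)$, the part of $\Pic^0(\tilde F)$ \emph{orthogonal} to the Prym of the paper's cover, and $\mathrm{Nm}\circ\sigma_*\neq 0$. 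So, as written against the paper's definition of $\pi_F$, your argument collapses at this step; carried out with the complementary involution $k$ it is the standard and correct proof. (This also exposes a looseness in the paper's own text, which defines $\pi_F$ via $t_L$ but then invokes Recillas, whose construction uses $k$; the two quotients are both genus $4$ bielliptic curves over the same elliptic curve $E$, but their Pryms are different abelian varieties in general.)

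Separately, even with the right involution, your final step --- that the resulting isogeny has degree one and identifies the principal polarizations --- is left as a promise (``granting it''). That comparison is the actual content of Recillas' theorem; since the paper disposes of the whole proposition by citing \cite{Re}, the economical completion of your argument, once the involution is fixed, is to do the same.
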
 \par 
On $\tilde F$ we have a second involution
$
j: \tilde F \to \tilde F.
$
By definition this associates to $x+y \in \tilde F$ the unique  $z+t$ such that $x + y + z + t$ is a canonical divisor. Hence $x+y$ is fixed by $j$   if and only if $\mathcal O_C(x+y)$ and $L(x+y)$ are odd theta characteristics. The formula for the number of these odd thetas gives 12. Let $$ \tilde \sigma: \tilde F \to \tilde E := \tilde F / \langle j \rangle$$  be the quotient map. Then, by Hurwitz formula, $\tilde E$ is elliptic.    It is easy to see that $i_{\tilde F}$ and $j$ commute. Moreover
$i_{\tilde F}$ induces a fixed points free involution $i_{\tilde E}$ on $\tilde E$, so that $E = \tilde E/\langle i_{\tilde E}\rangle$ is elliptic. Hence we have the commutative diagram
$$
\begin{CD}
{\tilde F} @>{\pi_F}>> F \\
@VV{\tilde \sigma}V @VV{\sigma}V \\
{\tilde E} @>{\pi_E}>> E \\
\end{CD}
$$
where the vertical arrows are the quotient maps of $i_{\tilde F}$ and $i_{\tilde E}$ respectively. It follows from the diagram that $L_F \in \sigma^*  \Pic^0_2(E)$. Hence
the Prym pair $(F, L_F)$ defines a point of the Prym moduli space of bielliptic curves $\mathcal R^b_4$.  
\begin{definition} The Coble-Roth map $\phi: \mathcal R_3 \to \mathcal R^b_4$ is the map sending the moduli point of $(C,L)$ to the moduli
point of $(F,L_F)$.
\end{definition} \par 
Using the diagram, we can now define a rational map 
$$
\psi: \mathcal R^b_4 \to \mathcal R_3
$$
inverse to $\phi$. Starting from $(F, L_F)$ we can reconstruct at first the previous commutative diagram. Moreover $ \Pic^0(C)$ is the Prym of $(F,L_F)$ and  sits in $ \Pic^0(\tilde F)$ as the connected
component of zero of the Kernel of the Norm map $\pi_{F*}:  \Pic^0(\tilde F) \to  \ \Pic^0(F)$. On the other hand
$$
\tilde \sigma^*:  \Pic^0 (\tilde E) \to  \Pic^0(\tilde F) \ \rm and \it  \ \sigma^*:  \Pic^0(E) \to  \ \Pic^0(F)
$$
are injective because $\tilde \sigma$ and $\sigma$ are ramified, cfr. \cite{M8}. It follows from the previous commutative diagram, since $\tilde \sigma_*\tilde \sigma^*$ is just multiplication by 2,  that
$\tilde \sigma^* \Pic^0_2(\tilde E)$ is contained in $\mbox{Ker} \  \ \pi_{F*}$ and finally that
 $$
 \Pic^0_2(C) \cap \tilde \sigma^*  \Pic^0_2(\tilde E) = {\pi_F}^*  \Pic^0_2(E) \cong \mathbb Z_2.
$$
Furthermore, it turns out that $\pi_F^*  \Pic^0_2(E)$ is generated by $L$. Starting from $(F,L_F)$, we have reconstructed  $(C,L)$. It follows that the assignement $(F,L_F) \mapsto (C,L)$ defines a map $\psi$ which is inverse to 
the Coble-Roth map $\phi$. This implies that  $\phi$ is birational, see \cite{D1} for further details on this proof and for the proof that $\mathcal R^b_4$ is rational.
\medskip \par 
\subsection {Rationality of $\mathcal R_4$}\rm
As we already have seen, Prym moduli spaces in very low genus are frequently related to linear systems $\vert C \vert$ of smooth, connected curves on a surface $S$ endowed with a quasi \'etale double cover
$$
p: \tilde S \to S.
$$
An example in genus 3 is the Kummer quartic surface $S \subset \mathbf P^3$. In this case $p$ is the 2:1 cover of $S$ branched on its 16 nodes of $S$ and
$\vert C \vert = \vert \mathcal O_S(1) \vert$. We are going to meet further examples when considering $\mathcal R_g$ for $ g = 4, 5, 6$.  
\par  
The rationality of $\mathcal R_4$ is due to Catanese, \cite{Ca}. In this case $S$ is a fixed surface in $\mathbf P^3$, up to projective equivalence. 
Namely $S$ is the 4-nodal cubic surface, known also as \it Cayley cubic. \rm  Fixing on $\mathbf P^3$ suitable coordinates $(x_1:x_2:x_3:x_4)$, we can assume that the equation of $S$ is
$$
x_1x_2x_3 + x_2x_3x_4 + x_1x_3x_4 + x_1x_2x_4 = 0,
$$
In particular $S$ contains the edges of the tetrahedron
$$
T = \lbrace x_1x_2x_3x_4 = 0 \rbrace
$$
and $ \Sing \ S = \lbrace v_1 \dots v_4 \rbrace$, where $v_i, i = 1 \dots 4$, is a vertex of $T$.   Let
$$
\sigma: S' \to S
$$
be the blowing up of $ \Sing \ S$. We fix our notations as follows:
\begin{enumerate} \it
\item $E := \sum E_{ij}$, $E_{ij}$ being the strict transform of $\overline {v_iv_j}$, $1 \leq i < j \leq 4$,
\item $F := \sum F_i$, where $F_i := \sigma^{-1}(v_i)$,
\item $H \in \vert \mathcal \sigma^* \mathcal O_S(1) \vert$.
\end{enumerate} \par 
We have $\sigma^*T \sim 2E + 3F \sim 4H$ so that $F \sim 4H - 2E - 2F$. Therefore $\mathcal O_{S'}(F)$ is divisible by 2 in $ \Pic \ S'$ and there exists a double covering
$$
p': \tilde S' \to S'
$$
branched on $F$. In particular we have the commutative diagram
$$
\begin{CD}
{\tilde S'}~@>{p'}>> {S'} \\
@VV{\sigma'}V @VV{\sigma}V \\
{\tilde S}~@>p>> S \\
\end{CD}
$$
where $p$ is branched on $ \Sing \ S$. We introduce now some further preliminaries and then we outline the proof of the rationality of $\mathcal R_4$, as it is given
in \cite{Ca}.  \medskip \par 
A smooth general $C \in \vert 2H \vert$ is the pull-back of a general quadratic section of $S$. In particular $T$ defines  on $C$
the effective divisors of degree two:
$$
e_{ij} = C \cdot E_{ij}
$$
and the line bundle $L := \mathcal O_C(D)$, where $$ D := 2H - E + F \sim \frac 12F. $$
\begin{lemma}~$L$ is a non trivial 2-torsion element of $ \Pic^0(C)$: \end{lemma}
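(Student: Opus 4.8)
The plan is to establish the three assertions hidden in the statement — that $L\in\Pic^0(C)$, that $L^{\otimes 2}\cong\mathcal O_C$, and that $L\not\cong\mathcal O_C$ — the first two being essentially bookkeeping with the classes already recorded, and the last carrying the real content. First I would exploit that a general $C\in\vert 2H\vert$ is the strict transform of a general quadratic section of $S$, hence misses the four nodes; its strict transform is therefore disjoint from each exceptional curve $F_i$, so $\mathcal O_C(F)\cong\mathcal O_C$ and $L=\mathcal O_C(2H-E+F)\cong\mathcal O_C(2H-E)$. Computing the degree $\deg L=C\cdot(2H-E+F)=2H\cdot(2H-E+F)$ from the intersection numbers $H^2=3$, $H\cdot E_{ij}=1$, $H\cdot F_i=0$ gives $12-12+0=0$, so $L\in\Pic^0(C)$.

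For the $2$-torsion I would simply restrict to $C$ the relation $\sigma^*T\sim 2E+3F\sim 4H$ recorded above, i.e. the divisibility $F\sim 2(2H-E-F)$ that defines the double cover $p'$. Since $\mathcal O_C(F)$ is trivial one gets $L^{\otimes 2}=\mathcal O_C(4H-2E+2F)\cong\mathcal O_C(5F)\cong\mathcal O_C$. Equivalently, $L$ is the restriction to $C$ of the square root $\mathcal O_{S'}(2H-E-F)$ defining $p'$ (the two classes $2H-E+F$ and $2H-E-F$ differ by $2F$, which is trivial on $C$), so $\tilde C:=p'^{-1}(C)\to C$ is precisely the étale double cover attached to $L$; non-triviality of $L$ is thus the same as connectedness of $\tilde C$.

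The hard part is this non-triviality, and here I would trade $H$ and $E$ for the edge divisors $e_{ij}=C\cdot E_{ij}$ of degree $2$. Pulling back the coordinate plane $\{x_k=0\}$, which meets $S$ in the three edges of the face of $T$ opposite $v_k$ together with the three nodes lying on that face, yields on $S'$ the relation $H\sim E_{ab}+E_{ac}+E_{bc}+F_a+F_b+F_c$ (the multiplicity one on each $F$ is forced by $H\cdot F_i=0$). Restricting to $C$ gives $\mathcal O_C(H)\cong\mathcal O_C(e_{ab}+e_{ac}+e_{bc})$, one such identity for each of the four faces. Adding the identities for the two faces containing a fixed edge $e_{ij}$ and subtracting the class $E$ of all six edges, everything collapses to $L\cong\mathcal O_C(e_{ij}-e_{kl})$, where $\{k,l\}$ is the opposite edge, $\{i,j\}\cup\{k,l\}=\{1,2,3,4\}$.

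Finally I would conclude by contradiction: if $L\cong\mathcal O_C$ then $e_{ij}\sim e_{kl}$ on $C$. A general $C$ is a smooth non-hyperelliptic curve of genus $4$ — indeed $\omega_C\cong\mathcal O_C(H)$ with $h^0=4$ embeds it as a canonical sextic in $\mathbf P^3$ — so $W^1_2(C)=\emptyset$ and every effective divisor of degree $2$ is the unique effective member of its class. Hence $e_{ij}\sim e_{kl}$ would force $e_{ij}=e_{kl}$ as divisors. But opposite edges $\overline{v_iv_j}$ and $\overline{v_kv_l}$ are skew lines in $\mathbf P^3$ through disjoint pairs of nodes, so $E_{ij}\cap E_{kl}=\emptyset$ and the effective degree-$2$ divisors $e_{ij}$, $e_{kl}$ have disjoint support, a contradiction. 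Thus $L$ is a non-trivial $2$-torsion point of $\Pic^0(C)$. The only genuinely delicate step is isolating a single pair of opposite edges from the face relations; once that identity is in hand the non-triviality reduces to the elementary fact that opposite edges of the tetrahedron do not meet, and all remaining steps are intersection theory on $S'$ already set up before the lemma.
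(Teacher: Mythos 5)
Your proof is correct, but its decisive step takes a genuinely different route from the paper's. The first two assertions are handled essentially as in the paper: $F\cap C=\emptyset$ makes $\mathcal O_C(F)$ trivial, and the relation $3F\sim 4H-2E$ on $S'$ then gives $\deg L=0$ and $L^{\otimes 2}\cong\mathcal O_C$ (you also quietly repair the paper's sign slip in $D=2H-E+F\sim\frac 12 F$ by noting the ambiguity is $2F$, which dies on $C$). For the non-triviality, however, the paper stays on the covering surface: $p'{}^*C$ is the double cover of $C$ attached to $L$, so if $L$ were trivial it would split as $C_1+C_2$ with $C_1\cap C_2=\emptyset$ and $C_i^2=12$, and then $C_1^2C_2^2-(C_1\cdot C_2)^2=144>0$ contradicts the Hodge index theorem on $\tilde S'$ --- the same template the paper reuses for Nikulin surfaces in Section 3. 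You set up exactly this covering interpretation but never use it; instead you restrict the face relations $H\sim E_{ab}+E_{ac}+E_{bc}+F_a+F_b+F_c$ to $C$, derive $L\cong\mathcal O_C(e_{ij}-e_{kl})$ for opposite edges, and rule out triviality using non-hyperellipticity of the canonical genus-$4$ sextic $C$ together with the skewness of opposite edges of $T$. Your supporting computations check out (the coefficient $1$ of each $F_i$ in the face relation is indeed forced by $H\cdot F_i=0$ and $F_i^2=-2$; $\mathcal O_C(H)\cong\omega_C$ does embed $C$, so $C$ is non-hyperelliptic and a degree-$2$ effective divisor is alone in its linear class). Note that the identity you prove is precisely the second assertion of the lemma that immediately follows in the paper, established there by the same manipulation $2H-E\sim E_{12}-E_{34}$ modulo $F$; so your argument in effect merges the present lemma with half of the next one. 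What each approach buys: yours is more elementary (no Hodge index theorem, the cover $p'$ never actually needed) and produces en route the explicit description of $L$ that the paper wants later anyway; the paper's is shorter, requires nothing about hyperellipticity or the edge geometry, and is a uniform argument that recurs almost verbatim elsewhere in the text.
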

\begin{proof} We have $L^{\otimes 2} \cong \mathcal O_C(F)$. The latter is $\mathcal O_C$ because $F$ is effective and $F \cap C$ is empty.
To show that $L$ is not trivial consider $\tilde C := {p'}^*C$ and observe that $p'/ \tilde C: \tilde C \to C$ is induced by $L$. If $L$ is trivial then
we have $\tilde C = C_1 + C_2$, where $p'/C_i: C_i \to C$ is biregular and $C_1 \cap C_2 = \emptyset$. But then $C_1^2C_2^2 - C_1C_2 =
144 > 0$: against Hodge index theorem.  
\end{proof} \par 
As in the previous section we consider now the involution
$$
t_L:  \Pic^2(C) \to  \ \Pic^2(C),
$$
induced by the tensor product with $L$, and the intersection
$$
Z = W^0_2(C) \cap t_L^* W^0_L(C).
$$ 
The self intersection of $W^0_2(C)$ is six, hence  we can expect that $Z$ consists of 6 points. Assume that $C \in \mid 2H \mid$ is general, then this is true:
\begin{lemma} $Z = \lbrace \mathcal O_C(e_{ij}), \ 1 \leq i < j \leq 4 \rbrace$. Moreover one has
$$
\mathcal O_C(e_{12}-e_{34}) \cong \mathcal O_C(e_{13}-e_{24}) \cong \mathcal O_C(e_{14}-e_{23}) \cong L.
$$
\end{lemma}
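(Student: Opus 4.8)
The plan is to read off everything from the way the faces of the coordinate tetrahedron $T$ cut $S$, and then to combine the resulting linear equivalences on $C$ with the self-intersection count that was already quoted. I will freely use the preceding lemma, that $L$ is a nontrivial $2$-torsion class, together with two facts about a general $C\in\vert 2H\vert$: since the resolution $\sigma$ of the nodal cubic is crepant one has $K_{S'}\cong\mathcal O_{S'}(-H)$, so adjunction gives $\omega_C\cong\mathcal O_C(H)$ and $C$ is a canonical curve of genus $4$; and for general such $C$ the quadric through it is smooth, so $C$ is not hyperelliptic and carries no $g^1_2$, whence $C^{(2)}\hookrightarrow\Pic^2(C)$ is injective.

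First I would record the four \emph{face relations}. The face $\{x_k=0\}$ of $T$ meets $S$ precisely in the triangle of the three edges $E_{ij}$ with $i,j\neq k$: for instance putting $x_4=0$ in the equation of $S$ leaves $x_1x_2x_3=0$, so $\{x_4=0\}\cap S=E_{12}+E_{13}+E_{23}$ as a reduced divisor. Pulling back by $\sigma$ and restricting to $C$, which is disjoint from $\Sing S$ and so meets none of the exceptional curves $F_i$, every exceptional contribution drops out and I obtain in $\Pic(C)$
$$
H|_C\sim e_{12}+e_{13}+e_{23},\qquad H|_C\sim e_{12}+e_{14}+e_{24},
$$
$$
H|_C\sim e_{13}+e_{14}+e_{34},\qquad H|_C\sim e_{23}+e_{24}+e_{34},
$$
one equivalence for each face. (As a check, $\deg e_{ij}=2H\cdot E_{ij}=2$, and any three of these sum to $\deg\mathcal O_C(H)=2H^2=6$.)

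Next I would derive the displayed isomorphisms and then the description of $Z$. Adding the two faces meeting along the edge $E_{12}$, namely $\{x_4=0\}$ and $\{x_3=0\}$, gives $2H|_C\sim 2e_{12}+e_{13}+e_{14}+e_{23}+e_{24}$; subtracting $E|_C=\sum_{i<j}e_{ij}$ and recalling $L\cong\mathcal O_C(2H-E)$ yields
$$
L\cong\mathcal O_C(2H|_C-E|_C)\cong\mathcal O_C(e_{12}-e_{34}).
$$
Repeating with the faces through $E_{13}$ (that is $\{x_4=0\},\{x_2=0\}$) and through $E_{14}$ (that is $\{x_3=0\},\{x_2=0\}$) gives $L\cong\mathcal O_C(e_{13}-e_{24})\cong\mathcal O_C(e_{14}-e_{23})$, which is the second assertion. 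These identities place all six classes $\mathcal O_C(e_{ij})$ in $Z$: each $e_{ij}$ is effective of degree $2$, so $\mathcal O_C(e_{ij})\in W^0_2(C)$; and since $L$ is $2$-torsion one also has $L\cong\mathcal O_C(e_{kl}-e_{ij})$ for the complementary pair $\{k,l\}$, whence $L\otimes\mathcal O_C(e_{ij})\cong\mathcal O_C(e_{kl})$ is again effective and $\mathcal O_C(e_{ij})\in t_L^*W^0_2(C)$.

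It remains to check that the six points are distinct and exhaust $Z$, which is the only genuinely delicate part. Distinctness is immediate: two distinct edges of $T$ meet only in a vertex, a point $C$ avoids, so $e_{ij}$ and $e_{kl}$ have disjoint supports for $\{i,j\}\neq\{k,l\}$, and the absence of a $g^1_2$ (injectivity of $C^{(2)}\hookrightarrow\Pic^2(C)$) forces $\mathcal O_C(e_{ij})\neq\mathcal O_C(e_{kl})$. For the reverse inclusion I would use the count already recorded: $t_L^*W^0_2(C)$ is the translate of $W^0_2(C)$ by the $2$-torsion class $L$, hence algebraically equivalent to it, so the intersection number equals the self-intersection $[W^0_2(C)]^2=(\theta^2/2)^2=\theta^4/4=4!/4=6$. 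The main obstacle is to show that for general $C\in\vert 2H\vert$ this intersection is proper and transverse; granting that, it is a reduced $0$-cycle of length $6$, and since $6$ distinct points of it have already been produced one concludes $Z=\{\mathcal O_C(e_{ij}):1\leq i<j\leq 4\}$. I expect properness and transversality to follow from a general-position argument as $C$ varies in $\vert 2H\vert$, and this is the step I would treat most carefully, along the lines of \cite{Ca}.
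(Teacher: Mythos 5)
Your treatment of the linear equivalences and of the inclusion $\{\mathcal O_C(e_{ij})\}\subseteq Z$ is correct and is in fact exactly the paper's argument: cutting $S$ with two faces of $T$ through a common edge gives $2H\vert_C\sim 2e_{12}+e_{13}+e_{14}+e_{23}+e_{24}$, whence $L\cong\mathcal O_C(2H-E)\cong\mathcal O_C(e_{12}-e_{34})$, and symmetrically for the other pairs; the six classes are then visibly in $Z$, pairwise distinct by non-hyperellipticity, and the count $[W^0_2(C)]^2=6$ caps the intersection. Two small remarks: transversality is not actually needed (once the intersection is proper it has total length $6$, and six distinct points already in it force it to be reduced and equal to them), and non-hyperellipticity follows simply from $\omega_C\cong\mathcal O_C(H)$ being the embedding bundle; the smoothness of the quadric through the canonical model is beside the point.

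The genuine gap is the step you explicitly defer: the finiteness of $Z$, i.e. properness of $W^0_2(C)\cap t_L^*W^0_2(C)$. This is not a soft general-position statement as $C$ varies in $\vert 2H\vert$: positive-dimensionality of the intersection is a closed condition in families, so ``generality'' only helps if you first exhibit at least one pair $(C,L)$ in this particular family with $Z$ finite, and producing one is precisely what requires an argument. The paper's proof is concrete and goes through the Prym-canonical map. A point of $Z$ is a class $\mathcal O_C(x+y)$ with both $x+y$ and $L(x+y)$ effective, and by Riemann--Roch this says exactly that $x,y$ are identified by the morphism $u\colon C\to\mathbf P^2$ defined by $\omega_C\otimes L$ (it is a morphism since $C$ is not hyperelliptic, \cite{CD} 0.6.1 and 0.6.5). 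If $Z$ were infinite, $u$ would fail to be birational onto its image, hence would be a double cover of a plane cubic, so $C$ would be bielliptic; this is impossible because $\vert 2H\vert$ dominates $\mathcal M_4$ and bielliptic curves fill up a proper subvariety, so a general $C\in\vert 2H\vert$ is not bielliptic. Without this (or an equivalent) argument, your proof is incomplete at its decisive point.
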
 
\begin{proof} We can write $2H - E = (2E_{12} + E_{13} + E_{14} + E_{23} + E_{24}) - E = E_{12} - E_{34}$. We know that $L \cong \mathcal O_C(2H-E)$, hence
$L \cong \mathcal O_C(e_{12} - e_{34})$. The same argument works for the other $e_{ij}$'s. It remains to show that $Z$ is finite: since $C$ is not hyperelliptic the
map $u: C \to \mathbf P^2$, defined by $\omega_C \otimes L$, is a morphism, \cite{CD} 0.6.1. Moreover $u$ is not injective at each $f \in Z$, see \cite{CD} 0.6.5. 
But then $u(C)$ is a plane cubic and $C$ is bielliptic. This is impossible, since  $\vert 2H \vert$ dominates $\mathcal M_4$ and $C$ is general. \end{proof} \par 
Now observe that $\vert H \vert$ is the anticanonical system of $S'$. In particular $\vert H \vert$ is invariant by the automorphism group $Aut \ S'$. It follows that
$Aut \ S'$ is isomorphic to the symmetric group $S_4$ of all projectivities  fixing the point $(1:1:1:1)$ and the set $ \Sing \ S$ of the vertices of the tetrahedron $T$. In particular $S_4$ acts on $\vert 2H \vert$
via this isomorphism. \par  
\begin{lemma} Let $C$, $C'$ be general in $\vert 2H \vert$. Then the Prym pairs $(C, \mathcal O_{C}(D))$ and $(C', \mathcal O_{C'}(D))$ are isomorphic
 if and only if $C' = a(C)$ for some $a \in S_4$.
\end{lemma}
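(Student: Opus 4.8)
The plan is to reconstruct the whole projective picture $C \subset S \subset \mathbf P^3$, up to the action of $\Aut S' \cong S_4$ recalled above, from the abstract Prym pair $(C,\mathcal O_C(D))$ alone; the lemma then follows at once. One implication is immediate: any $a \in \Aut S'$ permutes the $E_{ij}$ and the $F_i$ and fixes $H$, hence fixes the class $D = 2H-E+F$ and preserves the double cover $p'$ branched on $F$. Consequently $a$ restricts to an isomorphism of Prym pairs $(C,\mathcal O_C(D)) \cong (a(C),\mathcal O_{a(C)}(D))$. So I concentrate on the converse, and the core of the proof is an intrinsic reconstruction.

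First I would observe that $\vert H \vert$ restricts to the \emph{canonical} map on $C$. Since the four singularities of $S$ are ordinary double points the resolution $\sigma$ is crepant, so $K_{S'} = \sigma^*K_S = -H$; by adjunction $\omega_C = (K_{S'}+2H)\vert_C = H\vert_C$, and the restriction $H^0(S',\mathcal O_S(H)) \to H^0(C,\omega_C)$ is an isomorphism (its kernel and cokernel vanish because $h^0(\mathcal O_{S'}(-H))=h^1(\mathcal O_{S'}(-H))=0$). Thus the given embedding $C \subset \mathbf P^3$ is the intrinsic canonical embedding $C \subset \vert\omega_C\vert^*$, under which the image of $E_{ij}$ is the edge $\overline{v_iv_j}$ of the tetrahedron $T$ and $e_{ij}=C\cdot E_{ij}$ is the degree-two divisor $C \cap \overline{v_iv_j}$.

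Next I would reconstruct $T$ and $S$ from $(C,L)$, where $L=\mathcal O_C(D)$. By the preceding lemma the unordered set $Z = W^0_2(C)\cap t_L^*W^0_2(C) = \{\mathcal O_C(e_{ij})\}$ is determined by the pair $(C,L)$ alone, and the relations $\mathcal O_C(e_{12}-e_{34})\cong\mathcal O_C(e_{13}-e_{24})\cong\mathcal O_C(e_{14}-e_{23})\cong L$ record the partition of $Z$ into the three opposite-edge pairs. For general $C$ each $e_{ij}$ consists of two distinct points, so its span $\langle e_{ij}\rangle$ is the line $\overline{v_iv_j}$; hence the six edges of $T$ are intrinsic, and the four vertices $v_i$ are recovered as the pairwise intersection points of incident edges. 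Finally I would recover $S$ as the unique cubic surface through $C$ that is singular at $v_1,\dots,v_4$. This uniqueness, valid for general $C\in\vert 2H\vert$, is the \emph{main obstacle}: one must check, by a dimension/transversality count on cubics through $C$ that are nodal at the recovered vertices, that the Cayley cubic is the only such cubic, and also that the two points of each $e_{ij}$ are genuinely distinct so that the edges are honestly reconstructed. Granting this, the entire flag $C \subset S \subset \mathbf P^3$ is intrinsic to $(C,\mathcal O_C(D))$, with $S$ the fixed Cayley cubic on which $C$ actually lies.

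To conclude, suppose $\psi\colon (C,\mathcal O_C(D)) \xrightarrow{\ \sim\ } (C',\mathcal O_{C'}(D))$ is an isomorphism of Prym pairs. Identifying both canonical spaces with the fixed ambient $\mathbf P^3$ via restriction from $S'$, the induced map $\psi^*$ on spaces of canonical forms gives a projectivity $\Phi$ of $\mathbf P^3$ with $\Phi(C)=C'$ that carries the intrinsic data of $(C,L)$ to that of $(C',L')$, since $W^0_2$, $t_L$ and hence $Z$ are transported functorially. As $C$ and $C'$ lie on the \emph{same} Cayley cubic $S$, the reconstruction above returns $S$ in both cases, whence $\Phi(S)=S$ and therefore $\Phi\in\Aut S'\cong S_4$. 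Thus $C'=\Phi(C)=a(C)$ for some $a\in S_4$, which is exactly the assertion.
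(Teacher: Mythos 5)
Your proof follows the paper's own route through its first two steps: the observation that $C \subset \mathbf P^3$ is canonically embedded (so an abstract isomorphism of Prym pairs is induced by a projectivity $\Phi$ of $\mathbf P^3$ with $\Phi(C) = C'$), and the use of the preceding lemma to see that the six degree-two divisors $e_{ij}$, hence the six lines they span, i.e.\ the edges of $T$, are intrinsic to $(C, \mathcal O_C(D))$; this gives $\Phi(T) = T$ exactly as in the paper. The divergence, and the genuine gap, is in the last step. You propose to recover $S$ itself as ``the unique cubic through $C$ singular at the four recovered vertices,'' and you explicitly leave that uniqueness unproven (``Granting this\dots''). This is the crux, not a routine verification: the cubics through the canonical curve $C$ form a $\mathbf P^4$, spanned by an equation $f$ of $S$ together with the products $q\,l$, where $q$ is an equation of the unique quadric through $C$ and $l$ runs over linear forms, and nothing in your argument excludes that some member other than $S$ is also singular at a vertex.

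The gap can be closed in either of two ways. The paper's way avoids uniqueness altogether: once $\Phi(T) = T$ is known, $\Phi(S)$ is a cubic containing $\Phi(C) = C'$ and the six edges of $T$; since $S$ contains the same curve $C' \cup \Sing T$, of degree $6 + 6 = 12$, and two distinct irreducible cubics can share at most a curve of degree $9$ by Bezout, it follows that $\Phi(S) = S$, hence $\Phi \in \Aut S' \cong S_4$. Alternatively, your uniqueness claim is true and can be checked directly: because $C \cdot F_i = 0$, the curve $C$ misses the exceptional locus, so the quadric $q = 0$ through $C$ misses the vertices, i.e.\ $q(v_i) \neq 0$; if $\lambda f + q\,l$ is singular at $v_i$, then vanishing of the value gives $q(v_i)\,l(v_i) = 0$, so $l(v_i) = 0$, and vanishing of the gradient gives $\lambda \nabla f(v_i) + l(v_i)\nabla q(v_i) + q(v_i)\nabla l = q(v_i)\,\nabla l = 0$ (using $\nabla f(v_i) = 0$), forcing $l = 0$. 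With either completion your argument becomes a correct proof; as written, the decisive reconstruction step is asserted rather than proved.
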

\begin{proof} Assume the two pairs are isomorphic. Since $C$ and $C'$ are canonically embedded, there exists a projective automorphism
$a$ such that $a(C) = C'$ and $a^*\mathcal O_C(D) \cong \mathcal O_{C'}$. Then, by the previous lemma, it follows that $a(T) = T$.
Hence $S'' = a(S)$ is a cubic through $T$ and contains $C' \cup  \Sing \ T$. Then $S'' =  S$ for degree reasons and $a \in Aut \ S'$. The converse is obvious.
\end{proof} \par 
\begin{theorem} \ \begin{enumerate} \item $\vert 2H \vert / S_4$ is birational to $\mathcal R_4$,
\item $\vert 2H \vert / S_4$ is rational. 
\end{enumerate}\end{theorem}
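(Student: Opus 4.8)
The plan is to treat the two assertions separately, deriving (1) as a formal consequence of the lemmas already proved and reserving the real work for the rationality in (2).

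\emph{Part (1).} First I would record the numerology. Since $H = -K_{S'}$ is the anticanonical class of the (weak) Del Pezzo surface $S'$ of degree $H^2 = 3$, Riemann--Roch gives $h^0(\mathcal O_{S'}(2H)) = 1 + \tfrac12 (2H)(2H - K_{S'}) = 1 + 3H^2 = 10$, so $\dim |2H| = 9 = \dim \mathcal M_4 = \dim \mathcal R_4$. The assignment $C \mapsto (C, \mathcal O_C(D))$ defines a rational map $\Phi : |2H| \dashrightarrow \mathcal R_4$, well defined on the dense open locus of smooth members because there $\mathcal O_C(D)$ is a nontrivial $2$-torsion bundle by the lemma above. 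By the isomorphism lemma, two general members satisfy $\Phi(C) = \Phi(C')$ exactly when $C' = a(C)$ for some $a \in \Aut S' \cong S_4$; hence the fibres of $\Phi$ are precisely the $S_4$-orbits and $\Phi$ is generically finite. Since $|2H|$ dominates $\mathcal M_4$ (already used in the proof of the isomorphism lemma) and $\mathcal R_4 \to \mathcal M_4$ is finite, the $9$-dimensional image of $\Phi$ is dense in the irreducible $9$-dimensional space $\mathcal R_4$, so $\Phi$ is dominant. Therefore $\Phi$ descends to a generically injective dominant rational map $\overline\Phi : |2H|/S_4 \dashrightarrow \mathcal R_4$ between irreducible varieties of equal dimension, which is thus birational. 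A nontrivial $a\in S_4$ acts linearly and non-scalarly on $H^0(2H)$, so its fixed locus in $|2H|$ is a proper subspace; hence $S_4$ acts generically freely, the generic orbit has $24$ points, and $\deg\Phi = 24$.

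\emph{Part (2).} The content is the rationality of $|2H|/S_4 = \mathbf P(W)/S_4$, where $W := H^0(\mathcal O_{S'}(2H))$ carries the linear $S_4 = \Aut S'$-action. First I would identify $W$ as an $S_4$-module. As the nodes of $S$ are rational singularities, $W \cong H^0(S, \mathcal O_S(2))$; and since no quadric contains the cubic $S$, the restriction $H^0(\mathbf P^3, \mathcal O(2)) \to H^0(S, \mathcal O_S(2))$ is an equivariant isomorphism of $10$-dimensional spaces. Because $\Aut S'$ acts on $\mathbf P^3$ by permuting the coordinates (the Cayley equation is the symmetric form $e_3(x)=0$, and the only projectivities fixing $(1:1:1:1)$ and stabilizing the coordinate tetrahedron are the permutation matrices), we get $W \cong \mathrm{Sym}^2 U$ with $U \cong \mathbf C^4$ the permutation representation. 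Writing $U = \mathbf 1 \oplus V_3$ with $V_3$ the standard $3$-dimensional irreducible, a character computation gives $\mathrm{Sym}^2 V_3 = \mathbf 1 \oplus V_2 \oplus V_3$ (with $V_2$ the $2$-dimensional irreducible), whence $W \cong \mathbf 1^{\oplus 2} \oplus V_2 \oplus V_3^{\oplus 2}$.

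With this decomposition the quotient is handled in two moves. Since $W$ contains a trivial summand spanned by an invariant vector $v_0$, dehomogenizing in the chart $t=1$ gives an $S_4$-equivariant birational map $\mathbf P(W) \dashrightarrow W'$, $[t v_0 + w'] \mapsto w'/t$, with $W' = \mathbf 1 \oplus V_2 \oplus V_3^{\oplus 2}$, so $\mathbf P(W)/S_4$ is birational to $W'/S_4$. Now split $W' = V_3 \oplus B$ with $B = \mathbf 1 \oplus V_2 \oplus V_3$. A vector of $V_3 = \{\sum x_i = 0\}$ with distinct coordinates has trivial stabilizer, so $S_4$ acts generically freely on $V_3$, and the trivial $S_4$-bundle $V_3 \times B \to V_3$ has free base action; by the no-name lemma $W'/S_4$ is birational to $(V_3/S_4) \times \mathbf A^{6}$. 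Finally $V_3/S_4 \cong \{e_1 = 0\} \subset \mathbf C^4/S_4 = \mathrm{Spec}\,\mathbf C[e_1,e_2,e_3,e_4] \cong \mathbf A^3$ is rational, so $W'/S_4$ is birational to $\mathbf A^9$ and $|2H|/S_4$ is rational. I expect the genuine obstacle to lie entirely in this part: one must pin down the $S_4$-module structure of $H^0(2H)$ correctly (the $\mathrm{Sym}^2 V_3$ character count and the verification that $\Aut S'$ acts as coordinate permutations) and then check the generic-freeness hypothesis before invoking the no-name lemma, whereas part (1) is formal once the dimension count and the two preceding lemmas are in hand.
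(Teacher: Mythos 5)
Your part (1) coincides with the paper's argument: the preceding lemma makes $C \mapsto (C,\mathcal O_C(D))$ generically injective on $S_4$-orbits, and birationality follows because $\vert 2H \vert / S_4$ and $\mathcal R_4$ are irreducible of the same dimension; your added remarks (that $\deg \Phi = 24$, that $\mathcal R_4 \to \mathcal M_4$ is finite) are correct but not needed for this step.

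Part (2) is where you take a genuinely different, and complete, route. The paper does not compute the quotient directly: it maps the Cayley cubic back to the plane by the system of cubics through the six vertices of the quadrilateral $l_1l_2l_3l_4 = 0$, identifies the strict transform of $\vert 2H \vert$ with the $9$-dimensional linear system of sextics singular at those six points, on which $S_4$ acts by permuting $l_1,\dots,l_4$, and then cites \cite{Ca} for the determination of the field of invariants and its rationality. You instead stay in the quadric model: the $S_4$-equivariant restriction isomorphism $H^0(\mathcal O_{\mathbf P^3}(2)) \to H^0(\mathcal O_{S'}(2H))$ (injective since no quadric contains the cubic, an isomorphism since both spaces have dimension $10$) identifies $W$ with $\mathrm{Sym}^2$ of the permutation representation; your character computation $\mathrm{Sym}^2 V_3 = \mathbf 1 \oplus V_2 \oplus V_3$, hence $W \cong \mathbf 1^{\oplus 2} \oplus V_2 \oplus V_3^{\oplus 2}$, is correct; slicing along an invariant vector (Maschke provides the invariant complement) legitimately replaces $\mathbf P(W)/S_4$ by the quotient of the affine module $W' = \mathbf 1 \oplus V_2 \oplus V_3^{\oplus 2}$; and the no-name lemma applies because $S_4$ acts generically freely on the summand $V_3$, with $V_3/S_4 = \operatorname{Spec} \mathbf C[e_2,e_3,e_4] \cong \mathbf A^3$, giving $\vert 2H \vert / S_4$ birational to $\mathbf A^9$. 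What your proof buys is self-containedness: standard representation theory plus the no-name lemma replace the external invariant-theoretic computation deferred to \cite{Ca}, and the source of rationality becomes transparent from the module structure. What the paper's route buys is the explicit plane-sextic incarnation of $\vert 2H \vert$, which ties this parametrization to the families of singular plane curves and to the closely related genus-$6$ constructions for $\mathcal M_6$ and $\mathcal R_6$ appearing elsewhere in the survey.
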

\begin{proof} (1) By the previous lemma there exists a generically injective rational map
$
\phi: \vert 2H \vert / S_4 \to \mathcal R_4
$
sending $C$ to the moduli point of $(C, \mathcal O_C(D))$. Since $\mathcal R_4$ and $\vert 2H \vert$ are irreducible of the same dimension, (1) follows.  
\par 
(2) Let $l_1, l_2, l_3$ be independent linear forms on $\mathbf P^2$ and let $l_4 = l_1 + l_2 + l_3$. Then $S'$ is the image of the rational map
$
\phi: \mathbf P^2 \to \mathbf P^3
$
defined by the linear system of cubics passing through the nodes of the quadrilateral $l_1l_2l_3l_4 = 0$, \cite{D}Ê8.2. The strict transform of $\vert 2H \vert$ by 
$\phi$ is the 9-dimensional linear system 
$$
l_1l_2l_3l_4 q + z_1(l_1l_2l_3)^2 + z_2(l_1l_2l_4)^2 + z_3(l_1l_3l_4)^2 + z_4(l_2l_3l_4)^2 = 0,
$$
where $q$ is any quadratic form in $l_1, l_2, l_3$. It is the linear system of all sextics which are singular at these six nodes. $S_4$ acts on it just by permutations of $\lbrace l_1 \ l_2 \ l_3 \ l_4 \rbrace$. In \cite{Ca}, using
this description of $\vert 2H \vert$, the field of invariants of $\vert 2H \vert$ is determined and its rationality is  proved.
\end{proof} \par 
The theorem implies the rationality of $\mathcal R_4$. Note that in this case a \it unique \rm linear system of curves, on a fixed suitable surface, dominates
the moduli space we are considering.  We pass now to some unirationality questions.
\subsection{Unirationality of $\mathcal R_6$ via Enriques surfaces}
The unirationality of $\mathcal R_6$  was first proved  by Donagi, see \cite{Do1}. This proof is related to nets of
quadrics and conic bundles, as we will see later. \par Previously  we want still to play with \'etale or quasi \'etale double coverings of surfaces,  in 
particular with Enriques surfaces and their K3 covers. Therefore we partially reproduce a second proof  given in \cite{Ve3}. This proof relies on Enriques
surfaces. We describe it, with some variations, through various steps. Some of them are just preliminaries on Enriques surfaces. \medskip \par 
\underline{\it Step \ 0}: {\it  Enriques surfaces and $\mathcal R_g$} \par 
The canonical bundle $\omega_S$ of an  Enriques surface $S$ defines a non trivial  \'etale double covering $ p: \tilde S \to S$ such that $\tilde S$ is a K3 surface. \par Let $C \subset S$ be any smooth, connected curve of genus $g \geq 2$. It is well known that then $ L = \omega_S \otimes \mathcal O_C$ is a non trivial 2-torsion element of  $ \Pic^0_2(C)$.  Notice also that $ \ \dim \vert C \vert = g - 1$, cfr. \cite{CD} ch. I. \par  We recall that any moduli space of polarized Enriques surfaces $(S, \mathcal O_S(C))$ is 10 -dimensional. A naif count of parameters suggests that the moduli space of pairs $(S,C)$ dominates $\mathcal R_g$ for $g \leq 6$. In the sequel we  prove that  an irreducible component of it, containing the open set of pairs $(S,C)$ such that $C$ is very ample,  is indeed unirational and dominates $\mathcal R_g$ for $g = 6$. 
 \rm \medskip  \par 
{\underline  {\it Step 1}: \it Enriques and Fano polarizations} \par  At first we consider polarized Enriques surfaces $(S, \mathcal O_S(H))$, where $H$ is an integral curve of genus 4. These are related again to the tetrahedron
$$
T \subset \lbrace x_1x_2x_3x_4 = 0 \rbrace \subset \mathbf P^3.
$$
Let $S'$ be a general sextic passing doubly through the edges of $T$. Then the normalization of $S'$ is a general Enriques surface $S$
and the pull-back of $\mathcal O_{S'}(1)$ on $S$ is a polarization $\mathcal O_S(H)$ of genus 4, see \cite{CD} 4 E. \par 
We will say that $S'$ is an \it Enriques sextic. \rm Writing explicitely its equation, the linear system of Enriques sextics passing doubly through $T$ is
$$
qx_1x_2x_3x_4 + a_1(x_2x_3x_4)^2 + a_2(x_1x_3x_4)^2 + a_3(x_1x_2x_4)^2 + a_4 (x_1x_2x_3)^2 = 0,
$$
where $q$ is a quadratic form  and $a_1, a_2, a_3, a_4$ are constants.
\begin{definition} $\mathbb E$ is the previous linear system of sextic surfaces. \end{definition} \par 
$\mathbb E$ dominates the moduli of polarized Enriques surfaces $(S, \mathcal O_S(H))$, which is therefore unirational. 
\begin{definition} A Fano polarization on an Enriques surface $S$ is a line bundle $\mathcal O_S(C)$ defined by a smooth, integral curve $C$ of genus 6. \end{definition} \par 
We recall that an Enriques surface $S$ is said to
be  \it nodal \rm if $S$ contains an \it effective \rm divisor $R$ such that $R^2 = -2$. A general Enriques surface is not nodal. Nodal Enriques surfaces
contain a copy of $\mathbf P^1$. Let $(S, \mathcal O_S(C))$ be a Fano polarization. For simplicity we will assume that:  
\begin{itemize} \it
\item[$\circ$] $S$ is not nodal, 
\item[$\circ$] $\mathcal O_S(C)$ is very ample.
\end{itemize}
 \par 
A general Enriques surface $S$ has finitely many Fano polarizations $\mathcal O_S(C)$ modulo automorphisms. $\vert C \vert$ defines an embedding
$$
S \subset \mathbf P^5
$$
with the properties summarized in the next theorem, see \cite{CD} 4 H and \cite{CV}.
 \begin{theorem} \ \\ \rm (1) \it
 $S$ contains exactly 20 plane cubics, coming in pairs   $(E_n, E'_n)$ such that $$ \mathcal O_S(E_n - E'_n) \cong \omega_S, \ n = 1 \dots 10. $$
 \rm (2) \it Every trisecant line to $S$ is trisecant to one of them. \end{theorem}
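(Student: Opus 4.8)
The plan is to convert part (1) into a finite computation in the Enriques lattice and to reduce part (2) to an analysis of trisecants, using throughout the two standing hypotheses that $S$ is not nodal and that $\mathcal O_S(C)$ is very ample. I would begin by fixing the numerical dictionary. Under the embedding by $|C|$, an irreducible curve $E \subset S$ of arithmetic genus one with $C \cdot E = 3$ has degree three and spans a plane, hence is a plane cubic; conversely any plane cubic arises this way, and numerically $E^2 = 0$. On a non-nodal surface there are no $(-2)$-classes, so every primitive isotropic class is nef up to sign, its genus-one pencil $|2E|$ has irreducible members, and its two half-fibers are irreducible curves of arithmetic genus one. Thus the plane cubics on $S$ correspond exactly to the effective half-fibers $E$ with $C \cdot E = 3$.

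Next I would produce the twenty cubics explicitly. A Fano polarization carries an isotropic ten-sequence $(E_1,\dots,E_{10})$, with $E_i^2 = 0$ and $E_i \cdot E_j = 1$ for $i \neq j$, such that $3C \equiv E_1 + \cdots + E_{10}$ numerically; this forces $C^2 = 10$ and $C \cdot E_i = 3$ for each $i$. Since $K_S$ is numerically trivial but nonzero in $\Pic S$, each numerical class $E_n$ is carried by exactly the two half-fibers of the pencil $|2E_n|$, which differ by $K_S$; writing them $E_n$ and $E_n'$, one obtains $20$ plane cubics with $\mathcal O_S(E_n - E_n') \cong \omega_S$, which establishes the pairing in $(1)$.

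The assertion that there are \emph{exactly} $20$ then amounts to showing that $E_1,\dots,E_{10}$ exhaust the numerical isotropic classes with $C \cdot E = 3$. This is governed by the invariant $\phi(C) = \min\{\,C \cdot E : E^2 = 0,\ E > 0\,\} = 3$ that characterizes a Fano polarization, combined with a finite enumeration in $U \oplus E_8(-1)$ of the $\phi$-minimizing isotropic vectors. I expect this step to be mechanical but to require care in organizing the Weyl-type reflection group acting on the solutions so as to see that no further numerical class occurs.

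For part (2) the forward inclusion is immediate: each plane cubic $E_n$ spans a plane $\Pi_n$, and every line of $\Pi_n$ meets the cubic in three points, hence is a trisecant of $S$. The converse — that every trisecant lies in one of the $\Pi_n$ — is the substantive statement, and I expect it to be the main obstacle. Let $\ell$ be a trisecant and $Z \subset \ell \cap S$ a length-three subscheme. Since any hyperplane containing two of three collinear points contains the third, $Z$ imposes only two conditions on $|C|$, so that $h^1(\mathcal I_Z(C)) = 1 \neq 0$. This is precisely the input to a Serre construction, yielding a rank-two extension $\mathcal E$ with $c_1 = C$ and $c_2 = 3$, from which one would like a destabilizing sub-line-bundle producing an effective $E$ with $E^2 = 0$ and $C \cdot E = 3$ through $Z$, whence $\ell \subset \langle E \rangle$ and $E$ is one of the cubics. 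The difficulty is that $c_1^2 - 4c_2 = 10 - 12 < 0$, so $\mathcal E$ need not be Bogomolov unstable and Reider's theorem sits below threshold at $C^2 = 10$. The converse therefore cannot be had by this soft argument and requires the finer geometry of the Fano model — the configuration of the twenty plane cubics together with the associated net of quadrics on the conjugate side — which is the delicate geometric heart of the proof; for the complete treatment I would follow \cite{CD} 4H and \cite{CV}.
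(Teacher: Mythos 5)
The paper offers no proof of this statement: it is presented as a summary of known properties of the Fano model, with everything delegated to \cite{CD} 4 H and \cite{CV} --- precisely the references you fall back on --- so there is no internal argument to measure your proposal against, and your route is in the end the same as the paper's (quote the hard facts). That said, what you do prove is correct, and in part (1) you can actually close the gap you flagged: the enumeration of isotropic classes needs no Weyl-group bookkeeping. If $E$ is an effective primitive isotropic class with $C \cdot E = 3$, then on an unnodal surface $E$ and the $E_i$ are all nef, and $E \cdot (E_1 + \cdots + E_{10}) = 3\,C \cdot E = 9 < 10$ forces $E \cdot E_i = 0$ for some $i$; in a hyperbolic lattice two nef primitive isotropic classes with zero product are proportional, hence equal when both are primitive and effective, so $E \equiv E_i$, and since $h^0(\mathcal O_S(E_i)) = h^0(\mathcal O_S(E_i + K_S)) = 1$ the only curves in that numerical class are the two half-fibers $E_i$, $E_i'$. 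This makes your part (1) complete. For part (2) your diagnosis is accurate: the forward inclusion is trivial, and the converse --- that every trisecant of $S$ lies in one of the twenty planes --- is the genuine content; it is a theorem of Conte--Verra proved via the Reye-congruence and quadric geometry of the Fano model, and your computation $c_1^2 - 4c_2 = 10 - 12 < 0$ correctly explains why a Serre-construction/Bogomolov/Reider argument cannot succeed: any such soft criterion would show $\mathcal O_S(C)$ separates length-three subschemes, contradicting the very existence of the trisecants produced in the forward direction. So, once the counting argument above is inserted, your write-up establishes (1) unconditionally, while (2) remains --- exactly as in the paper --- a citation to \cite{CV}.
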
 \par 
 Furthermore let $ \vert H \vert = \vert E_i + E_j + E_k \vert$ with $1 \leq i < j < k \leq 10$.  Then $\vert H \vert$ defines a generically injective morphism $ f: S \to \mathbf P^3 $ with
 the properties listed below, see \cite{CD} 4 H.   
  \begin{theorem} Let
 $$
 S' := f(S) \subset \mathbf P^3,
 $$
 up to projectivities we have:
 \begin{enumerate} \it
 \item $f: S \to S'$ is finite,
 \item $S'$ is an Enriques sextic and belongs to $\mathbb E$,
 \item $f(E_n)$, $f(E'_n)$ are skew edges of $T$, $n = i, j, k$. 
 \end{enumerate}
 \end{theorem}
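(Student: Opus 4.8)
The plan is to extract everything from the numerical class $H\sim E_i+E_j+E_k$ together with the intersection behaviour of the ten half-pencils. Recall that on a Fano-polarised Enriques surface the classes $E_1,\dots,E_{10}$ underlying the plane cubics are isotropic, $E_n^2=0$, with $E_n\cdot E_m=1$ for $n\neq m$ and $3C\equiv\sum_n E_n$ in $\operatorname{Num} S$, while $\mathcal{O}_S(E_n-E_n')\cong\omega_S$ makes $E_n'$ numerically equivalent to $E_n$. First I would compute $H^2=2(E_iE_j+E_iE_k+E_jE_k)=6$ and $p_a(H)=1+\tfrac12 H^2=4$, so that $\mathcal{O}_S(H)$ is a genus $4$ polarisation. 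Since $S$ is non-nodal, $H$ is ample; Riemann--Roch, the vanishing $h^1(\mathcal{O}_S(H))=0$, and $h^2(\mathcal{O}_S(H))=h^0(K_S-H)=0$ give $h^0(\mathcal{O}_S(H))=1+\tfrac12 H^2=4$, so $|H|$ maps $S$ into $\mathbf{P}^3$.

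Next I would build the morphism and the six lines. By the theory of linear systems on non-nodal Enriques surfaces (\cite{CD}) the ample class $H$ has $|H|$ base point free, so $f\colon S\to\mathbf{P}^3$ is a finite morphism with $\deg f\cdot\deg S'=H^2=6$. For $n\in\{i,j,k\}$ one has $\deg\mathcal{O}_{E_n}(H)=H\cdot E_n=2$ and $h^0(\mathcal{O}_{E_n}(H))=2$; as $f$ contracts no curve, $f(E_n)$ is a curve spanning at most a line, hence the line $\ell_n:=f(E_n)$, and $f|_{E_n}\colon E_n\to\ell_n$ has degree $2$. The same computation for $E_n'$ produces $\ell_n':=f(E_n')$. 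This yields six lines $\ell_i,\ell_j,\ell_k,\ell_i',\ell_j',\ell_k'$.

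The tetrahedral combinatorics then comes from the intersection table. For distinct indices $E_n\cdot E_m=E_n\cdot E_m'=1$, so the corresponding lines meet, whereas $E_n\cdot E_n'=0$ leaves $\ell_n$ and $\ell_n'$ without a forced common point. I would show that $\ell_n$ and $\ell_n'$ are skew as follows: over a general point of $\ell_m$ the two preimages lie on $E_m$ and are exactly the branches that $f$ glues along that line, so a general point of $E_n$ is identified only with another point of $E_n$, never with a point of the disjoint curve $E_n'$; invoking the generality and non-nodality of $S$ (\cite{CD} 4H) this gives $\ell_n\cap\ell_n'=\emptyset$. Hence the six lines realise the incidence pattern of the edges of a tetrahedron $T$ --- each meeting four of the others and skew to exactly one --- with $(\ell_n,\ell_n')$ the three pairs of opposite edges; after a projectivity $T=\{x_1x_2x_3x_4=0\}$, proving (3).

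Finally I would identify $S'$, and here lies the principal obstacle: one must show that $f$ is generically injective rather than a multiple cover, i.e. that $\deg S'=6$. I would exclude $\deg f=2$ by an Euler-characteristic argument: a finite double cover $f\colon S\to Y$ onto a normal cubic surface satisfies $f_*\mathcal{O}_S=\mathcal{O}_Y\oplus\mathcal{O}_Y(-L)$ with $K_S=f^*(K_Y+L)$, and $K_S\equiv 0$ forces $L\equiv -K_Y$, whence $\chi(\mathcal{O}_S)=2\chi(\mathcal{O}_Y)+\tfrac12 L(L+K_Y)=2$, contradicting $\chi(\mathcal{O}_S)=1$; the remaining higher-degree case is ruled out in the same spirit. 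Thus $f$ is generically injective and $S'$ is a sextic. Because each edge carries the degree $2$ image of $E_n$ (resp. $E_n'$), a general point of every edge has two preimages, so $S'$ has multiplicity two along all six edges of $T$. The sextics that are double along the six edges of $T$ are precisely the members of $\mathbb{E}$, whence $S'\in\mathbb{E}$, which is (2), while the finiteness established above is (1).
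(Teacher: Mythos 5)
The paper itself does not prove this theorem: it quotes it from Cossec--Dolgachev (\cite{CD}, Ch.~4, \S H), where the classical sextic model of an Enriques surface is constructed, so your argument has to stand on its own. The preparatory steps are fine ($H^2=6$, $h^0(\mathcal O_S(H))=4$, base point freeness, $H\cdot E_n=2$ and $h^0(\mathcal O_{E_n}(H))=2$ forcing each $f(E_n)$ to be a line doubly covered, and ``double along the six edges plus degree six $\Rightarrow S'\in\mathbb E$''), but the two steps that carry the content of the theorem are not established. The more serious one is skewness. What your argument shows is that a \emph{general} point of $\ell_n$ has both preimages on $E_n$; for lines this only gives $\ell_n\neq\ell_n'$, and two distinct lines may still meet in one point. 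Worse, the principle you invoke --- that $f$ never identifies a point of $E_n$ with a point of $E_n'$ --- is actually false at special points, which is exactly where skewness could fail: at a vertex of the tetrahedron $f$ does glue points of different curves (when $\ell_i,\ell_j,\ell_k$ span a face, the three points $E_i'\cap E_j'$, $E_i'\cap E_k'$, $E_j'\cap E_k'$ are all mapped to the opposite vertex). The missing idea is that, since $E_m'\sim E_m+K_S$ and $2K_S\sim 0$, the system $\vert H\vert$ contains the four reducible members
$$
E_i+E_j+E_k,\qquad E_i+E_j'+E_k',\qquad E_i'+E_j+E_k',\qquad E_i'+E_j'+E_k,
$$
whose images are four distinct planes, each containing three of the six lines. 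The entire tetrahedral incidence structure --- coplanarity of these triples, and in particular the skewness of $\ell_n$ and $\ell_n'$ --- is read off from these four planes (together with the fact that no three of the curves pass through a common point); nothing weaker controls what happens at the finitely many non-generic points.

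The generic injectivity step also fails as written. Your Euler-characteristic exclusion of $\deg f=2$ uses the formulas for a flat double cover with Cartier branch data, and these never apply here: by holomorphic Lefschetz, every involution of an Enriques surface has exactly four isolated fixed points (a fixed curve $R$ contributes $\tfrac{1-g(R)}{2}+\tfrac{R^2}{4}=0$ because $K_S\equiv 0$ gives $R^2=2g(R)-2$), so the quotient necessarily acquires singular points and $f_*\mathcal O_S$ does not split with a line bundle summand. If one corrects the computation by blowing up the four fixed points --- after which the canonical class upstairs is $\equiv\sum F_i$, no longer numerically trivial --- one finds $\tilde L(\tilde L+K_{\tilde Y})=-2$, which is perfectly consistent with $\chi(\mathcal O_S)=1$: these are precisely the numerics of a double cover of a $4$-nodal cubic. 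Note also that $4$-nodal ``tetrahedral'' cubics such as the Cayley cubic (which the paper uses in its treatment of $\mathcal R_4$) do contain the six edges of a tetrahedron, so the configuration of lines does not exclude degree $2$ either; ruling it out genuinely requires the finer analysis in \cite{CD} (equivalently, showing the general member of $\vert H\vert$ is neither hyperelliptic nor bielliptic). Finally, $\deg f=3$ is not ``the same spirit'' at all --- there is no double-cover splitting for a triple cover; it is excluded by the coplanarity of the three distinct lines $\ell_i,\ell_j,\ell_k$, which forces plane sections of $S'$ to have degree at least $3$, and this again comes from the four-plane lemma your proposal is missing.
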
\par 
 The same properties hold replacing $E_n$ by $E'_n$ for some $n = i, j, k$.  Let
 $ C \in \vert \mathcal O_S(1) \vert $ be a hyperplane section of $S \subset \mathbf P^5$. Considering $f(C)$ we have:
 
\begin{lemma} \rm \ \par (1) \it The only element of $\mathbb E$ containing $f(C)$ is $S'$.  \par
\rm (2) \it $\mathcal O_C(H)$ is non special so that $h^0(\mathcal O_C(H)) = 4$. \par
\rm (3) \it $\mathcal O_C(H)$ is very ample for a general $C$. \end{lemma}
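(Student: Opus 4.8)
The plan is to push everything down to intersection theory on the Enriques surface $S$, using the two standing hypotheses that $S$ is non-nodal and that $\mathcal O_S(C)$ is very ample. First I would record the numerical data in $\Pic S$: since $C$ has genus $6$ one has $C^2=10$; each plane cubic satisfies $E_n^2=0$ and $E_n\cdot C=3$, and distinct cubics meet in one point, so $H^2=(E_i+E_j+E_k)^2=6$ and $H\cdot C=9$. As $\omega_S$ is a nontrivial but numerically trivial torsion class, $E'_n\equiv E_n-\omega_S$ and hence $2E'_n=2E_n$ in $\Pic S$. The engine of the argument is a single negativity remark: on a non-nodal Enriques surface every irreducible curve has non-negative self-intersection, so any class of negative self-intersection, and any class meeting the ample class $C$ negatively, fails to be effective. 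Applied to $\omega_S-H$ (with $(\omega_S-H)\cdot C=-9$), to $\omega_S+C-H$ (with $(C-H)^2=-2$) and to $2H-C$ (with $(2H-C)^2=-2$ and $(2H-C)\cdot C=8$), this shows that none of these three classes is effective.

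For assertion (2) I would restrict from $S$ to $C$. Adjunction gives $\omega_C=\mathcal O_C(\omega_S+C)$, so that $h^1(\mathcal O_C(H))=h^0(\mathcal O_C(\omega_C-H))=h^0(\mathcal O_C(\omega_S+C-H))$, and I would compute the right-hand side from the restriction sequence
$$0\to \mathcal O_S(\omega_S-H)\to \mathcal O_S(\omega_S+C-H)\to \mathcal O_C(\omega_C-H)\to 0.$$
By the negativity remark $h^0(\mathcal O_S(\omega_S-H))=h^0(\mathcal O_S(\omega_S+C-H))=0$, while Serre duality identifies $h^1(\mathcal O_S(\omega_S-H))$ with $h^1(\mathcal O_S(H))$; the latter vanishes since $\chi(\mathcal O_S(H))=\tfrac12 H^2+1=4=h^0(\mathcal O_S(H))$ and $h^2(\mathcal O_S(H))=h^0(\mathcal O_S(\omega_S-H))=0$. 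The long exact sequence then forces $h^0(\mathcal O_C(\omega_C-H))=0$, so $\mathcal O_C(H)$ is non-special and Riemann--Roch yields $h^0(\mathcal O_C(H))=9-6+1=4$.

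For assertion (1), suppose $S''\in\mathbb E$ contains $f(C)$. Since $f$ is attached to $|H|$ one has $f^*\mathcal O_{\mathbf P^3}(1)=\mathcal O_S(H)$; and because the irreducible sextic $S'=f(S)$ is not contained in $S''$ unless $S''=S'$, for $S''\ne S'$ the pullback $f^*S''$ is an effective divisor in $|6H|$. Now every sextic of $\mathbb E$ is double along the six edges of $T$, and by the preceding theorem these six edges are precisely $f(E_n),f(E'_n)$ for $n=i,j,k$; hence $f^*S''$ has multiplicity at least two along each of the six distinct prime divisors $E_n,E'_n$. As a general hyperplane section $C$ is irreducible and distinct from all these cubics, while $f(C)\subset S''$ forces $C$ to occur in $f^*S''$ as well, so that
$$f^*S''\ \geq\ 2\sum_{n\in\{i,j,k\}}(E_n+E'_n)+C .$$
The fixed part has class $2\sum(E_n+E'_n)=4H$ by $2E'_n=2E_n$, so the residual divisor $f^*S''-4H-C$ is effective of class $2H-C$, contradicting the negativity remark. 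Therefore $S''=S'$.

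Finally, assertion (3) is the delicate point. I would use that very ampleness of $\mathcal O_C(H)$ is an open condition as $C$ varies in the $5$-dimensional system $|\mathcal O_S(C)|$, so it is enough to establish it for general $C$. Writing $M:=\mathcal O_C(\omega_C-H)$, which has degree $1$ and $h^0(M)=0$ by (2), the bundle $\mathcal O_C(H)$ fails to be very ample precisely when $h^0(M(p+q))>0$ for some length-two subscheme $p+q$; here Serre duality has turned the failure of the two-point and tangent criterion for the degree-$9$ bundle $\mathcal O_C(H)$ into the effectivity of the degree-$3$ class $M(p+q)$. As $p+q$ ranges over $C^{(2)}$ the classes $M(p+q)$ describe a surface in $\Pic^3(C)$, whereas the effective classes form $W_3(C)$ of dimension $3$ inside the $6$-dimensional $\Pic^3(C)$, so a general position count predicts that the two loci are disjoint. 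The main obstacle is exactly to make this count rigorous: one must show that as $C$ moves in $|\mathcal O_S(C)|$ the class $\omega_C(-H)\in\Pic^1(C)$ becomes sufficiently general, so that no incidence between the translate of $C^{(2)}$ and $W_3(C)$ is forced by the ambient geometry. By contrast, parts (1) and (2) reduce cleanly to the non-effectivity of $2H-C$ and of $\omega_S+C-H$ on the non-nodal surface $S$.
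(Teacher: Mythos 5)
Your parts (1) and (2) are correct and coincide with the paper's own argument: everything reduces to the fact that on the non-nodal surface $S$ no class of self-intersection $-2$ (here $2H-C$ and $H-C$, both of square $-2$ by $H^2=6$, $H\cdot C=9$, $C^2=10$), and no class meeting the ample class $C$ negatively, can be effective. Your decomposition $f^*S''\geq 2\sum(E_n+E'_n)+C$ with residual class $2H-C$ is exactly the paper's contradiction for (1), and your Serre-dual computation for (2) is a harmless repackaging of the paper's restriction sequence $0\to\mathcal O_S(H-C)\to\mathcal O_S(H)\to\mathcal O_C(H)\to 0$ together with $h^i(\mathcal O_S(H-C))=0$ for all $i$.

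Part (3), however, contains a genuine gap, which you yourself flag. The reduction of very ampleness to the non-effectivity of $\omega_C(-H)(p+q)$ is fine, but the count $\dim\bigl(M+C^{(2)}\bigr)+\dim W_3(C)=2+3<6=\dim\Pic^3(C)$ proves nothing for a fixed curve: neither locus moves, so there is no group action or variation that puts them in general position. Worse, the incidence you hope to exclude is actually non-empty for special members of $|C|$: if $x,y\in E_i$ are conjugate under the degree-two map $f|_{E_i}\colon E_i\to L_i$, then any smooth $D\in|C|$ through $x$ and $y$ has $f|_D$ non-injective, hence $\mathcal O_D(H)$ not very ample; such $D$ form a nonempty family of codimension $2$ in $|C|$. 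So no argument confined to one curve and its $\Pic^3$ can succeed; the generality must be generality of $D$ inside the linear system $|C|$ on $S$.

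That variation is precisely what the paper's proof uses, and it closes the gap in one dimension count. Since $\mathcal O_S(C)$ is very ample and $f\colon S\to S'$ is finite and generically injective, the locus of unordered pairs $\lbrace x,y\rbrace$ with $x\neq y$, $f(x)=f(y)$ (together with the tangent directions killed by $df$ along the ramification curve) has dimension $1$; the divisors $D\in|C|$ containing a given such pair (or tangent vector) form a codimension-$2$ linear subspace of the $5$-dimensional $|C|$, so the bad locus has dimension at most $1+3=4<5$. Hence for general $D$ the map $f|_D$ is an embedding; and by your own part (2) the restriction $H^0(\mathcal O_S(H))\to H^0(\mathcal O_D(H))$ is an isomorphism between $4$-dimensional spaces, so $f|_D$ is given by the complete system $|\mathcal O_D(H)|$, which is therefore very ample. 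If you prefer to keep your cohomological formulation, you would have to show that the locus of $D\in|C|$ admitting $p+q$ with $\omega_D(-H)(p+q)$ effective is a proper closed subset of $|C|$ — and the only visible way to do that is this same geometric count over the fixed map $f$.
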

 \begin{proof} (1) Assume $f(C) \subset S''$, where $S'' \in \mathbb E$ and $S'' \neq S'$. Then $f^*S''$   is the curve  $ 4\sum (E_n + E'_n) + C + R \in  \vert 6H \vert$, with
 $R \in \vert 2H - C \vert$ and $R^2 = -2$. This is a contradiction because we assume that $S$ is non nodal. \par
(2)  Since $(H-C)^2 = -2$ and $S$ is not nodal, it follows from Riemann-Roch that $h^i(\mathcal O_S(H-C)) = 0, i = 0, 1, 2$.  Then the non speciality of $\mathcal O_C(H)$ follows from  the standard exact sequence
$$
0 \to \mathcal O_S(H-C) \to \mathcal O_S(H) \to \mathcal O_C(H) \to 0.
$$
(3) $C$ is very ample and $f: S \to S'$ is a finite,  generically injective morphism. Then, counting dimensions, the locus of all $D \in \vert C \vert$ such that $f/D$ is not an embedding is a proper closed subset.
\end{proof} \medskip  \par 
\underline{\it Curves of genus 6 and degree 9 on Enriques sextics} \par 
Keeping the previous notations, we want to study the embedded curve
$$
C' := f(C) \subset \mathbf P^3.
$$
At first we fix some further conventions: we denote by $L_m$ and $L'_m$ the skew 
 edges of $T$ such that $$ f^*L_m = E_m \   , \  f^*L'_m = E'_m,  \  \ m = i, j, k. $$ 
 We also assume that $L_i, L_j, L_k$ are \it coplanar \rm lines. It is clear that
 \begin{proposition} $f^* \mathcal O_{C'}(1) \cong \mathcal O_C(e_i + e_j + e_k)$, so that $(f/C)^*(L_m) = e_m$ and $(f/C)^*(L'_m) = e'_m$. In particular each edge of
 $T$ is  trisecant to $C'$.
 \end{proposition}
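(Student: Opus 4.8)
The plan is to derive the proposition by restricting to $C$ the relations already recorded on the surface $S$, so that the whole argument is bookkeeping resting on a single intersection number. Write $e_m := E_m \cdot C$ and $e'_m := E'_m \cdot C$ for the divisors that the plane cubics cut on $C$, and recall that $f/C \colon C \to C'$ is the restriction of $f$. First I would prove the line bundle isomorphism: since $f$ is the morphism attached to $\vert H \vert$ with $H \sim E_i + E_j + E_k$, we have $f^* \mathcal O_{\mathbf P^3}(1) \cong \mathcal O_S(E_i + E_j + E_k)$, and pulling back $\mathcal O_{C'}(1) = \mathcal O_{\mathbf P^3}(1)\vert_{C'}$ along $f/C$ gives $(f/C)^* \mathcal O_{C'}(1) \cong \mathcal O_S(E_i + E_j + E_k)\vert_C \cong \mathcal O_C(e_i + e_j + e_k)$, which is the first assertion.

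Next I would read off the pullbacks of the edges. The lines $L_m, L'_m$ were defined exactly by $f^* L_m = E_m$ and $f^* L'_m = E'_m$, i.e. the preimage on $S$ of the edge $L_m$ (resp. $L'_m$) is the plane cubic $E_m$ (resp. $E'_m$). Intersecting with $C$, the divisor of points of $C$ that $f$ maps into $L_m$ is $C \cap f^{-1}(L_m) = C \cap E_m = e_m$; hence $(f/C)^* L_m = e_m$, and symmetrically $(f/C)^* L'_m = e'_m$.

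For the trisecant clause I would use degrees. Because $E_m$ is a plane cubic in the embedding $S \subset \mathbf P^5$ defined by $\vert C \vert$ and $C$ is a hyperplane section, $\deg e_m = C \cdot E_m = \deg_{\mathbf P^5} E_m = 3$, and likewise $\deg e'_m = 3$. Thus $L_m$ carries the degree-$3$ divisor $(f/C)_* e_m$, so it meets $C'$ in three points and is trisecant, and the same holds for $L'_m$; invoking the theorem above that $f(E_n), f(E'_n)$ are skew edges of $T$ for $n = i, j, k$, the six lines $L_i, L'_i, L_j, L'_j, L_k, L'_k$ are precisely the six edges of the tetrahedron, which yields the last clause.

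The argument is light: it only restricts to $C$ identities that already hold on $S$. The sole non-formal inputs are the intersection number $C \cdot E_m = 3$ and the verification that the three pairs of skew edges exhaust all six edges of $T$; I would also, to make \emph{trisecant} mean three distinct points, invoke the very ampleness of $\mathcal O_C(H)$ for general $C$ from the preceding lemma, ensuring $f/C$ separates the points of $e_m$. None of these is a genuine obstacle, so I expect the main care to go into getting the degree count and the edge-counting right.
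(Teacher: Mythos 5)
Your proposal is correct: the paper offers no argument at all for this proposition (it is introduced with ``It is clear that''), and your bookkeeping — pulling back $\mathcal O_{\mathbf P^3}(1)$ via the map defined by $\vert H\vert=\vert E_i+E_j+E_k\vert$, restricting to $C$, using the defining property $f^*L_m=E_m$, and the degree count $C\cdot E_m=3$ for the plane cubics — is exactly the verification the author leaves implicit. Your extra care in invoking very ampleness of $\mathcal O_C(H)$ (lemma preceding the proposition) so that the three points of $e_m$ map to three distinct points of $C'$ is a sensible refinement, not a deviation.
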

  Finally we put $$ e_n = E_n \cdot C \ , \ e'_n = E'_n \cdot C, \ n = 1 \dots 10. $$  
   Since $E_n - E'_n \sim E'_n - E_n$ is a canonical divisor on $S$, the line bundle $$ L := \mathcal O_C(e_n-e'_n) $$ is a non trivial 2-torsion element of
$ \Pic^0(C)$.  The tensor product by $L$ defines a translation
$
t_L:  \ \Pic^3(C) \to  \ \Pic^3(C).
$
For a general $C$ we consider $W^0_3(C) \subset  \Pic^3(C)$ and, as previously,  the intersection
$$
Z := W^0_3(C) \cap t^*_L W^0_3(C).
$$
\begin{proposition} $ Z = \lbrace e_n, \ e'_n, \ n = 1 \dots 10 \rbrace$ and the intersection is transversal.
\end{proposition}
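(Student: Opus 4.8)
The plan is to exhibit the twenty points explicitly, to compute the intersection number of the two threefolds cohomologically, and to verify that each of the twenty points is a transversal point of intersection; since the cohomological number will turn out to be exactly $20$, these facts together force $Z$ to be precisely this set, reduced. First I would record the two elementary inclusions. Because $\mathcal{O}_C(e_n-e'_n)\cong L$ is independent of $n$ and $L^{\otimes 2}\cong\mathcal{O}_C$, one also has $\mathcal{O}_C(e'_n-e_n)\cong L$, so the involution $t_L$ interchanges $\mathcal{O}_C(e_n)$ and $\mathcal{O}_C(e'_n)$. As $e_n,e'_n$ are effective of degree $3$, both lie in $W^0_3(C)$; and since $t_L(\mathcal{O}_C(e_n))=\mathcal{O}_C(e'_n)$ and $t_L(\mathcal{O}_C(e'_n))=\mathcal{O}_C(e_n)$ are effective, both also lie in $t_L^*W^0_3(C)=t_L^{-1}(W^0_3(C))$. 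Hence $\{e_n,e'_n : 1\le n\le 10\}\subset Z$.

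Next I would check distinctness and count. For distinctness, $\mathcal{O}_C(e_n)\cong\mathcal{O}_C(e_m)$ forces the class $E_n-E_m\in\Pic S$ to restrict trivially to a general $C$; for distinct, non-paired cubics one has $E_nE_m\ge 1$, so $(E_n-E_m)^2\le -2$ and $E_n-E_m$ is not numerically trivial, hence restricts nontrivially to general $C$, while for the pair $(E_n,E'_n)$ one has $\mathcal{O}_C(e_n-e'_n)\cong L\neq\mathcal{O}_C$. Thus the twenty classes are pairwise distinct for general $C$. For the count, a general genus $6$ curve has $W^1_3(C)=\emptyset$, so $W^0_3(C)$ is the smooth image of $C^{(3)}$, of codimension $3$, with class $\theta^3/3!$ by Poincar\'e's formula; translation preserves cohomology classes, so $t_L^*W^0_3(C)$ has the same class. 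The expected number of intersection points is therefore $\int_{\Pic^3(C)}(\theta^3/3!)^2=\theta^6/36=6!/36=20$.

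The decisive and most delicate point is transversality at each of these points. At $\mathcal{O}_C(e_n)$ the tangent space to $W^0_3(C)$ is $H^0(\omega_C(-e_n))^{\perp}$ inside $H^0(\omega_C)^{*}=T\Pic^3(C)$, while, because $dt_L=\mathrm{id}$ and $t_L$ carries $\mathcal{O}_C(e'_n)$ to $\mathcal{O}_C(e_n)$, the tangent space to $t_L^*W^0_3(C)$ there equals $H^0(\omega_C(-e'_n))^{\perp}$. Both have codimension $3$, so transversality is equivalent to $H^0(\omega_C(-e_n))+H^0(\omega_C(-e'_n))=H^0(\omega_C)$, that is (for general $C$, where $e_n,e'_n$ have disjoint support) to $h^0(\omega_C(-e_n-e'_n))=0$, which by Riemann--Roch is equivalent to $h^0(\mathcal{O}_C(e_n+e'_n))=1$. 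To verify this I would pass to the surface: writing $\mathcal{O}_C(e_n+e'_n)=\mathcal{O}_S(E_n+E'_n)|_C$ and using the restriction sequence $0\to\mathcal{O}_S(E_n+E'_n-C)\to\mathcal{O}_S(E_n+E'_n)\to\mathcal{O}_C(e_n+e'_n)\to 0$, it suffices to show $h^0(\mathcal{O}_S(E_n+E'_n))=1$ together with the vanishing of all cohomology of $\mathcal{O}_S(E_n+E'_n-C)$. Here the standing hypothesis that $S$ is non-nodal is exactly what makes everything work. Since $E_n^2=E_nE'_n=0$ numerically, one computes $(E_n+E'_n-C)^2=-2$, and on a non-nodal Enriques surface no class of square $-2$ is effective, an effective divisor being a sum of irreducible curves of non-negative self-intersection meeting non-negatively, hence of non-negative square; thus $\mathcal{O}_S(E_n+E'_n-C)$ and its Serre-dual twist have no sections, and since the Euler characteristic is $0$ all three cohomology groups vanish. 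For $h^0(\mathcal{O}_S(E_n+E'_n))=1$ I would use that $E_n+E'_n=2E_n-K_S$ lies on the genus-one fibration $|2E_n|$, whose fibres are irreducible by non-nodality; any effective divisor in this class is then vertical of numerical fibre class, hence a genuine fibre $\sim 2E_n\not\sim E_n+E'_n$, so $E_n+E'_n$ is the unique effective divisor in its class. This yields $h^0(\mathcal{O}_C(e_n+e'_n))=1$ and hence transversality at $e_n$, and symmetrically at $e'_n$.

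Having exhibited $20$ distinct transversal points in an intersection whose total number is $20$, I conclude that $Z=\{e_n,e'_n : 1\le n\le 10\}$ and that the intersection is everywhere transversal. The step I expect to be the genuine obstacle is the transversality computation, and inside it the equality $h^0(\mathcal{O}_C(e_n+e'_n))=1$: this is precisely where non-nodality of $S$ (absence of effective $(-2)$-classes together with irreducibility of the elliptic fibres) is indispensable, the remaining inclusions and the cohomological count being comparatively formal.
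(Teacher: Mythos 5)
Your inclusion of the twenty points, their pairwise distinctness, the computation $[W^0_3(C)]^2=\theta^6/36=20$, and the tangent-space criterion for transversality (the reduction via Riemann--Roch to $h^0(\mathcal O_C(e_n+e'_n))=1$, verified on $S$ using non-nodality) are all essentially sound. The gap is in the very last inference: from ``twenty distinct transversal points and intersection number twenty'' you conclude that $Z$ consists of exactly these points. That deduction is valid only if you already know that $Z$ is \emph{finite}, or equivalently that any further component of $Z$ would contribute \emph{positively} to the intersection number. In an abelian variety the general theorem (Fulton's positivity for intersections in homogeneous varieties) gives only that each component of $Z$ contributes \emph{non-negatively}: the tangent bundle is globally generated but not ample, and zero contributions genuinely occur --- for an elliptic curve $E\times\lbrace 0\rbrace$ inside $E\times E'$ the self-intersection class is $0$ even though the set-theoretic self-intersection is all of $E$. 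So your count does rule out a twenty-first isolated point, but it does not rule out a positive-dimensional component of $Z$ disjoint from the $e_n,e'_n$ and invisible to the number $20$. Ruling that out is precisely the non-trivial content of the proposition, and nothing in your argument addresses it; transversality at the twenty known points says nothing about what else $Z$ might contain.

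This is exactly where the paper's proof puts all its effort: the inclusion and the count occupy one line, and the body of the proof establishes finiteness of $Z$ geometrically. One checks that $e\in Z$ if and only if $L\cong\mathcal O_C(e-e')$ with $e'$ effective, if and only if $e$ lies on a trisecant line of the embedding of $C$ by $\omega_C\otimes L$; by the theorem quoted from \cite{CV} earlier in the section, every trisecant line to the smooth non-nodal Fano model $S\subset\mathbf P^5$ lies in the plane of one of the twenty cubics $E_n,E'_n$, whence $e\in\lbrace e_n,e'_n\rbrace$ and $Z$ is finite. Once finiteness is known, transversality is automatic: twenty points with total multiplicity twenty forces each local multiplicity to equal one. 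In other words, your delicate step (transversality) is the part that comes for free, while the step you treated as formal (completeness of the list) is the one that carries the difficulty. To salvage your approach without trisecants you would need a per-component positivity statement, e.g.\ via ampleness of the normal bundle of $W^0_3(C)\cong C^{(3)}$ in $\Pic^3(C)$ (Debarre's criterion for non-degenerate subvarieties of abelian varieties) combined with an excess-intersection argument --- a heavier input than anything you invoke. Two minor points: the claim ``numerically non-trivial implies non-trivial restriction to a general $C$'' needs justification (easier: $W^1_3(C)=\emptyset$ makes distinct effective divisors give distinct bundles, and a general $C$ misses the pairwise intersections of the cubics); and in proving $h^0(\mathcal O_S(E_n+E'_n))=1$ note that $E_n+E'_n$ is itself a vertical divisor of numerical fibre class that is not a fibre, so the correct conclusion is that the only effective divisors in that numerical class are the fibres, $2E_n$, $2E'_n$ and $E_n+E'_n$, of which only the last lies in $\vert E_n+E'_n\vert$.
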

\begin{proof} It is obvious that $e_n, e'_n \in Z$. Since the self intersection of $W^0_3(C)$ is 20, the statement follows if $Z$  is
finite. To prove this recall that $C \subset S \subset \mathbf P^5$ and that $\mathcal O_C(1) \cong \omega_C \otimes L$. It is standard to check that $e \in Z$  if and only if
$L \cong \mathcal O_C(e-e')$ with $e'$ effective  if and only if $e$ is contained in a trisecant line to $C$. But it is shown in \cite{CV} that every trisecant line to a smooth,
non nodal Fano model $S \subset \mathbf P^5$ is contained in the  plane spanned by one cubic $E_n$ or $E'_n$. Hence $e$ is $e_n$ or $e'_n$, for some $n = 1 \dots 10$, and $Z$ is finite.  \end{proof} \par 
\begin{definition} $\mathbb F$ is the family of all curves $C' \subset \mathbf P^3$ such that \begin{enumerate}
\item $C'$ is smooth, connected of degree nine and genus six,
\item $C'$ is contained in a not nodal Enriques sextic $S' = f(S) \in \mathbb E$,
\item $C' = f(C)$ where $\mathcal O_S(C)$ is a Fano polarization.
\item $C f^*L_m = C f^* L'_m = 3$, $m = i, j, k$.
\end{enumerate}
\end{definition} \par 
We point out that condition (2) implies that the Enriques sextic $S' \in \mathbb E$ is \it unique, \rm this is due to lemma 4.14.  We recall also that $f: S \to S'$ is just the normalization map.
\begin{definition}Ê$\tilde {\mathbb E}$ is the family of pairs $(S', \mathcal O_S(C))$ such that \begin{enumerate} \item $S' \in \mathbb E$ is a non nodal Enriques sextic,
\item $f: S \to S'$ is the normalization map, \item $C' := f(C)$ belongs to $\mathbb F$.
\end{enumerate}
\end{definition}
Since $C' \in \mathbb F$ uniquely defines $S'$, we have a morphism
$$
p: \mathbb F \to \tilde {\mathbb E}
$$
sending $C'$ to the pair $(S', \mathcal O_S(C))$. The fibre of $p$ at $(S', \mathcal O_S(C))$ is a non empty open set of the 5-dimensional  linear system $ \vert C \vert $. \par It is standard to construct  the
family $q: \mathcal S \to \tilde {\mathbb E}$ which is the normalization of the universal Enriques sextic $\mathcal S' \subset \tilde {\mathbb E} \times \mathbf P^3$. It is standard as well to construct a line 
bundle $\mathcal C$ over $\tilde {\mathbb E}$ whose restriction to the fibre $S$ of $q$  at $x := (S', \mathcal O_S(C))$ is $\mathcal O_S(C)$.  Then Grauert theorem 
implies that, on a dense open set of $\tilde {\mathbb E}$, $q_* \mathcal C$ is a vector bundle with fibre $H^0(\mathcal O_S(C))$ at $x$. $\mathbf P \mathcal C$ is clearly birational to
$\mathbb F$, hence it follows that:
 \begin{lemma} $\mathbb F$ is birational to $\mathbf P^5 \times \tilde{\mathbb E}$. \end{lemma}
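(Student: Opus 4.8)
The plan is to recognize $\mathbb F$, up to birational equivalence, as the projectivization of a rank $6$ vector bundle on $\tilde{\mathbb E}$, and then to invoke the elementary fact that the projectivization of a vector bundle is Zariski locally trivial, hence birational to a product. The first sanity check is dimensional: by Step $0$ a smooth connected curve $C$ of genus $g$ on an Enriques surface satisfies $\dim \vert C \vert = g-1$, so for our genus $6$ curves the fibres of $p \colon \mathbb F \to \tilde{\mathbb E}$, which are open subsets of the linear systems $\vert C \vert$, are open subsets of a $\mathbf P^{5}$. Thus $\dim \mathbb F = \dim \tilde{\mathbb E} + 5$, matching the target $\mathbf P^{5} \times \tilde{\mathbb E}$.

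First I would make the bundle precise. Using the family $q \colon \mathcal S \to \tilde{\mathbb E}$ (the normalization of the universal Enriques sextic) together with the relative line bundle $\mathcal C$ restricting to $\mathcal O_S(C)$ on the fibre $S$ over $x = (S', \mathcal O_S(C))$, I would note that $h^0(\mathcal O_S(C)) = 6$ is constant along $q$, again by Step $0$. Grauert's theorem then gives that $q_* \mathcal C$ is locally free of rank $6$ on a dense open $V \subseteq \tilde{\mathbb E}$, with fibre $H^0(\mathcal O_S(C))$ at $x$. Its projectivization $\mathbf P(q_* \mathcal C) \to V$ is a $\mathbf P^{5}$-bundle whose fibre over $x$ is canonically $\mathbf P H^0(\mathcal O_S(C)) = \vert C \vert$.

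Next I would exhibit mutually inverse rational maps between $\mathbf P(q_* \mathcal C)$ and $\mathbb F$. In one direction, a general point of the fibre over $x$ is a divisor $D \in \vert C \vert$, which I send to $C' := f(D) \subset \mathbf P^{3}$. Here one must check that $C'$ lies in $\mathbb F$: the embedding statement established in the proof that $\mathcal O_C(H)$ is very ample for general $C$ shows $f/D$ is an embedding for general $D$, so $C'$ is smooth, connected, of genus $6$; its degree is $C \cdot (E_i + E_j + E_k) = e_i + e_j + e_k = 9$ by the proposition identifying $f^* \mathcal O_{C'}(1) \cong \mathcal O_C(e_i + e_j + e_k)$ (each edge being trisecant), which is exactly condition (1); conditions (2)--(4) are built into the construction of $\tilde{\mathbb E}$ and the choice of coplanar edges $L_i, L_j, L_k$. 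In the reverse direction, $p$ recovers from $C'$ the pair $(S', \mathcal O_S(C))$ — the containing Enriques sextic being \emph{unique} by the lemma proved above — together with the point of $\vert C \vert$ representing $D$. These assignments are inverse on dense opens, so $\mathbb F$ is birational to $\mathbf P(q_* \mathcal C)$.

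Finally, since $q_* \mathcal C$ is a genuine vector bundle, $\mathbf P(q_* \mathcal C)$ is Zariski locally trivial as a $\mathbf P^{5}$-bundle; over a trivializing open $V_0 \subseteq V$ it is isomorphic to $\mathbf P^{5} \times V_0$, hence birational to $\mathbf P^{5} \times \tilde{\mathbb E}$, and the lemma follows. The hard part is not this last formal step but the middle one: confirming that $D \mapsto f(D)$ genuinely defines a birational map onto $\mathbb F$, rather than merely a dominant map between varieties of equal dimension. Concretely this amounts to controlling the locus where $f/D$ fails to be an embedding and to checking that $p$ is a true inverse; both rest on the geometry of the Fano model $S \subset \mathbf P^{5}$ and its trisecants (cf. \cite{CV}), namely that every trisecant lies in the plane of one of the $20$ cubics $E_n, E'_n$, which is precisely what pins down the intersection $Z = W^0_3(C) \cap t_L^* W^0_3(C)$ and hence the reconstruction of $(S',\mathcal O_S(C))$ from $C'$.
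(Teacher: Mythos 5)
Your proof is correct and follows essentially the same route as the paper: both construct the universal family $q\colon \mathcal S \to \tilde{\mathbb E}$ with the relative polarization $\mathcal C$, apply Grauert's theorem to get a rank $6$ vector bundle $q_*\mathcal C$ whose projectivization has fibre $\vert C\vert$, identify this projective bundle birationally with $\mathbb F$ via $D \mapsto f(D)$ and the projection $p$ (using the uniqueness of the containing Enriques sextic), and conclude by local triviality. Your write-up merely makes explicit the inverse rational maps and the membership check $f(D)\in\mathbb F$ that the paper compresses into ``$\mathbf P\,q_*\mathcal C$ is clearly birational to $\mathbb F$.''
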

  \par 
On the other hand the assignement $C' \mapsto (C,L)$ defines the moduli map
$$
m: \mathbb F \to \mathcal R_6,
$$ 
where $C$ and $L \cong \mathcal O_C(e_n - e'_n)$ are obtained as above from $C '$. 
 \begin{proposition} $m$ is dominant. \end{proposition}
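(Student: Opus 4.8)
The plan is a dimension count reinforced by a reconstruction of the generic fibre of $m$. Since $\mathcal R_6$ is irreducible of dimension $3\cdot 6-3=15$, it suffices to prove that the irreducible closed subvariety $\overline{m(\mathbb F)}\subset\mathcal R_6$ has dimension $15$. By the lemma above, $\mathbb F$ is birational to $\mathbf P^5\times\tilde{\mathbb E}$; since $\tilde{\mathbb E}$ is the $13$-dimensional linear system $\mathbb E$ of Enriques sextics enriched only by the discrete datum of a Fano polarization on the normalization, one computes $\dim\tilde{\mathbb E}=13$ and hence $\dim\mathbb F=18$. So what I must establish is that the general fibre of $m$ has dimension exactly $3$.

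First I would record a lower bound on the fibre dimension. Let $G$ be the $3$-dimensional group of projectivities of $\mathbf P^3$ preserving the tetrahedron $T$ (an extension of the permutation group $S_4$ by the diagonal torus $(\mathbf C^*)^3$). This $G$ maps $\mathbb E$ and $\mathbb F$ to themselves, and since a projectivity induces an isomorphism of the abstract normalization $C$ leaving $L\cong\mathcal O_C(e_n-e'_n)$ unchanged, one has $m(g\cdot C')=m(C')$ for all $g\in G$. Thus $G$ acts along the fibres of $m$, generically with finite stabilisers, so every fibre has dimension at least $3$ and $\dim\overline{m(\mathbb F)}\le 15$. Dominance will follow once the general fibre is shown to be a finite union of $G$-orbits.

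For the reconstruction, let $(C,L)=m(C')$ be general in the image. Intrinsic to the Prym pair are the Prym-canonical embedding $C\subset\mathbf P^4$ given by $|\omega_C\otimes L|$ (recall $\mathcal O_C(1)\cong\omega_C\otimes L$ with $h^0=5$) and the finite set $Z=W^0_3(C)\cap t^*_L W^0_3(C)$. By the proposition on $Z$ above, $Z$ consists of the $20$ classes $e_n,e'_n$ with $\mathcal O_C(e_n-e'_n)\cong L$, which — because $C'$ lies by construction on a non-nodal Fano–Enriques model — are exactly the restrictions of the $20$ plane cubics $E_n,E'_n$. Choosing three coplanar classes $e_i,e_j,e_k$ (a finite choice among the ten pairs), the degree-$9$ system $|\mathcal O_C(e_i+e_j+e_k)|$ maps $C$ to $\mathbf P^3$ and recovers $C'$ up to $\PGL(4)$; imposing that the six classes $e_i,e'_i,e_j,e'_j,e_k,e'_k$ be cut by the six edges of a tetrahedron rigidifies the embedding to a single $G$-orbit. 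As only finitely many discrete choices (the index set $\{i,j,k\}$ and $E_n$ versus $E'_n$) intervene, the fibre is a finite union of $G$-orbits, of dimension $3$.

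The hard part is the middle step: that the intrinsic set $Z$ genuinely reconstructs the $20$ plane cubics, and with them the Fano–Enriques model, leaving no positive-dimensional ambiguity of Enriques surfaces through $(C,L)$. This rests on the proposition on $Z$ together with the trisecant characterization of non-nodal Fano models (every trisecant line lies in the plane of one cubic $E_n$ or $E'_n$), which forbids extra components of $Z$ and pins the surface down to finite data. Granting this, $\dim\overline{m(\mathbb F)}=18-3=15=\dim\mathcal R_6$, and the irreducibility of $\mathcal R_6$ forces $\overline{m(\mathbb F)}=\mathcal R_6$; that is, $m$ is dominant.
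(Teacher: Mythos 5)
Your proof is correct and takes essentially the same approach as the paper: both get $\dim \mathbb F = 18$ from the birational description $\mathbb F \sim \mathbf P^5 \times \tilde{\mathbb E}$ together with the finiteness of the Fano polarizations on a fixed Enriques surface (so $\dim \tilde{\mathbb E} = \dim \mathbb E = 13$), and both show the general fibre of $m$ is $3$-dimensional by combining the finiteness of $Z = W^0_3(C) \cap t_L^* W^0_3(C)$ (Proposition 4.16) and the completeness of the embedding system ($h^0(\mathcal O_{C'}(1)) = 4$, Lemma 4.14) with the action of the $3$-dimensional group $\Aut T$ of projectivities preserving the tetrahedron. The only difference is presentational: you split the fibre analysis into a lower bound (the $G$-action) and an upper bound (reconstruction up to finitely many discrete choices), whereas the paper identifies the fibre directly as a finite union of $\Aut T$-orbits.
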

\begin{proof} Let $C' \in \mathbb F$ and let $(C,L)$ be in  the isomorphism class of $m(C')$. Then , keeping the previous notations, $C'$ is embedded in
$\mathbf P^3$ by a line bundle $\mathcal O_C(a_i + a_j + a_k)$ for some $a_i, a_j, a_k$ in $Z := W^0_3(C) \cap t_L^* W^0_3(C)$. By proposition 4.16 
$Z$ is finite. Moreover we know that $h^0(\mathcal O_{C'}(1)) = 4$ by lemma 4.14. Let $C'' \in \mathbb F$, then $m(C'') = m(C')$  if and only if $C''$ is biregular
to $C'$ and $\mathcal O_{C''}(1) \cong \mathcal O_{C'}(b_i + b_j + b_k)$ for some $b_i, b_j, b_k \in Z$. In particular $C''$ and $C'$ are projectively equivalent  if and only if
$C'' = \alpha(C')$ for some $\alpha \in Aut \ T$, the group of projectivities of $T$. Since $Aut \ T$ is 3-dimensional and $Z$ is finite, it 
follows that the fibre of $m$ over $m(C')$ is 3-dimensional. Hence $m$ is dominant  if and only if $ \ \dim \ \mathbb F \geq  \ \dim \ \mathcal R_6 +  \ \dim \ Aut \ T = 18$. 
To prove this note that the natural projection   $u: \tilde{\mathbb E} \to \mathbb E$ has finite fibres. Indeed the fibre of $u$ at $S'$ is the set of Fano polarizations 
of $S$. Hence $ \ \dim \ \tilde {\mathbb F} =  \dim \ \mathbb F = 13$ and $ \ \dim \ \mathbb F = 18$.
 \end{proof} \par  
\begin{theorem} $\tilde {\mathbb E}$ is irreducible and rational. \end{theorem}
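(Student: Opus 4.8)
The plan is to leverage the generically finite map $u\colon\tilde{\mathbb E}\to\mathbb E$ produced in the proof of Proposition 4.19, together with the fact that the base is as simple as possible. Indeed the defining equation of $\mathbb E$ is linear in the ten coefficients of the quadratic form $q$ and in the four constants $a_1,\dots,a_4$, so $\mathbb E$ is a projective space $\mathbf P^{13}$; in particular it is irreducible and rational. Since $u$ is dominant with finite fibres and $\dim\tilde{\mathbb E}=\dim\mathbb E=13$, the entire question reduces to understanding this cover: the goal is to show that $\tilde{\mathbb E}$ is birational to $\mathbb E$, or at worst to a rational fibre bundle over a rational base.

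First I would try to reconstruct the Fano polarization from the sextic, i.e. to produce a rational section of $u$. Given a general $S'\in\mathbb E$, its normalization $S$ is a non nodal Enriques surface, and the six edges of the tetrahedron $T$ pull back to three pairs of plane cubics $(E_i,E_i'),(E_j,E_j'),(E_k,E_k')$, with $H\sim E_i+E_j+E_k$ the genus $4$ polarization. Since the Fano polarization is built from all ten pairs of plane cubics of the Fano model (numerically $3C\equiv E_1+\dots+E_{10}$), reconstructing $\mathcal O_S(C)$ amounts to recovering the remaining seven cubics, equivalently the effective class $3C-H$. Here the intrinsic characterization of the twenty cubics through the trisecant lines of the Fano model $S\subset\mathbf P^5$ (Theorem 4.13 and \cite{CV}) is the natural tool, as it rigidifies the configuration and should let one single out the class $C$ from $S$ and the three prescribed pairs.

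The hard part is precisely to control the fibre of $u$. A general Enriques sextic need not determine its Fano polarization uniquely, since in principle several isotropic $10$-sequences of effective cubics may contain the fixed isotropic $3$-sequence $E_i,E_j,E_k$; so $u$ may well have degree larger than one, leaving a finite cover of $\mathbf P^{13}$ whose rationality is not automatic. To circumvent this I would rigidify the problem by recording, besides $S'$, enough rational geometric data to pin down $C$ once and for all: for instance the full configuration of the ten cubic pairs on the Fano model, or an auxiliary projective model of $(S,\mathcal O_S(C))$ whose parameters are visibly rational. On such a rigidified space the assignment $(S',\mathcal O_S(C))$ is algebraic, the forgetful maps have rational (indeed linear) fibres coming from the freedom in the morphism $f\colon S\to\mathbf P^3$ and in the coordinates fixing $T$, and the parameter space is an open subset of a product of Grassmannians and linear systems, hence irreducible and rational.

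Assembling these rational fibrations would exhibit $\tilde{\mathbb E}$ as the dominant image of an irreducible rational variety, giving irreducibility at once and rationality provided the fibres are genuinely rational and the chain of maps is birational onto $\tilde{\mathbb E}$. I expect the main obstacle, and the real content of the theorem, to be exactly the bookkeeping of the ten cubic pairs: proving that over a dense open set the reconstruction of $C$ (or of the rigidifying datum) is single valued, so that the construction is invertible. This rests on the explicit geometry of the Fano model and its trisecants recalled above, together with the classical theory of Enriques surfaces and their Fano polarizations in \cite{CD}.
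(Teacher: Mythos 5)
The paper does not actually prove this theorem internally: immediately after the statement it says ``This theorem is proven in \cite{Ve3}'' and moves on. So the only question is whether your proposal constitutes a proof on its own, and it does not. What you have written is a reduction of the theorem to precisely the point you then leave open. Your two correct observations --- that $\mathbb E$ is a linear system, hence a $\mathbf P^{13}$, and that $u\colon\tilde{\mathbb E}\to\mathbb E$ is generically finite with fibre the set of compatible Fano polarizations --- are both already contained in the paper (in the proof that $m\colon \mathbb F \to \mathcal R_6$ is dominant). Everything beyond that is conditional: you say one should either produce a rational section of $u$, or ``rigidify'' by recording auxiliary data so that the resulting parameter space is visibly rational and maps birationally onto $\tilde{\mathbb E}$; but you never construct the section, never define the rigidified space, and you yourself concede that single-valuedness of the reconstruction of $C$ from $S'$ is ``the real content of the theorem.'' A plan that defers its central step is a gap, not a proof.

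The gap is not cosmetic, because none of your fallback positions are automatic. A generically finite cover of $\mathbf P^{13}$ of degree $>1$ need not be rational, nor even unirational, so the rationality of $\mathbb E$ by itself buys nothing; and there is no evident reason for the fibre of $u$ at a general $S'$ --- the set of Fano polarizations $C$ on the normalization $S$ with $C\cdot E_m=C\cdot E'_m=3$ for the three pairs of cubics over the edges of $T$ --- to be a single point, since an isotropic $3$-sequence $(E_i,E_j,E_k)$ can a priori be completed to several isotropic $10$-sequences, i.e.\ several Fano polarizations. Your ``chain of rational fibrations'' also needs more than rational fibres over a rational base: at each step one needs Zariski-local triviality or a section to propagate rationality (the paper itself is careful on this point elsewhere, invoking Grauert's theorem to get a projective bundle structure for $\mathbb F\to\tilde{\mathbb E}$), and at the end one needs the composite to be birational onto $\tilde{\mathbb E}$, which is exactly the bookkeeping you postpone. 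Finally, even if a concretely rational rigidified space dominating $\tilde{\mathbb E}$ were exhibited, that alone would give irreducibility and unirationality, not rationality. So the proposal identifies the right objects and the right difficulty, but the difficulty is the theorem; the actual argument is the one in \cite{Ve3}, which the paper cites rather than reproduces.
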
  \par 
This theorem is proven in \cite{Ve3}.   Since $\tilde {\mathbb E}$ is rational then $\mathbb F$ is rational too. Since $m: \mathbb F \to \mathcal R_6$ is dominant it follows:
\begin{corollary} $\mathcal R_6$ is unirational. \end{corollary}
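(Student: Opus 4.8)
The plan is to assemble the three structural results already established for the families $\mathbb F$ and $\tilde{\mathbb E}$ into a single chain of rational maps. The guiding principle is the elementary one used repeatedly throughout the paper: if a variety $X$ admits a dominant rational map from a rational, or merely unirational, variety, then $X$ is unirational. Here $X = \mathcal R_6$ and the dominating variety will be $\mathbb F$, the family of degree nine, genus six space curves $C' \subset \mathbf P^3$ lying on non-nodal Enriques sextics.

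First I would invoke Theorem 4.22, which asserts that $\tilde{\mathbb E}$ is irreducible and rational. Next, Lemma 4.20 identifies $\mathbb F$ birationally with $\mathbf P^5 \times \tilde{\mathbb E}$: the projection $p \colon \mathbb F \to \tilde{\mathbb E}$ exhibits $\mathbb F$ as an open set of a projective bundle whose fibre over $(S', \mathcal O_S(C))$ is the linear system $\vert C \vert \cong \mathbf P^5$. Since a projective bundle over a rational base is rational, it follows immediately that $\mathbb F$ is rational.

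Finally I would apply Proposition 4.21, the dominance of the moduli map $m \colon \mathbb F \to \mathcal R_6$ sending $C'$ to the Prym pair $(C, L)$ with $L \cong \mathcal O_C(e_n - e'_n)$. Composing, $\mathbb F$ is a rational variety mapping dominantly onto $\mathcal R_6$, so $\mathcal R_6$ is unirational, as claimed.

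The genuine content of the argument lies entirely in the inputs, not in this final assembly. The main obstacle is twofold and has already been met earlier in the section: on the one hand, Proposition 4.21 rests on the parameter count $\dim \mathbb F = 18 = \dim \mathcal R_6 + \dim \mathrm{Aut}\, T$, which itself uses Proposition 4.16 (the finiteness of $Z = W^0_3(C) \cap t_L^* W^0_3(C)$, guaranteed by the trisecant analysis of the Fano model $S \subset \mathbf P^5$) together with the finiteness of the fibres of $u \colon \tilde{\mathbb E} \to \mathbb E$; on the other hand, the rationality of $\tilde{\mathbb E}$ in Theorem 4.22 is the truly delicate step, since it demands an explicit rational parametrization of the moduli of non-nodal Enriques surfaces equipped with a Fano polarization. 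Granting these, the unirationality of $\mathcal R_6$ is a one-line consequence.
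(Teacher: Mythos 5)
Your proposal is correct and follows exactly the paper's own argument: Theorem 4.22 gives the rationality of $\tilde{\mathbb E}$, Lemma 4.20 transfers this to $\mathbb F$ via the birational identification $\mathbb F \cong \mathbf P^5 \times \tilde{\mathbb E}$, and Proposition 4.21 (dominance of $m \colon \mathbb F \to \mathcal R_6$) then yields unirationality. Your accompanying remarks on where the real content lies --- the parameter count behind Proposition 4.21 and the rationality of $\tilde{\mathbb E}$ --- also accurately reflect the structure of the section.
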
 
  \medskip  \par 
  \subsection{The unirationality of $\mathcal R_g$ via conic bundles: $g \leq 6$ }
We continue along the path of the unirationality of $\mathcal R_g$ adding more facts and  a few historical remarks. Then we will discuss some well known relations between 
conic bundles and Prym pairs.  Finally we will complete our picture giving a simple proof via conic bundles of the following known result
  \begin{itemize} \it  \item[$\circ$] $\mathcal R_g$ is unirational for $g \leq 6$. \end{itemize} \par  
The proof uses linear systems of nodal conic bundles in  $\mathbf P^2 \times \mathbf P^2$. So it follows, in some sense, the spirit of classical parametrizations of $\mathcal M_g$ via
families of nodal plane curves, we started with in the first part of this exposition. \medskip \par 
\underline{\it A brief  historical overview} \par
The first proof of the unirationality of $\mathcal R_6$, given by Donagi in \cite{Do1}, goes back to 1984. A further purpose of this paper is  to prove the unirationality of $\mathcal A_5$, 
the moduli space of principally polarized abelian varieties. The author considers Fano threefolds, of index one and of the principal series,
$$
V \subset \mathbf P^7
$$
which are a complete intersection of type $(1,1,2)$ in the Grassmannian $G(2,5)$. Among them one has the family of those threefolds having five ordinary double points. The purpose is to show that the intermediate
Jacobian of a minimal desingularization of a general threefold in this family is in fact a general 5-dimensional principally polarized abelian variety. As we will see, this is actually the case. \par
Projecting from one of the nodes of $V$, one obtains a complete intersection of three quadrics in $\mathbf P^6$. Actually this net of quadrics is quite special.
The discriminant curve of the net is a plane curve
$
\Gamma \cup P,
$
where $P$ is a line and $\Gamma$ is a 4-nodal sextic. It turns out that the normalization $C$ of $\Gamma$ is endowed with a non split \'etale 2:1 cover
$$
\pi: \tilde C \to C.
$$
$\Gamma$ parametrizes a family of singular quadrics $Q_x, x \in \Gamma$,  of the net.  $\tilde C$ is the normalization of the family of rulings of maximal linear subspaces of the quadrics $Q_x$,
$x \in \Gamma$. An important fact is that every such a $Q_x$ has rank 6, even when $x$ is singular for $\Gamma \cup P$. This implies that $\pi$ is \'etale. \par
Let $L$ be the non trivial line bundle defining $\pi$. Then $(C,L)$ is a Prym pair. The family of 5-nodal Fano threefolds $V$ is easily seen to be rational. Hence, for proving in this way
the unirationality of $\mathcal R_6$ and of $\mathcal A_5$, the crucial point is to show that the previous Prym pais $(C,L)$ is general in moduli. This step is done in \cite{Do1} 
via an appropriate algorithm.  
We will see in a moment a simple variation of this method. \par Before we want to complete our picture with a brief report on the unirationality of $\mathcal R_5$.  
The first complete proof of the unirationality of $\mathcal R_5$ is quite new. This result is due to Izadi, Lo Giudice and Sankaran, \cite{ILS}. An independent
proof follows from \cite{Ve4}. \par All the subject, as well as some of the proofs, was strongly influenced by the study of rational parametrizations for the
moduli spaces $\mathcal A_g$ of principally polarized abelian varieties of dimesion $g \leq 6$. In \cite{C} Clemens was the first to produce a rational
parametrization of the moduli space $\mathcal A_4$. He studies a special family of threefods, namely quartic double solids
$$
f: V \to \mathbf P^3,
$$
branched on a quartic surface $B$ with 6 nodes. The method is the same used by Donagi for $\mathcal A_5$: the intermediate Jacobian of the blowing up
of $ \Sing \ V$ is 4-dimensional. Then the main point is to show that the period map
$$
p: \mathcal V \to \mathcal A_4
$$
is dominant. Also in this case Prym pairs are present in the construction, though not considered in the proof. Projecting from a node of $B$ one can
realize a birational model of $V$ as a singular conic bundle over $\mathbf P^2$.  In this case the discriminant $\Gamma  \subset \mathbf P^2$ is a sextic 
with 5 nodes. Its normalization $C$ is endowed with an \'etale double cover $\pi: \tilde C \to C$, parametrizing the irreducible components of the singular fibres of this conic bundle.
\medskip \par 
\underline{ \it Families of nodal conic bundles on $\mathbf P^2$ and $\mathcal R_g$} \par
We want to see, more in general, some useful families of nodal conic bundles. At first we define the conic bundles on $\mathbf P^2$ we want. Consider a rank 3 vector bundle $E$ 
over $\mathbf P^2$ and the natural projection $p: \mathbf PE \to \mathbf P^2$. Let
$$ V \subset \mathbf PE$$ be a hypersurface, we introduce the following
\begin{definition} $V$ is an admissible conic bundle of $\mathbf PE$ if:   
\begin{enumerate} \it 
\item  $p/V: V \to \mathbf P^2$ is flat,
\item  each fibre of $p/V$ is a conic of rank $\geq 2$,
\item $ \Sing \ V$ is a finite set of $\delta$ nodes,
\item  the union of the singular fibres of $p$ is an integral surface.
\end{enumerate} 
\end{definition} \par 
By definition the discriminant curve of $V$ is
$$
C' := \lbrace x \in \mathbf P^2 \ / \ rk \ p^*(x) = 2 \rbrace.
$$
To give a $\delta$-nodal conic bundle $V \subset \mathbf PE$ is equivalent to give 
$$
q: E \otimes E \to \mathcal O_{\mathbf P^2}(m),
$$
where $q$ is a quadratic form on $E$ defining the exact sequence
$$
0 \to E \to E^*(m) \to A \to 0.
$$
Furthermore conditions (2) and (4) imply that
 \begin{itemize} \it
\item[$\circ$] $A$ is a line bundle on a $C'$,
\item[$\circ$] $C'$ is an integral $\delta$-nodal curve,
\end{itemize} \par 
It is easy to see that the map $p/ \Sing \ V:  \Sing \ V \to  \Sing \ C'$ is bijective. Moreover $C'$ is endowed with the \'etale double covering
$$
\pi': \tilde C' \to C',
$$
where
$$
\tilde C' = \lbrace l \in \mathbf PE^*_x \ / \ l \subset p^*(x), \ x \in C' \rbrace
$$
is the family of the irreducible components of the singular fibres of $p/V$. $\tilde C'$ is integral by condition (4). 
We denote the normalization map of $C$ as
$$
\nu: C \to C'.
$$
Clearly $\pi'$ is defined by a non trivial 2-torsion element
$$
L' \in  \Pic^0 (C').
$$
Moreover  we have the standard exact sequence
$$
\begin{CD}
0 @>>> {{\mathbf C^*}^{\delta}} @>>> { \Pic^0 (C')} @>{\nu^*}>> { \Pic^0(C)} @>>> 0. \\
\end{CD}
$$
Hence $L'$ defines a line bundle
$$
L := \nu^* L'
$$
such that $L^{\otimes 2} \cong \mathcal O_{\tilde C}$. 
\begin{lemma} $L$ is non trivial. \end{lemma}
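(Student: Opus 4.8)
The plan is to exhibit the double cover of $C$ attached to $L$ and prove it is connected; since a $2$-torsion line bundle on a smooth connected curve is trivial exactly when its associated double cover splits into two copies, this will settle the claim. First I would form the fibre product $\tilde C := C \times_{C'} \tilde C'$, with its two projections $\pi : \tilde C \to C$ and $\mu : \tilde C \to \tilde C'$. Since the étale double cover $\tilde C' \to C'$ is $\operatorname{Spec}(\mathcal O_{C'} \oplus L')$ with multiplication coming from $L'^{\otimes 2} \cong \mathcal O_{C'}$, pulling this $\mathcal O_{C'}$-algebra back along $\nu$ gives $\tilde C = \operatorname{Spec}(\mathcal O_C \oplus \nu^{*}L') = \operatorname{Spec}(\mathcal O_C \oplus L)$. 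Thus $\pi : \tilde C \to C$ is precisely the double cover defined by $L$, and $L$ is non trivial if and only if $\tilde C$ is connected.

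The key step is to identify $\tilde C$ with the normalization of $\tilde C'$. Because $\pi'$ is étale and $C$ is smooth, the projection $\pi : \tilde C \to C$ is étale, so $\tilde C$ is a smooth, hence normal, curve. On the other hand $\mu : \tilde C \to \tilde C'$ is the base change of the normalization $\nu : C \to C'$ along the étale map $\pi'$: base change preserves finiteness, and since $\nu$ is an isomorphism over the smooth locus of $C'$ while $\pi'$ is open, $\mu$ is an isomorphism over a dense open subset of $\tilde C'$. Hence $\mu$ is a finite birational morphism from a normal curve, which means that $\tilde C$ is the normalization of $\tilde C'$.

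Finally I would invoke condition (4) in the definition of admissible conic bundle, which guarantees that the union of the singular fibres, and hence $\tilde C'$, is integral, in particular irreducible. The normalization of an irreducible curve is again irreducible, hence connected, so $\tilde C$ is connected and therefore $L$ is non trivial. The subtle point, and the reason the argument must be geometric rather than purely formal, is that the kernel $(\mathbf C^{*})^{\delta}$ of $\nu^{*}$ contains the $2$-torsion subgroup $(\mathbf Z/2)^{\delta}$, so the exact sequence relating $\Pic^0(C')$ and $\Pic^0(C)$ does not by itself prevent $L'$ from lying in $\ker \nu^{*}$; it is exactly the irreducibility of $\tilde C'$, coming from the integrality hypothesis on the singular fibres, that forces $L' \notin \ker \nu^{*}$.
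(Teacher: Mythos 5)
Your proof is correct and is essentially the paper's own argument: the paper forms the same cartesian square, declares the top arrow to be the double cover defined by $L$ and the left arrow to be the normalization of $\tilde C'$, and concludes from the integrality of $\tilde C'$ (condition (4)) that $\tilde C$ is integral, hence the cover is non-split and $L$ is non-trivial. You have simply supplied the verifications the paper leaves implicit, namely that the fibre product is $\operatorname{Spec}(\mathcal O_C \oplus L)$ and that it coincides with the normalization of $\tilde C'$.
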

\begin{proof} Consider the cartesian square
$$
\begin{CD}
{\tilde C}~@>{\pi}>> C \\
@VV{\tilde \nu}V @VV{\nu}V \\
{\tilde C'} @>{\pi'}>> {C'}, \\
\end{CD}
$$
where $\tilde \nu$ is the normalization map of $\tilde C'$ and $\pi$ is defined by $L$. Since $\tilde C'$ is integral, $\tilde C$ is integral too.
Hence $\pi$ is non split and $L$ is not trivial.
\end{proof} \par  
\begin{definition} $(C, L)$ is the Prym pair of the conic bundle $V$. \end{definition} \par 
We will assume that $C$ has genus $g$. From now on we denote as
$$
\vert V \vert_{\delta}
$$
the family of admissible conic bundles $W \in \vert V \vert$ having exactly $\delta$ nodes.
\begin{definition} The natural map is the morphism $$ m: \vert V \vert_{\delta} \to \mathcal R_g $$ sending an admissible
conic bundle  $V' \in  \vert V \vert$ to the moduli of its Prym pair.
\end{definition} \par 
It is interesting to study $m$ in the simplest cases of $E$. For instance if $E$ is the direct sum of three line bundles.
Let $\mathcal O_{\mathbf PE}(1)$ be the tautological bundle. The linear system $\vert V \vert$ we want  to use is 
$\vert \mathcal O_{\mathbf PE}(2) \vert$.  The case $$ E = \mathcal O_{\mathbf P^2}(1)^{\oplus 3} $$ is already 
quite interesting.  Indeed it provides a quick proof of the unirationality of $\mathcal R_g$, $g \leq 6$. In this case we  have
$$
\vert V  \vert = \mathcal O_{\mathbf P^2 \times \mathbf P^2}(2,2) \vert.
$$
Fixing coordinates $(x,y) := (x_1:x_2:x_3) \times (y_1:y_2:y_3)$, it follows that
$$
\vert V \vert = \lbrace \sum_{1 \leq i < j \leq 3} a_{ij} y_iy_j = 0 \rbrace,
$$
where $M := (a_{ij})$ is a $3 \times 3$ symmetric matrix of quadratic forms in $x$. Let $V$ be standard with discriminant curve $C'$ and let
$L' \in  \Pic^0 (C')$ be the non trivial line bundle considered above. Then $M$ defines the exact sequence
$$
\begin{CD}
0 @>>> {\mathcal O_{\mathbf P^2}^3(-3)}~@>M>> {\mathcal O_{\mathbf P^2}(-1)^3 }~@>>> {L'} @>>> 0. \\
\end{CD}
$$ 
Note that $C'$ is an integral sextic with $\delta$ nodes and that $V$ provides a sextic equation of $C'$ as the determinant of a symmetric matrix of 
quadratic forms. It is well known that to give such a  determinant is equivalent to give a non trivial element of $ \Pic^0_2 (C')$ as $L'$, cfr. \cite{Be1} lemma 6.8 and also \cite{DI} 6.1.
\begin{lemma} The natural map $\vert V \vert_{\delta} \to \mathcal R_g$ is dominant for $g \leq 6$.
\end{lemma}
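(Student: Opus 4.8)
The plan is to establish dominance by a three-part dimension count: the dimension of the source $\vert V\vert_\delta$, the dimension of a general fibre of $m$, and the conclusion that $\dim m(\vert V\vert_\delta)=3g-3=\dim\mathcal R_g$. Recall that $\vert V\vert_\delta$ is an open subset of the linear system $\vert\mathcal O_{\mathbf P^2\times\mathbf P^2}(2,2)\vert\cong\mathbf P^{35}$, and that for an admissible $\delta$-nodal $V$ the discriminant $C'$ is an integral $\delta$-nodal plane sextic, so its normalization $C$ has genus $g=\binom{5}{2}-\delta=10-\delta$; thus $g\le 6$ is exactly $\delta\ge 4$. Since $\vert V\vert_\delta$ is open in a projective space it is rational, so once $m$ is shown dominant the unirationality of $\mathcal R_g$ follows at once.

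First I would compute $\dim\vert V\vert_\delta=35-\delta$. A member of $\vert\mathcal O(2,2)\vert$ acquiring a node is a codimension-one condition --- being singular at a variable point of the fourfold $\mathbf P^2\times\mathbf P^2$ imposes $1+4$ conditions against the $4$ moduli of the point --- and the nodes of $V$ correspond bijectively to the nodes of the discriminant sextic $C'$, the fibre over each being a rank-$2$ conic by admissibility. The content of this step is that $\delta$ such nodes impose independent conditions; this is a Severi-type transversality statement for conic bundles, which I would verify by exhibiting an admissible member with $\delta$ nodes in general position and checking that the corresponding node map has maximal rank there.

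Next I would determine a general fibre of $m$. Over a general Prym pair $(C,L)$ with $C$ of genus $g$, any admissible $V$ in the fibre decomposes into intrinsic data. Its discriminant is an embedded $\delta$-nodal sextic $C'\subset\mathbf P^2$, which is the choice of a $g^2_6$ on $C$, i.e. a point of $W^2_6(C)$, of dimension $\rho(g,2,6)=12-2g$ for general $C$ by Brill-Noether theory, each point giving the plane model up to the $8$-dimensional group $\PGL_3$ of coordinate changes on the base. Over $C'$ the matrix $M$ determines the two-torsion bundle $L'$ with $\nu^*L'=L$, and there are only finitely many such $L'$ since $\Ker\nu^*$ is a torus meeting the two-torsion in the finite group $(\mathbb Z/2)^\delta$. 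Finally, by the classical correspondence between symmetric determinantal representations of $C'$ by $3\times 3$ matrices of quadrics and two-torsion bundles on $C'$ cited above, $M$ is determined by $(C',L')$ up to the $8$-dimensional group $\PGL_3$ acting on the fibre variables by $M\mapsto {}^t\!A\,M\,A$. The group orbit and the family $W^2_6(C)$ unconditionally sit inside the fibre, giving fibre dimension $\ge (12-2g)+8+8=28-2g$; the reconstruction uniqueness shows that every $V$ over $(C,L)$ arises from this finite-plus-group data, so the fibre dimension is exactly $28-2g$.

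Combining, the fibre dimension theorem gives $\dim m(\vert V\vert_\delta)=(35-\delta)-(28-2g)=3g-3$, using $\delta=10-g$; as this equals $\dim\mathcal R_g$ and $\mathcal R_g$ is irreducible, $m$ is dominant for every $g\le 6$. The main obstacle is the last ingredient of the fibre count together with surjectivity of the reconstruction: one must invoke and correctly normalize the classical theory of symmetric quadratic determinantal representations so that $(C',L')$ recovers $M$ up to $\PGL_3$-equivalence, and --- what is really the crux --- one must check that the conic bundle produced from a general $(C,L)$ is genuinely admissible, namely that all its fibres have rank $\ge 2$ and that $\tilde C'\to C'$ is connected, so that $L=\nu^*L'$ is nontrivial as in the lemma just proved. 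Verifying this admissibility for the general member, equivalently the nonemptiness of $\vert V\vert_\delta$ with nodes imposing independent conditions, is where the genuine geometric work lies; granting it, the dimension count closes.
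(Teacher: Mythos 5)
Your dimension count does not close as written, for two reasons. First, it is circular at the fibre--dimension step: the theorem on fibre dimensions requires an upper bound on the fibre of $m$ over a general point of the \emph{image}, whereas you compute the fibre over a general point of $\mathcal R_g$, using Brill--Noether generality to get $\dim W^2_6(C)=12-2g$. If the image were a proper subvariety of $\mathcal R_g$ --- which is exactly what must be excluded --- its general member need not be Brill--Noether general and $W^2_6(C)$ can jump: for $g=6$ a smooth plane quintic has $\dim W^2_6(C)\geq 1$ instead of $\rho=0$. Then the bound $28-2g$ on the fibre, and with it the conclusion $\dim m(\vert V\vert_{\delta})=3g-3$, would fail. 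Second, both nonemptiness inputs are deferred: the existence of admissible $\delta$-nodal members (hence $\dim \vert V \vert_{\delta}=35-\delta$), and what you yourself call the crux, namely that the conic bundle built from a general $(C,L)$ is admissible. Note that this last item, once proved, renders the entire dimension count superfluous: if the determinantal construction applied to a general Prym pair $(C,L)$ produces an admissible $V\in\vert V\vert_{\delta}$ whose Prym pair is $(C,L)$, then the general point of $\mathcal R_g$ lies in the image and $m$ is dominant, full stop.

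That direct construction is precisely the paper's proof, and it is short. For $g\leq 6$ a general $C$ is birational to a plane sextic $C'$ with $\delta=10-g$ nodes; since $\nu^*:\Pic^0(C')\to\Pic^0(C)$ is surjective with kernel $(\mathbf C^*)^{\delta}$, a $2$-torsion lift $L'$ of $L$ exists (correct an arbitrary lift by a square root in the torus); by the cited correspondence of Beauville between nontrivial $2$-torsion bundles on a nodal sextic and symmetric determinantal representations, $L'$ yields the matrix $M$ of quadratic forms and hence $V=\lbrace xMy=0\rbrace$, which is admissible with exactly $\delta$ nodes because $L'$ is a line bundle (corank $\leq 1$ everywhere) and is nontrivial; its Prym pair is $(C,L)$ by construction, so $m$ is dominant. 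In short, the step you postponed \emph{is} the proof, and the scaffolding around it --- the count $\dim\vert V\vert_{\delta}=35-\delta$ and the fibre count $(12-2g)+8+8$ --- is a useful consistency check (it does reproduce $3g-3$) but carries no logical weight toward dominance.
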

\begin{proof} Let $(C,L)$ be a general Prym pair, with $C$ of genus $g \leq 6$. Then $C$ is birational to a plane sextic $C'$ with $\delta = 10 - g$ nodes.
Let $L' \in {\nu^*}^{-1}(L)$, where $\nu: C \to C'$ is the normalization map. Then $L'$ defines an exact sequence as above, hence a matrix $M$ as above.
Consider the conic bundle $V$, defined by the equation $x M y = 0$ in $\mathbf P^2 \times \mathbf P^2$. Since $L'$ is a line bundle, it easily follows that 
$V$ is admissible and in $\vert V \vert_{\delta}$. Hence $m_g(V)$ is the moduli point of $(C,L)$ and the statement follows. \end{proof} \par 
Like in the case of nodal plane curves we can now consider the family of $\delta$-nodal conic bundles $V$. Since $ \dim \ \vert V \vert = 35$, we can assume
that the points of $ \Sing \ V$ are in general position for $4 \leq \delta \leq 7$. Then, as in the case of nodal plane curves,  $\vert V \vert_{\delta}$ is rational.
This implies that:
\smallskip

\begin{theorem}~$\mathcal R_g$ is unirational for $g \leq 6$. \end{theorem}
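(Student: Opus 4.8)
The plan is to deduce the statement from the preceding lemma, which already provides that the natural map $m\colon\vert V\vert_\delta\to\mathcal R_g$ is dominant for every $g\le 6$, i.e. for $\delta=10-g$. It therefore suffices, for each such $g$, to exhibit a rational variety dominating $\mathcal R_g$, and the obvious candidate is $\vert V\vert_\delta$ itself. Since $g=2$ corresponds to $\delta=8$, which lies outside the range $4\le\delta\le 7$ in which the argument below operates, I would dispose of $\mathcal R_2$ separately: its rationality has already been established above. Thus the whole weight of the proof falls on showing that $\vert V\vert_\delta$ is rational for $3\le g\le 6$, i.e. for $4\le\delta\le 7$, in exact analogy with the rationality of the Severi variety $\mathcal N_{g,\delta}$ obtained in the first part of the paper.

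First I would transcribe the Severi-variety diagram, replacing $\mathbf P^2$ by $\mathbf P^2\times\mathbf P^2$ and $\vert\mathcal O_{\mathbf P^2}(d)\vert$ by $\vert V\vert=\vert\mathcal O_{\mathbf P^2\times\mathbf P^2}(2,2)\vert\cong\mathbf P^{35}$. The node map $h\colon V'\mapsto\Sing V'$ sends $\vert V\vert_\delta$ to the Hilbert scheme of $\delta$ points of $\mathbf P^2\times\mathbf P^2$, and the fibre of its projective completion over a configuration $Z$ is the linear system $\vert\mathcal I^2_Z(2,2)\vert$ of $(2,2)$-forms singular along $Z$. The numerical input is that a node at a fixed point imposes exactly five conditions on $\vert V\vert$: the six partials $\partial_{x_i}F,\ \partial_{y_j}F$ must vanish at the point, while the two Euler relations produce a single dependence among these six functionals and simultaneously force $F$ to vanish there. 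Hence $\delta$ points in general position impose $5\delta$ independent conditions precisely when $5\delta\le 35$, that is $\delta\le 7$, and then $\vert\mathcal I^2_Z(2,2)\vert$ has constant dimension $35-5\delta$ over an open set of configurations. By semicontinuity the component of $\vert V\vert_\delta$ whose general member has nodes in general position is birational to a $\mathbf P^{35-5\delta}$-bundle over an open subset of the Hilbert scheme, whose dimension $4\delta$ accounts, together with the fibre, for $\dim\vert V\vert_\delta=35-\delta$. As the relevant Hilbert scheme is rational, being birational to the symmetric product $(\mathbf P^2\times\mathbf P^2)^{(\delta)}$ which is rational by the known rationality of symmetric products of products of projective spaces, the total space, and hence this component of $\vert V\vert_\delta$, is rational.

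The step I expect to be the genuine obstacle is the genericity verification: that a general element of $\vert\mathcal I^2_Z(2,2)\vert$, for $Z$ a general configuration, is really an admissible $\delta$-nodal conic bundle in the sense of the definition. Concretely one must check that it is flat over $\mathbf P^2$ with every fibre of rank $\ge 2$, that its singular locus consists of exactly the $\delta$ prescribed ordinary nodes and of nothing more, and, most importantly, that the union of its singular fibres (equivalently, the discriminant sextic) is integral, so that the covering $\pi\colon\tilde C\to C$ is non-split and a bona fide Prym pair is produced. Granting this, the component constructed above is rational and, by the dominance lemma, dominates $\mathcal R_g$ for $3\le g\le 6$; combined with the rationality of $\mathcal R_2$, this yields the unirationality of $\mathcal R_g$ for all $g\le 6$.
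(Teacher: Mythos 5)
Your proposal follows essentially the same route as the paper's own proof: the paper likewise combines the dominance lemma with the claim that for $4 \leq \delta \leq 7$ the points of $\Sing V$ may be assumed in general position, so that $\vert V \vert_{\delta}$ is birationally a projective bundle over a rational base (``as in the case of nodal plane curves''), the case $g=2$ being already settled by the earlier rationality of $\mathcal R_2$. The genericity verification you single out as the genuine obstacle is left equally implicit in the paper, so your write-up matches it in strategy while being more explicit about the count of five conditions per node and the rationality of the base.
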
 \par 
\begin {remark} \rm For $g = 6$ then $ \Sing \ V$ consists of  4 general points. Since all these sets are projectively equivalent in $\mathbf P^2 \times \mathbf P^2$ we can fix one
of them, say $Z = \lbrace o_1, o_2, o_3, o_4 \rbrace$. Let $\mathcal I_Z$ be the ideal sheaf of $Z$. We point out that then \it $\mathcal R_6$ is dominated by a linear system, \rm namely by
$$
\vert \mathcal I_Z^{2}(2,2) \vert.
$$
\end{remark} 

\addcontentsline{toc}{subsection}{References}

\end{document}